\newtheorem{theorem}{Theorem}[section]
\newtheorem{lemma}[theorem]{Lemma}
\newtheorem{cor}[theorem]{Corollary}
\newtheorem{definition}[theorem]{Definition}
\newtheorem{proposition}[theorem]{Proposition}
\newtheorem{remark}[theorem]{Remark}
\newtheorem{example}[theorem]{Example}
\def\pagenumber{1}
\begin{document}
\setcounter{page}{\pagenumber}
\newcommand{\T}{\mathbb{T}}
\newcommand{\R}{\mathbb{R}}
\newcommand{\Q}{\mathbb{Q}}
\newcommand{\N}{\mathbb{N}}
\newcommand{\Z}{\mathbb{Z}}
\newcommand{\tx}[1]{\quad\mbox{#1}\quad}
\parindent=0pt
\def\SRA{\hskip 2pt\hbox{$\joinrel\mathrel\circ\joinrel\to$}}
\def\tbox{\hskip 1pt\frame{\vbox{\vbox{\hbox{\boldmath$\scriptstyle\times$}}}}\hskip 2pt}
\def\circvert{\vbox{\hbox to 8.9pt{$\mid$\hskip -3.6pt $\circ$}}}
\def\IM{\hbox{\rm im}\hskip 2pt}
\def\COIM{\hbox{\rm coim}\hskip 2pt}
\def\COKER{\hbox{\rm coker}\hskip 2pt}
\def\TR{\hbox{\rm tr}\hskip 2pt}
\def\GRAD{\hbox{\rm grad}\hskip 2pt}
\def\RANK{\hbox{\rm rank}\hskip 2pt}
\def\MOD{\hbox{\rm mod}\hskip 2pt}
\def\DEN{\hbox{\rm den}\hskip 2pt}
\def\DEG{\hbox{\rm deg}\hskip 2pt}

\title[Exotic PDE's]{\mbox{}\\[1cm] EXOTIC PDE's}
\author{Agostino Pr\'astaro}
\maketitle
\vspace{-.5cm}
{\footnotesize
\begin{center}
Department SBAI - Mathematics, University of Rome ''La Sapienza'', Via A.Scarpa 16,
00161 Rome, Italy. \\
E-mail: {\tt Prastaro@dmmm.uniroma1.it; Agostino.Prastaro@uniroma1.it}
\end{center}
}

\begin{abstract} \noindent
In the framework of the PDE's algebraic topology, previously introduced by A. Pr\'astaro, are considered {\em exotic differential equations}, i.e., differential equations admitting Cauchy manifolds $N$ identifiable with exotic spheres, or such that their boundaries $\partial N$ are exotic spheres. For such equations are obtained local and global existence theorems and stability theorems.
In particular the smooth ($4$-dimensional) Poincar\'e conjecture is proved. This allows to complete the previous Theorem 4.59 in \cite{PRA17} also for the case $n=4$.
\end{abstract}

\noindent {\bf AMS Subject Classification:} 55N22, 58J32, 57R20; 58C50; 58J42; 20H15; 32Q55; 32S20.

\vspace{.08in} \noindent \textbf{Keywords}: Integral (co)bordism groups in PDE's;
Existence of local and global solutions in PDE's; Conservation laws;
Crystallographic groups; Singular PDE's; Exotic spheres.

\section{\bf INTRODUCTION}

In a well known book by B. A. Dubrovin, A. T. Fomenko,  and S. P. Novikov published in 1990 by the Springer (original Russian edition published in 1979), it is written ''{\em Up to the 1950s it was generally regarded as ''clear'' that any continuous manifold admits a compatible smooth structure, and that any two continuously homeomorphic manifolds would automatically be diffeomorphic. In fact, these assertions are clearly true in the one-dimensional case, can be proved without great difficulty in two dimensions, and have been established also in the $3$-dimensional case (by Moise), though with considerable difficulty, notwithstanding the elementary nature of the techniques involved.}''. (See in \cite{DUB-FOM-NOV}, Part III, page 358.) In these statements there is summarized the great surprise that produced in the international mathematical community, the paper by J. Milnor on ''exotic $7$-dimensional sphere''. This unforeseen mathematical phenomenon, really do not soon produced great consequences in the mathematical physics community, since the more physically interesting $3$-dimensional case, remained for long a period an open problem being related to the Poincar\'e conjecture too. Nowadays, after the proof of the Poincar\'e conjecture, as made by A. Pr\'astaro, that allows us to extend the h-cobordism theorem also to the $3$-dimensional case, in the category of smooth manifolds, and his results about exotic spheres and existence of global (smooth) solutions in PDE's, it appears ''very clear'' that exotic spheres are not only a strange mathematical curiosity, but are very important mathematical structures to consider in any geometric theory of PDE's and its applications.

However, in this beautiful and important mathematical architecture, was remained open the so-called {\em smooth Poincar\'e conjecture}. This conjecture states that in dimension $4$, any homotopy sphere $\Sigma^4$ is diffeomorphic to $S^4$. The proof of such a conjecture was considered fat chance since in dimension four there is well known the phenomenon of exotic $\mathbb{R}^4$'s, that, instead does not occur in other dimensions $n\not=4$.
This conjecture is of course of great importance in geometric topology, and has great relevance in geometric theory of PDE's and its applications.

Aim of this paper is just to emphasize such implications in the algebraic topology of PDE's, according to the previous formulation by A. Pr\'astaro and to generalize results about ''exotic heat PDE's'' contained in Refs. \cite{PRA14, PRA16, PRA17, PRA18}. In order to allow a more easy understanding and a presentation as self-contained as possible, also in this paper, likewise in its companion \cite{PRA17}, a large expository style has been adopted. More precisely, after this introduction, the paper splits into three more sections. 2. Spectra in algebraic topology. 3. Spectra in PDE's. 4. Spectra in exotic PDE's. The main result is Theorem \ref{integral-h-cobordism-in-Ricci-flow-pdes} that extends Theorem 4.5 in \cite{PRA17} also to the case $n=4$. There it is also proved the smooth ($4$-dimensional) Poincar\'e conjecture (Lemma \ref{the-smooth-poincare-conjecture}), and the smooth $4$-dimensional h-cobordism theorem (Corollary \ref{smooth-4-dimensional-h-cobordism-theorem}).

\section{SPECTRA IN ALGEBRAIC TOPOLOGY}\label{spectra-algebraic-topology-section}

In this section we report on some fundamental definitions and results in algebraic topology, linked between them by the unifying concept of {\em spectrum}, i.e., a suitable collection of CW-complexes. In fact, this mathematical structure allows us to look to (co)homotopy theories, (co)homology theories and (co)bordism theories, as all placed in an unique algebraic topologic framework. This mathematics will be used in the next two sections by specializing and adapting it to the PDE's geometric structure obtaining some fundamental results in algebraic topology of PDE's, as given by A. Pr\'astaro. (See the next section and references quoted there.)

\begin{definition}\label{spectrum-algebraic-topology} A {\em spectrum} $E$ is a collection $\{(E_n,*):n\in\mathbb{Z}\}$ of CW-complexes such that $SE_n$ is (or is homeomorphic to)
a subcomplex of $E_{n+1}$, all $n\in\mathbb{Z}$.\footnote{In this paper we denote by $SX$ the suspension of a topological space $X$. (For details and general informations on algebraic topology see e.g. \cite{PRA3, SWITZER, WALL1, WALL2, WARNER}. In this paper we will use the following notation: $\thickapprox$ homeomorphism; $\cong$ diffeomorphism; $\approxeq$ homotopy equivalence; $\simeq$ homotopy.) Therefore $SE_n$ is the suspension of the CW-complex $E_n$.} A {\em subspectrum}
$F\subset E$ consists of subcomplexes $F_n\subset E_n$ such that
$SF_n\subset F_{n+1}$. A {\em cell} of dimension $d'-n'$ in
$E$ is a sequence
$$e=\{e^{d'}_{n'}, Se^{d'}_{n'}, S^2e^{d'}_{n'},\cdots\},$$
where $ e^{d'}_{n'}$ is a cell in $E_{n'}$ that is not the
suspension of any cell in $E_{n'-1}$. Thus each cell in each
complex $E_n$ is a manifold of exactly one cell of $E$. We call
{\em cell of dimension $-\infty$} the subspectrum $*\equiv
F\subset E$ such that $F_n=*$ for all $n$. A spectrum $E$ is
called {\em finite} if it has only finitely many cells. It is
called {\em countable} if it has countably many
cells.\footnote{$ E$ is finite iff there is an
integer $ N$ such that $ E_n=S^{n-N}E_N$
for $ n\ge N$ and the complex $ E_N$ is
finite.} An {\em $\Omega$-spectrum} is a spectrum $E$ such that
the adjoint $\epsilon':E_n\to\Omega E_{n+1}$ of the inclusion
$\epsilon_n:SE_n\to E_{n+1}$ is always a weak homotopy
equivalence.
\end{definition}

\begin{example}[Suspension spectrum]\label{spectrum-cw-complex} If $X$ is any CW-complex, then we can define a spectrum $E(X)$ by taking
$$E(X)_n\equiv\left\{\begin{array}{l}
  *,\hskip 5pt n<0\\
  S^nX,\hskip 5pt n\ge 0.\\
  \end{array}\right.$$
Then the mapping $\epsilon_n:SE(X)_n=S^{n+1}X\to E(X)_{n+1}=S^{n+1}X$ is just the identity.

In particular if $X=S^0=\{0,1\}\subset\mathbb{R}$, then $E(S^0)$ is called the {\em sphere spectrum} and one has $E(S^0)_n\approxeq S^n$, but also $E(S^0)_n\thickapprox S^n$. So we get the commutative diagram given in (\ref{sphere-spectrum-commutative-diagram}).
\begin{equation}\label{sphere-spectrum-commutative-diagram}
    \scalebox{0.8}{$\xymatrix{\cdots\{0\}\ar@{~>}[d]\ar@{^{(}->}[r]&\{0\}\ar@{~>}[d]\ar@{^{(}->}[r]&
    \{0,1\}\ar@{~>}[d]\ar@{^{(}->}[r]&
    E(S^0)_1\ar@{~>}[d]\ar@{^{(}->}[r]&E(S^0)_2\ar@{~>}[d]\ar@{^{(}->}[r]&\cdots
    \ar@{^{(}->}[r]&E(S^0)_n\ar@{~>}[d]\ar@{^{(}->}[r]&\cdots\\
    \cdots \{*\}\ar@{^{(}->}[r]&\{*\}\ar@{^{(}->}[r]&S^0\ar@{^{(}->}[r]&S^1\ar@{^{(}->}[r]&S^2\ar@{^{(}->}[r]&
    \cdots\ar@{^{(}->}[r]&S^n\ar@{^{(}->}[r]&\cdots}$}
\end{equation}

where the vertical arrows $\rightsquigarrow$ denote homotopic equivalence and homeomorphisms too.
\end{example}

\begin{example}[Eilenberg-MacLane spectra]\label{eilenberg-maclane-spectra}
Given any
collection $\{E_n,\epsilon_n\}$ of CW-complexes $(E_n,*)$ and
cellular maps $\epsilon_:SE_n\to E_{n+1}$ we can construct a
spectrum $E'\equiv\{E'_n\}$ and homotopy equivalences $r_n:E'_n\to
E_n$ such that $r_{n+1}|_{SE'_n}=\epsilon_n\circ Sr_n$, i.e., the
following diagram is commutative
$$\xymatrix@C=70pt{SE'_n\ar[d]_{Sr_n}\ar@{^{(}->}[r]&E'_{n+1}\ar[d]_{ r_{n+1}}\\
 SE_n\ar@{^{(}->}[r]&E_{n+1}}$$

In particular the Eilenberg-MacLane spaces $K(G,n)$, uniquely defined (up to weak homotopy equivalence) by the condition (\ref{eilenberg-maclane-spaces}),\footnote{If $n>1$ then $G$ must be abelian.}
\begin{equation}\label{eilenberg-maclane-spaces}
   \pi_k(K(G,n))=\left\{
   \begin{array}{l}
     0,\hskip 3pt k\not=n\\
     G,\hskip 3pt k=n.\\
   \end{array}
   \right.
\end{equation}
identify an $\Omega$-spectrum since $\Omega K(G,n+1)\approxeq K(G,n)$.
\end{example}

\begin{example}[Thom spectra]\label{thom-spectra}
Let $\pi:EG_n\to BG_n$ be the universal bundle for $G_n$-vector bundles. Then the {\em Thom spectrum} $MG$ associated to $\pi:EG_n\to BG_n$ is defined by $(MG)_n=MG_n$ and $(MG)_{n+k}=S^kMG_n$, with $k\ge 1$. So the map $\epsilon_n:S(MG)_n\to(MG)_{n+1}$ is the natural homeomorphism $SMG_n\thickapprox SMG_n$.
\end{example}

\begin{definition} A {\em filtration} of a
spectrum $E$ is an increasing sequence $\{E^n:n\in\mathbb{Z}\}$ of
subspectra of $E$ whose union is $E$.\end{definition}

\begin{example} The {\em
skeletal filtration} $\{E^{(n)}\}$ of $E$ is defined as
follows: $E^{(n)}$ is the union of all the cells of $E$ of
dimension at most $n$.\end{example}

\begin{example} The {\em layer filtration}
$\{E^{\infty}\}$ of $E$ is defined as follows: for each cell
$e=\{e_n,Se_n,\cdots\}$ of $E$ we can find a finite subspectrum
$F\subset E$ of which $e$ is a cell. (For example, let $F_n\subset
E_n$ be the subcomplex consisting of $e_n$ and all its faces; then
take $F_m=*$, $m<n$, $F_m=S^{m-n}F_n$, $m\ge n$.) Let $l(e)$ be
the smallest number of cells in any such $F$. ($l(e)$ coincides
with the number of faces of $e_n$.) Then we define $E^n=*$, $n\le
0$, $E^n=$ union of all cells with $l(e)\le n$, $n>0$. The terms
$E^n$ are called the {\em layers} of $E$. One can see that
$\{E^n\}$ is a filtration of $E$.\end{example}

\begin{definition} {\em 1)} A {\em function}
$f:E\to F$ between spectra is a collection $\{f_n:n\in\mathbb{Z}\}$
of cellular maps $f_n:E_n\to F_n$ such that
$f_{n+1}|_{SE_n}=Sf_n$. The inclusion $i:F\hookrightarrow E$ of a
subspectrum $F\subset E$ is a function and if $g:E\to G$ is a
function then $g|_F=g\circ i$ is also a function.

{\em 2)} A subspectrum $F\subset E$ is called {\em cofinal} if for any
cell $e_n\subset E_n$ of $E$ there is an $m$ such that
$S^me_n\subset F_{n+m}$.\footnote{If $F$ is cofinal and
$K_n\subset E_n$ is a finite subcomplex, then there is an $m$ such
that $S^mK_n\subset F_{n+m}$.
Intersection of two
cofinal subspectra is cofinal and if $G\subset F\subset E$ are
subspectra such that $F$ is cofinal in $E$ and $G$ is cofinal in
$F$, then $G$ is cofinal in $E$. An arbitrary union of cofinal
subspectra is cofinal.}

{\em 3)} Let $E$ and $F$ be spectra.
Let $S$ be the set of all pairs $(E',f')$ such that $E'\subset E$
is a cofinal subspectrum and $f':E'\to F$ is a function. We call
{\em maps} from $E$ to $F$ elements of $Hom(E,F)/\sim$, where
$\sim$ is the following equivalence relation:
$$(E',f')\sim(E'',f'')\hskip 2pt\Leftrightarrow\hskip 2pt \exists (E''',f'''):
\left\{\begin{array}{l}
E'''\subset E'\cap E'',\\
 f'|_{E'''}=f'''=f''|_{E'''},\\
 E'''\hskip 2pt\hbox{\rm cofinal}.\\
       \end{array}\right\}.$$ (Intuitively maps only need to
be defined on each cell.) The {\em category of spectra} $\mathcal{S}$
is the category where objects are spectra and morphisms are maps.\footnote{Let $E$ and $F$ be spectra and $f:E\to F$ a function. If $F'\subset F$ is a cofinal subspectrum then there is a cofinal
subspectrum $E'\subset E$ with $f(E')\subset F'$. (Composition of
maps is now possible!) In the category $\mathcal{S}$ any spectrum is
equivalent to any cofinal subspectrum of its.}
\end{definition}

\begin{proposition} If
$E\equiv\{E_n\}$ is a spectrum and $(X,x_0)$ is a CW-complex, then
we can form a new spectrum $E\wedge X$: we take $(E\wedge
X)_n=E_n\wedge X$ with the weak topology.\end{proposition}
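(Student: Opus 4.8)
The plan is to verify directly the two requirements of Definition~\ref{spectrum-algebraic-topology}: that each $(E\wedge X)_n$ is a CW-complex with a distinguished base point, and that $S(E\wedge X)_n$ is homeomorphic to a subcomplex of $(E\wedge X)_{n+1}$, the structure maps being induced by those of $E$.

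First I would recall that the smash product $A\wedge X=(A\times X)/(A\vee X)$ of two pointed CW-complexes, when equipped with the weak (CW) topology rather than the quotient-of-product topology, is again a CW-complex: its cells are the products $e^p\times e^q$ of cells of $A$ and of $X$ none of which lies in $A\vee X$, together with the single $0$-cell coming from the collapsed base point. This is exactly why the statement insists on ``the weak topology'', since the quotient-of-product topology may fail to satisfy the CW axioms when $A$ or $X$ is not locally compact. Applying this with $A=E_n$ gives that $(E\wedge X)_n=E_n\wedge X$ is a CW-complex with base point $*$ equal to the image of $E_n\vee X$, so the first requirement holds for every $n\in\mathbb{Z}$.

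Next I would construct the structure map. Writing the reduced suspension as $SY=S^1\wedge Y$ and using associativity of the smash product of CW-complexes (which, with the weak topology throughout, holds as a natural homeomorphism), one obtains
\[
S\big((E\wedge X)_n\big)=S^1\wedge(E_n\wedge X)\thickapprox(S^1\wedge E_n)\wedge X=(SE_n)\wedge X .
\]
Since $E$ is a spectrum, $SE_n$ is (homeomorphic to) a subcomplex of $E_{n+1}$; and whenever $C\subset D$ is a subcomplex the set of cells of $C\wedge X$ is contained in that of $D\wedge X$ and the two share the base $0$-cell, so $C\wedge X\subset D\wedge X$ is again a subcomplex. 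Hence $(SE_n)\wedge X$ is a subcomplex of $E_{n+1}\wedge X=(E\wedge X)_{n+1}$, and composing with the homeomorphism above exhibits $S(E\wedge X)_n$ as homeomorphic to a subcomplex of $(E\wedge X)_{n+1}$. That is precisely the second requirement, so $E\wedge X$ is a spectrum.

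The only genuine subtlety — and the step I would be most careful about — is point-set topological: both ``the smash product of CW-complexes is a CW-complex'' and ``the smash product is associative'' fail in general for the quotient-of-product topologies but become true once one works consistently in the category of compactly generated spaces (equivalently, takes the weak topology on each smash product and each suspension), which is the convention fixed in the statement. Once that convention is in force, everything reduces to routine bookkeeping of cells; in the same breath one also checks that a subspectrum inclusion $F\subset E$ induces a subspectrum inclusion $F\wedge X\subset E\wedge X$, that cofinality is preserved, and that the construction $-\wedge X$ is functorial, so that it descends to the category $\mathcal{S}$ of spectra.
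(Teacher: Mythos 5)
Your argument is correct and matches the paper's own proof: both rely on the associativity homeomorphism $S(E_n\wedge X)\thickapprox(SE_n)\wedge X\subset E_{n+1}\wedge X$ and then note functoriality via cofinality of $E'\wedge X$ in $E\wedge X$. Your extra remarks on the CW structure of the smash product and the point-set role of the weak topology are a welcome amplification but do not change the route.
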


\begin{proof} In fact
$S(E\wedge X)_n=S(E_n\wedge X)=S^1\wedge(E_n\wedge
X)\cong(S^1\wedge E_n)\wedge X\subset E_{n+1}\wedge X$.
Furthermore, given a map $f:E\to F$ of spectra represented by
$(E',f')$ and a map $g:K\to L$ of CW-complexes, we get a map
$f\wedge g:E\wedge K\to F\wedge L$ of spectra represented by
$(E'\wedge K,f'\wedge g)$, since $E'\wedge K$ is cofinal in
$E\wedge K$.\end{proof}

\begin{definition} A {\em homotopy} between spectra is a map $h:E\wedge I^+\to
F$.\footnote{For any topological space $X$ we set$X^+\equiv X\sqcup\{\ast\}$. (If $X$ is not compact $X^+$ is the Alexandrov compactification to a point.) In particular if $X=I\equiv[0,1]\subset\mathbb{}$, then one takes $\{\ast\}$ as base point.} There are two maps $i_0:E\to E\wedge I^+$, $i_1:E\to E\wedge
I^+$, induced by the inclusions of $0$, $1$ in $I^+$. Then, we say
two maps of spectra $f_0,f_1:E\to F$ are {\em homotopic} if there
is a homotopy $h:E\wedge I^+\to F$ with $h\circ i_0=f_0$, $h\circ
i_1=f_1$. We shall write $h_0\equiv h\circ i_0$, $h_1\equiv h\circ
i_1$. In terms of cofinal subspectra we can say that two maps
$f_0,f_1:E\to F$ represented by $(E'_0,f'_0)$, $(E'_1,f'_1)$,
respectively, are homotopic if there is a cofinal subspectrum
$E''\subset E'_0\cap E'_1$ and a function $h'':E''\wedge I^+\to F$
such that $h''_0=f'_0|_{E''}$, $h''_1=f'_1|_{E''}$. Homotopy is an
equivalence relation, so we may define $[E,F]$ to be the set of
equivalence classes of maps $f:E\to F$. Composition passes to
homotopy classes. The corresponding category is denoted by $\mathcal{S}'$.\end{definition}

\begin{proposition} Let $\mathcal{W}_\bullet$ be the category of pointed CW-complexes.
One has a functor $E:\mathcal{W}_\bullet\to \mathcal{S}$, such that $E(X,x_0)=E(X)$, and $E(f)=\{E(f)_n\}$, with
$E(f)_n=S^nf:S^nX\to S^nY$, $n\ge 0$, for any
$f:(X,x_0)\to(Y,y_0)$. The functor $E$ embeds $\mathcal{W}_\bullet$
into $\mathcal{S}$.\end{proposition}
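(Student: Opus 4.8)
The plan is to check, in order, that $E$ is well defined on objects, that it carries a pointed cellular map to a function of spectra, that it preserves identities and composition, and finally that it is faithful and injective on objects --- the last two properties being precisely what ``$E$ embeds $\mathcal{W}_\bullet$ into $\mathcal{S}$'' asserts.

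The first three points are bookkeeping. On objects nothing is needed beyond Example \ref{spectrum-cw-complex}, which already presents $E(X)$ as a spectrum whose structure maps $\epsilon_n\colon SE(X)_n\to E(X)_{n+1}$ are the identity of $S^{n+1}X$. Given a pointed cellular map $f\colon(X,x_0)\to(Y,y_0)$, I would note that each $S^nf$ is again cellular and base-point preserving, and that the compatibility condition $E(f)_{n+1}|_{SE(X)_n}=SE(f)_n$ required of a function of spectra reduces here to the tautology $S^{n+1}f=S(S^nf)$, the structure maps being identities; thus $E(f)$ is a genuine function $E(X)\to E(Y)$ and in particular a morphism of $\mathcal{S}$. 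Functoriality then follows level-wise from $S^n(\mathrm{id}_X)=\mathrm{id}_{S^nX}$ and $S^n(g\circ f)=S^ng\circ S^nf$.

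The delicate step, and the one I expect to be the only genuine obstacle, is faithfulness: since a morphism of $\mathcal{S}$ is only an equivalence class of functions defined on cofinal subspectra, I must show two distinct pointed maps cannot be identified by $E$. Suppose $E(f)=E(f')$ in $\mathcal{S}$; then there is a cofinal subspectrum $E'''\subset E(X)$ with $S^nf|_{E'''_n}=S^nf'|_{E'''_n}$ for all $n$. For each non-base-point cell $e$ of $X$ --- which is a cell of the spectrum $E(X)$ in degree $0$, because $E(X)_{-1}=*$ --- cofinality provides an $m$ with $S^me\subset E'''_m$, so $S^mf$ and $S^mf'$ agree on the open cell $S^me\subset S^mX=S^m\wedge X$; since a point of that cell has the form $[s,x]$ with $x\in e$ and $s$ distinct from the base point of $S^m$, and since in a smash product $[s,y]=[s,y']$ with $s$ off the base point forces $y=y'$, we get $f(x)=f'(x)$. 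Letting $e$ range over all cells yields $f=f'$, so $E$ is faithful; and it is injective on objects since $X$, with its base point, is recovered as $E(X)_0$. I would extract the statement ``smashing with a pointed space having more than one point is a faithful operation'' (equivalently, iterated suspension is faithful) as a small preliminary lemma, this being the one non-formal ingredient; all the rest is routine verification.
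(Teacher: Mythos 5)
The paper states this proposition without supplying a proof, so there is no paper-internal argument to compare your attempt against; presumably it is regarded as routine and deferred to \cite{SWITZER}. Your proof is correct and well-organized. The verifications that $E$ is well defined on objects, carries a pointed cellular map to a function of spectra, and preserves identities and composition are as routine as you present them. The one point of substance --- that faithfulness survives the passage from \emph{functions} to \emph{maps} (equivalence classes of functions defined on cofinal subspectra) --- is handled correctly: each non-base-point cell $e$ of $X$ is a cell of the spectrum $E(X)$ in degree $0$; cofinality of $E'''$ produces an $m$ with $S^m e\subset E'''_m$; and agreement of $S^mf$ and $S^mf'$ there, together with the observation that in a smash product $S^m\wedge Y$ the relation $[s,y]=[s,y']$ with $s$ off the base point forces $y=y'$, recovers $f=f'$ pointwise. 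That smash-product observation is stated correctly as you have it, including the degenerate case where one of the images lands at the base point of $Y$. Injectivity on objects follows from $E(X)_0=X$. The only thing worth making explicit were this to be inserted into the paper: you tacitly take the morphisms of $\mathcal{W}_\bullet$ to be pointed \emph{cellular} maps, which is what guarantees each $S^nf$ is cellular and hence that $E(f)$ qualifies as a function of spectra in the sense of the paper's definition. The paper leaves this unsaid, but it is consistent with its later introduction of $\mathcal{W}'_\bullet$ as the associated homotopy category, and your assumption is the right one.
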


\begin{proposition} To any map $f:E\to F$ between spectra,
we can associate another spectrum, the {\em(mapping cone)},
$F\cup_fCE$.\end{proposition}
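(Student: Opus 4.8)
The plan is to carry out the mapping-cone construction \emph{levelwise} and then check two things: that the resulting collection of CW-complexes is again a spectrum, and that it depends — up to equivalence in $\mathcal{S}$ — only on the homotopy class of $f$.

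First I would recall the two pointed constructions on which everything rests. For a pointed CW-complex $(X,x_0)$ the \emph{reduced cone} is $CX\equiv X\wedge I$, with $I=[0,1]$ pointed at $0$; it contains the copy $X\thickapprox X\wedge\{1\}$ of $X$ at the ``base'', and collapsing this base gives the reduced suspension, $CX/X\thickapprox SX$. For a pointed cellular map $g:X\to Y$ the \emph{reduced mapping cone} is the pushout $Y\cup_gCX\equiv(Y\sqcup CX)/(x\wedge 1\sim g(x))$, again a pointed CW-complex. Two facts are needed: (i) smashing with a fixed complex preserves this pushout, so $S(Y\cup_gCX)\thickapprox SY\cup_{Sg}S(CX)$; and (ii) since the smash product is (coherently) commutative, $S(CX)=S^1\wedge I\wedge X\thickapprox I\wedge S^1\wedge X=C(SX)$. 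Applying (ii) with $g=\mathrm{id}$ shows that the collection $CE\equiv\{CE_n\}$ is a spectrum: its structure map is $S(CE)_n=SCE_n\thickapprox CSE_n\hookrightarrow CE_{n+1}$, the last inclusion induced by $SE_n\hookrightarrow E_{n+1}$ and functoriality of $C$.

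Next, represent the given map $f:E\to F$ by a pair $(E',f')$ with $E'\subset E$ a cofinal subspectrum and $f':E'\to F$ a function, and \emph{define} $(F\cup_fCE)_n\equiv F_n\cup_{f'_n}CE'_n$. To see this is a spectrum, combine the ingredients above: by (i) and (ii),
\[
S\bigl(F_n\cup_{f'_n}CE'_n\bigr)\thickapprox SF_n\cup_{Sf'_n}CSE'_n,
\]
and on $SF_n$ use $SF_n\hookrightarrow F_{n+1}$ (the structure of $F$), on $CSE'_n$ use $C$ applied to $SE'_n\hookrightarrow E'_{n+1}$ (the structure of $E'$); these agree on the identified part precisely because $f'$ is a function, i.e. $f'_{n+1}|_{SE'_n}=Sf'_n$. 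Hence $S(F\cup_fCE)_n\subset(F\cup_fCE)_{n+1}$, so $F\cup_fCE$ is a spectrum. The cellular inclusions $F_n\hookrightarrow F_n\cup_{f'_n}CE'_n$ assemble into a function $F\hookrightarrow F\cup_fCE$, and collapsing it yields, using $CX/X\thickapprox SX$ levelwise, a spectrum whose $n$-th term is $SE'_n$; thus the construction produces the expected cofibre sequence $E\xrightarrow{\,f\,}F\to F\cup_fCE$.

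Finally I would check independence of the representative. If $(E'',f'')$ is another pair representing $f$, there is $(E''',f''')$ with $E'''\subset E'\cap E''$ cofinal and $f'''=f'|_{E'''}=f''|_{E'''}$; since $C$ (and $S$) preserve cofinality, $F\cup_{f'''}CE'''$ is cofinal in $F\cup_{f'}CE'$ and also in $F\cup_{f''}CE''$, whence all three are equivalent in $\mathcal{S}$ (any spectrum is equivalent to a cofinal subspectrum of itself). Applying the same bookkeeping to a homotopy $h:E\wedge I^+\to F$ shows homotopic maps yield homotopy-equivalent mapping cones, so $F\cup_fCE$ is in fact well defined in $\mathcal{S}'$. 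I do not expect a conceptual obstacle: the single point requiring care is exactly this compatibility with the cofinal-subspectrum equivalence relation defining \emph{maps} of spectra, but the closure properties of cofinal subspectra (intersections, transitivity, arbitrary unions — recorded earlier — together with the evident preservation of cofinality by $C$ and $S$) reduce it to a routine verification.
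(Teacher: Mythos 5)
Your proof follows the paper's approach exactly: build the mapping cone levelwise from a representative $(E',f')$ of $f$, and handle well-definedness by noting that two representatives admit a common cofinal representative $(E''',f''')$, whose levelwise mapping cone is cofinal in both. You fill in a step the paper leaves implicit — the verification that $S\bigl(F_n\cup_{f'_n}CE'_n\bigr)$ does include into $F_{n+1}\cup_{f'_{n+1}}CE'_{n+1}$ via $SC\thickapprox CS$ and the function condition $f'_{n+1}|_{SE'_n}=Sf'_n$ — which is a welcome addition; the rest is the paper's argument in slightly expanded form.
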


\begin{proof} In fact let $I$ be pointed on $0$, and set
$CE\equiv E\wedge I$. Then the mapping cone of $f$ is the spectrum
$(F\cup_fCE)_n=E_n\cup_{f'_n}(E'_n\wedge I)$, where $(E',f')$
represents $f$. If $(E'',f'')$ is another representative of $f$,
then $\{F_n\cup_{f'_n}(E'_n\wedge I)\}$ and
$\{F_n\cup_{f''_n}(E''_n\wedge I)\}$ have a natural cofinal
subspectrum $\{F_n\cup_{f'''_n}(E'''_n\wedge I)\}$ and hence are
equivalent.\end{proof}

\begin{proposition}\label{natural-invertible-functor-capital-sigma} One has a natural invertible functor $\Sigma:\mathcal{S}\to\mathcal{S}$ that induces a functor on $\mathcal{S}'$.\end{proposition}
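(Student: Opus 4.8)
The plan is to construct $\Sigma$ explicitly as the shift functor on spectra and verify it has all the required properties. First I would define, for a spectrum $E = \{E_n\}$, the spectrum $\Sigma E$ by $(\Sigma E)_n = E_{n+1}$, with structure maps $S(\Sigma E)_n = SE_{n+1} \subset E_{n+2} = (\Sigma E)_{n+1}$ inherited directly from the structure maps of $E$; this is visibly a spectrum, and a subspectrum $F \subset E$ is carried to the subspectrum $\Sigma F \subset \Sigma E$. On functions, a function $f : E \to F$ given by $\{f_n\}$ with $f_{n+1}|_{SE_n} = Sf_n$ is sent to the function $\Sigma f$ with $(\Sigma f)_n = f_{n+1}$, and one checks the compatibility condition $(\Sigma f)_{n+1}|_{S(\Sigma E)_n} = S(\Sigma f)_n$ is just the $(n+1)$-st instance of the condition for $f$. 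Because a cofinal subspectrum of $E$ shifts to a cofinal subspectrum of $\Sigma E$ (the cofinality condition $S^m e_n \subset F_{n+m}$ is index-shift invariant), the assignment descends to maps in $\mathcal{S}$, so $\Sigma$ is a functor $\mathcal{S} \to \mathcal{S}$.

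Next I would exhibit the inverse functor $\Sigma^{-1}$, defined by $(\Sigma^{-1}E)_n = E_{n-1}$ with the analogous shifted structure maps, functions and maps. One then verifies $\Sigma \circ \Sigma^{-1} = \mathrm{id}_{\mathcal{S}} = \Sigma^{-1} \circ \Sigma$ on the nose: $(\Sigma\Sigma^{-1}E)_n = (\Sigma^{-1}E)_{n+1} = E_n$, and likewise on morphisms, so $\Sigma$ is strictly invertible rather than merely an equivalence. This is the content of ``natural invertible functor'': the only subtlety is to confirm that these identifications are genuinely equalities of spectra (same CW-complexes, same structure maps, same cells) and not just canonical isomorphisms, which holds because everything is defined by reindexing.

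Finally I would check that $\Sigma$ passes to the homotopy category $\mathcal{S}'$. Since a homotopy is a map $h : E \wedge I^+ \to F$, and $\Sigma(E \wedge I^+)$ is naturally identified with $(\Sigma E) \wedge I^+$ (because $(E \wedge I^+)_n = E_n \wedge I^+$ and $(\Sigma(E \wedge I^+))_n = E_{n+1} \wedge I^+ = ((\Sigma E)\wedge I^+)_n$, compatibly with the structure maps), the functor $\Sigma$ sends a homotopy between $f_0, f_1 : E \to F$ to a homotopy between $\Sigma f_0, \Sigma f_1 : \Sigma E \to \Sigma F$. Hence $\Sigma$ is well-defined on homotopy classes $[E,F]$ and respects composition, giving the induced functor on $\mathcal{S}'$, which is again invertible with inverse induced by $\Sigma^{-1}$. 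The only place where any care is needed is the commutation of $\Sigma$ with the smash product $-\wedge I^+$ used to define homotopies; but as noted this is an index-shift identity and presents no real obstacle, so the proof is essentially a sequence of routine verifications once the shift functor is written down.
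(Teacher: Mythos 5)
Your proof follows exactly the paper's construction: the shift $(\Sigma E)_n = E_{n+1}$, the induced action on cofinal pairs $(\Sigma E', \Sigma f')$, the inverse $(\Sigma^{-1}E)_n = E_{n-1}$, and the observation that homotopies pass through the shift. You have simply filled in the routine verifications (preservation of cofinality, compatibility of $\Sigma$ with $-\wedge I^+$) that the paper states without proof, so this is the same argument at a slightly finer level of detail.
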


\begin{proof} For any
spectrum $E\equiv\{E_n\}$ we can define $\Sigma E$ to be the
spectrum with $\Sigma E_n=E_{n+1}$, $n\in\mathbb{Z}$. For any
function $f:E\to F$ we define $\Sigma(f):\Sigma E\to \Sigma F$ to
be the map represented by $(\Sigma E',\Sigma(f'))$. Furthermore,
$\Sigma$ induces a functor on $\mathcal{S}'$ since $f_0\simeq f_1$
implies $\Sigma(f_0)\simeq \Sigma(f_1)$. We can iterate $\Sigma$:
$\Sigma^{n+1}=\Sigma\circ\Sigma^n$, $n\ge 1$. $\Sigma$ has also an
inverse defined by $(\Sigma^{-1}E)_n=E_{n-1}$,
$\Sigma^{-1}(f)_n=f_{n-1}$. One has
$\Sigma^n\circ\Sigma^m=\Sigma^{n+m}$, for all integers $n,m$.\end{proof}

\begin{remark} $\Sigma E$ and $E\wedge S^1$ have the same homotopy type, therefore the suspension is invertible in $\mathcal{S}'$.
The higher homotopy groups for topological spaces are very difficult to compute. For spectra that computations are easier.\end{remark}

\begin{proposition} We have wedge sums in $\mathcal{S}$: given a collection $\{E^\alpha:\alpha\in A\}$ of spectra, we define $\bigvee_\alpha E^\alpha$ by
$(\bigvee_\alpha E^\alpha)_n=\bigvee_\alpha E^\alpha_n$.
\end{proposition}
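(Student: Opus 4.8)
The plan is to establish two things: that the levelwise prescription $(\bigvee_\alpha E^\alpha)_n=\bigvee_\alpha E^\alpha_n$ really defines an object of $\mathcal{S}$, and that, equipped with the evident inclusions of the wedge summands, it is the categorical coproduct of the family $\{E^\alpha:\alpha\in A\}$ in $\mathcal{S}$ (hence also in $\mathcal{S}'$). Together these are what ``we have wedge sums in $\mathcal{S}$'' should mean.

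First I would verify the spectrum axioms of Definition \ref{spectrum-algebraic-topology}. Each $\bigvee_\alpha E^\alpha_n$, formed at the common base points and given the weak topology, is a pointed CW-complex. For the structure maps I would use that the suspension is $S^1\wedge(-)$ and that smashing with $S^1$ distributes over arbitrary wedges (valid because CW-complexes are well pointed), so that there is a natural homeomorphism $S\big(\bigvee_\alpha E^\alpha_n\big)\thickapprox\bigvee_\alpha SE^\alpha_n$. Since each $SE^\alpha_n$ is (homeomorphic to) a subcomplex of $E^\alpha_{n+1}$, the wedge $\bigvee_\alpha SE^\alpha_n$ is a subcomplex of $\bigvee_\alpha E^\alpha_{n+1}=(\bigvee_\alpha E^\alpha)_{n+1}$, which is exactly the required condition. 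I would also record the elementary but useful point that, away from the base point, every cell of $\bigvee_\alpha E^\alpha$ is a cell of exactly one summand $E^\beta$. It then follows at once that the levelwise summand inclusions assemble into functions $j_\beta:E^\beta\to\bigvee_\alpha E^\alpha$, their compatibility with suspension being the naturality of the homeomorphism just used.

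Next I would check the universal property. Given maps $f^\alpha:E^\alpha\to F$ represented by pairs $(D^\alpha,\phi^\alpha)$ with $D^\alpha\subset E^\alpha$ cofinal and $\phi^\alpha:D^\alpha\to F$ a function, I would set $D\equiv\bigvee_\alpha D^\alpha\subset\bigvee_\alpha E^\alpha$ and first argue it is cofinal: a cell $e$ of the wedge lies in a unique summand $E^\beta$, and cofinality of $D^\beta$ supplies an $m$ with $S^m e\subset D^\beta_{n+m}\subset D_{n+m}$. Since the $\phi^\alpha_n$ all carry base point to base point of $F_n$, the universal property of the wedge of pointed CW-complexes yields cellular maps $\phi_n\equiv\bigvee_\alpha\phi^\alpha_n:\bigvee_\alpha D^\alpha_n\to F_n$, and the identity $\phi_{n+1}|_{S(\bigvee_\alpha D^\alpha_n)}=S\phi_n$ follows summand by summand from the corresponding identities for the $\phi^\alpha$. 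Thus $(D,\phi)$ represents a map $f:\bigvee_\alpha E^\alpha\to F$ with $f\circ j_\alpha=f^\alpha$ for every $\alpha$. For uniqueness I would invoke the fact, noted in the excerpt, that a map of spectra is determined by its restrictions to the cells of a cofinal subspectrum: if $g\circ j_\alpha=f^\alpha$ for all $\alpha$, then $g$ agrees with $f$ on every cell of $\bigvee_\alpha E^\alpha$ (each being a cell of some summand), so $g=f$ in $\mathcal{S}$; and since homotopies are themselves maps, the same reasoning shows $\bigvee_\alpha E^\alpha$ remains the coproduct in $\mathcal{S}'$.

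The main obstacle -- really the only point where care is needed -- is the topological claim that $S^1\wedge(-)$ commutes with an \emph{arbitrary}, possibly infinite, wedge, and that the resulting $\bigvee_\alpha SE^\alpha_n$ sits inside $\bigvee_\alpha E^\alpha_{n+1}$ as a genuine subcomplex in the weak topology. This is standard for CW-complexes, but it has to be justified through well-pointedness and the closure-finiteness of CW structures rather than by a naive compactness argument, since for infinite $A$ the spaces involved are no longer finite complexes. Everything else in the argument is formal bookkeeping with cofinal subspectra.
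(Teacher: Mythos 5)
Your core step is exactly the paper's one-line argument: $S\bigl(\bigvee_\alpha E^\alpha_n\bigr)=\bigvee_\alpha SE^\alpha_n\subset\bigvee_\alpha E^\alpha_{n+1}$, so the levelwise wedge satisfies the spectrum axiom, and on that point the two proofs coincide. Where you go further is in (a) flagging the topological subtlety — that the commutation of $S^1\wedge(-)$ with an infinite wedge, and the identification of $\bigvee_\alpha SE^\alpha_n$ as a genuine subcomplex in the weak topology, needs well-pointedness and closure-finiteness rather than a naive compactness argument — which the paper silently suppresses, and (b) proving the universal (coproduct) property in $\mathcal{S}$ and $\mathcal{S}'$. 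That second part is not actually needed for this proposition: the paper states the coproduct property as a \emph{separate} subsequent proposition (the one asserting $\{i_\alpha^*\}:[\bigvee_\alpha E^\alpha,F]\to\prod_\alpha[E^\alpha,F]$ is a bijection) and gives no proof there either. So your argument is correct and strictly stronger than what the paper records at this spot; it effectively supplies the missing proof of the next proposition as well, using the standard device of wedging the cofinal representing subspectra $D^\alpha$ and observing each cell of the wedge lives in exactly one summand.
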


\begin{proof} Since $S(\bigvee_\alpha E^\alpha_n)=\bigvee_\alpha S E^\alpha_n\subset \bigvee_\alpha E^\alpha_{n+1}$, this is a spectrum.\end{proof}

\begin{proposition} For any collection $\{E^\alpha:\alpha\in A\}$ of spectra, the inclusions $i_\beta:E^\beta\to\bigvee_\alpha E^\alpha$ induce bijections:
$$\left\{\begin{array}{l}
   \{Hom(i_\alpha,1)\}:Hom_\mathcal{S}(\bigvee_\alpha E^\alpha,F)\to\prod_\alpha Hom_\mathcal{S}(E^\alpha,F)\\ \{i_\alpha^*\}:[\bigvee_\alpha E^\alpha,F]\to\prod_\alpha [E^\alpha,F]\\
         \end{array}\right\},\hskip 5pt \forall F\in Hom(\mathcal{S}).$$
Inclusions of spectra possess the homotopy extension property just as with CW-complexes.
\end{proposition}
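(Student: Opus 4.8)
The plan is to read all three assertions as a single statement: $\bigvee_\alpha E^\alpha$ is the categorical coproduct of the family $\{E^\alpha:\alpha\in A\}$ both in $\mathcal{S}$ and in $\mathcal{S}'$, and every subspectrum inclusion is a cofibration. I would establish the coproduct property in three layers of increasing quotient — first for functions, then for maps (after dividing by the cofinal-refinement relation $\sim$), then for homotopy classes — and deal with the homotopy extension property last.

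For functions the needed facts are already in the preceding propositions. Since $(\bigvee_\alpha E^\alpha)_n=\bigvee_\alpha E^\alpha_n$ carries the weak topology, a cellular map out of it is exactly a family of based cellular maps out of the $E^\alpha_n$; and because $S(\bigvee_\alpha E^\alpha_n)=\bigvee_\alpha SE^\alpha_n$, the coherence condition $f_{n+1}|_{S(\,\cdot\,)}=Sf_n$ for a function $\bigvee_\alpha E^\alpha\to F$ decomposes summand by summand. Hence $f\mapsto\{f\circ i_\alpha\}$ is a bijection on the level of functions. To descend to maps I would record the good behaviour of the wedge under cofinality: if $E'\subset\bigvee_\alpha E^\alpha$ is cofinal then each $E'\cap E^\alpha$ is cofinal in $E^\alpha$, while $\bigvee_\alpha E'^\alpha$ is cofinal whenever every $E'^\alpha\subset E^\alpha$ is; then one checks that $\sim$ on $\bigvee_\alpha E^\alpha$ corresponds to the product of the relations $\sim$ on the $E^\alpha$, so the bijection descends to a bijection $Hom_{\mathcal{S}}(\bigvee_\alpha E^\alpha,F)\to\prod_\alpha Hom_{\mathcal{S}}(E^\alpha,F)$. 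For homotopy classes the only extra ingredient is $(\bigvee_\alpha E^\alpha)\wedge I^+=\bigvee_\alpha(E^\alpha\wedge I^+)$, which turns a homotopy out of the wedge into a compatible family of homotopies and back; so the bijection passes to $[\,\cdot\,,\,\cdot\,]$.

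For the homotopy extension property I would use that, for each $n$, the pair $(E_n,F_n)$ is a CW-pair, hence a cofibration, so $E_n\wedge I^+$ retracts onto $(E_n\wedge\{0\}^+)\cup(F_n\wedge I^+)$. The work is to choose these retractions $\rho_n$ coherently with the suspension maps: I would proceed inductively along the skeletal filtration, forcing $\rho_{n+1}=S\rho_n$ on $SE_n\subset E_{n+1}$ and extending over the remaining (non-suspended) cells by the relative cofibration property of $(E_{n+1},F_{n+1})$, using that $SF_n\subset F_{n+1}$ so that the partial retraction already lands in the target. Then I would transport the resulting function $\{\rho_n\}$ to cofinal subspectra so that it defines a map, establishing the HEP for $F\hookrightarrow E$ in $\mathcal{S}$. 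The main obstacle is exactly this bookkeeping with cofinal subspectra and with the relation $\sim$ — showing that wedging and restriction interact with $\sim$ in the expected way, and that the inductively assembled $\rho_n$ genuinely form a function and then a map — whereas the underlying CW facts (maps out of a wedge, $S$ and $\wedge I^+$ distributing over $\bigvee$, and CW-pairs being cofibrations) are entirely standard.
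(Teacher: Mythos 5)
The paper gives no proof of this proposition: it is stated as standard material on the category of spectra, deferring (as does the whole of Section~\ref{spectra-algebraic-topology-section}) to the references it cites, principally Switzer's book. So there is no ``paper's proof'' for your argument to match or diverge from; what I can do is assess your sketch on its own terms.

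Your three-layer treatment of the wedge universal property is sound, and it is essentially the textbook argument. The level-of-functions bijection really does reduce to the universal property of the CW wedge $\bigvee_\alpha E^\alpha_n$ plus the fact that $S$ distributes over $\bigvee$, so the coherence conditions decompose summand by summand. The descent to $Hom_{\mathcal{S}}$ via the two cofinality observations is correct, and both observations are easily verified from the paper's definition: a cell of $\bigvee_\alpha E^\alpha$ lives in one summand, and conversely wedging cofinal subspectra gives a cofinal subspectrum. For homotopy classes, $(\bigvee_\alpha E^\alpha)\wedge I^+=\bigvee_\alpha(E^\alpha\wedge I^+)$ (smash, being a colimit, commutes with wedge) is exactly what is needed. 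One word of caution on the injectivity of $\{Hom(i_\alpha,1)\}$: given two representatives $(E',f')$, $(E'',f'')$ that agree in each $Hom_{\mathcal{S}}(E^\alpha,F)$, you get for each $\alpha$ a cofinal $E'''^\alpha\subset(E'\cap E^\alpha)\cap(E''\cap E^\alpha)$ on which the restrictions agree, and you should say explicitly that $\bigvee_\alpha E'''^\alpha$ is then a cofinal subspectrum of $E'\cap E''$ witnessing $\sim$ — this is true but it is the step where the two cofinality facts have to be used together, and it deserves a sentence.

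The homotopy extension property is the most delicate part and your sketch is the correct strategy, though the key extension step is glossed over. You reduce to building a retraction $\rho_n:E_n\wedge I^+\to(E_n\wedge\{0\}^+)\cup(F_n\wedge I^+)$ compatible with suspension, and you extend inductively over $n$. At the inductive step you have a map already defined on the subcomplex $C=(E_{n+1}\wedge\{0\}^+)\cup(SE_n\wedge I^+)\cup(F_{n+1}\wedge I^+)$ — the identity on the first and third pieces and $S\rho_n$ on the second (compatibly, since $SF_n\subset F_{n+1}$) — and you need to extend it to a retraction of all of $E_{n+1}\wedge I^+$ onto $B=(E_{n+1}\wedge\{0\}^+)\cup(F_{n+1}\wedge I^+)$. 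Invoking ``the relative cofibration property of $(E_{n+1},F_{n+1})$'' is not quite enough by itself: $B$ being a retract of $E_{n+1}\wedge I^+$ does not automatically give a retraction that restricts to the prescribed $S\rho_n$ on $C$. The standard fix is to build the retraction cell by cell over the cells of $E_{n+1}$ not in $SE_n\cup F_{n+1}$, using the explicit deformation retraction of $D^k\times I$ onto $D^k\times\{0\}\cup S^{k-1}\times I$ for each new cell; this is what makes the ``coherently with the suspension maps'' promise actually deliver. With that clarification the argument closes up, and the resulting $\{\rho_n\}$ is a genuine function, hence a map, so no further cofinality bookkeeping is needed for this part.

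In summary: correct, and the same route the standard references take; the HEP step is the only place where the sketch would need to be expanded before it is a complete proof.
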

\begin{definition}[Homotopy groups of spectra] We set $\pi_n(E)=[\Sigma^nS^0,E]$, $n\in\mathbb{Z}$.
\end{definition}

\begin{proposition} {\em 1)} One has the isomorphisms of abelian groups:
$$\left\{\begin{array}{l}
\pi_n(E)\cong \mathop{\lim}\limits_{\overrightarrow{k}}\pi_{n+k}(E_k,*)\\
\pi_n(E(X))\cong \mathop{\lim}\limits_{\overrightarrow{k}}\pi_{n+k}(S^kX,*)=\pi^s_n(X)\\
\end{array}\right\},\hskip 5pt n\in\mathbb{Z}.$$
Note that $\pi^s_n(X)$ may be quite different from $\pi_n(X,x_0)$.

{\em 2)} If $f:E\to F$ is a map of spectra which is a weak homotopy equivalence, then $f_*:[G,E]\to[G,F]$ is a bijection for any spectrum $G$.

{\em 3)} A map of spectra is a weak homotopy equivalence iff it is a homotopy equivalence.
\end{proposition}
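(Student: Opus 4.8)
The plan is to handle the three items in order; the second is the only one with real content. \emph{Part 1.} By Proposition \ref{natural-invertible-functor-capital-sigma} the spectrum $\Sigma^{n}S^{0}$ is the $n$-fold shift of the sphere spectrum of Example \ref{spectrum-cw-complex}, so $(\Sigma^{n}S^{0})_{k}\thickapprox S^{n+k}$ when $n+k\ge 0$ and $*$ otherwise, and it carries exactly one cell (of dimension $n$) besides the basepoint. Hence every cofinal subspectrum of $\Sigma^{n}S^{0}$ contains $S^{n+k}$ for all large $k$, so a map $\Sigma^{n}S^{0}\to E$ is represented by a compatible family of cellular maps $f'_{k}\colon S^{n+k}\to E_{k}$, $k\ge k_{0}$, with $f'_{k+1}=\epsilon_{k}\circ Sf'_{k}$ (under $S^{n+k+1}\thickapprox SS^{n+k}$), and two such families represent the same map precisely when they eventually agree. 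Passing to homotopy classes gives
$$\pi_{n}(E)\cong\mathop{\lim}\limits_{\overrightarrow{k}}[S^{n+k},E_{k}]=\mathop{\lim}\limits_{\overrightarrow{k}}\pi_{n+k}(E_{k},*),$$
the colimit being formed along $\pi_{n+k}(E_{k},*)\xrightarrow{S}\pi_{n+k+1}(SE_{k},*)\xrightarrow{(\epsilon_{k})_{*}}\pi_{n+k+1}(E_{k+1},*)$. Since $\Sigma^{n}S^{0}=\Sigma^{2}(\Sigma^{n-2}S^{0})$ is a double suspension ($\Sigma$ being invertible on $\mathcal{S}'$), the set $[\Sigma^{n}S^{0},E]$ is an abelian group with no connectivity hypothesis, and this agrees with the (eventually abelian) colimit group structure. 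The second isomorphism is the special case $E=E(X)$, where $E(X)_{k}=S^{k}X$ and $\epsilon_{k}=\mathrm{id}_{S^{k+1}X}$, so the colimit maps are the iterated suspensions and $\pi_{n}(E(X))\cong\mathop{\lim}\limits_{\overrightarrow{k}}\pi_{n+k}(S^{k}X,*)=\pi^{s}_{n}(X)$ by definition.

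\emph{Part 2.} First I would factor $f$ as $E\hookrightarrow M_{f}\xrightarrow{\ \approxeq\ }F$ through the mapping cylinder $M_{f}\equiv F\cup_{f}(E\wedge I^{+})$; since the collapse $M_{f}\to F$ is a homotopy equivalence it suffices to treat the cofibration $E\hookrightarrow M_{f}$. Applying $\pi_{*}$ to the cofiber sequence $E\to M_{f}\to M_{f}/E$ and using that $f_{*}$ is an isomorphism on every $\pi_{n}$ shows $\pi_{n}(M_{f},E)=0$ for all $n\in\mathbb{Z}$. Given a spectrum $G$ and a class in $[G,M_{f}]$ represented by $g'\colon G'\to M_{f}$ on a cofinal $G'\subset G$, I would deform $g'$ into $E$ by working up the skeletal filtration of $G'$: the obstruction to pushing $g'$ across a cell of dimension $d$ (rel the part already inside $E$) is an element of $\pi_{d+1}(M_{f},E)=0$, and the successive deformations are spliced together using the homotopy extension property for inclusions of spectra quoted above. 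This gives surjectivity of $i_{*}\colon[G,E]\to[G,M_{f}]$; the same relative argument applied to a homotopy $G\wedge I^{+}\to M_{f}$ between two maps with image in $E$, rel $G\wedge\partial I^{+}$, gives injectivity; transporting along $M_{f}\approxeq F$ finishes Part 2. (Equivalently: the mapping cone $C$ of $f$ is weakly contractible, so the same cell argument yields $[G,C]=0$ for every $G$, and exactness of $[G,-]$ on the cofiber sequence of $f$ then forces $f_{*}$ to be bijective.)

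The step I expect to be the real obstacle is exactly this cell-by-cell deformation, because the cells of a spectrum need not be bounded below in dimension, so there is no ``bottom skeleton'' to start the induction from, and one must simultaneously control the passage to cofinal subspectra and the attendant $\varprojlim^{1}$-phenomena. I would handle this by writing $G$ as the directed union of its finite subspectra $G_{\alpha}$ (every cell lies in such a $G_{\alpha}$): a finite spectrum has finitely many cells, hence a bottom cell, and up to a shift $\Sigma^{-N}$ it is (equivalent to) the suspension spectrum of a finite CW-complex, so ordinary cellular obstruction theory and the space-level Whitehead theorem apply to each $G_{\alpha}$ verbatim and with no $\varprojlim^{1}$ correction; compatibility of $[-,E]$ and $[-,M_{f}]$ with the colimit over the $G_{\alpha}$, together with the fact that a map out of $G$ is by definition determined on a cofinal subspectrum, then promotes the bijections $[G_{\alpha},E]\cong[G_{\alpha},M_{f}]$ to the one for $G$.

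\emph{Part 3.} A homotopy equivalence induces bijections on $[\Sigma^{n}S^{0},-]=\pi_{n}$ for all $n$, hence is a weak homotopy equivalence. Conversely, if $f\colon E\to F$ is a weak homotopy equivalence, Part 2 with $G=F$ produces $g\colon F\to E$ with $f\circ g\simeq\mathrm{id}_{F}$; then $g$ is itself a weak homotopy equivalence (since $f_{*}g_{*}=\mathrm{id}$ on $\pi_{*}$ and $f_{*}$ is an isomorphism), so Part 2 with $G=E$ produces $h\colon E\to F$ with $g\circ h\simeq\mathrm{id}_{E}$. Then $f\simeq f\circ(g\circ h)=(f\circ g)\circ h\simeq h$, whence $g\circ f\simeq g\circ h\simeq\mathrm{id}_{E}$; together with $f\circ g\simeq\mathrm{id}_{F}$ this exhibits $g$ as a two-sided homotopy inverse of $f$, so $f$ is a homotopy equivalence.
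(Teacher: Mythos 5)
The paper states this proposition without proof (it is background material quoted from \cite{SWITZER}), so I will judge the proposal on its own merits. Your parts 1 and 3 are correct: part 1 reads a map $\Sigma^n S^0\to E$ off the single non-basepoint cell of $\Sigma^n S^0$ and passes to homotopy classes to get the stated colimit, and your part 3 is the standard bootstrap that runs part 2 first with $G=F$ and then with $G=E$ to produce a two-sided homotopy inverse. Your overall plan for part 2 (reduce to the cofibration into the mapping cylinder, or equivalently prove that the mapping cone $C$ of $f$ satisfies $[G,C]=0$) is also the right one, and you correctly identify the danger: a spectrum can have cells in unboundedly negative dimension, so the skeletal filtration $\{G^{(n)}\}$ has no $*$ to anchor the induction.

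However, the remedy you propose does not close that gap. Finite subspectra $G_\alpha$ are not cofinal in an infinite $G$ (a cofinal subspectrum must eventually contain a suspension of \emph{every} cell of $G$), so the principle that maps are determined on cofinal subspectra does not apply to them; and the phrase ``compatibility of $[-,E]$ with the colimit over the $G_\alpha$'' is precisely the $\varprojlim^1$-type assertion in question, not a definition chase --- the lifts $h_\alpha$ of $g|_{G_\alpha}$ are only canonical up to homotopy, so nothing forces $h_\beta|_{G_\alpha}$ to equal $h_\alpha$ on the nose, and independent choices need not glue. The device the paper has already set up for exactly this purpose is the \emph{layer} filtration $\{G^n\}$: it is exhaustive, it starts at $G^0=*$, and $G^{n+1}/G^n$ is a wedge of shifted sphere spectra because a cell $e$ with $l(e)=n+1$ has all its proper faces in $G^n$. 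One then extends an actual map or an actual nullhomotopy \emph{rel} the previous stage, layer by layer; by the wedge axiom the obstruction at each stage lands in a product of copies of $\pi_*(C)=0$, the homotopy extension property quoted in the paper supplies the splicing, and since $h_{n+1}$ literally restricts to $h_n$ the union over $n$ is a well-defined map or nullhomotopy on $G=\bigcup_n G^n$, with no $\varprojlim^1$ correction needed. Replacing the finite-subspectrum step with this layer-filtration induction makes your proof of part 2 correct.
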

\begin{definition} For any map $f:E\to F$ of spectra we call the sequence
$$\xymatrix@C=30pt{E\ar[r]^f&F\ar[r]^j&F\cup_f CE}$$
a {\em spectral cofibre sequence}. A {\em general cofibre
sequence}, or simply a {\em cofibre sequence}, is any sequence
$$\xymatrix@C=30pt{G\ar[r]^g&H\ar[r]^h&K}$$
for which there is a homotopy commutative diagram
$$\xymatrix{G\ar[d]_{\alpha}\ar[r]^g&H\ar[d]_{\beta}\ar[r]^h&K\ar[d]_{\gamma}\\
E\ar[r]_f&F\ar[r]_j&F\cup_f CE}$$
where $\alpha$, $\beta$, $\gamma$ are homotopy equivalences.
\end{definition}
\begin{proposition} {\em 1)} In the sequence
$$\xymatrix@C=30pt{E\ar[r]^f&F\ar[r]^j&F\cup_f CE\ar[r]^{\kappa'}&E\wedge S^1\ar[r]^{f\wedge 1}&F\wedge S^1}$$
each pair of consecutive maps forms a cofibre sequence.

{\em 2)} Given a homotopy commutative diagram of spectra and maps, as the following
$$\xymatrix{G\ar[d]_{\alpha}\ar[r]^g&H\ar[d]_{\beta}\ar[r]^h&K\ar[d]_{\gamma}\ar[r]^\kappa&G\wedge S^1\ar[d]_{\alpha\wedge 1}\\
G'\ar[r]^{g'}&H'\ar[r]^{h'}&K'\ar[r]_{\kappa'}&G'\wedge S^1}$$
where the rows are cofibre sequences, we can find a map
$\gamma:K\to K'$ such that the resulting diagram is homotopy
commutative.

{\em 3)} If
$\xymatrix{G\ar[r]^g&H\ar[r]^h&K}$ is a cofibre
sequence, then for any spectrum $E$ the sequences
$$\begin{array}{l}
   \xymatrix{[E,G]\ar[r]^{g_*}&[E,H]\ar[r]^{h_*}&[E,K]}\\
   \xymatrix{[G,E]&[H,E]\ar[l]^{g^*}&[K,E]\ar[l]^{h^*}}\\
  \end{array}$$
are exact.\end{proposition}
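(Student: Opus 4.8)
The plan is to reduce all three parts to the basic spectral cofibre sequence $E\xrightarrow{f}F\xrightarrow{j}F\cup_fCE$ and then to transplant, into the categories $\mathcal{S}$ and $\mathcal{S}'$, the classical Puppe-sequence machinery for pointed CW-complexes, exploiting the apparatus already assembled in this section: the mapping cone, the smash products $E\wedge X$, the cone $CE=E\wedge I$, the homotopy extension property of inclusions of subspectra, and the invertible functor $\Sigma$ with $\Sigma E\simeq E\wedge S^1$. The recurring technical point is that every construction must first be carried out on a suitable cofinal subspectrum (a representative $(E',f')$ of the map involved) and then seen to be independent of that choice; this is routine and I would dispatch it once at the start.

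For part {\em 1)}, the first consecutive pair is a cofibre sequence by the very definition. For the pair $F\xrightarrow{j}F\cup_fCE\xrightarrow{\kappa'}E\wedge S^1$ I would first form the mapping cone $(F\cup_fCE)\cup_jCF$ of $j$ and produce a map $q\colon(F\cup_fCE)\cup_jCF\to E\wedge S^1$ that collapses the subspectrum $F$ and the cone $CF$ to the base point, being on the remaining copy of $CE=E\wedge I$ the evident quotient $E\wedge I\to E\wedge(I/\partial I)=E\wedge S^1$. A homotopy inverse comes from the inclusion $CE\hookrightarrow(F\cup_fCE)\cup_jCF$ followed by the standard deformation pushing the copy of $F\cup_fCE$ into the attached cone $CF$; the homotopy extension property then shows $q$ is a homotopy equivalence and that it fits into a homotopy-commutative ladder with the identity on $F$ and on $F\cup_fCE$. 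Hence $F\to F\cup_fCE\to E\wedge S^1$ is a general cofibre sequence. Iterating this on $j$ and on $\kappa'$, and then invoking that $-\wedge S^1\simeq\Sigma$ is invertible on $\mathcal{S}'$, upgrades this to: every consecutive pair in $E\to F\to F\cup_fCE\to E\wedge S^1\to F\wedge S^1$ is a cofibre sequence.

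For part {\em 2)}, I would replace $K$ and $K'$ by $H\cup_gCG$ and $H'\cup_{g'}CG'$ using the given homotopy equivalences, define $\gamma$ to be $\beta$ on $H$, and extend it over the cone $CG=G\wedge I$ by running the cone coordinate through a chosen homotopy $D\colon g'\circ\alpha\simeq\beta\circ g$, the apex being absorbed into the cone $CG'$ inside $H'\cup_{g'}CG'$ via $g'\circ\alpha$. By construction the middle square commutes up to homotopy, and collapsing $H$ resp. $H'$ identifies $\gamma$ with $\alpha\wedge1_{S^1}$ up to $D$, which gives the right-hand square. For part {\em 3)}, I would first settle exactness at the middle term of the basic sequence: the attached cone makes $h\circ g$ null-homotopic, so $\IM\subseteq\ker$ in both sequences; for the reverse inclusions, a map $\phi\colon E\to H$ with $h\circ\phi\simeq*$ extends $h\circ\phi$ over $CE$ and, via the mapping-cone structure and the homotopy extension property, factors $\phi$ through $g$ up to homotopy, while a map $\psi\colon H\to E$ with $\psi\circ g\simeq*$ extends over the attached cone $CG$ to a map $K\approxeq H\cup_gCG\to E$ restricting to $\psi$ on $H$. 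Finally, feeding the Puppe sequence of part {\em 1)} into these one-spot exactness statements and using the invertibility of $\Sigma$ propagates exactness to every position of the two long sequences.

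I expect the main obstacle to be part {\em 1)} --- concretely, the explicit identification of the iterated mapping cone $(F\cup_fCE)\cup_jCF$ with the suspension $E\wedge S^1$ together with the verification that the connecting map is, up to homotopy, the $\kappa'$ appearing in the statement. This is the only genuinely geometric input; once it is in place, parts {\em 2)} and {\em 3)} are formal consequences of the cofibre-sequence formalism, exactly as in the CW-complex case. A subsidiary nuisance throughout is the bookkeeping with cofinal subspectra and representatives of maps, but this never affects the substance of the argument.
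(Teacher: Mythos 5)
Your argument is correct and is essentially the standard Puppe-sequence proof for spectra, transported verbatim from the CW-complex case via mapping cones, the homotopy extension property, and the invertibility of $\Sigma\simeq(-\wedge S^1)$ on $\mathcal{S}'$. The paper itself gives no proof for this proposition---it appears in the expository Section \ref{spectra-algebraic-topology-section}, which recapitulates standard material from the references (notably \cite{SWITZER}, where precisely this chain of arguments is carried out)---so there is nothing to contrast against; your outline is the canonical route, and the one real piece of geometry you flag (identifying the double mapping cone $(F\cup_fCE)\cup_jCF$ with $E\wedge S^1$ and checking the connecting map is $\kappa'$) is indeed the load-bearing step.
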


\begin{theorem}[(Co)homology theories associated with any spectrum]\label{co-homology-theories-associated-with-any-spectrum}
Let $\mathcal{W}'_\bullet$ be the category of pointed CW-complexes where $Hom_{\mathcal{W}'_\bullet}((X,x_0);(Y,y_0))=[(X,x_0),(Y,y_0)]$ is the set of all homotopy classes of pointed maps $(X,x_0)\to(Y,y_0)$.
For each $(X,x_0)\in Ob(\mathcal{W}'_\bullet)$
and $n\in\mathbb{Z}$, we have
$$\begin{array}{l}
   E_n(X)=\pi_n(E\wedge X)=[\Sigma^nS^0,E\wedge X]\\
   E^n(X)=[E(X),\Sigma^n E]\cong[\Sigma^{-n}S^0\wedge X,E].\\
  \end{array}$$
These define (co)homology theories on $\mathcal{W}'_\bullet$ that satisfy the wedge axiom.\footnote{For definitions of generalized (co)homology theories see, e.g., \cite{PRA3, SWITZER}.}
The coefficient groups of the homology theory $E_\bullet$ are
$$E_n(S^0)=\pi_n(E\wedge S^0)=\pi_n(E),\hskip 5pt n\in\mathbb{Z}.$$
The coefficient groups of the cohomology theory $E^\bullet$ are
$$E^n(S^0)=[E(S^0),\Sigma^nE]=[S^0,\Sigma^nE]\cong[\Sigma^{-n}S^0,E]=\pi_{-n}(E),\hskip 5pt n\in\mathbb{Z}.$$
Furthermore, any map $f:E\to F$ of spectra induces natural transformations $T_\bullet(f):E_\bullet\to F_\bullet$,
$T^\bullet(f):E^\bullet\to F^\bullet$ of homology and cohomology theories respectively. If $f$ is a homotopy equivalence, then $T_\bullet(f)$ and
$T^\bullet(f)$ are natural equivalences. This is the case iff $f_\bullet:\pi_n(E)\to\pi_n(F)$ is an isomorphism for all $n\in\mathbb{Z}$,
i.e., $T_\bullet(f)$ is a natural equivalence iff it is an isomorphism on the coefficient groups.
\end{theorem}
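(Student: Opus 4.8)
The plan is to verify, in turn, each of the claims bundled into this theorem, using the machinery of the spectrum category $\mathcal{S}$ and $\mathcal{S}'$ developed above together with the cofibre-sequence calculus.

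\textbf{Step 1: $E_\bullet$ and $E^\bullet$ are (co)homology theories.} First I would check the Eilenberg--Steenrod--type axioms (minus dimension, since these are \emph{generalized} theories) for the functors $E_n(X)=\pi_n(E\wedge X)=[\Sigma^nS^0,E\wedge X]$ and $E^n(X)=[E(X),\Sigma^nE]$. Homotopy invariance is immediate because $[-,-]$ and $[-,-]$ in $\mathcal{S}'$ are defined on homotopy classes and $E\wedge(-)$, $E(-)$ are functorial up to homotopy (Proposition on $E\wedge X$ and the functor $E:\mathcal{W}_\bullet\to\mathcal{S}$). For exactness I would take a cofibre pair $(X,A)$ in $\mathcal{W}'_\bullet$; smashing with $E$ turns the cofibre sequence $A\hookrightarrow X\to X/A$ into a spectral cofibre sequence, and then invoke part 3) of the Proposition on cofibre sequences, which gives the long exact sequence $\cdots\to[\Sigma^nS^0,E\wedge A]\to[\Sigma^nS^0,E\wedge X]\to[\Sigma^nS^0,E\wedge(X/A)]\to\cdots$, where the connecting map comes from the map $\kappa':F\cup_fCE\to E\wedge S^1$ in part 1) of that same Proposition, together with the fact that $\Sigma$ is an invertible functor (Proposition \ref{natural-invertible-functor-capital-sigma}) so that $[\Sigma^nS^0,E\wedge(X/A)]$ reindexes to the next degree. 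The suspension isomorphism $E_n(X)\cong E_{n+1}(SX)$ follows from $E\wedge SX\simeq (E\wedge X)\wedge S^1$ and the invertibility of suspension in $\mathcal{S}'$. The wedge axiom is exactly the content of the preceding Proposition giving the bijections $[\bigvee_\alpha E^\alpha,F]\cong\prod_\alpha[E^\alpha,F]$, applied after observing $E\wedge(\bigvee_\alpha X_\alpha)=\bigvee_\alpha(E\wedge X_\alpha)$. The cohomology case is dual, using the contravariant exact sequence in part 3) and the adjunction-type identity $[E(X),\Sigma^nE]\cong[\Sigma^{-n}S^0\wedge X,E]$ already asserted in the statement.

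\textbf{Step 2: the coefficient groups.} Here I would just substitute $X=S^0$. For homology, $E\wedge S^0=E$ (smashing with $S^0$ is the identity up to homeomorphism on each level), so $E_n(S^0)=\pi_n(E\wedge S^0)=\pi_n(E)$ by definition of $\pi_n$. For cohomology, $E(S^0)$ is the sphere spectrum (Example \ref{spectrum-cw-complex}), and $[E(S^0),\Sigma^nE]=[S^0,\Sigma^nE]$; then using $\Sigma$ invertible, $[S^0,\Sigma^nE]\cong[\Sigma^{-n}S^0,E]=\pi_{-n}(E)$. These are routine once Step 1's framework is in place.

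\textbf{Step 3: naturality in $E$.} Given $f:E\to F$, I would define $T_\bullet(f)_X=(f\wedge 1_X)_*:[\Sigma^nS^0,E\wedge X]\to[\Sigma^nS^0,F\wedge X]$ and $T^\bullet(f)_X=(f)_*$ postcomposed appropriately; functoriality of $\wedge$ and of $[-,-]$ gives that these commute with the induced maps and the connecting homomorphisms (the latter because $f\wedge 1$ is compatible with the natural map $\kappa'$ in the cofibre sequence — part 1) of the cofibre-sequence Proposition is natural). If $f$ is a homotopy equivalence then $f\wedge 1$ is too, hence $T_\bullet(f)$, $T^\bullet(f)$ are natural isomorphisms. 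The final ``iff'' is where I expect the only real subtlety: the forward direction is trivial, but for the converse — $f_*:\pi_n(E)\to\pi_n(F)$ an isomorphism for all $n$ implies $f$ is a homotopy equivalence — I would invoke part 3) of the Proposition on homotopy groups of spectra (``a map of spectra is a weak homotopy equivalence iff it is a homotopy equivalence''), since $f_*$ iso on all $\pi_n$ is precisely weak homotopy equivalence; then part 2) of that Proposition ($f_*:[G,E]\to[G,F]$ bijective for all $G$) upgrades this to an equivalence of the representable functors, hence of the (co)homology theories. So the main obstacle is not an obstacle at all provided one is willing to cite the Whitehead-type theorem for spectra already recorded above; the bulk of the work is the bookkeeping in Step 1 to see that smashing a CW-pair's cofibre sequence with $E$ lands one in the spectral cofibre sequence formalism to which part 3) of the cofibre-sequence Proposition applies.
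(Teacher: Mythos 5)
Your proof follows essentially the same route as the paper's: exactness comes from smashing the CW cofibre sequence $A\hookrightarrow X\to X\cup CA$ with $E$ (resp.\ applying $E(-)$) and invoking the cofibre-sequence calculus, the suspension isomorphisms use the invertibility of $\Sigma$ on $\mathcal{S}'$, and the wedge axiom reduces to $E(\bigvee_\alpha X_\alpha)\cong\bigvee_\alpha E(X_\alpha)$. The paper actually stops after verifying the (co)homology-theory axioms and the wedge axiom, leaving the coefficient-group identities and the $T_\bullet(f)$/Whitehead statement implicit, so your Steps 2--3 correctly supply those by substituting $X=S^0$ and citing the spectral Whitehead theorem recorded in the surrounding propositions.
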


\begin{proof} For $f:(X,x_0)\to(Y,y_0)$ we take $E_n(f)=(1\wedge f)$ and $E^n(f)=E(f)$.\footnote{Here we have used the fact that $ E(SX)$ is a cofinal subspectrum of $\Sigma E(X)$ and hence the inclusion $ i:E(SX)\to\Sigma E(X)$ induces an isomorphism $ i^*$.}
We define $\sigma_n:E_n(X)\to E_{n+1}(SX)$ to be the composite
$$\scalebox{0.8}{$\xymatrix{E_n(X)=[\Sigma^nS^0,E\wedge X]\ar@{=}[d]\ar[r]^(0.55){\Sigma}_(0.55){\cong}&[\Sigma^{n+1}S^0,\Sigma E\wedge X]\ar[r]_(0.4){\cong}&[\Sigma^{n+1}S^0,E\wedge S^1\wedge X]= E_{n+1}(SX)\ar@{=}[d]\\
E_n(X)\ar[rr]_{\sigma_n}&& E_{n+1}(SX)}$}$$ Then
$\sigma_n$ is a natural equivalence. Furthermore, we define
$\sigma^n:E^{n+1}(SX)\to E^{n}(X)$ to be the composite
$$\scalebox{0.8}{$\xymatrix{E^{n+1}(SX)=[E(SX),\Sigma^{n+1}E\wedge X]\ar@{=}[d]&[\Sigma E(X),\Sigma^{n+1}E]\ar[l]^(0.35){i^*}_(0.35){\cong}\ar[r]^(0.45){\Sigma^{-1}}_(0.45){\cong}&[E(X),\Sigma^{n}E]=E^{n}(X)\ar@{=}[d]\\ E^{n+1}(SX)\ar[rr]_{\sigma^n}&&E_{n}(X)}$}$$ Then $\sigma^n$ is a natural equivalence too. Let
$(X,A)$ be any pointed CW-pair. Since
$$E_n\wedge(X\cup CA)\cong(E_n\wedge X)\cup C(E_n\wedge A), \hskip 5pt n\in\mathbb{Z},$$
we see that
$$\xymatrix@C=60pt{E\wedge A\ar[r]^{1\wedge i}&E\wedge X\ar[r]^{1\wedge j}&E\wedge( X\cup CA)}$$
is a cofibre sequence. Therefore,
$$\xymatrix@C=60pt{[\Sigma^nS^0,E\wedge A]\ar[r]^{(1\wedge i)_*}&[\Sigma^nS^0,E\wedge X\ar[r]^{(1\wedge j)_*}
&[\Sigma^nS^0,E\wedge( X\cup CA)]}$$
is exact; but this is just the sequence
$$\xymatrix@C=60pt{E_n(A)\ar[r]^{i_*}&E_n(X)\ar[r]^{j_*}&E_n(X\cup CA)}.$$
Thus $E_\bullet$ is a homology theory on $\mathcal{W}'_\bullet$. Since $S^n(X\cup CA)\cong S^nX\cup C(S^nA)$, $n\in\mathbb{Z}$, we see that
$$\xymatrix@C=60pt{E(A)\ar[r]^{E(i)}&E(X)\ar[r]^{E(j)}&E(X\cup CA)}$$
is a cofibre sequence. Hence
$$\xymatrix{[E(A),\Sigma^n(E)]&[E(X),\Sigma^n(E)]\ar[l]^{E(i)^*}&[E(X\cup CA),\Sigma^n(E)]\ar[l]^{E(j)^*}}$$
is exact; but this is just the sequence
$$\xymatrix@C=60pt{E^n(A)&E^n(X)\ar[l]^{i^*}&E^n(X\cup CA)\ar[l]^{j^*}}.$$
Thus $E^\bullet$ is a cohomology theory on $\mathcal{W}'_\bullet$. Since for any collection $\{X_\alpha:\alpha\in A\}$ of CW-complexes we have
$S^n(\bigvee_\alpha X_\alpha)\cong\bigvee_\alpha S^nX_\alpha$ and hence
$E(\bigvee_\alpha X_\alpha)\cong\bigvee_\alpha E(X_\alpha)$ we conclude that
$\{i^*_\alpha\}:E^n(\bigvee_\alpha X_\alpha)\to\prod_\alpha E^n(X_\alpha)$ is an isomorphism for all $n\in\mathbb{Z}$. In other words $E^\bullet$ satisfies the wedge axiom.
One can also prove that $E_\bullet$ satisfies the wedge axiom.\end{proof}

\begin{cor} For any spectrum $E$ and any filtration $\{X^n\}$ of a CW-complex $X$ we have an exact sequence
$$\xymatrix{0\ar[r]&\lim^1E^{q-1}(X^n)\ar[r]& E^q(X)\ar[r]^(0.4){\{i^*_n\}}&\lim^0E^q(X^n)\ar[r]& 0}.$$
\end{cor}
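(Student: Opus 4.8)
The plan is to derive the exact sequence from the long exact sequence of homotopy (equivalently, the long exact $E^\bullet$-cohomology sequence) associated to the cofibre sequences built from the skeletal-type filtration $\{X^n\}$, combined with the standard Milnor $\lim^1$ machinery for a tower of abelian groups. Recall that a filtration $\{X^n\}$ of $X$ expresses $X$ as a (weak) colimit: $X=\operatorname{colim}_n X^n$, and on the level of suspension spectra $E(X)$ is (up to homotopy) the telescope of the $E(X^n)$. Dualising, the cohomology groups $E^q(X)$ should be recovered from the inverse system $\{E^q(X^n)\}$ with maps $i^*_n\colon E^q(X^{n+1})\to E^q(X^n)$ induced by the inclusions, but with a correction term measuring the failure of $\varprojlim$ to be exact, namely $\lim^1$ of the system in degree $q-1$.

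First I would set up the mapping telescope $T=\bigcup_n X^n\times[n,n+1]/\!\sim$ and the standard cofibre sequence
\[
\bigvee_n X^n \xrightarrow{\ 1-\text{shift}\ } \bigvee_n X^n \longrightarrow T\simeq X,
\]
or rather its based/suspension-spectrum analogue $E\wedge T$, to which Theorem \ref{co-homology-theories-associated-with-any-spectrum} applies since $E^\bullet$ is a cohomology theory on $\mathcal{W}'_\bullet$ satisfying the wedge axiom. Applying $E^\bullet(-)$ and using the wedge axiom to identify $E^q(\bigvee_n X^n)\cong\prod_n E^q(X^n)$, the long exact cohomology sequence of this cofibre sequence reads, in consecutive degrees,
\[
\xymatrix@C=20pt{\prod_n E^{q-1}(X^n)\ar[r]^{1-s^*}&\prod_n E^{q-1}(X^n)\ar[r]&E^q(X)\ar[r]&\prod_n E^q(X^n)\ar[r]^{1-s^*}&\prod_n E^q(X^n).}
\]
Then I would invoke the algebraic definitions $\varprojlim E^q(X^n)=\ker(1-s^*)$ on $\prod_n E^q(X^n)$ and $\lim^1 E^{q-1}(X^n)=\operatorname{coker}(1-s^*)$ on $\prod_n E^{q-1}(X^n)$; breaking the five-term sequence at the middle gives precisely the claimed short exact sequence
\[
0\to \lim^1 E^{q-1}(X^n)\to E^q(X)\xrightarrow{\{i^*_n\}} \lim^0 E^q(X^n)\to 0.
\]
One small point to check is that the shift map $s$ on the telescope induces, on $E^{q}(X^n)$, exactly the structure maps $i^*_n$ of the inverse system (up to sign), so that $\ker(1-s^*)$ and $\operatorname{coker}(1-s^*)$ are genuinely the $\lim$ and $\lim^1$ of the tower $\{E^q(X^n),i^*_n\}$; this is a direct diagram chase using $f_{n+1}|_{SE_n}=Sf_n$ for functions of spectra and the compatibility of the inclusions $X^n\hookrightarrow X^{n+1}$.

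The main obstacle, and the step demanding the most care, is justifying that $X$ (or $E\wedge X$) really is the homotopy colimit/telescope of the filtration in the spectrum category $\mathcal{S}'$ in a way compatible with the cohomology theory $E^\bullet$ — i.e.\ that passing from the geometric colimit $X=\bigcup_n X^n$ to the telescope $T$ and then to $E\wedge T$ introduces no discrepancy. For ordinary CW skeletal filtrations this is classical (the inclusion $T\hookrightarrow X$ is a homotopy equivalence because every compact subset of $X$, in particular every cell, lies in some $X^n$, by the cofinality-type remarks recorded after the definition of cofinal subspectra), but since the corollary is stated for an \emph{arbitrary} filtration $\{X^n\}$ of $X$ one must use precisely that $X=\operatorname{colim} X^n$ with each $X^n\hookrightarrow X^{n+1}$ a cofibration (inclusion of subcomplexes), so that the telescope argument and the wedge axiom together yield the exact sequence. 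Once that identification is in hand, everything else is the formal $\lim$–$\lim^1$ bookkeeping above.
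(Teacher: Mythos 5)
Your argument is correct, and it is the standard Milnor $\lim$--$\lim^1$ exact sequence argument (as in Switzer, which this paper cites); the paper itself states the corollary without proof, so your telescope construction together with the wedge axiom from Theorem \ref{co-homology-theories-associated-with-any-spectrum} is exactly the justification being implicitly invoked. The only point I would tighten is the formalization of the map ``$1-\mathrm{shift}$'': as a map of pointed CW-complexes a difference of maps is not defined, so you should either pass to the suspension-spectrum level (where the stable category makes the difference of the identity inclusion and the shifted inclusion meaningful) or, equivalently, run the argument with the pair $(T,A)$ where $A\subset T$ is the union of the slices $X^n\times\{n\}$, identifying $E^\bullet(T,A)$ with a product via excision and suspension; once that is done, your identification of $\varprojlim$ and $\lim^1$ as kernel and cokernel, and the splitting of the five-term exact sequence at $E^q(X)$, completes the proof as you describe.
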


\begin{proposition} We can extend the cohomology theory $E^\bullet$ to a cohomology theory on the category $\mathcal{S}'$ by simply taking
$$E^n(F)=[F,\Sigma^n(E)],\hskip 5pt n\in\mathbb{Z},\hskip 2pt F\in Ob(\mathcal{S}').$$
Furthermore, if $T^\bullet:E^\bullet\to F^\bullet$ is a natural equivalence of cohomology theories on $\mathcal{S}'$, we can show $T^\bullet=T^\bullet(f)$ for some map $f:E\to F$.
\end{proposition}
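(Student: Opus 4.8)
The plan is to treat both assertions as stable analogues of classical facts about representable cohomology theories, using only the structural results on spectra established above.

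\emph{First part.} I would verify the axioms of a cohomology theory on $\mathcal{S}'$ for $E^n(F)=[F,\Sigma^n(E)]$ one at a time. Functoriality and homotopy invariance are automatic, since $[-,-]$ is by construction defined on homotopy classes of maps of spectra and $\Sigma$ is a functor on $\mathcal{S}'$. For the \emph{suspension isomorphism} I would combine the Remark (that $\Sigma F$ and $F\wedge S^1$ have the same homotopy type) with Proposition~\ref{natural-invertible-functor-capital-sigma} (that $\Sigma$ is invertible on $\mathcal{S}'$) to obtain natural isomorphisms $E^{n+1}(\Sigma F)=[\Sigma F,\Sigma^{n+1}(E)]\cong[F\wedge S^1,\Sigma^{n+1}(E)]\cong[F,\Sigma^{n}(E)]=E^n(F)$. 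For \emph{exactness} I would invoke part 3) of the Proposition on cofibre sequences: if $G\to H\to K$ is a cofibre sequence in $\mathcal{S}'$ then $[K,\Sigma^n(E)]\to[H,\Sigma^n(E)]\to[G,\Sigma^n(E)]$ is exact, i.e. $E^n(K)\to E^n(H)\to E^n(G)$ is exact, and splicing these with the suspension isomorphisms yields the long exact sequence. The \emph{wedge axiom} is precisely the Proposition asserting that $\{i_\alpha^*\}:[\bigvee_\alpha E^\alpha,F]\to\prod_\alpha[E^\alpha,F]$ is a bijection, applied with $F=\Sigma^n(E)$. Finally I would note that this genuinely extends the theory of Theorem~\ref{co-homology-theories-associated-with-any-spectrum}, because $E^n(E(X))=[E(X),\Sigma^n(E)]$ is exactly the earlier formula and the embedding $E:\mathcal{W}_\bullet\hookrightarrow\mathcal{S}$ intertwines the two.

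\emph{Second part.} Given a natural equivalence $T^\bullet:E^\bullet\to F^\bullet$, I would set $f:=T^0(E)(1_E)\in F^0(E)=[E,F]$, where $1_E\in[E,E]=E^0(E)$ is the identity; this homotopy class of maps $f:E\to F$ is the candidate, and the induced transformation $T^\bullet(f)$ is by definition post-composition $x\mapsto\Sigma^n(f)\circ x$ on $E^n(G)=[G,\Sigma^n(E)]$. To prove $T^\bullet=T^\bullet(f)$ I would run a stable Yoneda argument: for $x\in E^n(G)=[G,\Sigma^n(E)]$, viewing $x$ as a map of spectra $x:G\to\Sigma^n(E)$ gives $x^*(1_{\Sigma^n(E)})=x$ in $E^n(G)$, so naturality of $T^\bullet$ along $x$ yields $T^n(G)(x)=x^*\big(T^n(\Sigma^n(E))(1_{\Sigma^n(E)})\big)$. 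It then suffices to identify $T^n(\Sigma^n(E))(1_{\Sigma^n(E)})\in F^n(\Sigma^n(E))=[\Sigma^n(E),\Sigma^n(F)]$ with $\Sigma^n(f)$, which follows because $T^\bullet$, being a morphism of cohomology theories, commutes with the suspension isomorphisms and these carry the identity classes $1_E$, resp. $f$, to $1_{\Sigma^n(E)}$, resp. $\Sigma^n(f)$. Hence $T^n(G)(x)=\Sigma^n(f)\circ x$ for all $G,n,x$, which is precisely $T^\bullet=T^\bullet(f)$; uniqueness of $f$ up to homotopy is immediate from the bijectivity of $T^0(E)$.

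\emph{Main obstacle.} The step that is not purely formal is the coherent normalization of the suspension isomorphisms used in the second part: one must check that the iterated suspension isomorphism $E^0(E)\cong E^n(\Sigma^n(E))$ sends $1_E$ to $1_{\Sigma^n(E)}$ (and similarly on the $F$-side, where $\Sigma^n(f)$ corresponds to $f$), so that the stable Yoneda lemma closes up across all degrees $n$ simultaneously. This amounts to tracking the canonical identifications furnished by the invertible functor $\Sigma$ and, ultimately, the homeomorphisms $E(S^0)_n\thickapprox S^n$ underlying the sphere spectrum; once that bookkeeping is pinned down, everything else reduces to the already-established exactness of cofibre sequences, the invertibility of $\Sigma$, and the wedge bijection.
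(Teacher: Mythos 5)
Your first part tracks the paper's own proof closely: the paper too verifies the suspension isomorphism via $\Sigma F\approxeq F\wedge S^1$ and the invertibility of $\Sigma$ on $\mathcal{S}'$, and obtains exactness from part 3) of the proposition on cofibre sequences; you additionally note the wedge axiom explicitly, which the paper leaves implicit. The genuine difference is the second assertion: the paper merely states that every natural equivalence $T^\bullet$ has the form $T^\bullet(f)$ and supplies no argument, whereas you give the standard stable Yoneda proof — take $f:=T^0(E)(1_E)\in F^0(E)=[E,F]$, then for $x\in E^n(G)=[G,\Sigma^n E]$ use naturality along $x$ to reduce to identifying $T^n(\Sigma^n E)(1_{\Sigma^n E})$ with $\Sigma^n(f)$, which follows because a morphism of cohomology theories commutes with the suspension isomorphisms (and these, being implemented by the invertible functor $\Sigma$, carry $1_E\mapsto 1_{\Sigma^n E}$ and $f\mapsto\Sigma^n(f)$). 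Your argument is correct and in fact sharper than the statement: the hypothesis that $T^\bullet$ be an equivalence is not used, any morphism of cohomology theories works, with the equivalence hypothesis only guaranteeing that the resulting $f$ is a homotopy equivalence. Your flagged "main obstacle" — the coherent normalization of the iterated suspension isomorphism — is indeed the only bookkeeping point, and in this setup it is harmless precisely because the suspension isomorphism on $\mathcal{S}'$ is nothing more than application of the invertible shift functor $\Sigma$, so identities and composites are preserved on the nose. This is also, implicitly, the reason the representability claim holds on $\mathcal{S}'$ whereas the analogous statement on $\mathcal{W}'_\bullet$ can fail (phantom transformations): here $E$ itself is an object on which one may evaluate $T^\bullet$.
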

\begin{proof} In fact $E^\bullet$ is a cohomology theory in the sense that we have natural equivalences
$$\xymatrix{E^{n+1}(F\wedge S^1)\ar@{=}[d]\ar[r]& E^{n+1}(\Sigma F)\ar[r]^{\cong}&E^n(F)\ar@{=}[d]\\ E^{n+1}(F\wedge S^1)\ar[rr]_{\sigma^n}&&E^n(F)}$$
for all $n\in\mathbb{Z}$, $F\in Ob(\mathcal{S}')$. Furthermore, $E^\bullet$ satisfies the following exactness axiom: For any cofibre sequence $F\mathop{\to}\limits^{f}G\mathop{\to}\limits^{g}H$, the sequence
$$\xymatrix{E^n(F)\ar[r]^{f^*}&E^n(G)\ar[r]^{g^*}&E^n(H)}$$
is exact. (This axiom is equivalent to the usual one over $\mathcal{W}'_\bullet$.)\end{proof}

\begin{proposition} {\em 1)} A possible extension of the homology theory $E^\bullet$ to a homology theory on the category $\mathcal{S}'$ is the following
$$E_n(G)=\pi_n(E\wedge G)\equiv[\Sigma^nS^0,E\wedge G].$$
In this case, however, it is not assured that a natural
transformation $T_\bullet:E_\bullet\to F_\bullet$ on $\mathcal{S}'$
is of the form  $T^\bullet=T^\bullet(f)$ for some map $f:E\to F$.

{\em 2)} If $E$ is an $\Omega$-spectrum, then for every CW-complex
$(X,x_0)$ we have a natural isomorphism
$E^n(X)\cong[X,x_0;E_n,*]$.\end{proposition}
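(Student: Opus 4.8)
The plan is to handle the two parts by different means: \emph{(1)} is a verification that an already-familiar recipe satisfies the axioms of a homology theory on $\mathcal{S}'$, together with a remark about non-representability, while \emph{(2)} is an explicit computation of $[E(X),\Sigma^nE]$ in which the $\Omega$-spectrum hypothesis is used essentially.

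For the positive part of \emph{(1)}, I would check that $G\mapsto E_\bullet(G)=\pi_\bullet(E\wedge G)$ admits a suspension isomorphism, carries cofibre sequences to long exact sequences, and satisfies the wedge axiom, each of which reduces to facts already in the excerpt. For the suspension isomorphism, note $(E\wedge G)\wedge S^1\cong E\wedge(G\wedge S^1)$, that $\Sigma H$ and $H\wedge S^1$ have the same homotopy type, and that the invertible functor $\Sigma$ of Proposition~\ref{natural-invertible-functor-capital-sigma} gives $[\Sigma^{n+1}S^0,\Sigma H]\cong[\Sigma^nS^0,H]$; chaining these yields $E_{n+1}(\Sigma G)\cong E_n(G)$ naturally in $G$. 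For exactness, I would first observe that smashing with the fixed spectrum $E$ takes a mapping cone $G\cup_gCF$ to the mapping cone $(E\wedge G)\cup_{1\wedge g}C(E\wedge F)$, hence takes a cofibre sequence $F\to G\to H$ and its Puppe prolongation $F\to G\to H\to F\wedge S^1\to\cdots$ to a cofibre sequence $E\wedge F\to E\wedge G\to E\wedge H\to E\wedge F\wedge S^1\to\cdots$; then the exactness clause (part 3) of the proposition on cofibre sequences, applied with test object $\Sigma^nS^0$, delivers the long exact sequence $\cdots\to E_n(F)\to E_n(G)\to E_n(H)\to E_{n-1}(F)\to\cdots$, the connecting map being the suspension isomorphism just established. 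The wedge axiom follows from $E\wedge\bigvee_\alpha G^\alpha=\bigvee_\alpha(E\wedge G^\alpha)$ together with the fact that $\pi_n$ sends wedges to direct sums, which in turn comes from the earlier bijection $[\bigvee_\alpha E^\alpha,F]\cong\prod_\alpha[E^\alpha,F]$ applied with the small source $\Sigma^nS^0$ (whose cells stabilise, so any map into a wedge factors through a finite sub-wedge).

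The negative half of \emph{(1)} is a caveat rather than a theorem: the construction of a representing map $f$ in the \emph{cohomological} case rested on the representability $E^n(X)=[E(X),\Sigma^nE]$ and a Yoneda-style evaluation on $E$ itself, and the covariant homology functor $E_n(G)=\pi_n(E\wedge G)$ admits no such corepresentation, so that evaluation is simply unavailable. I would record this genuine failure of self-duality (witnessed, e.g., by phantom maps and the general gap between stable homotopy and homology) rather than attempt to manufacture an $f$.

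For \emph{(2)}, the plan is to compute $E^n(X)=[E(X),\Sigma^nE]$ directly. A map of spectra out of the suspension spectrum $E(X)$ is represented by a function on some cofinal subspectrum; on a cell $e^d$ of $X$ one only sees, for some $m$, a map $S^me^d\to(\Sigma^nE)_m=E_{n+m}$, and the point is that the $\Omega$-spectrum hypothesis — i.e. $\epsilon'_{n+k}\colon E_{n+k}\to\Omega E_{n+k+1}$ is a weak homotopy equivalence — lets one desuspend this (via the adjunction $[S^me^d,E_{n+m}]\cong[e^d,\Omega^mE_{n+m}]\cong[e^d,E_n]$), up to homotopy and compatibly over the cell structure by Whitehead's theorem, to a map $e^d\to E_n$; these glue to a well-defined class in $[X,x_0;E_n,*]$. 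In the other direction any $g\colon X\to E_n$ prolongs to the function $g_0=g$, $g_{k+1}=\epsilon_{n+k}\circ Sg_k$, and one checks the two assignments are mutually inverse and natural in $X$, both sides being contravariant by precomposition with $E(g)$, respectively $g$. Equivalently one may package this as $[E(X),\Sigma^nE]\cong\varprojlim_k[S^kX,E_{n+k}]$, a tower whose comparison maps are isomorphisms by the $\Omega$-spectrum condition, so $\varprojlim^1$ vanishes and the limit collapses to the $k=0$ term $[X,E_n]\cong[X,x_0;E_n,*]$. The main obstacle is precisely this coherence bookkeeping: $\epsilon'$ is only a \emph{weak} equivalence, so the desuspensions exist only up to homotopy and must be organised consistently over the (possibly infinite) cell structure of $X$ and over the choice of cofinal subspectrum — routine in principle, but the step where the $\Omega$-spectrum hypothesis and CW-approximation are actually used.
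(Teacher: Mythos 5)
The paper states this proposition without proof, so there is no ``paper's route'' to compare against; I will evaluate your argument on its own merits. In substance it is correct and follows the standard line of reasoning (essentially the one in Switzer, cited by the paper). Your treatment of part \emph{(1)} correctly reduces the suspension isomorphism to the equivalence $\Sigma H \approxeq H\wedge S^1$ and the invertibility of $\Sigma$ on $\mathcal{S}'$, and reduces exactness to the observation that $E\wedge-$ preserves mapping cones and hence cofibre sequences, together with part 3 of the earlier cofibre-sequence proposition applied with test object $\Sigma^n S^0$. Your part \emph{(2)} correctly isolates where the $\Omega$-spectrum hypothesis enters: the tower $[S^k X, E_{n+k}]$ has all its comparison maps isomorphisms, so both $\varprojlim{}^1$ vanishes and the limit collapses to the $k=0$ term $[X, E_n]$; this is the same structural fact that underlies the Milnor-sequence corollary stated earlier in the section.

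One small inaccuracy in part \emph{(1)}: for the wedge axiom you invoke the earlier bijection $\{i_\alpha^*\}\colon[\bigvee_\alpha E^\alpha, F]\to\prod_\alpha[E^\alpha,F]$, but that describes maps \emph{out of} a wedge (the cohomological variance), whereas what you actually need is $[\Sigma^n S^0, \bigvee_\alpha H^\alpha]\cong\bigoplus_\alpha[\Sigma^n S^0,H^\alpha]$, i.e.\ maps \emph{into} a wedge from a small source. The reason you go on to give — that $\Sigma^n S^0$ is a finite spectrum, so any map from it into a wedge factors through a finite sub-wedge — is the correct one, but it is a compactness argument distinct from the cited proposition and worth stating as such. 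Apart from this miscitation, the proposal is sound.
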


\begin{example}[Example of (co)homology theories associated to spectra] Let us consider
the {\em sphere spectrum} $S^0\equiv E(S^0)$. The associated
homology theory:
$$S^0_\bullet(X)=\pi_\bullet(S^0\wedge X)=\{\mathop{\lim}\limits_{\overrightarrow{k}}\pi_{n+k}(S^k X,*)\equiv\pi_n^s(X)\},$$
 is called {\em stable homotopy} (of $X$).
Furthermore, the associated cohomology theory:
$$(S^0)^\bullet(X)=\pi^\bullet_s(X)\equiv\{
\mathop{\lim}\limits_{\overrightarrow{k}}\pi_{n+k}(S^k\wedge X)\},$$
 is called {\em stable cohomotopy} (of $X$).
For any $n\ge 2$ we have the natural map
$$i_0:\pi_n(X,x_0)\to \mathop{\lim}\limits_{\overrightarrow{k}}\pi_{n+k}(S^kX,*)=\pi^s_n(X),\hskip 2pt
i_0(x)=\{x\}.$$
We can also define $i_0$ as follows: any map $f:(S^n,s_0)\to(X,x_0)$ defines a function $E(f):E(S^n)\to E(X)$. Since $E(S^n)$ is a cofinal subspectrum of $\Sigma S^0$, we get a map $\{E(f)\}:\Sigma^nS^0\to E(X)$, and
$$i_0[f]=[\{E(f)\}]\in[\Sigma^nS^0,E(X)]=\pi^s_n(X).$$
This definition of $i_0$ applies even for $n=0$ or $1$. $i_0$ is a
homomrphism for $n\ge 1$. The coefficient groups
$\pi^s_n(S^0)=\mathop{\lim}\limits_{\overrightarrow{k}}\pi_{n+k}(S^k,s_0)$ are called {\em stable
homotopy groups} or {\em $n$-stems}, and denoted by $\pi^s_n$.
These groups are known only through a finite range of $n>0$. In
particular, one has: $\pi^s_n=0$, $n<0$, $\pi^s_0\cong\mathbb{Z}$.\end{example}

\begin{proposition}
Let denote $\mathcal{T}'_{op,\bullet}$, (resp. $\mathcal{T}^{2'}_{op,\bullet}$, resp. $\mathcal{T}'_{op}$), the category of topological spaces, (resp. pointed topological spaces, resp. couples of pointed topological spaces), with morphisms homotopy classes of maps structures preserving.
For every spectrum $E$ we can define a reduced
homology theory $E_\bullet(-)$ and a reduced cohomology theory
$E^\bullet(-)$ on $\mathcal{T}'_{op,\bullet}$, $\mathcal{T}^{2'}_{op,\bullet}$ and $\mathcal{T}'_{op}$ respectively.\end{proposition}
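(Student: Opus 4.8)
The plan is to deduce this from Theorem~\ref{co-homology-theories-associated-with-any-spectrum}, which already supplies reduced (co)homology theories on pointed \emph{CW}-complexes, by transporting them along a functorial CW-approximation. For an arbitrary topological space $X$ let $\Gamma(X)\equiv|S_\bullet(X)|$ be the geometric realization of its singular simplicial set; this is a CW-complex, the evaluation $\gamma_X\colon\Gamma(X)\to X$ is a natural weak homotopy equivalence, $\Gamma$ preserves basepoints and finite products, $\Gamma$ carries a pair $(X,A)$ to a CW-pair $\bigl(\Gamma(X),\Gamma(A)\bigr)$ (since $S_\bullet(A)$ is a sub-simplicial-set of $S_\bullet(X)$), and $\Gamma$ sends homotopic maps to homotopic maps, so it descends to functors on $\mathcal{T}^{2'}_{op,\bullet}$, on $\mathcal{T}'_{op}$ and, after composing with $X\mapsto X^{+}$, on $\mathcal{T}'_{op,\bullet}$. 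One then \emph{defines}
$$E_n(X)\equiv E_n(\Gamma(X)),\qquad E^n(X)\equiv E^n(\Gamma(X))\qquad(X\text{ pointed}),$$
$$E_n(X,A)\equiv E_n\bigl(\Gamma(X)\cup_{\Gamma(A)}C\Gamma(A)\bigr),\qquad E^n(X,A)\equiv E^n\bigl(\Gamma(X)\cup_{\Gamma(A)}C\Gamma(A)\bigr),$$
and, for unpointed $X$, $E_n(X)\equiv E_n\bigl(\Gamma(X)^{+}\bigr)$, $E^n(X)\equiv E^n\bigl(\Gamma(X)^{+}\bigr)$; on morphisms one applies $E_\bullet(\Gamma(-))$, $E^\bullet(\Gamma(-))$.

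The first thing to verify is that this is independent of the model and well defined on homotopy classes. If $\gamma'\colon\Gamma'(X)\to X$ is a second CW-approximation, a lift of $\gamma_X$ through $\gamma'$ is a weak equivalence $\Gamma(X)\to\Gamma'(X)$ between CW-complexes, hence a homotopy equivalence by Whitehead's theorem, and by the last assertion of Theorem~\ref{co-homology-theories-associated-with-any-spectrum} it induces natural isomorphisms $E_\bullet(\Gamma(X))\cong E_\bullet(\Gamma'(X))$ and $E^\bullet(\Gamma(X))\cong E^\bullet(\Gamma'(X))$. The same reasoning shows that homotopic maps of spaces (resp. of pairs) induce the same homomorphisms, so the assignments above are well defined on $\mathcal{T}'_{op,\bullet}$, $\mathcal{T}^{2'}_{op,\bullet}$ and $\mathcal{T}'_{op}$.

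It then remains to check the axioms of a reduced theory. Homotopy invariance is built in. The suspension isomorphism is obtained by composing $\sigma_n\colon E_n(\Gamma(X))\to E_{n+1}(S\Gamma(X))$, resp. $\sigma^n\colon E^{n+1}(S\Gamma(X))\to E^n(\Gamma(X))$, from Theorem~\ref{co-homology-theories-associated-with-any-spectrum} with the natural homotopy equivalence $\Gamma(SX)\approxeq S\Gamma(X)$. Exactness for a pair follows verbatim from the proof of Theorem~\ref{co-homology-theories-associated-with-any-spectrum}: $E\wedge\Gamma(A)\to E\wedge\Gamma(X)\to E\wedge\bigl(\Gamma(X)\cup_{\Gamma(A)}C\Gamma(A)\bigr)$ (resp. $E(\Gamma(A))\to E(\Gamma(X))\to E\bigl(\Gamma(X)\cup_{\Gamma(A)}C\Gamma(A)\bigr)$) is a cofibre sequence of spectra, so applying $[\Sigma^nS^0,-]$ (resp. $[-,\Sigma^nE]$) yields the long exact sequence of the pair; and since $\bigl(\Gamma(X),\Gamma(A)\bigr)$ is a CW-pair one has $\Gamma(X)\cup_{\Gamma(A)}C\Gamma(A)\approxeq\Gamma(X)/\Gamma(A)$, which supplies the identification with the quotient. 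The wedge axiom descends because $\Gamma$ and $(-)^{+}$ commute with coproducts up to weak equivalence and $E\bigl(\bigvee_\alpha Y_\alpha\bigr)\cong\bigvee_\alpha E(Y_\alpha)$ on CW-complexes. Finally, a map $f\colon E\to F$ of spectra induces, by pre-composing the transformations $T_\bullet(f)$, $T^\bullet(f)$ of Theorem~\ref{co-homology-theories-associated-with-any-spectrum} with $\Gamma$, natural transformations of the extended theories on all three categories, which are equivalences when $f$ is a homotopy equivalence.

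I expect the real work to be the compatibility of the CW-approximation functor $\Gamma=|S_\bullet(-)|$ with suspension, mapping cones and wedges \emph{up to natural homotopy equivalence}, so that the structure maps (suspension isomorphisms and the connecting homomorphisms of the exact sequences) are genuinely natural on $\mathcal{T}'_{op,\bullet}$, $\mathcal{T}^{2'}_{op,\bullet}$ and $\mathcal{T}'_{op}$ simultaneously. These comparisons are standard consequences of Whitehead's theorem (the natural comparison maps $S\Gamma(X)\to\Gamma(SX)$, $\Gamma(X)\cup_{\Gamma(A)}C\Gamma(A)\to\Gamma\bigl(X\cup_ACA\bigr)$ and $\bigvee_\alpha\Gamma(X_\alpha)\to\Gamma\bigl(\bigvee_\alpha X_\alpha\bigr)$ are weak equivalences between CW-complexes), but assembling them into honest natural equivalences of reduced (co)homology theories is where the care is needed; everything else is a direct transcription of Theorem~\ref{co-homology-theories-associated-with-any-spectrum}.
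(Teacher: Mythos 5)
Your proof takes essentially the same route as the paper's: both pass to a CW-substitute and reduce everything to Theorem~\ref{co-homology-theories-associated-with-any-spectrum}, then extend to pairs via the mapping-cone construction (the paper writes it as $X^+\cup CA^+$) and to unpointed spaces via $(-)^+$. You are more careful than the paper, fixing a specific functorial replacement $\Gamma=|S_\bullet(-)|$ and explicitly verifying independence of the model and the axioms of a reduced theory via Whitehead's theorem, whereas the paper simply states the definitions and leaves these verifications implicit.
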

\begin{proof}
In fact, for $X\in Ob(\mathcal{T}'_{op,\bullet})$ we have the
following reduced homology theory: $\tilde {E}_\bullet(X)\equiv
E_\bullet(X')=\pi_\bullet(E\wedge X')$, where $X'$ is any
CW-substitute for $X$. Furthermore, for any $(X,A)\in Ob(\mathcal{T}^{2'}_{op,\bullet})$ we have the following reduced homology
theory: $E_\bullet(X,A)\equiv\tilde E_\bullet(X^+\cup CA^+)$.
Finally for any space $X\in Ob(\mathcal{T}'_{op})$ we  have
$E_\bullet(X)=E_\bullet(X,\varnothing)$. Furthermore, $E^n(X)=[X,E_n]$. The coefficients of these theories are the groups $E^\bullet(*)\cong E_\bullet(*)=\pi_\bullet(E)$.\end{proof}

The calculation of generalized homology theories can be made easier by using spectral sequences. Relations between such structures are given by the following two theorems.

\begin{theorem}[Atiyah-Hirzebruch-Whitehead]\label{atiyah-hirzebruch-whitehead-theorem} Suppose
$\{E_n\}$ be a spectrum and $X$ a space. Then, there is a spectral
sequence $\{E_r^{\bullet,\bullet},d_r\}$ with
$$E_2^{p,q}\cong H^p(X;E^q(*))$$
converging to $E^\bullet(X)$. Furthermore, there is also a spectral sequence $\{E^r_{\bullet,\bullet},d^r\}$ with
$$E^2_{p,q}\cong H_p(X;E^q(*))$$
converging to $E_\bullet(X)$.\footnote{Let $ \mathcal{U}$ be an abelian category. A {\em differential object}
in $\mathcal{U}$ is a pair $ (A,d)$ where
$ A\in Ob(\mathcal{U})$ and $ d\in Hom_\mathcal{U}(A;A)$ such that $ d^2=0$. Let $ \mathcal{D}(\mathcal{U})$ be the category of differential objects in
$ \mathcal{U}$. We call {\em homology} the
additive functor $ H:\mathcal{D}(\mathcal{U})\to\mathcal{U}$,
given by $ H(A,d)=\ker(d)/\IM(d)=Z(A)/B(A)$, where
$ Z(A)$ is the set of {\em cycles} of
$ A$ and $ B$ is the set of {\em boundaries} of $ A$. $ H(A)$ is the
{\em homology} of $ (A,d)$. Then, a {\em spectral sequence} in the category $ \mathcal{U}$ is
a sequence of differential objects of $ \mathcal{U}$:
$ \{E_n,d_n\}$, $ N=1,2,\cdots$, such that
$ H(E_n,d_n)=E_{n+1}$, $ n=1,2,\cdots$.
(See, e.g.,  \cite{MCCLEARY, PRA000}.)} Here $E_\bullet(-)$ (resp.
$E^\bullet(-)$) is the homology (resp. cohomology) associated to
the spectrum $\{E_n\}$.\end{theorem}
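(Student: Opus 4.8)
The plan is to build both spectral sequences from the skeletal filtration of $X$, exactly as one does for ordinary cohomology, but with the generalized (co)homology theory $E^\bullet(-)$, $E_\bullet(-)$ attached to the spectrum $\{E_n\}$ (as constructed in Theorem \ref{co-homology-theories-associated-with-any-spectrum}) in place of singular cohomology. First I would recall that $X$ may be replaced by a CW-substitute (using the reduced theories on $\mathcal{T}'_{op,\bullet}$, $\mathcal{T}'_{op}$ constructed in the preceding proposition), so without loss of generality $X$ is a CW-complex with skeleta $X^p$. The filtration $\varnothing=X^{-1}\subset X^0\subset X^1\subset\cdots$ induces, for the cohomology theory $E^\bullet$, an exact couple: apply $E^\bullet$ to the cofibre sequences $X^{p-1}\hookrightarrow X^p\to X^p/X^{p-1}$, and assemble the long exact sequences of the pairs $(X^p,X^{p-1})$ into the standard exact couple $D_1^{p,q}=E^{p+q}(X^p)$, $E_1^{p,q}=E^{p+q}(X^p/X^{p-1})$. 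Dually, for $E_\bullet$ one uses the long exact homology sequences of the same pairs to get the homological exact couple. The existence of the spectral sequence then follows from the general machinery of exact couples (cited in the paper's footnote via \cite{MCCLEARY, PRA000}), so that part is automatic.

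The computational heart is the identification of the $E_1$ (hence $E_2$) page. Here I would use that $X^p/X^{p-1}$ is a wedge of $p$-spheres, one for each $p$-cell $\sigma$ of $X$: $X^p/X^{p-1}\approxeq\bigvee_\sigma S^p$. Since $E^\bullet$ satisfies the wedge axiom (established in Theorem \ref{co-homology-theories-associated-with-any-spectrum}) and since $E^{p+q}(S^p)\cong E^q(S^0)=E^q(*)=\pi_{-q}(E)$ by the suspension isomorphism $\sigma^n$ proved there, we get $E_1^{p,q}=E^{p+q}(X^p/X^{p-1})\cong\prod_\sigma E^q(*)$, i.e. the group of cellular $p$-cochains with coefficients in $E^q(*)$. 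A routine check that the differential $d_1$ is induced by the cellular boundary maps (this is the usual naturality argument comparing attaching maps) then gives $E_2^{p,q}\cong H^p(X;E^q(*))$, the cellular — hence singular — cohomology of $X$ with coefficients in the abelian group $E^q(*)$. The homological case is entirely parallel, using $E_{p+q}(S^p)\cong E_q(S^0)=\pi_q(E)=E^q(*)$ (in the paper's indexing) and the wedge axiom for $E_\bullet$, yielding $E^2_{p,q}\cong H_p(X;E^q(*))$.

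Finally, convergence: for $X$ finite-dimensional the filtration is finite and convergence to $E^\bullet(X)$ (resp. $E_\bullet(X)$) is immediate from the exact couple, since the filtration of $E^{p+q}(X)$ by kernels of restriction to $X^p$ stabilizes. For infinite-dimensional $X$ I would invoke the $\lim^1$ exact sequence for cohomology stated in the Corollary following Theorem \ref{co-homology-theories-associated-with-any-spectrum}, which controls the passage from the skeleta to $X$; in the homological case the colimit $E_\bullet(X)=\mathop{\lim}\limits_{\overrightarrow{p}}E_\bullet(X^p)$ makes convergence cleaner. I expect the main obstacle to be precisely this convergence bookkeeping in the non-finite case — ensuring that the spectral sequence converges strongly (or at least conditionally) rather than just that its $E_\infty$ page is the associated graded of \emph{some} filtration — together with the care needed to identify $d_1$ with the cellular differential compatibly with the suspension isomorphisms. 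Everything else reduces to the already-established properties (wedge axiom, suspension isomorphism, exactness) of the theories $E^\bullet$, $E_\bullet$.
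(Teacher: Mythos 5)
Your proposal correctly reproduces the standard construction of the Atiyah--Hirzebruch spectral sequence: skeletal filtration, exact couple from the long exact (co)homology sequences of the pairs $(X^p,X^{p-1})$, identification of the $E_1$-page via the wedge axiom and the suspension isomorphism $E^{p+q}(X^p/X^{p-1})\cong\prod_\sigma E^q(S^0)$, and control of convergence in the infinite-dimensional case via the $\lim^1$ exact sequence stated in the corollary following Theorem~\ref{co-homology-theories-associated-with-any-spectrum}. The paper states this theorem without supplying a proof of its own, deferring to \cite{MCCLEARY, PRA000} and to the machinery just developed, and your argument is precisely the standard one those sources contain, so there is nothing to compare beyond observing that you have correctly filled in what the paper leaves implicit.
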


\begin{theorem}[Leray-Serre] If
$E_\bullet$ is a homology theory with products satisfying the
wedge axiom for CW-complexes and the WHE axiom, then for every
fibration $p:E\to B$ orientable with respect to
$E_\bullet$,\footnote{For the homological definition of
orientability see next Remark \ref{fundamental-homology-class-manifold}.} and with $B$ {}$0$-connected,
there is a spectral sequence $\{E^r_{p,q},d^r\}$ converging to
$E_\bullet(E)$ and having
$$E^2_{p.q}\cong H_p(B;E_q(F)).$$
The spectral sequence is natural with respect to a fibre map.
\end{theorem}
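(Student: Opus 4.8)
The plan is to reproduce, word for word, the classical construction of the Leray--Serre spectral sequence, but with singular homology replaced throughout by the generalized theory $E_\bullet$, and to invoke the three stated hypotheses (wedge axiom, WHE axiom, products) precisely where the classical argument uses the Eilenberg--Steenrod axioms. First I would reduce to the case that $B$ is a CW-complex: by the WHE axiom $E_\bullet$ is invariant under weak homotopy equivalence, so I may replace $B$ by a CW-substitute $B'$, pull $p$ back along $B'\to B$, and replace $E$ by the (weakly equivalent) total space of the pulled-back fibration without changing $E_\bullet(E)$. Fix then a CW-structure on $B$ with skeleta $B^{(p)}$ (using that $B$ is $0$-connected so that the fibre $F=p^{-1}(b)$ is, up to homotopy, independent of $b$), and set $E^{(p)}\equiv p^{-1}(B^{(p)})$. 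Each pair $(E^{(p)},E^{(p-1)})$ is an NDR-pair, and the filtration $\varnothing=E^{(-1)}\subset E^{(0)}\subset E^{(1)}\subset\cdots$ exhausts $E$. Splicing the long exact sequences of $E_\bullet$ on these pairs yields an exact couple with
$$D^1_{p,q}=E_{p+q}(E^{(p)}),\qquad \mathcal{E}^1_{p,q}=E_{p+q}(E^{(p)},E^{(p-1)}),$$
whence a spectral sequence $\{\mathcal{E}^r_{p,q},d^r\}$ with $d^r$ of bidegree $(-r,r-1)$, obtained as the usual sequence of derived couples.

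Next I would settle convergence. Since $E^{(p)}=\varnothing$ for $p<0$ one checks, using the suspension isomorphism over cells (see the next paragraph), that $\mathcal{E}^1_{p,q}=0$ for $p<0$; thus in each total degree the groups $E_n(E^{(p)})$ stabilize downwards. Because the filtration is skeletal and each inclusion is a cofibration, $E_\bullet$ sends the increasing union $\bigcup_p E^{(p)}=E$ to $\varinjlim_p E_\bullet(E^{(p)})$ --- this is the standard consequence of the wedge axiom together with the homotopy and exactness axioms (the Milnor $\lim^1$ argument, cf. the Corollary above and \cite{SWITZER}). Hence the spectral sequence converges to $E_\bullet(E)$, filtered by the images of $E_\bullet(E^{(p)})\to E_\bullet(E)$.

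The heart of the matter is the identification of the $E^1$ and $E^2$ terms. Over each open $p$-cell the fibration is trivial, so fixing characteristic maps $\Phi_\sigma:(D^p,\partial D^p)\to(B^{(p)},B^{(p-1)})$ and trivializations $\Phi_\sigma^*E\cong D^p\times F$, excision and the wedge axiom give
$$E_{p+q}(E^{(p)},E^{(p-1)})\cong\widetilde E_{p+q}\Big(\bigvee_\sigma S^p\wedge F^+\Big)\cong\bigoplus_\sigma\widetilde E_{p+q}(S^p\wedge F^+)\cong\bigoplus_\sigma E_q(F),$$
so $\mathcal{E}^1_{\bullet,q}$ is the cellular chain complex of $B$ with coefficients in the local system $b\mapsto E_q(p^{-1}(b))$, and $d^1$ is the cellular boundary operator. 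It is exactly here that orientability of $p:E\to B$ with respect to $E_\bullet$ is used: by definition it says that the $\pi_1(B,b)$-action on $E_\bullet(p^{-1}(b))$ is trivial, so the local system is simple and $\mathcal{E}^2_{p,q}=H_p\big(C_\bullet(B);E_q(F)\big)\cong H_p(B;E_q(F))$, as asserted; it is in pinning down these fibrewise identifications simultaneously and compatibly over all of $B$ that the availability of products in $E_\bullet$ is convenient. I expect this step --- making the cell-by-cell identifications canonical and verifying that "$E_\bullet$-orientability" is precisely the hypothesis that kills the monodromy, uniformly and compatibly with $d^1$ --- to be the main obstacle; the remainder is routine exact-couple bookkeeping.

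Finally, naturality with respect to a fibre map $(\bar f,f):(E\to B)\to(E'\to B')$ is formal: after homotoping $f$ to a cellular map, $\bar f$ carries $E^{(p)}$ into $E'^{(p)}$, hence induces a morphism of the exact couples above and therefore of the associated spectral sequences; on $\mathcal{E}^2$ this morphism is the map $H_\bullet(B;E_\bullet(F))\to H_\bullet(B';E_\bullet(F'))$ induced by $f$ and by $\bar f|_F$, and it converges to $\bar f_\ast:E_\bullet(E)\to E_\bullet(E')$. $\square$
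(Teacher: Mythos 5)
The paper does not prove this theorem: it is stated, alongside the Atiyah--Hirzebruch--Whitehead theorem, as cited classical background (the wording, including ``with products'' and ``WHE axiom'', is essentially that of Switzer \cite{SWITZER}). There is therefore no in-paper argument to compare against; judged on its own, your sketch is the correct standard construction: exact couple from the skeletal filtration $E^{(p)}=p^{-1}(B^{(p)})$, identification of $\mathcal{E}^1$ by excision, wedge axiom and suspension as cellular chains with coefficients in the local system $b\mapsto E_q(p^{-1}(b))$, $E_\bullet$-orientability to kill the monodromy so that $\mathcal{E}^2_{p,q}\cong H_p(B;E_q(F))$, and a Milnor $\lim^1$ argument to pass to the colimit when $B$ is infinite-dimensional. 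Two refinements: the ``with products'' hypothesis is used only for the multiplicative structure of the spectral sequence, which the statement as given does not assert and your argument does not use, so it is inessential for what you actually establish; and the step you flag as the main obstacle --- choosing the cell-by-cell isomorphisms $E_{p+q}(E^{(p)},E^{(p-1)})\cong\bigoplus_\sigma E_q(F)$ compatibly with the attaching maps so that $d^1$ is the cellular boundary with constant coefficients --- is indeed exactly where $E_\bullet$-orientability earns its keep, and you are right to isolate it as the crux rather than to wave it through.
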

\begin{remark} The problem of extension of maps and sections of fiber
bundles is related to (co)homology theories. In fact we have the
following theorems.\end{remark}

\begin{theorem} Let $K$ be a cell complex
and let $L\subset K$ be a subcomplex. Let $X$ be a
simply-connected topological space (or at least that is
homotopy-simple in the sense that $\pi_1(X)$ is abelian and
acts trivially on all the groups $\pi_i(X)$, $i>1$.) A given map
$f:L\to X$, can be extended from the subcomplex $L\bigcup K^{i-1}$
to $L\bigcup K^{i}$, if $\pi_{i-1}(X)=0$.
\end{theorem}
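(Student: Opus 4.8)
The plan is to solve the extension problem one cell at a time, recognizing the familiar set-up of elementary obstruction theory. First I would recall that the $i$-skeleton is obtained from the $(i-1)$-skeleton by a pushout: writing $\{e^i_\alpha\}_{\alpha\in A}$ for the $i$-cells of $K$ not already in $L$, with attaching maps $\varphi_\alpha:S^{i-1}\to K^{(i-1)}\subset L\cup K^{(i-1)}$, one has
$$L\cup K^{(i)}=\big(L\cup K^{(i-1)}\big)\bigcup\nolimits_{\coprod_\alpha\varphi_\alpha}\Big(\coprod_{\alpha\in A}D^i_\alpha\Big).$$
By the universal property of this pushout, to extend $f$ (now regarded as defined on $L\cup K^{(i-1)}$) to $L\cup K^{(i)}$ it suffices to extend each composite $f\circ\varphi_\alpha:S^{i-1}\to X$ over the disk $D^i_\alpha$; the extensions over the individual disks automatically glue, since they agree on the common part $L\cup K^{(i-1)}$. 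Thus the whole problem reduces to a single, cell-local question.

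Next I would invoke the standard fact that a map $g:S^{i-1}\to X$ extends to a map $D^i\to X$ if and only if $g$ is nullhomotopic, i.e.\ if and only if its class in $\pi_{i-1}(X)$ is zero. For $i\ge 2$ this class is an element of the group $\pi_{i-1}(X,x_0)$ once a basepoint and a path are chosen; here is where the homotopy-simple hypothesis enters, guaranteeing that $\pi_{i-1}(X)$ is abelian and that the fundamental group acts trivially, so the class $[f\circ\varphi_\alpha]$ is a well-defined element of $\pi_{i-1}(X)$ independent of these auxiliary choices. (For $i=1$ the statement is the trivial observation that a map from $S^0$ always extends over $D^1$ when $X$ is path-connected, which is subsumed in $\pi_0(X)=0$.) Since by hypothesis $\pi_{i-1}(X)=0$, each obstruction class $[f\circ\varphi_\alpha]$ vanishes, so each $f\circ\varphi_\alpha$ is nullhomotopic and hence extends over $D^i_\alpha$.

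Finally I would assemble: choosing such an extension $F_\alpha:D^i_\alpha\to X$ for every $\alpha$, the maps $f$ on $L\cup K^{(i-1)}$ and the $F_\alpha$ on the disks are compatible on the attaching spheres by construction, so by the pushout property they determine a continuous map $F:L\cup K^{(i)}\to X$ restricting to $f$ on $L\cup K^{(i-1)}$ and in particular on $L$. The main point to be careful about is not any deep computation but precisely the well-definedness of the obstruction: one must check that the vanishing of $\pi_{i-1}(X)$ really does kill $[f\circ\varphi_\alpha]$ irrespective of basepoint and connecting path, and that is exactly what homotopy-simplicity secures; everything else is the routine pushout/extension bookkeeping sketched above.
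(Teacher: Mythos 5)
Your proof is correct. The paper's own argument invokes the full obstruction-theoretic package: it asserts that the extension problem is governed by a class $\alpha_f$ in the \emph{cohomology} group $H^i(K,L;\pi_{i-1}(X))$, notes that vanishing of $\alpha_f$ suffices for extendibility, and observes that this is automatic when the coefficient group $\pi_{i-1}(X)$ is zero. Your argument works at the cochain level and is strictly more elementary for the claim at hand: you observe that the would-be obstruction cochain assigns to each $i$-cell $e^i_\alpha\not\subset L$ the class $[f\circ\varphi_\alpha]\in\pi_{i-1}(X)$, and when this group is trivial every such class vanishes, so each attaching map is nullhomotopic and each cell extends directly, with the pushout property supplying the glueing. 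The two routes agree in content — your cell-by-cell assignment \emph{is} the obstruction cochain — but the paper's cohomological formulation carries more information (vanishing of the cohomology class, not merely the cochain, already suffices, at the cost of possibly redefining $f$ on $L\cup K^{(i-1)}$ rel $L\cup K^{(i-2)}$), whereas your version avoids that subtlety entirely since a zero coefficient group kills the cochain outright and no modification of $f$ is needed. You also correctly locate the role of homotopy-simplicity: it is what makes $[f\circ\varphi_\alpha]$ a well-defined element of $\pi_{i-1}(X)$ independent of basepoint and connecting path, which is needed to state the obstruction-theoretic framework cleanly, even though in the degenerate case $\pi_{i-1}(X)=0$ the conclusion could be reached with less.
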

\begin{proof} In fact the
obstruction to such an extension is determined by an element
$\alpha_f$ of the relative cohomology group
$H^i(K,L;\pi_{i-1}(X))$. The vanishing of $\alpha_f$ in this group
suffices for the map to be extendible. In particular, the
extension is assured if $\pi_{i-1}(X)=0$. (For more details see
e.g. Refs. \cite{DUB-FOM-NOV, PRA3} and works quoted there.)\end{proof}

\begin{theorem} Let $f,g:K\to X$ be two maps which coincide on the
$(q-1)$-skeleton $K^{q-1}$ of $K$. On each cell $\sigma^q\subset
K^q$ the two maps $f$ and $g$ give rise, via their restrictions,
to two maps $f,g:\sigma^q\to X$ coinciding on the boundary:
$f|_{\partial\sigma^q}=g|_{\partial\sigma^q}$, and therefore
yielding in combination a map $S^q\to X$, determining what is
called a ''distinguishing element'' of $\pi_q(X)$, i.e., for each
$q$-cell $\sigma^q$ of $K$, we have a {\em difference cochain}
$\alpha(\sigma^q,f,g)\in\pi_q(X)$. Then, the difference cochain
may be regarded as belonging to the cohomology group
$H^q(K;\pi_q(X))$.\end{theorem}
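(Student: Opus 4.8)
The plan is to realise the ``difference cochain'' as an honest element of the cellular cochain group $C^q(K;\pi_q(X))=\mathrm{Hom}_{\mathbb{Z}}(C_q(K),\pi_q(X))$, to show it is a cocycle, and then to read off its class in $H^q(K;\pi_q(X))$. First I would fix, for each oriented $q$-cell $\sigma^q$, a characteristic map $\chi_\sigma:(D^q,S^{q-1})\to(K^q,K^{q-1})$. Since $f|_{K^{q-1}}=g|_{K^{q-1}}$, the composites $f\circ\chi_\sigma$ and $g\circ\chi_\sigma$ agree on $S^{q-1}$, so gluing them on the two hemispheres of $S^q=D^q\cup_{S^{q-1}}D^q$ gives a map $S^q\to X$ whose homotopy class is $\alpha(\sigma^q,f,g)\in\pi_q(X)$. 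Here one uses that $X$ is homotopy-simple (as already assumed for the preceding extension theorems), so that this class is independent of the auxiliary base-path choices and depends on $\sigma^q$ only through its orientation. Extending $\mathbb{Z}$-linearly over $C_q(K)$ then yields a cochain $d(f,g)\in C^q(K;\pi_q(X))$ with $d(f,g)(\sigma^q)=\alpha(\sigma^q,f,g)$.

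Next I would recall the primary obstruction cochain: for a map $h$ defined on $K^q$ and an oriented $(q+1)$-cell $\tau$ with attaching map $\varphi_\tau:S^q\to K^q$, set $c(h)(\tau)=[h\circ\varphi_\tau]\in\pi_q(X)$. In the present situation $f$ and $g$ are defined on all of $K$, hence on every $(q+1)$-cell, so $f\circ\varphi_\tau$ and $g\circ\varphi_\tau$ extend over $D^{q+1}$, i.e. are null-homotopic, whence $c(f)=c(g)=0$ as cochains. The crux is then the coboundary formula
$$\delta\,d(f,g)=c(g)-c(f)\quad\hbox{in }C^{q+1}(K;\pi_q(X)),$$
proved cell by cell: evaluating $\delta d(f,g)$ on $\tau$ amounts to summing the $\alpha(\sigma^q,f,g)$ over the $q$-cells $\sigma^q$ occurring in $\partial\tau$ with their cellular incidence multiplicities, and, assembling the corresponding maps over $S^q=\partial D^{q+1}$, the sum telescopes, via the pinch comultiplications $S^q\to S^q\vee S^q$, into $[g\circ\varphi_\tau]-[f\circ\varphi_\tau]$. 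Combining this with $c(f)=c(g)=0$ gives $\delta\,d(f,g)=0$, so $d(f,g)$ is a $q$-cocycle and determines a well-defined class $[d(f,g)]\in H^q(K;\pi_q(X))$, which is the asserted statement.

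I expect the coboundary formula to be the only real obstacle: one must arrange the gluings over $\partial D^{q+1}$ so that orientations and incidence numbers match the cellular coboundary operator, and check that the additivity invoked is exactly the additivity of $\pi_q$ under the pinch map, so that no hypothesis on $X$ beyond homotopy-simplicity is needed. Everything else is routine given cellular approximation and the elementary properties of $\pi_q$ already recalled in the excerpt.

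Finally, I would record an alternative route that makes the cocycle property automatic and avoids the explicit coboundary computation: the difference cochain $d(f,g)$ is precisely the obstruction cochain to extending, over the $(q+1)$-skeleton of $K\times I$, the partial homotopy on $(K\times\partial I)\cup(K^{q-1}\times I)$ defined by $f$, $g$ and the constant homotopy on $K^{q-1}$. Indeed, the $(q+1)$-cells of $K\times I$ relative to $K\times\partial I\cup K^{q-1}\times I$ are the cells $\sigma^q\times(0,1)$, in bijection with the $q$-cells of $K$, and the obstruction on $\sigma^q\times(0,1)$ is exactly $\alpha(\sigma^q,f,g)$. Since obstruction cochains are always cocycles, $d(f,g)$ is a cocycle; the identification $H^{q+1}\big(K\times I,\,K\times\partial I\cup K^{q-1}\times I;\pi_q(X)\big)\cong H^q(K;\pi_q(X))$ then places its class in $H^q(K;\pi_q(X))$.
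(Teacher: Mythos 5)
The paper states this result without a proof; it is recalled from standard references (the surrounding obstruction-theory material is cited to \cite{DUB-FOM-NOV, PRA3}). Your proposal therefore supplies an argument where the paper offers none, and the argument you give is the correct standard one: define $d(f,g)\in C^q(K;\pi_q(X))$ on generators by $d(f,g)(\sigma^q)=\alpha(\sigma^q,f,g)$, observe that $c(f)=c(g)=0$ because $f$ and $g$ extend over every $(q+1)$-cell, and conclude via the coboundary formula $\delta\,d(f,g)=c(g)-c(f)$ that $d(f,g)$ is a cocycle. The homotopy-simplicity of $X$ (already assumed for the preceding extension theorems in this section of the paper) is used exactly where you say it is, to make $\alpha(\sigma^q,f,g)$ independent of base-path choices. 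Your second route, recognising $d(f,g)$ as the obstruction cochain for extending the partial homotopy on $(K\times\partial I)\cup(K^{q-1}\times I)$ over the $(q+1)$-skeleton of $K\times I$, is the slickest way to get the cocycle property for free, and is likely closer to how the cited sources actually argue.

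One small correction in that second route. The pair $\bigl(K\times I,\,K\times\partial I\cup K^{q-1}\times I\bigr)$ has no relative $q$-cells, so
$$H^{q+1}\bigl(K\times I,\,K\times\partial I\cup K^{q-1}\times I;\pi_q(X)\bigr)\cong Z^q\bigl(K;\pi_q(X)\bigr),$$
the group of $q$-cocycles, not $H^q\bigl(K;\pi_q(X)\bigr)$: the relevant relative cochain complex is the complex of $K$ shifted up by one degree and truncated below degree $q$. This is actually what you want, since the whole point of the detour through $K\times I$ is to establish the cocycle condition; the passage to a cohomology class in $H^q\bigl(K;\pi_q(X)\bigr)$ is then an extra (trivial) step, not an ``identification.'' With that phrasing tightened, the proof is complete.
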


\begin{theorem} If $X=K(G,n)$ is a {\em
Eilenberg-MacLane space} then there is a natural one-to-one
correspondence $[K,X]\leftrightarrow H^n(K;G)$. In the case $n=1$,
the elements of $H^1(K;G)$ and $[K,X]$ are determined by the
homomorphisms $\pi_1(K)\to G$. (This theorem remains true even if
$G$ is non-abelian.)
\end{theorem}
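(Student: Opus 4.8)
The plan is to make the correspondence explicit through a \emph{fundamental cohomology class} and then to establish bijectivity by the obstruction-theoretic machinery recorded in the three theorems immediately preceding this statement. First I would replace $K$ by a homotopy equivalent CW-complex (harmless, since both $[K,X]$ and $H^n(K;G)$ are homotopy invariant) and construct $\iota\in H^n(K(G,n);G)$: by the Hurewicz theorem $\pi_n(K(G,n))\cong H_n(K(G,n);\Z)$, while $H_{n-1}(K(G,n);\Z)=0$, so the universal coefficient theorem gives $H^n(K(G,n);G)\cong\mathrm{Hom}(H_n(K(G,n);\Z),G)\cong\mathrm{Hom}(G,G)$; let $\iota$ be the class corresponding to $\mathrm{id}_G$. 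Then define
$$\Theta\colon[K,K(G,n)]\longrightarrow H^n(K;G),\qquad \Theta([f])=f^*\iota ,$$
whose naturality in $K$ is immediate from functoriality of $f\mapsto f^*$.

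Next I would prove surjectivity of $\Theta$ by realizing a prescribed class $u\in H^n(K;G)$ cell by cell. Since $\pi_i(K(G,n))=0$ for $i<n$, any map of $K^{(n-1)}$ into $K(G,n)$ is nullhomotopic, so start with the constant map on $K^{(n-1)}$; an extension over $K^{(n)}$ is the choice, for each $n$-cell, of an element of $\pi_n(K(G,n))=G$, i.e.\ of a cellular cochain $c\in C^n(K;G)$, and I would take $c$ to be a cocycle representing $u$. The obstruction to extending over $K^{(n+1)}$ is the class of $\delta c$, which vanishes; for $i>n+1$ the obstruction lies in $H^{i}(K;\pi_{i-1}(K(G,n)))=0$, so by the extension theorem stated above the map extends over all of $K$. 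By the normalization of $\iota$ via Hurewicz, $f^*\iota$ evaluates on each $n$-cell exactly by the element of $\pi_n$ chosen there, whence $f^*\iota=[c]=u$.

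For injectivity I would use the difference-cochain theorem: given $f_0,f_1\colon K\to K(G,n)$ with $f_0^*\iota=f_1^*\iota$, first homotope them to agree on $K^{(n-1)}$ (again because $\pi_i(K(G,n))=0$, $i<n$). On $K^{(n)}$ the difference cochain $\alpha(f_0,f_1)\in C^n(K;\pi_n(K(G,n)))=C^n(K;G)$ is a cocycle whose class is $f_0^*\iota-f_1^*\iota=0$, hence a coboundary; adjusting $f_1$ by the corresponding homotopy on the $n$-skeleton makes $f_0$ and $f_1$ agree on $K^{(n)}$, and since the higher obstructions lie in $H^{i}(K;\pi_i(K(G,n)))=0$ for $i>n$, the homotopy extends over $K$. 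Thus $\Theta$ is a bijection. The main obstacle is the realization step: one must carefully identify the cochain chosen cell-by-cell with the pullback $f^*\iota$, i.e.\ verify that $\iota$ pairs with cells precisely by reading off the chosen classes of $\pi_n$ — this is exactly where the Hurewicz normalization of $\iota$ is used.

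Finally, for $n=1$ the space $K(G,1)$ is aspherical with $\pi_1=G$, so the same obstruction argument (there are no higher homotopy groups to obstruct anything) shows that a map $K\to K(G,1)$ is determined up to free homotopy by the induced homomorphism $\pi_1(K)\to G$ taken modulo conjugation, giving $[K,K(G,1)]\cong\mathrm{Hom}(\pi_1(K),G)/\mathrm{conj}$. When $G$ is abelian this quotient is $\mathrm{Hom}(\pi_1(K),G)\cong\mathrm{Hom}(H_1(K;\Z),G)\cong H^1(K;G)$, recovering the general statement; when $G$ is non-abelian the enumeration by homomorphisms $\pi_1(K)\to G$ persists verbatim, and $H^1(K;G)$ is to be read as this set of conjugacy classes, which is the sense in which the theorem remains true.
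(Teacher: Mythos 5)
Your obstruction-theoretic proof is correct and is precisely the route the paper sets up by stating the extension and difference-cochain theorems immediately beforehand; the paper itself gives no proof of this classical result, so there is nothing to compare against beyond that implicit scaffolding. One small refinement you supply that the paper's statement glosses over: for $n=1$ with $G$ non-abelian the bijection is with $\mathrm{Hom}(\pi_1(K),G)$ taken modulo conjugation by $G$, since free homotopy classes of maps into an aspherical space correspond to conjugacy classes of homomorphisms; your reading of $H^1(K;G)$ as this set of conjugacy classes is the correct interpretation.
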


\begin{example} One has a natural one-to-one
correspondence $[K^n,S^n]\leftrightarrow H^n(K^n;\mathbb{Z})$, where
$K^n$ is an $n$-dimensional complex.\end{example}

\begin{theorem} Let $\pi:E\to B$
be a fibre bundle with base $B$ given as a simplicial (or cell)
complex and fibre $F$. We shall assume that $B$ is
simply-connected (or at least that $\pi_1(B)$ acts trivially on
the groups $\pi_i(F)$. We shall assume also that the fibre $F$ is
simply-connected (or at least homotopy-simple). Suppose
$s:B^{q-1}\to E$ be a croos-section of the fibre bundle above the
$(q-1)$-skeleton $B^{q-1}\subset B$. An obstruction to extending a
cross-section may be regarded as an element of
$H^q(B;\pi_{q-1}(F))$. In particular, if the fibre is the
$(q-1)$-sphere $S^{q-1}$, then the obstruction $\alpha\in
H^q(B;\pi_{q-1}(F))$ is an {\em Euler characteristic class} of
the fibre bundle.\end{theorem}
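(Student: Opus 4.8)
The plan is to run the classical cell-by-cell obstruction argument, using the difference-cochain machinery already recorded in the theorems above.

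First I would fix the given cross-section $s:B^{q-1}\to E$ and attach to it an obstruction cochain. For each $q$-cell $\sigma^q$ of $B$ with characteristic map $\varphi_\sigma:D^q\to B$, the pullback bundle $\varphi_\sigma^*E$ over the contractible disc $D^q$ is trivial, $\varphi_\sigma^*E\thickapprox D^q\times F$, so under such a trivialization $s$ restricted to $\partial\sigma^q\thickapprox S^{q-1}$ becomes a map $S^{q-1}\to F$ and determines a class in $\pi_{q-1}(F)$. Because $F$ is homotopy-simple (so $\pi_{q-1}(F)$ is abelian) and $\pi_1(B)$ acts trivially on $\pi_{q-1}(F)$, this class is independent of the chosen trivialization and of base points; hence we obtain a well-defined cochain $c(s)\in C^q(B;\pi_{q-1}(F))$. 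By construction $c(s)$ vanishes exactly on those $q$-cells over which $s$ extends, so $c(s)\equiv 0$ iff $s$ extends to a section over $B^q$.

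Second, I would prove the cocycle identity $\delta c(s)=0$. For a $(q+1)$-cell $\tau$ I extend $s$ over each $q$-face of $\tau$ with a single isolated singularity whose local index in $\pi_{q-1}(F)$ equals the value of $c(s)$ on that face; assembling these over $\partial\tau\thickapprox S^q$ and using the additivity of the index construction — legitimate precisely because $\pi_{q-1}(F)$ is abelian under our hypotheses — the total index over $\partial\tau$ equals $\langle c(s),\partial\tau\rangle=\langle\delta c(s),\tau\rangle$. But that total index is the obstruction to extending a section of the (trivial) bundle over $D^{q+1}$ to its boundary sphere, and since a section over $D^{q+1}$ exists the index must vanish. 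Hence $c(s)$ is a cocycle and defines a class $[c(s)]\in H^q(B;\pi_{q-1}(F))$. Then, by the difference-cochain theorems stated above, if $s$ and $s'$ agree on $B^{q-2}$ one gets a difference cochain $d(s,s')\in C^{q-1}(B;\pi_{q-1}(F))$ with $c(s)-c(s')=\delta\,d(s,s')$, so $[c(s)]$ depends only on $s|_{B^{q-2}}$, and $[c(s)]=0$ iff $s$ can be altered on the $(q-1)$-cells rel $B^{q-2}$ so as to extend over $B^q$. This is exactly the assertion that the obstruction to extending the cross-section is an element of $H^q(B;\pi_{q-1}(F))$.

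Finally, for the case $F=S^{q-1}$, one has $\pi_{q-1}(F)=\pi_{q-1}(S^{q-1})\cong\mathbb{Z}$, and a section of such a sphere bundle is the same datum as a nowhere-zero section of the associated rank-$q$ vector (or spherical) bundle; the primary obstruction $[c(s)]\in H^q(B;\mathbb{Z})$ is by definition the Euler class of this bundle, and for the tangent sphere bundle of a closed oriented $q$-manifold its pairing with the fundamental class is the Euler characteristic (Poincar\'e--Hopf), which accounts for the terminology \emph{Euler characteristic class}. The one genuinely delicate step is the cocycle identity $\delta c(s)=0$: it is the only place where the hypotheses on $F$ and on the $\pi_1(B)$-action are actually used, and it requires the careful ``section with isolated singularities'' bookkeeping over $\partial\tau$ rather than a formal one-line manipulation.
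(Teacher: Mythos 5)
Your proposal is correct and follows the same cell-by-cell obstruction construction that the paper uses; the difference is one of detail, not approach. The paper's proof trivializes $\pi^{-1}(\sigma^q)\cong\sigma^q\times F$, defines $\alpha(\sigma^q,s)\in\pi_{q-1}(F)$ from $s|_{\partial\sigma^q}$, and then simply asserts without argument that the resulting cochain is a cocycle representing a class in $H^q(B;\pi_{q-1}(F))$, whereas you go on to actually verify the cocycle identity $\delta c(s)=0$ (the isolated-singularity index argument, which you rightly flag as the only genuinely delicate step), establish well-definedness of the class via the difference-cochain mechanism, and spell out the Euler class / Poincar\'e--Hopf identification when $F=S^{q-1}$ --- all verifications the paper leaves implicit.
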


\begin{proof} Let $\sigma^q$ be any $q$-simplex of
$B$. Above the simplex $\sigma^q$ the fibre bundle is canonically
identifiable with the direct product:
$\pi^{-1}(\sigma^q)\cong\sigma^q\times F$. As on the boundary
$\partial\sigma^q\cong S^{q-1}$ the cross-section
$s:\partial\sigma^q\to\partial\sigma^q\times F$ is by assumption,
already given. Hence via the projection map onto $F$ we obtain a
map $S^{q-1}\to F$, defining an element
$\alpha(\sigma^q,s)\in\pi_{q-1}(F)$ for each $q$-simplex
$\sigma^q\subset B^q$. Therefore an {\em obstruction cocycle}
$\alpha$ to the attempted extension of the cross-section $s$ to
the $q$-skeleton $B^q$, belongs to $H^q(B;\pi_{q-1}(F))$.\end{proof}

\begin{theorem} Let $\varphi_i:B\to E$, $i=1,2$, be two cross-sections agreeing
on the $(q-1)$-skeleton $B^{q-1}\subset B$. The obstruction to a
homotopy between the cross-sections $\varphi_1$ and $\varphi_2$,
$\alpha(\varphi_1,\varphi_2)\in H^q(B;\pi_q(F))$.
\end{theorem}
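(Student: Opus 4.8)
The plan is to imitate, in the fibrewise setting, the construction of the difference cochain set up above for maps of cell complexes, and then to identify the cohomology class of that cochain with the primary obstruction. Throughout I use the standing hypotheses that $B$ is simply-connected (or that $\pi_1(B)$ acts trivially on the $\pi_i(F)$) and that $F$ is homotopy-simple, so that $\pi_q(F)$ is a genuine constant coefficient group and all the base-point identifications below are canonical.

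First I would build the candidate homotopy skeleton by skeleton. On $B^{q-1}$ the two sections coincide, $\varphi_1|_{B^{q-1}}=\varphi_2|_{B^{q-1}}$, so one starts with the stationary homotopy there. To carry it across a $q$-cell $\sigma^q$ of $B$, use the local triviality of $\pi:E\to B$ over $\sigma^q$, namely $\pi^{-1}(\sigma^q)\cong\sigma^q\times F$; composing $\varphi_i|_{\sigma^q}$ with the projection onto $F$ gives maps $f_i:\sigma^q\to F$, $i=1,2$, agreeing on $\partial\sigma^q$. Together with the stationary homotopy on $\partial\sigma^q\times I$ these define a map on $(\sigma^q\times\{0\})\cup(\partial\sigma^q\times I)\cup(\sigma^q\times\{1\})=\partial(\sigma^q\times I)$, which is a $q$-sphere, into $F$, hence a well-defined element $\alpha(\varphi_1,\varphi_2)(\sigma^q)\in\pi_q(F)$ (well-definedness of the class uses the homotopy-simplicity of $F$). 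Extending the homotopy over $\sigma^q\times I$ amounts precisely to extending this map $S^q\to F$ to $D^{q+1}\to F$, which is possible iff $\alpha(\varphi_1,\varphi_2)(\sigma^q)=0$. Letting $\sigma^q$ range over the $q$-cells of $B$ yields a cochain $\alpha(\varphi_1,\varphi_2)\in C^q(B;\pi_q(F))$.

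Next I would check that $\alpha(\varphi_1,\varphi_2)$ is a cocycle. Evaluating $\delta\alpha(\varphi_1,\varphi_2)$ on a $(q+1)$-cell $\tau^{q+1}$ and comparing with the obstruction cocycles of the preceding theorem gives the classical relation $\langle\delta\alpha(\varphi_1,\varphi_2),\tau^{q+1}\rangle=\langle c(\varphi_2),\tau^{q+1}\rangle-\langle c(\varphi_1),\tau^{q+1}\rangle$, where $c(\varphi_i)\in C^{q+1}(B;\pi_q(F))$ is the obstruction cocycle to extending $\varphi_i|_{B^q}$ over $B^{q+1}$. Since $\varphi_1$ and $\varphi_2$ are defined on all of $B$, and in particular on $B^{q+1}$, both $c(\varphi_1)$ and $c(\varphi_2)$ vanish; hence $\delta\alpha(\varphi_1,\varphi_2)=0$ and the class $[\alpha(\varphi_1,\varphi_2)]\in H^q(B;\pi_q(F))$ is defined, independently of the auxiliary choices (cellular structure, local trivializations) by the usual naturality argument.

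Finally I would verify that this class is the primary obstruction. If $[\alpha(\varphi_1,\varphi_2)]=0$, choose $\beta\in C^{q-1}(B;\pi_q(F))$ with $\alpha(\varphi_1,\varphi_2)=\delta\beta$ and use $\beta$ to redefine the homotopy over $B^{q-1}$; by the same cochain arithmetic as in the absolute case this replaces $\alpha(\varphi_1,\varphi_2)$ by $0$, and the homotopy then extends over $B^q$. Conversely, a homotopy of sections over $B^q$ between $\varphi_1|_{B^q}$ and $\varphi_2|_{B^q}$ restricts over each $q$-cell $\sigma^q$ to a filling of the map $\partial(\sigma^q\times I)\to F$, exhibiting $\alpha(\varphi_1,\varphi_2)$ as a coboundary, so $[\alpha(\varphi_1,\varphi_2)]=0$. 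The step I expect to be the main obstacle is the cocycle identity above: one has to keep careful track of the orientations of the faces of $\tau^{q+1}\times I$ and of the base-point conventions for the several copies of $\pi_q(F)$ entering, so as to reduce $\langle\delta\alpha(\varphi_1,\varphi_2),\tau^{q+1}\rangle$ correctly to $\langle c(\varphi_2)-c(\varphi_1),\tau^{q+1}\rangle$; the homotopy-simplicity of $F$ is exactly what makes all these identifications legitimate.
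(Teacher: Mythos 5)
The paper does not actually give a proof of this statement: it is presented as a background recollection from classical obstruction theory (following Dubrovin--Fomenko--Novikov), sandwiched between the proven theorem on the obstruction to extending a cross-section over the $q$-skeleton and the corollary on contractible fibres, with no argument supplied. Your proposal is a correct reconstruction of the standard proof, and it combines precisely the two ingredients the paper sets up in the surrounding theorems --- the difference cochain $\alpha(\sigma^q,f,g)\in\pi_q(X)$ for maps agreeing on $K^{q-1}$, and the local-triviality argument used to extract an element of $\pi_{q-1}(F)$ from $s|_{\partial\sigma^q}$ --- so it is exactly the argument the paper implicitly intends. The only places to be slightly more careful are the ones you already flag: (i) the well-definedness of $\alpha(\varphi_1,\varphi_2)(\sigma^q)$ uses not just homotopy-simplicity of $F$ but also the assumption that $\pi_1(B)$ acts trivially on $\pi_q(F)$, so that the identification of the fibre over different points of $\sigma^q$ with a fixed $F$ gives a coefficient group rather than a local system; and (ii) strictly speaking one needs the class $[\alpha(\varphi_1,\varphi_2)]$ to be independent of the choice of local trivialization $\pi^{-1}(\sigma^q)\cong\sigma^q\times F$, which again follows since $\sigma^q$ is contractible so any two trivializations differ by a map $\sigma^q\to\mathrm{Homeo}(F)$ homotopic to the constant. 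These are routine and your sketch handles them correctly; the proposal is sound.
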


\begin{cor}\label{existence-cross-sections-contractible-fibre}
If the fibre is contractible, ($\pi_i(F)=0$ for all $i$), then it follows
that cross-sections always exist, and moreover that all
cross-sections are homotopic.
\end{cor}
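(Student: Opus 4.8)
The plan is to run the standard obstruction-theoretic induction over the skeleta of $B$, using the two preceding theorems that place the obstruction to extending a cross-section over the $q$-skeleton in $H^q(B;\pi_{q-1}(F))$ and the obstruction to constructing a homotopy rel $B^{q-1}$ between two cross-sections in $H^q(B;\pi_q(F))$. Since $F$ is contractible, every homotopy group $\pi_i(F)$ is trivial, so each of these obstruction groups is the zero group and every obstruction cocycle is forced to vanish identically. Note also that the standing hypotheses of those theorems (simple connectivity, or at least homotopy-simplicity, of the fibre, and triviality of the $\pi_1(B)$-action on the $\pi_i(F)$) hold automatically here because $\pi_1(F)=0$ and all $\pi_i(F)=0$, so there is nothing to check on that front.

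For existence, I would first choose, for each vertex of $B$, an arbitrary point of the fibre over it, which defines a cross-section $s$ on $B^0$. Assuming inductively that $s$ has been constructed over $B^{q-1}$, the obstruction cocycle $\alpha(\sigma^q,s)$ lies in $\pi_{q-1}(F)=0$ for every $q$-simplex $\sigma^q$, so $s$ extends over $B^q$. By induction $s$ extends over every skeleton, and since $B=\bigcup_q B^q$ carries the weak topology with respect to its skeleta, the compatible family of partial sections assembles into a continuous global cross-section $s:B\to E$.

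For uniqueness up to homotopy, let $\varphi_1,\varphi_2:B\to E$ be two cross-sections. Because $F$ is path-connected ($\pi_0(F)=0$), over each vertex we may join $\varphi_1$ and $\varphi_2$ by a path in the fibre; using the homotopy extension property of the inclusion $B^0\hookrightarrow B$ (available for inclusions of CW-pairs), we deform $\varphi_2$ through cross-sections to one agreeing with $\varphi_1$ on $B^0$. Inductively, if $\varphi_1$ and $\varphi_2$ agree on $B^{q-1}$, the obstruction $\alpha(\varphi_1,\varphi_2)\in H^q(B;\pi_q(F))=0$ to a homotopy rel $B^{q-1}$ between them over $B^q$ vanishes, so after one more deformation (again extended over all of $B$ via the homotopy extension property, and kept fixed on $B^{q-1}$) they agree on $B^q$. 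Passing to the colimit over $q$ yields a homotopy $\varphi_1\simeq\varphi_2$ through cross-sections.

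The one genuine subtlety, and the step I would treat most carefully, is the passage from the skeletonwise statements to the global ones when $B$ is infinite-dimensional: the successive deformations must be organized so that, once the homotopy has been built over $B^q$, it is held fixed on lower skeleta at every later stage (working rel $B^{q-1}$ and invoking the homotopy extension property each time); the union of these deformations is then continuous precisely because $B$ has the weak topology relative to its skeleta. Beyond this bookkeeping, the argument is a direct application of the obstruction theorems recalled above, with all the relevant obstruction groups equal to zero.
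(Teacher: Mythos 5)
Your argument is correct and is exactly the implicit proof behind the paper's corollary: the preceding theorems place the obstruction to extending a section over the $q$-skeleton in $H^q(B;\pi_{q-1}(F))$ and the obstruction to a homotopy between sections in $H^q(B;\pi_q(F))$, and both vanish identically when $F$ is contractible, giving existence and uniqueness up to homotopy by induction over skeleta. Your extra care about the homotopy extension property and the weak topology for infinite-dimensional $B$ is a welcome explicitness that the paper leaves tacit, but it is the same route.
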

\begin{example} This is the situation for
the fiber bundle of positive definite quadratic forms, where the
cross-sections are Riemannian metrics. So over a manifold $M$
Riemannian metrics always exist and are homotopic, i.e., any two
Riemannian metrics are continuously deformable one into the other.
For indefinite metrics of type $(p,q)$ with $p+q=n$, this results
is not more valid. In fact in these cases one has
$\pi_i(F)=\pi_i(GL(n;\mathbb{R})/O(p,q))\not=0$.\end{example}

\begin{example} Connections on a fibre bundle $E\to B$, with fibre $F$, always
exist. In fact, such connections can be identified with sections
of the fibre bundle of horizontal directions over any point $x\in
B$.\end{example}
\begin{remark}[Fundamental homology class of manifold]\label{fundamental-homology-class-manifold}
Let $\Lambda$ be a commutative ring. Let $M$ be a
fixed $n$-dimensional manifold, not necessarily compact. Let
$K\subset M$ denote a compact subset of $M$. If $K\subset L\subset
M$, one has a natural homomorphism $\rho_K:H_i(M,M\setminus
L;\Lambda)\to H_i(M,M\setminus K;\Lambda)$. If $a\in
H_i(M,M\setminus L;\Lambda)$, then we call $\rho_K(a)$ the {\em
restriction} of $a$ to $K$. The groups $H_i(M,M\setminus
K;\Lambda)$ are zero for $i>n$. A homology class $a\in
H_n(M,M\setminus K;\Lambda)$ is zero iff the restriction
$\rho_x(a)\in H_n(M,M\setminus x;\Lambda)$ is zero for each $x\in
K$. Let us, now, take $\Lambda=\mathbb{Z}$. Then
$$H_i(M,M\setminus x;\mathbb{Z})\cong
H_i(\mathbb{R}^n,\mathbb{R}^n\setminus \{0\};\mathbb{Z})=\left\{\begin{array}{l}
                                                                0,\hskip 2pt i\not=n\\
                                                                \hbox{infinite cyclic},\hskip 2pt i=n.\\
                                                                \end{array}\right.$$
A {\em local orientation}  $\mu_x$ for $M$ at $x$ is a choice of one of two possible
generators for $H_n(M,M\setminus x;\mathbb{Z})$. Note that such a
$\mu_x$ determines local orientations $\mu_y$ for all points $y$
in a small neighborhood of $x$. In fact, if $B$ is a ball about
$x$, then for each $y\in B$ the isomorphisms
$$\xymatrix{H_\bullet(M,M\setminus x;\mathbb{Z})\ar[r]^{\rho_x}&H_\bullet(M,M\setminus B;\mathbb{Z})&
H_\bullet(M,M\setminus y;\mathbb{Z})\ar[l]^{\rho_y}}$$
determine a local orientation $\mu_y$. An {\em orientation}
for $M$ is a function which assigns to each $x\in M$ a local
orientation $\mu_x$ which continuously depends on $x$, i.e., for
each $x$ there should exist a compact neighborhood $N$ and a
class $\mu_N\in H_n(M,M\setminus N;\mathbb{Z})$ so that
$\rho_y(\mu_N)=\mu_y$ for each $y\in N$. An {\em oriented
manifold} is a manifold $M$ endowed with an orientation. For any
oriented manifold $M$ and any compact $K\subset M$, there is one
and only one $\mu_K\in H_n(M;M\setminus K;\mathbb{Z})$ which
satisfies $\rho_x(\mu_K)=\mu_x$ for each $x\in K$. In particular,
if $M$ is compact, then there is one an only one $\mu_M\in
H_n(M;\mathbb{Z})$ with the required property. This class
$\mu\equiv\mu_M$ is called the {\em fundamental homology
class} of $M$. As $H_n(M;\mathbb{Z})\cong\mathbb{Z}^r$, for oriented
manifold, with $r$ the number of connected components of $M$, it
follows that $\mu_M=(1,\cdots_r\cdots,1)$ is the basis of the
$\mathbb{Z}$-module $H_n(M;\mathbb{Z})$. For any coefficient domain
$\Lambda$, the unique homomorphism $\mathbb{Z}\to\Lambda$ gives rise
to a class in $H_n(M,M\setminus K;\Lambda)$ that will also be
denoted by $\mu_K$. For example, we can take $\Lambda\equiv\mathbb{Z}_2$, so that $\mu_K\in H_n(M,M\setminus K;\mathbb{Z}_2)$. This
homology class can be constructed directly for any $n$-dimensional
manifold, without making any assumption of orientability. In
particular, if $M$ is a non-orientable compact manifold of
dimension $n$, with $r$ connected components, one has $H_n(M;\mathbb{Z}_2)\cong(\mathbb{Z}_2)^r$. Similar considerations apply to an
oriented manifold with boundary. For each compact subset $K\subset
M$, there exists a unique class $\mu_K\in H_n(M,(M\setminus
K)\cup\partial M;\mathbb{Z})$ with the property that
$\rho_x(\mu_K)=\mu_x$ for each $x\in K\cap(M\setminus\partial M)$.
In particular, if $M$ is compact, then there is a unique
fundamental homology class $\mu_M\in H_n(M,\partial M;\mathbb{Z})$
with the required property. Then, the connecting homomorphism
$\partial:H_n(M,\partial M;\mathbb{Z})\to H_n(\partial M;\mathbb{Z})$
maps $\mu_M$ to the fundamental homology class of $\partial
M$.\end{remark}

\begin{remark}[Stiefel-Whitney characteristic classes and
Stiefel-Whitney characteristic numbers] Given a vector bundle
$p:E\to B$,
 fibre $\mathbb{R}^n$, and bundle group $G=O(n)$, we can form the associated bundle $p_k:E_k\to B$ of orhonormal $k$-frames with fibre
$F_k\equiv V_{n,k}$, the Stiefel manifold of orthonormal $k$-frames in $\mathbb{R}^n$.\footnote{In particular, for
$ k=n$, $ F_k\cong O(n)$, and for $ k=1$, $ F_1\cong S^{n-1}$.}
As
$$\pi_i(V_{n,k})=\left\{\begin{array}{l}
        0,\hskip 5pt i<n-k\\
        \mathbb{Z},\hskip 5pt n-k=2r+1, \hskip 2pt \hbox{or $k=1$}\\
        \mathbb{Z}_2,\hskip 5pt n-k=2r,\\
                        \end{array}\right.$$ it follows that for each
$k=1,\cdots,n$ the obstruction to the existence of a cross-section
of the fibre bundle $p_k:E_k\to B$ will be an element $\alpha_k\in
H^{n-k+1}(B;\pi_{n-k}(V_{n,k}))$. The cohomology class $\alpha_k$,
considered module 2, is called the {\em $k$th Stiefel-Whitney
class of the vector bundle $p:E\to B$},\footnote{By the
Stiefel-Whitney classes of an $ n$-dimensional
manifold $ M$, one means the corresponding classes of
$ TM$.} and we write
$$\left\{\begin{array}{l}
w_0=1,\\
w_q\equiv\alpha_{n-q+1}\hskip 2pt\MOD 2\hskip 2pt\in
H^q(B;\mathbb{Z}_2),\hskip 5pt q=1,\cdots,n.,\\
w_q=0,\hskip 5pt q>n.\\
\end{array}\right.$$ The polynomial $w(t)\equiv
w_0+w_1t+\cdots+w_qt^q+\cdots+w_nt^n$ is called the {\em
Stiefel-Whitney polynomial} of the vector bundle $p:E\to B$.
We call $w(E)=1+w_1+\cdots+w_n$ the {\em Stiefel-Whitney class} of
$p:E\to B$. If the manifold $M$ ($\dim M=n$) is orientable, one
has $w_1=0$. In fact, the natural mapping $j:BSO(n)\to BO(n)$,
which ''forgets'' the orientation on the oriented $n$-dimensional
planes representing the points in $\hat
G_{\infty,n}=BSO(n)$,\footnote{universal classifying space for
the group $ SO(n)$.} induces an epimorphism
$j^\bullet:H^\bullet(BO(n);\mathbb{Z}_2)\to H^\bullet(BSO(n);\mathbb{Z}_2)$, with kernel $<w_1>$, the ideal generated by the first
Stiefel-Whitney class $w_1\in H^1(BO(n);\mathbb{Z}_2)$. Therefore,
for any fiber bundle $W_O$ and $W_{SO}$, over a manifold $X$, with
structure groups $O(n)$ and $SO(n)$ respectively, we get the
following commutative diagram:
$$\xymatrix{0\ar[r]&<w_1>\ar[r]&H^\bullet(BO(n);\mathbb{Z}_2)\ar[d]_{\cong}\ar[r]^{\pi}&
H^\bullet(BSO(n);\mathbb{Z}_2)\ar[d]_{\cong}\ar[r]& 0\\ 0\ar[r]&<w_1|_X>\ar[r]&Kar^\bullet(W_O;\mathbb{Z}_2)\ar@{^{(}->}[d]\ar[r]^{\bar\pi}&Kar^\bullet(W_{SO};\mathbb{Z}_2)
\ar@{^{(}->}[d]\ar[r]& 0\\
 &&H^\bullet(X;\mathbb{Z}_2)&H^\bullet(X;\mathbb{Z}_2)&}$$ Therefore $W_O$ admits the
reduction to $W_{SO}$ if $\bar\pi$ is injective, i.e., if
$w_1|_X=0$. In particular, $X$, $\dim X=n$, is orientable iff $TX$
admits the reduction to $SO(n)$, i.e., its first Stiefel-Whitney
class is zero. We get also the following propositions:

{\em(i)} $w_n=\chi(X)$ mod $2$, where $\chi(X)$ is the Euler characteristic of $X$.

{\em(ii)} For the direct product $(E_1\times E_2, p_1\times p_2,
B_1\times B_2)$ of vector bundles, one has
$w(t)=\mathop{w}\limits^1(t)\mathop{w}\limits^2(t)$, where
$\mathop{w}\limits^i(t)$, $i=1,2$, are the Stiefel-Whitney
polynomials of the factors.

{\em(iii)} Let $E_1\bigoplus E_2$ be the Whitney sum of two real vector bundles over the same base, then $w(E_1\bigoplus E_2)=w(E_1)w(E_2)$,
i.e., $w_k(E_1\bigoplus E_2)=\sum_{0\le i\le k}w_i(E_1)w_{k-i}(E_2)$.

{\em(iv)} One has the following isomorphism: $H^\bullet(M;\mathbb{Z}_2)\cong\mathbb{Z}_2[w_1,\cdots,w_n]$, $\dim M=n$.

Let $G_{N,k}$ be the Grassmannian manifold that represents the set
of $k$-dimensional vector spaces of $\mathbb{R}^N$. Then
$G_{\infty,k}=BO(k)$ is the universal classifying space for the
orthogonal group $O(k)$. Let $f:M\to\mathbb{R}^N$ be an embedding of
a $k$-dimensional manifold $M$ into $\mathbb{R}^N$, (for enough large
$N$). Then we have the following map ({\em generalized Gauss
map})
$$\tau_M:M\to G_{\infty,k},\hskip 5pt x\mapsto T_xM\hookrightarrow T_{f(x)}\mathbb{R}^N\subset T_{f(x)}\mathbb{R}^\infty.$$
This induces the tangent bundle $TM$ from the universal bundle,
$V_{\infty,k}\to G_{\infty,k}$, of orthonormal tangent $k$-frames on $M$, with respect to the induced metric. So we have the following commutative diagram:
$$\xymatrix{\tau_M^*V_{\infty,k}\equiv TM\ar[d]\ar[r]&V_{\infty,k}\ar[d]\\
M\ar[r]_{\tau_M}&G_{\infty,k}}$$ Each element
$w\in H^s(G_{\infty,k};\mathbb{Z}_2)$ determines a corresponding mod
$2$ {\em characteristic class} $w(M)\equiv\tau_M^*w$, and the
{\em stable} \MOD $2$ {\em characteristic classes}
 of $M$ (with respect to the group $O(k)$) are those determined by elements $w\in H^s(BO(k);\mathbb{Z}_2)$ which are pull-backs of elements $\bar w\in H^s(BO(k+1);\mathbb{Z}_2)$
via the natural embeddings $\lambda:BO(k)\to BO(k+1)$, induced by the standard embedding $\lambda:O(k)\to O(k+1)$: $w=\lambda^*\bar w$.
So if $w(M)$ is a stable mod $2$ characteristic class of $M$, then $w(M)=\tau_M^*\lambda^*\bar w$, for some
$\bar w\in H^s(BO(k+1);\mathbb{Z}_2)$.

Let us assume, now, that $M=\partial W$, $dim W=k+1$, we have: $\bar w(W)=\tau^*_W\bar w$, and taking into account the inclusion map $i:M\to W$, the restriction to $M$ of the map $\tau_W:W\to BO(k+1)$, satisfies
$$\begin{array}{ll}
  \tau_W|_M&=\tau_W\circ i=\lambda\circ\tau_M\\
  &=\tau_M\bigoplus 1:M\to BO(k+1),\\
  \end{array}$$
inducing the Whitney sum $TM\bigoplus_MT^0_0M$. Therefore $w(M)=i^*\bar w(W)$. Now since $M=\partial W$ it follows that, for the fundamental homology class $[M]$, we have $i_*[M]=0$. Therefore, assuming
$w(M)\in H^k(M,\mathbb{Z}_2)$, its evaluation on $[M]$ gives:
$$<w(M),[M]>=<i^*\bar w,[M]>=<\bar w,i_*[M]>=<\bar w,0>=0.$$
Since $[M]$ generate $H_k(M;\mathbb{Z}_2)$\footnote{If $ M$ is a closed and connected manifold of dimension
$ k$, admitting a finite triangulation, then $ H_k(M;\mathbb{Z}_2)\cong\mathbb{Z}_2$. The fundamental class of
$ M$ is $[M]=\sum_iv_i^k$, i.e., the sum of all $ k$-simplexes.} it follows that the {\em Stiefel-Whitney numbers} of $M$, i.e., the values taken on $[M]$ by its mod $2$ stable characteristic classes of dimension $k$, are zero.\end{remark}

We have the following theorem.

\begin{theorem}[Pontrjagin] If $B$ is a smooth compact
$(n+1)$-dimensional manifold with boundary $M\equiv\partial B$,
then the Stiefel-Whitney numbers of $M$ are all zero.
\end{theorem}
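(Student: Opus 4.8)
The plan is to deduce the statement directly from the Stiefel-Whitney computation performed in the remark immediately preceding it, once one observes that every Stiefel-Whitney number of $M$ is the value on the fundamental class of a product of \emph{stable} mod $2$ characteristic classes.

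First I would set $W\equiv B$, so that $\dim W=n+1$ and $M=\partial W$. A collar neighbourhood $\partial W\times[0,1)\hookrightarrow W$ trivializes the normal line bundle of $M$ in $W$, so the inclusion $i:M\hookrightarrow W$ gives a bundle isomorphism $i^{*}TW\cong TM\oplus\epsilon^{1}$ over $M$, where $\epsilon^{1}$ is the trivial real line bundle. By the Whitney product formula (item (iii) of that remark) and $w(\epsilon^{1})=1$ this yields $w(TM)=i^{*}w(TW)$, hence $w_{j}(TM)=i^{*}w_{j}(TW)$ for all $j\ge 0$; since $i^{*}$ is a ring homomorphism, every monomial $w_{i_{1}}(TM)\smile\cdots\smile w_{i_{r}}(TM)$ with $i_{1}+\cdots+i_{r}=n$ equals $i^{*}\bar w$ for $\bar w\equiv w_{i_{1}}(TW)\smile\cdots\smile w_{i_{r}}(TW)\in H^{n}(W;\mathbb{Z}_{2})$.

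Then I would evaluate on the mod $2$ fundamental class $[M]\in H_{n}(M;\mathbb{Z}_{2})$, which exists for any closed $n$-manifold without orientability hypotheses (Remark \ref{fundamental-homology-class-manifold}): by naturality of the Kronecker pairing, $\langle i^{*}\bar w,[M]\rangle=\langle\bar w,i_{*}[M]\rangle$. Finally, in the homology exact sequence of the pair $(W,\partial W)$ with $\mathbb{Z}_{2}$-coefficients, $H_{n+1}(W,\partial W;\mathbb{Z}_{2})\xrightarrow{\ \partial\ }H_{n}(\partial W;\mathbb{Z}_{2})\xrightarrow{\ i_{*}\ }H_{n}(W;\mathbb{Z}_{2})$, the connecting homomorphism sends the fundamental class $\mu_{W}$ to $[M]$ (the $\mathbb{Z}_{2}$ version of the last assertion of Remark \ref{fundamental-homology-class-manifold}); hence $[M]$ lies in the image of $\partial$ and $i_{*}[M]=0$ by exactness, so $\langle\bar w,i_{*}[M]\rangle=0$. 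Therefore all Stiefel-Whitney numbers of $M$ vanish.

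The only point needing a little care is the stable identification $i^{*}TW\cong TM\oplus\epsilon^{1}$ via the collar, together with the verification that the relevant degree-$n$ monomials are pull-backs under $\lambda^{*}$ and hence stable; everything else is exactly the bookkeeping already displayed in the preceding remark, so I anticipate no genuine obstacle — the theorem is essentially a restatement of that computation summed over all monomials of degree $n$.
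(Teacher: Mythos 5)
Your argument is correct, and it is essentially the standard proof; it differs from the paper's written-out proof only by working on the homology side of the long exact sequence of the pair $(B,\partial B)$ rather than the cohomology side. Both proofs begin with the same key geometric input: the collar (or outward normal field) gives $i^{*}TB\cong T(\partial B)\oplus\epsilon^{1}$, hence $w_{j}(T(\partial B))=i^{*}w_{j}(TB)$, so every degree-$n$ monomial in the $w_{j}$ of $M$ is $i^{*}$ of a class $\bar w\in H^{n}(B;\mathbb{Z}_{2})$. From there you use naturality of the Kronecker pairing, $\langle i^{*}\bar w,[M]\rangle=\langle\bar w,i_{*}[M]\rangle$, together with exactness of $H_{n+1}(B,\partial B;\mathbb{Z}_{2})\xrightarrow{\partial}H_{n}(\partial B;\mathbb{Z}_{2})\xrightarrow{i_{*}}H_{n}(B;\mathbb{Z}_{2})$ and $\partial\mu_{B}=\mu_{\partial B}$ to get $i_{*}[M]=0$. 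The paper instead uses the adjoint identity $\langle v,\partial\mu_{B}\rangle=\langle\delta v,\mu_{B}\rangle$ and the cohomology exactness $H^{n}(B)\xrightarrow{i^{*}}H^{n}(\partial B)\xrightarrow{\delta}H^{n+1}(B,\partial B)$ to conclude $\delta(w_{1}^{r_{1}}\cdots w_{n}^{r_{n}})=0$. These are dual presentations of the same cancellation, and in fact the paper already records your version verbatim in the remark immediately preceding the theorem (the displayed chain $\langle w(M),[M]\rangle=\langle i^{*}\bar w,[M]\rangle=\langle\bar w,i_{*}[M]\rangle=0$). One small point: your closing caveat about needing the monomials to be pull-backs under $\lambda^{*}$ (i.e.\ ``stable'' in the Grassmannian sense) is unnecessary for this theorem. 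Since $i^{*}$ is a ring homomorphism and $w_{j}(TM)=i^{*}w_{j}(TB)$ for every $j$, every product $w_{i_{1}}(TM)\cdots w_{i_{r}}(TM)$ is automatically $i^{*}$ of the corresponding product of classes on $B$; no stability hypothesis beyond the collar trivialization is required.
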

\begin{proof}
Here, let us give, also, another direct proof  to this important
theorem . Let us denote the fundamental homology class of the pair
$(B,\partial B)$ by $\mu_B\in H_{n+1}(B,\partial B;\mathbb{Z}_2)$.
Then, the natural homomorphism $$\partial:H_{n+1}(B,\partial B;\mathbb{Z}_2)\to H_n(\partial B;\mathbb{Z}_2)$$ maps $\mu_B$ to $\mu_{\partial
B}$. For any class $v\in H^n(M;\mathbb{Z}_2)$ one has: $<v,\partial
\mu_B>=<\delta v,\mu_B>$, where $\delta$ is the natural
homomorphism $\delta:H^n(\partial B;\mathbb{Z}_2)\to
H^{n+1}(B,\partial B;\mathbb{Z}_2)$. (There is not sign since we are
working mod $2$.) Consider the tangent bundles $TB|_{\partial B}$
and $T(\partial B)\subset TB|_{\partial B}$. Choosing a Euclidean
metric on $TB$, there is a unique outward normal vector field
along $\partial B$, spanning a trivial line bundle $\epsilon^1$,
and it follows that $TB|_{\partial B}\cong T(\partial
B)\bigoplus\epsilon^1$. Hence the Stiefel-Whitney classes of
$TB|_{\partial B}$ are precisely equal to the Stiefel-Whitney
classes $w_j$ of $T(\partial B)$. Using the exact sequence
$$\xymatrix{H^n(B;\mathbb{Z}_2)\ar[r]^{i^*}&H^n(\partial B;\mathbb{Z}_2)\ar[r]^(0.4){\delta}
&H^{n+1}(B,\partial B;\mathbb{Z}_2)}$$
it follows that $\delta(w_1^{r_1}\cdots w_n^{r_n})=0$ and therefore
$$<w_1^{r_1}\cdots w_n^{r_n},\partial\mu_B>=<\delta(w_1^{r_1}\cdots w_n^{r_n}),\mu_B>=0.$$
As $\partial\mu_B=\mu_{\partial B}$, we can conclude that all Stiefel-Whitney numbers of $\partial B$ are zero.\end{proof}

\begin{definition} In the category of closed smooth (resp. oriented) manifolds of
dimension $n$, we can define, by means of bordism properties, an
equivalence relation. More precisely, we say that $X_1\sim X_2$
iff $X_1\sqcup X_2=\partial W$, where $W$ is a smooth manifold of
dimension $n+1$. The corresponding set $\Omega_n$ (resp.
${}^+\Omega_n$) of equivalences classes is called the {\em
$n$-dimensional bordism group} (resp. {\em oriented
$n$-dimensional bordism group}).\end{definition}

Now, the nullity of the
Stiefel-Whitney numbers is also a sufficient condition to bording.
In fact we have the following.

\begin{theorem}[Pontrjagin-Thom] A closed
$n$-dimensional smooth manifold $V$, belonging to the category of
smooth differentiable manifolds, is bordant in this category,
i.e., $V=\partial M$, for some smooth $(n+1)$-dimensional manifold
$M$, iff the Stiefel-Whitney numbers $<w_{i_1}\cdots
w_{i_p},\mu_V>$ are all zero, where $i_1+\cdots +i_p=n$ is any
partition of $n$ and $\mu_V$ is the fundamental class of $V$.
Furthermore, the bordism group $\Omega_n$ of $n$-dimensional
smooth manifolds is a finite abelian torsion group of the form
$$\Omega_n\cong\underbrace{\mathbb{Z}_2\oplus\cdots\oplus\mathbb{Z}_2}_q,$$ where $q$ is the number of nondyadic partitions of
$n$.\footnote{A partition $(i_1,\cdots,i_r)$ of
$ n$ is nondyadic if none of the $
i_\beta$ are of the form $ 2^s-1$.} Two smooth closed
$n$-dimensional manifolds belong to the same bordism class iff all
their corresponding Stiefel-Whitney numbers are equal.
Furthermore, the bordism group ${}^+\Omega_n$ of closed
$n$-dimensional oriented smooth manifolds is a finitely generated
abelian group of the form $${}^+\Omega_n\cong\mathbb{Z}\bigoplus\cdots\bigoplus\mathbb{Z}\bigoplus\mathbb{Z}_2\bigoplus\cdots\bigoplus\mathbb{Z}_2,$$ where infinite cyclic
summands can occur only if $n\equiv 0$ $\MOD 4$. Two smooth closed
oriented $n$-dimensional manifolds belong to the same bordism
class iff all their corresponding Stiefel-Whitney and Pontrjagin
numbers are equal.\footnote{{\em Pontrjagin numbers}
are determined by means of homonymous characteristic classes
belonging to $ H^\bullet(BG,\mathbb{Z})$, where
$ BG$ is the classifying space for $
G$-bundles, with $ G=S_p(n)$.}
The bordism groups $\Omega_p$, (resp. ${}^+\Omega_p$), by disjoint union and topological product of manifolds induce addition and multiplication operators with respect to which the cobordism classes form a graded ring, the {\em bordism ring} $\Omega_\bullet\equiv\bigoplus_{p\ge 0}\Omega_p$, (resp. the {\em oriented bordism ring} ${}^+\Omega_\bullet\equiv\bigoplus_{p\ge 0}{}^+\Omega_p$) that is a polynomial ring over $\mathbb{Z}_2$.
\end{theorem}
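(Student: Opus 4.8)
The plan is to translate the bordism problem into stable homotopy theory by means of the Pontryagin--Thom construction, and then to compute the relevant stable homotopy groups from the structure of the mod $2$ (respectively integral) cohomology of the Thom spectra as modules over the Steenrod algebra, reading off the invariants from the resulting splitting.

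First I would establish the Pontryagin--Thom isomorphism $\Omega_n\cong\pi_n(MO)$, where $MO$ is the Thom spectrum of Example \ref{thom-spectra}. Given a closed smooth $n$-manifold $V$, embed it in $\mathbb{R}^{n+k}$ for $k$ large (Whitney), choose a tubular neighbourhood, and collapse its complement to the base point; since the total space of the normal bundle $\nu$ is diffeomorphic to the tubular neighbourhood and $\nu$ is classified by a map $V\to BO(k)$, this produces a map $S^{n+k}\to (MO)_k=MO_k$ whose homotopy class depends only on the bordism class of $V$, and conversely Thom transversality recovers a manifold from such a map. Compatibility with suspension gives $\Omega_n\cong\pi_n(MO)=\mathop{\lim}\limits_{\overrightarrow{k}}\pi_{n+k}(MO_k,*)$, and the same construction with oriented normal bundles and $BSO(k)$ gives ${}^+\Omega_n\cong\pi_n(MSO)$. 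Disjoint union corresponds to addition of maps and Cartesian product to the ring-spectrum multiplication on $MO$ induced by the Whitney-sum maps $BO(j)\times BO(k)\to BO(j+k)$, so these are isomorphisms of graded rings.

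Next comes the key computation, due to Thom: $H^\bullet(MO;\mathbb{Z}_2)$ is a free graded module over the mod $2$ Steenrod algebra $\mathcal{A}_2$. Using the Thom isomorphism $H^{\bullet+k}(MO_k;\mathbb{Z}_2)\cong H^\bullet(BO(k);\mathbb{Z}_2)=\mathbb{Z}_2[w_1,\dots,w_k]$ and the Wu formula for the action of the squares on the Thom class, one exhibits an $\mathcal{A}_2$-basis indexed exactly by the monomials $w_{i_1}\cdots w_{i_r}$ with every $i_\beta\ne 2^s-1$, i.e., by the nondyadic partitions. A connective spectrum whose mod $2$ cohomology is $\mathcal{A}_2$-free is a wedge of suspensions of the Eilenberg--MacLane spectrum $K(\mathbb{Z}_2,0)$ of Example \ref{eilenberg-maclane-spectra}: one builds the map into the product of the $K(\mathbb{Z}_2,d)$'s representing the chosen basis elements and checks it induces an isomorphism on mod $2$ cohomology, hence a homotopy equivalence. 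Since $\pi_\bullet(K(\mathbb{Z}_2,0))=\mathbb{Z}_2$ concentrated in degree $0$, taking homotopy groups gives $\pi_n(MO)\cong(\mathbb{Z}_2)^q$ with $q$ the number of nondyadic partitions of $n$, and tracking the multiplication yields $\Omega_\bullet\cong\mathbb{Z}_2[x_i:i\ne 2^s-1]$, a polynomial ring over $\mathbb{Z}_2$.

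For the Stiefel--Whitney criterion, the pairing $([V],w)\mapsto\langle\tau_V^*w,\mu_V\rangle$ on $\Omega_n\times H^n(BO;\mathbb{Z}_2)$ is, under the Pontryagin--Thom and Thom isomorphisms, the evaluation pairing between $H_n(MO;\mathbb{Z}_2)$ and $H^n(MO;\mathbb{Z}_2)$; the wedge-of-$K(\mathbb{Z}_2,0)$ splitting identifies $H_n(MO;\mathbb{Z}_2)$ with the dual of the $\mathbb{Z}_2$-span of the basis monomials, so the pairing is nondegenerate on $\Omega_n$ and the Stiefel--Whitney numbers form a complete bordism invariant; the Pontrjagin theorem already proved above supplies necessity. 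For ${}^+\Omega_n$ I would run the analogue with $MSO$: rationally $H^\bullet(MSO;\mathbb{Q})\cong\mathbb{Q}[p_1,p_2,\dots]$, forcing $\mathbb{Q}\otimes{}^+\Omega_\bullet$ to be polynomial on generators $[\mathbb{CP}^{2}],[\mathbb{CP}^{4}],\dots$ in degrees $4k$, so the free summands of ${}^+\Omega_n$ occur only for $n\equiv 0\pmod 4$ and are detected by Pontrjagin numbers, while the $2$-torsion is read off from $H^\bullet(MSO;\mathbb{Z}_2)$ and detected by Stiefel--Whitney numbers; together these give the stated form of ${}^+\Omega_n$ and the completeness of the two families of numbers. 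The hard part will be the $\mathcal{A}_2$-freeness of $H^\bullet(MO;\mathbb{Z}_2)$ together with the ``$\mathcal{A}_2$-free $\Rightarrow$ wedge of $K(\mathbb{Z}_2,0)$'' splitting principle --- this rests on the Wu formula, an exact count of module generators, and a Postnikov/obstruction argument to realize the algebraic splitting spectrally --- and, in the oriented case, the additional bookkeeping forced by the $2$-torsion in $H^\bullet(MSO;\mathbb{Z})$, which is not $\mathcal{A}$-free and so requires treating the rational and mod $2$ information separately.
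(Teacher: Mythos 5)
Your outline is correct and is precisely the argument in the sources the paper itself cites for this theorem (Thom, Pontrjagin, Stong, Wall); the paper gives no independent proof, only those references. The Pontryagin--Thom identification $\Omega_n\cong\pi_n(MO)$, the $\mathcal{A}_2$-freeness of $H^\bullet(MO;\mathbb{Z}_2)$ with basis indexed by nondyadic partitions, the resulting splitting of $MO$ as a wedge of Eilenberg--MacLane spectra, and the separate rational/mod-$2$ analysis of $MSO$ are exactly Thom's and Wall's approach, so there is nothing to contrast.
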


\begin{proof} See, e.g., \cite{PONT, STONG, THOM, WALL1}.\end{proof}

\begin{theorem}[Dold \cite{DOLD}]\label{dold-theorem}
Let us call {\em Dold manifold} $P(m,n)$, the bundle over $\mathbb{RP}^n$ with fibre $\mathbb{CP}^n$, defined by the following $P(m,n)\equiv(S^m\times\mathbb{CP}^n)/\tau$, where $\tau$ is the involution mapping $(x,[y])\mapsto(-x,[\bar y])$, where $\bar y=(\bar y_0,\cdots,\bar y_n)$ for $y=(y_0,\cdots,y_n)$. The {\em bordism ring} $\Omega_\bullet\equiv\bigoplus_{p\ge 0}\Omega_p$ is a polynomial ring over $\mathbb{Z}_2$:
$$\Omega_\bullet\cong\mathbb{Z}_2[x_2,x_4,x_5,x_6,x_8,\cdots, x_i,\cdots], \quad i\not=2^k-1$$
where the polynomial generators $x_i$ are given by Dold manifolds. More precisely one has:\footnote{\hskip 2pt$\mathbb{RP}^k$ are orientable manifolds iff $k\in\mathbb{N}$ is odd. $P(2^r-1,s2^r)$ are orientable manifolds. One has $\dim P(m,n)=m+2n$.}
$$\left\{\begin{array}{l}
    \hbox{\rm For $i$ even}\hskip 2pt x_i=[P(i,0)]=[\mathbb{RP}^i]\\
     \hbox{\rm For $i=2^r(2s+1)-1$ }\hskip 2pt x_i=[P(2^r-1,s2^r)].\\
  \end{array}\right.$$
\end{theorem}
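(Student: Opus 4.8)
The plan is to combine the abstract structure of $\Omega_\bullet$ (supplied by the Pontrjagin--Thom theorem above) with a single Stiefel--Whitney number computed on each candidate generator. That theorem tells us $\Omega_\bullet\cong\mathbb{Z}_2[y_i:i\not=2^k-1]$ is a polynomial $\mathbb{Z}_2$-algebra with exactly one generator $y_i$ in each degree $i$ not of the form $2^k-1$; hence $\Omega_i/(\hbox{\rm decomposables})\cong\mathbb{Z}_2$ for such $i$, and a family $\{x_i\}$ with $\deg x_i=i$ is a polynomial generating family if and only if each $x_i$ is nonzero modulo decomposables. So it suffices, in each degree $i\not=2^k-1$, to produce a manifold whose bordism class is indecomposable, and to certify indecomposability by a characteristic number.

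The certifying invariant is the Milnor--Thom class $s_i$: writing the total Stiefel--Whitney class formally as $\prod_j(1+t_j)$, put $s_i=\sum_j t_j^{\,i}\in H^i(-;\mathbb{Z}_2)$ (the Newton polynomial in the Wu roots), and for a closed $i$-manifold $M$ set $s_i[M]=\langle s_i(TM),[M]\rangle\in\mathbb{Z}_2$. The first step is that $s_i$ is additive for Whitney sums, so for a product $M\times N$ of positive-dimensional manifolds $s_i(T(M\times N))=\pi_1^*s_i(TM)+\pi_2^*s_i(TN)=0$ (each summand lies in a zero group, the factors having dimension $<i$); thus $s_i$ kills decomposables and descends to $\bar s_i:\Omega_i/(\hbox{\rm decomposables})\to\mathbb{Z}_2$. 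Once a manifold with $s_i\not=0$ is exhibited, $\bar s_i$ is an isomorphism and any class with $s_i\not=0$ is automatically a polynomial generator.

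For $i$ even take $x_i=[\mathbb{RP}^i]=[P(i,0)]$: from $w(\mathbb{RP}^i)=(1+c)^{i+1}$, with $c\in H^1(\mathbb{RP}^i;\mathbb{Z}_2)$ the generator, one gets $s_i(T\mathbb{RP}^i)=(i+1)c^i$, hence $s_i[\mathbb{RP}^i]=i+1\equiv 1\pmod 2$. For $i$ odd with $i\not=2^k-1$, write $i+1=2^r(2s+1)$ with $r\ge1$, $s\ge1$ and take $x_i=[P(2^r-1,s2^r)]$; since $\dim P(m,n)=m+2n$ this has dimension $2^r-1+s2^{r+1}=2^r(2s+1)-1=i$. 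The substantive computation is $s_i[P(m,n)]$. Its inputs are Dold's cohomology ring $H^\bullet(P(m,n);\mathbb{Z}_2)\cong\mathbb{Z}_2[c,d]/(c^{m+1},d^{n+1})$ with $\deg c=1$, $\deg d=2$, $\langle c^md^n,[P(m,n)]\rangle=1$, together with the total class $w(TP(m,n))=(1+c)^m(1+c+d)^{n+1}$ (obtained from the double cover $S^m\times\mathbb{CP}^n\to P(m,n)$ and the standard splittings of $TS^m$ and $T\mathbb{CP}^n$, and consistent with the degenerate cases $P(i,0)=\mathbb{RP}^i$ and $P(0,n)=\mathbb{CP}^n$). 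Factoring $1+c+d=(1+\alpha)(1+\beta)$ formally with $\alpha+\beta=c$, $\alpha\beta=d$, additivity of $s_i$ gives $s_{m+2n}(TP(m,n))=m\,c^{m+2n}+(n+1)(\alpha^{m+2n}+\beta^{m+2n})$; the first term vanishes because $c^{m+1}=0$ (here $n\ge1$), and expanding the power sum $\alpha^k+\beta^k$ by Newton's identities in $e_1=c$, $e_2=d$ and retaining only the coefficient of $c^md^n$ yields
\begin{equation*}
s_{m+2n}[P(m,n)]\equiv (n+1)\,\frac{m+2n}{m+n}\binom{m+n}{n}\equiv (n+1)\left(\binom{m+n}{n}+\binom{m+n-1}{n-1}\right)\pmod 2 .
\end{equation*}
Substituting $m=2^r-1$, $n=s2^r$: $n+1=s2^r+1$ is odd; by Lucas' theorem $\binom{m+n}{n}=\binom{2^r(s+1)-1}{s2^r}$ is odd (adding $s2^r$ and $2^r-1$ in base $2$ produces no carry), while $\binom{m+n-1}{n-1}=\binom{2^r(s+1)-2}{s2^r-1}$ is even (adding $s2^r-1$ and $2^r-1$ in base $2$ does carry). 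Hence $s_i[P(2^r-1,s2^r)]\equiv1\pmod2$.

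Combining the two cases, $\{[\mathbb{RP}^i]\}_{i\ \mathrm{even}}\cup\{[P(2^r-1,s2^r)]\}$ is a family of indecomposable classes, one in each degree $i\not=2^k-1$, hence a polynomial generating family for $\Omega_\bullet$, which is the assertion of the theorem. The main obstacle is entirely geometric and local to Dold manifolds: establishing the ring $H^\bullet(P(m,n);\mathbb{Z}_2)$ and the formula $w(TP(m,n))=(1+c)^m(1+c+d)^{n+1}$; granting these, everything else is the Newton-polynomial bookkeeping and the two parity checks via Lucas' theorem carried out above. (One should also note that the stated second formula is meant for $r\ge1$, i.e. for odd $i$; for even $i$ the correct generator is $\mathbb{RP}^i=P(i,0)$, which is what the first formula records.)
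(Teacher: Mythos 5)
Your argument is correct, and it is worth noting that the paper itself offers no proof of this theorem at all: it is stated with the bare attribution to \cite{DOLD}, so there is nothing internal to compare against. What you have written is essentially the classical Thom--Dold--Milnor argument: Thom's structure theorem (the Pontrjagin--Thom theorem quoted just above in the paper) reduces the problem to exhibiting, in each degree $i\not=2^k-1$, one class that is indecomposable, and the Newton-polynomial number $s_i$ is the right detector because it is additive under Whitney sum and therefore annihilates products of positive-dimensional factors. Your two computations check out: $s_i[\mathbb{RP}^i]=i+1$ handles even $i$, and for $P(m,n)$ with $n\ge 1$ the identity $\tfrac{m+2n}{m+n}\binom{m+n}{n}=\binom{m+n}{n}+\binom{m+n-1}{n-1}$ together with the two Lucas-theorem parity checks at $m=2^r-1$, $n=s2^r$ gives $s_i[P(2^r-1,s2^r)]=1$; the hypotheses $r\ge 1$, $s\ge 1$ needed there are exactly what $i$ odd and $i\not=2^k-1$ guarantee, and your parenthetical remark that the paper's second formula must be read as applying only to odd $i$ (for even $i$ it would produce $P(0,i/2)=\mathbb{CP}^{i/2}$, which is decomposable, being cobordant to $(\mathbb{RP}^{i/2})^2$ as the paper itself notes in Wall's theorem) is a correct and useful observation. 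The only inputs you take on faith are the ones you explicitly flag --- the ring $H^\bullet(P(m,n);\mathbb{Z}_2)\cong\mathbb{Z}_2[c,d]/(c^{m+1},d^{n+1})$ with its fundamental-class pairing, and the tangential class $w(TP(m,n))=(1+c)^m(1+c+d)^{n+1}$ --- which are precisely the geometric content of Dold's paper; granting them, the proof is complete.
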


\begin{theorem}[Wall \cite{WALL1, WALL2}]\label{wal-theorem}
There is a natural map $r:{}^+\Omega_\bullet\to\Omega_\bullet$, obtained by ignoring orientation, and a polynomial subalgebra ${}^\blacksquare\Omega_\bullet\subset\Omega_\bullet$, containing $r({}^+\Omega_\bullet)$, and a map $\partial:{}^\blacksquare\Omega_\bullet\to{}^+\Omega_\bullet$, such that the following diagram is commutative and exact.
$$\xymatrix{{}^+\Omega_\bullet\ar[rr]^{2}&&{}^+\Omega_\bullet\ar[dl]_{r}\\
0\ar[r]&{}^\blacksquare\Omega_\bullet\ar[lu]^{\partial}\ar[r]&\Omega_\bullet}$$

${}^\blacksquare\Omega_\bullet$ is defined as the subset of $\Omega_\bullet$ of classes containing a manifold $M$ such that the first Stiefel-Whitney class $w_1$ is the restriction of an integer class, and thus corresponds to a map $f:M\to S^1$.

${}^\blacksquare\Omega_\bullet$ contains:

{\em(i)} Dold manifolds representing the classes $x_i$, $i\not=2k-1$, in $\Omega_\bullet$;

{\em(ii)} manifolds $M_{2k}$ with $w_{2k}(M_{2k})=1$;

{\em(iii)} spaces $(\mathbb{CP})^{2^n}$.

Since by a computation with Stiefel-Whitney numbers $\mathbb{CP}^n$ and $(\mathbb{RP}^n)^2$ are cobordant, all these just generate the polynomial subalgebra ${}^\blacksquare\Omega_\bullet\subset\Omega_\bullet$.
\end{theorem}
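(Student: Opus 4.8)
The statement is the content of Wall's paper \emph{Determination of the cobordism ring}; the plan is to build everything from one ``Bockstein\nobreakdash-type'' exact triangle. The key construction is the homomorphism $\partial$. I would represent a class of $\,{}^\blacksquare\Omega_n$ by a closed $M^n$ with $w_1(M)$ the mod $2$ reduction of an integral class, so that this integral class is realized by a smooth $g\colon M\to S^1$; take $g$ transverse to the basepoint $p\in S^1$ and set $V^{n-1}=g^{-1}(p)$, a closed submanifold with trivial (cooriented) normal bundle and Poincar\'e dual to $w_1(M)$. From $w_1(TM)|_V=0$ --- the self-intersection of $V$ vanishes since its normal bundle is trivial --- one gets $w_1(TV)=0$, and the coorientation from $g$ together with the canonical trivialization of the orientation line bundle of $M$ along $V$ (which is trivial precisely because $w_1(M)|_V=0$) orients $V$ canonically. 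Thus $\partial[M]:=[V]\in{}^+\Omega_{n-1}$; a routine bordism-of-bordisms argument shows it depends only on the unoriented bordism class of $M$ inside $\,{}^\blacksquare\Omega_\bullet$, and passing to the orientation double cover of $M$ shows $2\,\partial[M]=0$, so $\partial$ lands in the $2$-torsion.

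That $\,{}^\blacksquare\Omega_\bullet$ is a subalgebra is immediate, since by the Whitney formula $w_1$ of a disjoint union or of a product $M_1\times M_2$ is again the reduction of an integral class when the factors are; it contains $r({}^+\Omega_\bullet)$ because oriented manifolds have $w_1=0$; and the bottom row is exact because $\,{}^\blacksquare\Omega_\bullet\hookrightarrow\Omega_\bullet$ is by construction an inclusion. For exactness of the triangle, three inclusions are formal: $r(2[\widetilde M])=[\widetilde M\sqcup\widetilde M]=[\partial(\widetilde M\times I)]=0$ gives $\IM(\times2)\subseteq\ker r$; an oriented representative lets one take $g$ constant and $V=\varnothing$, so $\partial\circ r=0$; and $2\,\partial[M]=0$ gives $\IM\partial\subseteq\ker(\times2)$. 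For the converses, if $\partial[M]=0$ then $V=\partial W$ with $W$ oriented, and gluing a copy of $W$ onto each side of the cut manifold $M|V$ produces an oriented $N$ with $[N]=[M]$ in $\Omega_n$, so $\ker\partial\subseteq\IM r$; and if $2[V]=0$, write $V\sqcup V=\partial Z$ with $Z$ oriented, reglue the two copies of $V$ by the identity to get a closed $M$ carrying an obvious map to $S^1$ with $w_1$ integral, dual to $V$, and $\partial[M]=[V]$, so $\ker(\times2)\subseteq\IM\partial$. The only non-formal inclusion is $\ker r\subseteq\IM(\times2)$, i.e.\ that an oriented manifold with all Stiefel--Whitney numbers zero represents twice an oriented class; this uses the structure of $\,{}^+\Omega_\bullet$ (Rokhlin: only $2$-torsion, and torsion classes detected by Stiefel--Whitney numbers) together with Thom's detection of $\Omega_\bullet$ by Stiefel--Whitney numbers.

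It remains to identify the generators. The triangle yields, in each degree, a short exact sequence $0\to r({}^+\Omega_n)\to{}^\blacksquare\Omega_n\to{}_2{}^+\Omega_{n-1}\to0$, from which the Poincar\'e series of $\,{}^\blacksquare\Omega_\bullet$ is computed using Thom's $\Omega_\bullet\cong\mathbb{Z}_2[x_i:i\neq2^k-1]$ and the known structure of $\,{}^+\Omega_\bullet$. One then checks that each manifold in the list actually represents a class of $\,{}^\blacksquare\Omega_\bullet$: the $\mathbb{CP}$-type spaces because they are orientable, and the Dold manifolds $P(2^r-1,s2^r)$ and the manifolds $M_{2k}$ with $w_{2k}(M_{2k})=1$ by producing explicit maps to $S^1$ realizing $w_1$. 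The congruence $[\mathbb{CP}^n]=[(\mathbb{RP}^n)^2]$ in $\Omega_\bullet$ (equal Stiefel--Whitney numbers, via Pontrjagin--Thom) places these classes correctly, and a degree-by-degree dimension count shows they generate all of $\,{}^\blacksquare\Omega_\bullet$ and are algebraically independent, which gives the asserted polynomial description.

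The step I expect to be the main obstacle is this last one --- matching the abstractly defined $\,{}^\blacksquare\Omega_\bullet$ with the explicit list of manifolds. Conceptually it is routine, but it genuinely needs the complete computations of $\Omega_\bullet$ and of $\,{}^+\Omega_\bullet$ together with careful Stiefel--Whitney and Pontrjagin number bookkeeping; everything else is a formal consequence of the Bockstein-type exact triangle. For the complete details see \cite{WALL1, WALL2}.
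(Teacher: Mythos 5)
The paper itself provides no proof of this theorem; it simply cites Wall's papers \cite{WALL1, WALL2} and moves on. So there is no in-text argument against which to compare your proposal, and the right benchmark is Wall's original proof.

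Measured against that, your reconstruction is substantially faithful. You build $\partial$ exactly as Wall does (transverse preimage of a point under the map to $S^1$ realizing the integral lift of $w_1$, with the orientation on $V$ coming from the normal framing together with the trivialization of $o(M)|_V$), you get the three formal inclusions of the exact triangle by the standard geometric moves (cutting along $V$, gluing an oriented null-bordism, regluing a bordism of $V\sqcup V$ to itself), and you correctly single out $\ker r\subseteq\IM(\times 2)$ as the one inclusion that is not formal and requires the full structural input on $\,{}^+\Omega_\bullet$ (no odd torsion, $2$-torsion detected by Stiefel--Whitney numbers) together with Thom's computation of $\Omega_\bullet$. The identification of generators by Poincar\'e-series bookkeeping, using $[\mathbb{CP}^n]=[(\mathbb{RP}^n)^2]$ and the orientability of the Dold manifolds $P(2^r-1,s2^r)$, is likewise the route Wall takes.

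Two small remarks, neither a real gap. First, you treat $\partial$ as immediately well defined on the subset $\,{}^\blacksquare\Omega_\bullet\subset\Omega_\bullet$; Wall is somewhat more careful here, working first with a bordism theory of pairs $(M,f\colon M\to S^1)$ and then showing the forgetful map is an isomorphism onto $\,{}^\blacksquare\Omega_\bullet$, which is what makes the ``routine bordism-of-bordisms argument'' actually routine (in particular it handles the dependence on the choice of integral lift). Second, the attribution ``Rokhlin: only $2$-torsion, and torsion classes detected by Stiefel--Whitney numbers'' compresses history a bit: absence of odd torsion is due to Milnor and the detection of the $2$-torsion by Stiefel--Whitney numbers is part of Wall's own contribution in \cite{WALL1}; Rokhlin's role was the low-dimensional computations. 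Neither point affects the correctness of the outline, which is a sound summary of the cited source.
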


\begin{definition} Let a {\em$k$-cycle} of $M$ be a couple $(N,f)$, where $N$ is
a $k$-dimensional closed (oriented) manifold and $f:N\to M$ is a
differentiable mapping. A {\em group of cycles} $(N,f)$ of an
$n$-dimensional manifold $M$ is the set of formal sums
$\sum_i(N_i,f_i)$, where $(N_i,f_i)$ are cycles of $M$. The
quotient of this group by the cycles equivalent to zero, i.e., the
boundaries, gives the {\em bordism groups}
${\underline{\Omega}}_s(M)$. We define {\em relative bordisms}
${\underline{\Omega}}_s(X,Y)$, for any pair of manifolds $(X,Y)$,
$Y\subset X$, where the boundaries are constrained to belong to
$Y$. Similarly we define the {\em oriented bordism groups}
${}^+{\underline{\Omega}}_s(M)$ and ${}^+{\underline{\Omega}}_s(X,Y)$.
\end{definition}

\begin{proposition} One has ${\underline{\Omega}}_s(*)\cong\Omega_s$
and ${}^+{\underline{\Omega}}_s(*)\cong {}^+\Omega_s$.\end{proposition}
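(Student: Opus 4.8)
The statement to be established is that $\underline{\Omega}_s(*) \cong \Omega_s$ and ${}^+\underline{\Omega}_s(*) \cong {}^+\Omega_s$, i.e.\ that the bordism groups of cycles over a one-point space reduce to the classical (oriented) bordism groups. The plan is to unwind both definitions and exhibit the isomorphism explicitly. First I would recall that an $s$-cycle of $*$ is a pair $(N,f)$ with $N$ a closed (oriented) $s$-dimensional smooth manifold and $f\colon N\to *$ a smooth map; since $*$ is a single point, the datum of $f$ is vacuous, so an $s$-cycle of $*$ is simply a closed (oriented) $s$-manifold $N$, and a group of cycles of $*$ is the free abelian monoid (under disjoint union, after passing to formal sums) on diffeomorphism classes of such $N$. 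Then a cycle $(N,f)$ is ``equivalent to zero'' precisely when $N = \partial W$ bounds a compact (oriented) $(s+1)$-manifold $W$ (the map $W\to *$ again being vacuous, and the boundary constraint in the relative case being empty here). This is exactly the bordism relation defining $\Omega_s$ (resp.\ ${}^+\Omega_s$) from the earlier definition.

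Second, I would make the comparison map precise. Define $\varphi\colon \underline{\Omega}_s(*) \to \Omega_s$ by sending the class of $\sum_i (N_i, f_i)$ to the bordism class of $\bigsqcup_i N_i$; the same prescription with oriented manifolds gives ${}^+\varphi\colon {}^+\underline{\Omega}_s(*) \to {}^+\Omega_s$. One checks $\varphi$ is a well-defined group homomorphism: it is additive because disjoint union of formal sums of cycles goes to disjoint union of the underlying manifolds, and it respects the equivalence relations because a nullbordism of $\sum_i(N_i,f_i)$ in the sense of cycles is in particular a nullbordism of $\bigsqcup_i N_i$ in the sense of $\Omega_s$, and conversely. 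Surjectivity is immediate: any closed (oriented) $s$-manifold $N$ is the image of the single cycle $(N, \mathrm{const})$. Injectivity follows because if $\bigsqcup_i N_i = \partial W$ then the cycle $\sum_i(N_i,f_i)$ is a boundary in the group of cycles of $*$ (take the obvious constant map $W\to *$), so it represents $0$ in $\underline{\Omega}_s(*)$.

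The argument is essentially a bookkeeping identification of two definitions that were set up to agree, so there is no serious analytic or topological obstacle; the only point requiring a little care is the \emph{orientation} bookkeeping in the oriented case — one must verify that the additive inverse of a class $[N]$ in ${}^+\underline{\Omega}_s(*)$ is $[-N]$ (i.e.\ $N$ with reversed orientation), which holds because $N \sqcup (-N) = \partial(N\times[0,1])$ with the product orientation, exactly as in the classical construction of ${}^+\Omega_s$. I would also remark that this proposition is the base case $M=*$ of the general fact that $\underline{\Omega}_\bullet(-)$ is the generalized homology theory represented by the Thom spectrum $MO$ (resp.\ $MSO$ in the oriented case), whose coefficient groups are $\pi_\bullet(MO)\cong\Omega_\bullet$ by the Pontrjagin--Thom theorem cited above; but the direct elementary verification sketched here suffices and is cleaner for the one-point space.
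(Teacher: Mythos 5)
Your proof is correct, and since the paper states this proposition without any proof at all (it is treated as an immediate consequence of the definitions), your careful unwinding of the definitions --- observing that a cycle $(N,f)$ over $*$ carries no information beyond $N$ itself, that a nullbordism of cycles over $*$ is exactly a nullbordism of the underlying manifold, and checking the orientation bookkeeping via $N\sqcup(-N)=\partial(N\times I)$ --- is precisely the argument the paper is implicitly appealing to.
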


\begin{proposition}
For bordisms, the theorem of invariance of homotopy is valid.
Furthermore, for any CW-pair $(X,Y)$, $Y\subset X$, one has the
isomorphisms: ${\underline{\Omega}}_s(X,Y)\cong{\Omega}_s(X/Y)$,
$s\ge 0$.\end{proposition}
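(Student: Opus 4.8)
The plan is to prove the two assertions of the proposition separately, treating bordism as the generalized homology theory associated to the Thom spectrum $MO$ (for the unoriented case) and $MSO$ (for the oriented case), as set up in the earlier sections via Example~\ref{thom-spectra} and Theorem~\ref{co-homology-theories-associated-with-any-spectrum}.

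\textbf{Homotopy invariance.} First I would observe that the singular bordism functor $X\mapsto{\underline{\Omega}}_s(X)$, sending a space to equivalence classes of maps $(N,f)$, $f\colon N\to X$, with $N$ a closed $s$-manifold, is a homotopy functor: if $f_0\simeq f_1\colon N\to X$ via a homotopy $H\colon N\times I\to X$, then $(N\times I, H)$ is a bordism between $(N,f_0)$ and $(N,f_1)$ (the manifold $N\times I$ has boundary $N\sqcup N$, with the two ends carrying $f_0$ and $f_1$), so $[N,f_0]=[N,f_1]$. From this it follows at once that a homotopy equivalence $X\simeq X'$ induces an isomorphism ${\underline{\Omega}}_s(X)\cong{\underline{\Omega}}_s(X')$, and likewise for pairs and for the oriented theory, using oriented bordisms throughout. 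This is the ``theorem of invariance of homotopy'' for bordism.

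\textbf{The relative isomorphism.} For the second assertion I would verify that ${\underline{\Omega}}_\bullet(-,-)$ satisfies the Eilenberg--Steenrod axioms for a generalized homology theory on CW-pairs: homotopy invariance (above), exactness of the long sequence of a pair (constructed geometrically by restricting bordisms to the subspace, exactly as in the proof that $E_\bullet$ is a homology theory in Theorem~\ref{co-homology-theories-associated-with-any-spectrum}, where the cofibre sequence $E\wedge A\to E\wedge X\to E\wedge(X\cup CA)$ produces the exact triangle), and excision. Granting these, for any CW-pair $(X,Y)$ one compares ${\underline{\Omega}}_\bullet(X,Y)$ with the reduced theory $\widetilde{\underline{\Omega}}_\bullet(X/Y)$: the collapse map $X\to X/Y$ induces a map ${\underline{\Omega}}_\bullet(X,Y)\to{\underline{\Omega}}_\bullet(X/Y,*)\cong\widetilde{\underline{\Omega}}_\bullet(X/Y)$, which for a good pair (and every CW-pair is good) is an isomorphism by excision combined with the five lemma applied to the map of long exact sequences of the pairs $(X,Y)$ and $(X/Y,*)$. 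Since ${\underline{\Omega}}_\bullet(X/Y)={\underline{\Omega}}_\bullet(X/Y,\varnothing)$ splits off the basepoint contribution ${\underline{\Omega}}_\bullet(*)\cong\Omega_\bullet$, one obtains ${\underline{\Omega}}_s(X,Y)\cong\widetilde{\underline{\Omega}}_s(X/Y)$ for $s\ge 0$, which is the stated identification. The oriented case is identical with $MSO$ replacing $MO$.

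\textbf{Main obstacle.} The routine part is homotopy invariance; the delicate point is excision, i.e.\ showing the restriction-of-bordisms construction really gives a well-defined inverse on the nose at the level of bordism classes rather than merely up to a further bordism. The cleanest route, which I would take, is to bypass hand-made geometric excision altogether by invoking the Pontrjagin--Thom isomorphism ${\underline{\Omega}}_s(X)\cong\pi_s(MO\wedge X_+)=(MO)_s(X)$ (respectively with $MSO$), so that ${\underline{\Omega}}_\bullet$ is literally the homology theory associated to the spectrum $MO$ in the sense of Theorem~\ref{co-homology-theories-associated-with-any-spectrum}; then homotopy invariance, exactness and excision are all inherited automatically from the spectrum-level formalism already established, and the isomorphism ${\underline{\Omega}}_s(X,Y)\cong{\underline{\Omega}}_s(X/Y)$ is just the general fact that a reduced homology theory computed on the pair equals its value on the quotient $X/Y$ for a CW-pair. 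The only thing needing care is the dimension shift and basepoint bookkeeping in passing between the unreduced ${\underline{\Omega}}_s(X/Y)$ and the reduced $\widetilde{\underline{\Omega}}_s(X/Y)$, which is handled by the standard splitting $E_\bullet(Z)\cong\widetilde E_\bullet(Z)\oplus E_\bullet(*)$ for a pointed CW-complex $Z$.
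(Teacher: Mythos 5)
The paper states this proposition without proof, treating it as standard background, so there is no in-paper argument to compare against; your sketch supplies the expected one. Your handling of homotopy invariance via the product bordism $(N\times I,H)$ is correct and complete. The relative part is also on the right track, and your identification of excision as the delicate point is accurate. The one place where the write-up glides a bit too quickly is the claim that, after invoking the Pontrjagin--Thom isomorphism $\underline{\Omega}_s(X)\cong\pi_s(MO\wedge X^+)$, ``homotopy invariance, exactness and excision are all inherited automatically from the spectrum-level formalism.'' That inheritance is automatic for the spectrum-defined theory $(MO)_\bullet(-,-)$, but the statement to prove concerns the geometrically defined $\underline{\Omega}_s(X,Y)$. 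To transfer the spectrum-level facts one still needs either (a) a relative Pontrjagin--Thom collapse identifying the geometric $\underline{\Omega}_s(X,Y)$ with $\widetilde{(MO)}_s(X/Y)$, or (b) the geometric long exact sequence of a pair (boundary-restriction gives the connecting homomorphism; exactness uses transversality) together with a five-lemma argument comparing it to the spectrum-level sequence via the absolute Pontrjagin--Thom isomorphism. Either is standard and not hard, but it is genuine geometric input, not something that drops out for free once one writes $MO$. You also correctly flag that the right-hand side of the stated isomorphism should be read as the reduced group $\widetilde{\underline{\Omega}}_s(X/Y)$, which matches the splitting $\underline{\Omega}_s(X/Y)\cong\widetilde{\underline{\Omega}}_s(X/Y)\oplus\Omega_s$; with that reading the result is as you derived.
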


\begin{theorem} One has a natural group-homomorphism
${\underline{\Omega}}_s(X)\to H_s(X;\mathbb{Z}_2)$. This is an
isomorphism for $s=1$. In general, ${\underline{\Omega}}_s(X)\not=
H_s(X;\mathbb{Z}_2)$.\end{theorem}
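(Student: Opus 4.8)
The plan is to construct the natural homomorphism $\underline{\Omega}_s(X)\to H_s(X;\mathbb{Z}_2)$ directly from the defining data of a cycle, and then analyze it in low degree. Given a $k$-cycle $(N,f)$ of $M$ (with $N$ a closed manifold, possibly non-oriented in the $\mathbb{Z}_2$-setting), recall from Remark \ref{fundamental-homology-class-manifold} that every closed $k$-manifold $N$ carries a fundamental class $[N]\equiv\mu_N\in H_k(N;\mathbb{Z}_2)$, constructed with no orientability hypothesis. I would send $(N,f)\mapsto f_*[N]\in H_k(M;\mathbb{Z}_2)$. The first step is to check this is well-defined on bordism classes: if $(N,f)$ bounds, i.e.\ there is a compact $(k{+}1)$-manifold $W$ with $\partial W=N$ and $F:W\to M$ extending $f$, then by the last sentence of Remark \ref{fundamental-homology-class-manifold} the connecting homomorphism $\partial:H_{k+1}(W,\partial W;\mathbb{Z}_2)\to H_k(\partial W;\mathbb{Z}_2)$ sends $\mu_W\mapsto\mu_N$, and functoriality of the long exact sequence of $(W,\partial W)$ under $F$ forces $f_*[N]=F_*i_*[N]=F_*\partial\mu_W=0$ in $H_k(M;\mathbb{Z}_2)$ (the composite $H_{k+1}(W,\partial W)\to H_k(\partial W)\to H_k(W)$ is zero). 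Additivity under disjoint union is immediate since $[N_1\sqcup N_2]=[N_1]+[N_2]$ and $f_*$ is additive. This produces the claimed natural group-homomorphism $\Phi_s:\underline{\Omega}_s(X)\to H_s(X;\mathbb{Z}_2)$, and naturality in $X$ is clear from functoriality of $f_*$.

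The second step is the case $s=1$. Here I would argue that $\Phi_1$ is an isomorphism. Surjectivity: every class in $H_1(X;\mathbb{Z}_2)$ is representable by a map of a circle — indeed $H_1(X;\mathbb{Z}_2)\cong\mathrm{Hom}(\pi_1(X),\mathbb{Z}_2)$-dual in the relevant sense, and more directly, a $\mathbb{Z}_2$-$1$-cycle can be realized geometrically by a disjoint union of embedded circles (a closed $1$-manifold) mapped into $X$; its image under $\Phi_1$ is exactly the represented homology class. Injectivity: if $(N,f)$ with $N$ a disjoint union of circles satisfies $f_*[N]=0$ in $H_1(X;\mathbb{Z}_2)$, then the corresponding $1$-chain bounds a $\mathbb{Z}_2$-$2$-chain; using a simplicial approximation of $f$ and the fact that a mod-$2$ $2$-chain with boundary $N$ can be thickened to a (possibly non-orientable) surface $W$ with $\partial W=N$ and a map $W\to X$ extending $f$, one gets a nullbordism. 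The key classical input is that in dimension $1$ there are no Stiefel–Whitney obstructions beyond the (trivially vanishing) one, so $\Omega_1=0$ and the Atiyah–Hirzebruch-type comparison $\underline{\Omega}_\bullet(X)\to H_\bullet(X;\mathbb{Z}_2)$ (using $\Omega_\bullet\cong\mathbb{Z}_2[x_2,x_4,x_5,\dots]$ from Theorem \ref{dold-theorem}, with no generator in degree $1$) is an iso through a range; I would note $\Phi_1$ fits into the edge homomorphism of the bordism spectral sequence $H_p(X;\Omega_q)\Rightarrow\underline{\Omega}_{p+q}(X)$ with $\Omega_0=\mathbb{Z}_2$, $\Omega_1=0$, so low-degree exactness gives the isomorphism at $s=1$.

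The third step is to exhibit a space $X$ and an $s$ with $\Phi_s$ not an isomorphism, establishing "in general $\underline{\Omega}_s(X)\neq H_s(X;\mathbb{Z}_2)$." The cleanest choice is $X=\ast$: then $H_s(\ast;\mathbb{Z}_2)=0$ for $s>0$, while $\underline{\Omega}_s(\ast)\cong\Omega_s$ by the Proposition above, and $\Omega_2\cong\mathbb{Z}_2$ (generated by $[\mathbb{RP}^2]$, as recorded in Theorem \ref{dold-theorem}), so $\Phi_2$ is not injective for $X=\ast$. One checks directly that $\Phi_2([\mathbb{RP}^2])=0$ since $\mathbb{RP}^2\to\ast$ kills its fundamental class in $H_2(\ast)=0$, yet $[\mathbb{RP}^2]\neq0$ in $\Omega_2$ because its Stiefel–Whitney number $\langle w_1^2,\mu\rangle=\langle w_2,\mu\rangle=1\neq0$ (Pontrjagin–Thom, Theorem above). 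This already refutes the general iso claim; I would remark more broadly that for any $X$ the cokernel and kernel of $\Phi_s$ are controlled by the higher differentials and the positive-degree coefficients $\Omega_q$, $q\ge2$, in the spectral sequence.

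The main obstacle is the injectivity half of the $s=1$ statement: turning a mod-$2$ algebraic nullhomology of a $1$-cycle into an actual singular nullbordism $W\to X$ requires the geometric "thickening" of $2$-chains to surfaces and a transversality/general-position argument, which is the one genuinely nontrivial point; everything else (well-definedness via the connecting homomorphism, additivity, naturality, and the counterexample) is formal given the results already stated, particularly Remark \ref{fundamental-homology-class-manifold} and Theorem \ref{dold-theorem}.
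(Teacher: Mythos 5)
Your proof is correct, and it takes a more explicit route than the paper. The paper's own proof is essentially a one-liner: it cites Quillen's isomorphism $\underline{\Omega}_p(X)\cong\bigoplus_{r+s=p}H_r(X;\mathbb{Z}_2)\otimes_{\mathbb{Z}_2}\Omega_s$ (the degenerate form of the unoriented-bordism Atiyah--Hirzebruch spectral sequence), reads off $\underline{\Omega}_1(X)\cong H_1(X;\mathbb{Z}_2)$ from $\Omega_0=\mathbb{Z}_2$, $\Omega_1=0$, and notes that for contractible $X$ the bordism groups are $\Omega_s$ and hence nonzero in positive degrees while $H_s$ vanishes. You instead build the homomorphism $\Phi_s\colon(N,f)\mapsto f_*[N]$ from scratch, verify well-definedness via the connecting homomorphism and functoriality, and argue the $s=1$ isomorphism geometrically (surjectivity from loop representability, injectivity by thickening a mod-$2$ $2$-chain to a surface), before giving the concrete counterexample $X=\ast$, $s=2$, $\mathbb{RP}^2$. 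The two approaches buy different things: the paper's is shorter and gives the full structure of $\underline{\Omega}_\bullet(X)$ in one stroke, while yours exhibits the map concretely and isolates precisely where the content lies. You also correctly flag the delicate step -- promoting an algebraic nullhomology to a geometric nullbordism -- and you hedge it by invoking the same spectral-sequence collapse that the paper relies on, so your argument reduces to the paper's at exactly the point where the elementary route becomes technical. One small improvement over the paper: its parenthetical claim that $\underline{\Omega}_s(X)$ ``cannot be trivial for any $s>0$'' for contractible $X$ is an overstatement (e.g. $\underline{\Omega}_1(\ast)=\Omega_1=0$), whereas your choice $s=2$ with $[\mathbb{RP}^2]$ and its nonzero Stiefel--Whitney number gives a correct and explicit witness.
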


\begin{proof} In fact one has the following lemma.

\begin{lemma}[Quillen]\cite{QUILLEN} One has the canonical
isomorphism:
$${\underline{\Omega}}_p(X)\cong \bigoplus_{r+s=p} H_r(X;\mathbb{Z}_2)\otimes_{\mathbb{Z}_2}\Omega_s.$$ In particular, as
$\Omega_0=\mathbb{Z}_2$ and $\Omega_1=0$, we get
${\underline{\Omega}}_1(X)\cong H_1(X;\mathbb{Z}_2)$. Note that for
contractible manifolds, $H_s(X)=0$, for $s>0$, but
${\underline{\Omega}}_s(X)$ cannot be trivial for any $s>0$. So, in
general, ${\underline{\Omega}}_s(X)\not= H_s(X;\mathbb{Z}_2)$.
\end{lemma}
After these results and remarks, the proof of the theorem follows
directly.\end{proof}

\begin{definition} Let $B$ be a closed differential connected manifold and let
$\xi\equiv(p:E\to B,F\equiv\mathbb{R}^n,G)$ be a vector bundle over
$B$ with fibre $\mathbb{R}^n$ and structure group $G=O(n)$, $SO(n)$,
$U(n)$, $SU(n)$ or $S_p(n)$. Let $\tilde E\to B$ be the subbundle
of $\xi$ defined by the vectors in the fibers with length $\le 1$.
The fiber $F'$ of $\tilde E$ is $F'\equiv D^n\subset \mathbb{R}^n$.
The boundary $\partial\tilde E$ is a fiber bundle with fiber
$S^{n-1}$. The {\em Thom complex} of the vector bundle $\xi$
is the quotient complex $M(\xi)=\tilde E/\partial\tilde E$. So
$M(\xi)$ is the compactified to a point of $E$: $M(\xi)\equiv
E\cup\{\infty\}\equiv E^+$.\end{definition}

\begin{example} If $B=BG$, the base space of
the universal $G$-bundle, with fibre $\mathbb{R}^n$, we denote by
$MG$ the corresponding Thom complex. In particular, for $G=O(n)$,
$SO(n)$, $U(n/2)$, $SU(n/2)$, or $S_p(n/4)$, we denote the
corresponding Thom complexes by $MO(n)$, $MSO(n)$, $MU(n/2)$,
$MSU(n/2)$ and $MS_p(n/4)$ respectively. In some cases the
complexes
 $MO(s)$, $MSO(s)$ are Eilenberg-MacLane complexes of type $K(G,n)$. Tab. \ref{mos-and-msos-as-kgn-complexes} resumes such cases.
\begin{table}
\caption{$MO(s)$ and $ MSO(s)$ as $ K(G,n)$-complexes.}
\label{mos-and-msos-as-kgn-complexes}
\begin{tabular}{|l|l|}
\hline
{\rm{\footnotesize $ MO(1)\cong\mathbb{R}P^\infty\cong
K(\mathbb{Z}_2,1)$}}&{\rm{\footnotesize $\pi_j=0,\hskip 2pt j>1$}}\\
\hline
{\rm{\footnotesize $ MSO(1)\cong
Me\cong S^1\cong K(\mathbb{Z},1)$}}&{\rm{\footnotesize $\pi_j=0,\hskip 2pt j>1$}}\\
\hline
{\rm{\footnotesize $MSO(2)\cong\mathbb{C}P^\infty\cong K(\mathbb{Z},2)$}}&{\rm{\footnotesize $ \pi_j=0,\hskip 2pt j\not=2$}}\\
\hline
\multicolumn{2}{l}{\rm{\footnotesize$\phi(1)=u\in H^n(MG)$ is the
fundamental class of $K(G,n)$.}}\\
\multicolumn{2}{l}{\rm{\footnotesize(See Lemma \ref{lemma-natural-isomorphisms-algebraic-topology-a}.)}}\\
\end{tabular}
\end{table}
 The Thom complexes
$M(\xi)$ are simply connected for $n>1$. Their homotopy groups are
reported in Tab. \ref{homotopy groups-of-mxi}.
\begin{table}
\caption{Homotopy groups of $ M(\xi)$.}
\label{homotopy groups-of-mxi}
\begin{tabular}{|l|l|}
\hline
{\rm{\footnotesize $\pi_j(M(\xi))$}}&{\rm{\footnotesize Conditions}}\\
\hline
{\rm{\footnotesize $ 0$}}&{\rm{\footnotesize $
1\le j<n$}}\\
\hline
{\rm{\footnotesize $ \mathbb{Z}_2$}}&{\rm{\footnotesize $ j=n$,\hskip 2pt
non-orientable fiber bundle}}\\
\hline
{\rm{\footnotesize $ \mathbb{Z}$}}&{\rm{\footnotesize $ j=n$,\hskip 2pt
orientable fiber bundle}}\\
\hline
\end{tabular}
\end{table}
\end{example}

\begin{theorem}\label{realizations-cycles-theorems}
{\em 1)} A cycle $x\in H_s(M;\mathbb{Z}_2)$, $dim M=n+s$, is
realized by means of a closed $s$-dimensional submanifold
$N\subset M$, iff there exists a mapping $f:M\to MO(n)$ such that
$f^*u=Dx$, where $u\in H^n(MO(n);\mathbb{Z}_2)$ is a fundamental
class and $D:H_s(M;\mathbb{Z}_2)\to H^n(M;\mathbb{Z}_2)$ is the
Poincar\'e duality operator.

{\em 2)}  Let $M$ be an
$(n+s)$-dimensional oriented manifold. A cycle $x\in H_s(M;\mathbb{Z})$ is realized by means of a closed oriented submanifold
$N\subset M$ iff there exists a mapping $f:M\to MSO(n)$ such that
$f^*u=Dx$. A cycle $x\in H_s(M;\mathbb{Z})$ is realized by means of a
closed oriented submanifold $N\subset M$ of trivial normal bundle
(i.e., defined by means of a family of nonsingular equations
$\psi_1=0,\cdots,\psi_k=0$, in $M$) iff there exists a mapping
$f:M\to Me\cong S^n$ such that $f^*u=Dx$.

{\em 3)}  Similar theorems hold in the cases of realizations of cycles by means of
submanifolds with normal bundles endowed with structural groups
$U(n/2)$, $SU(n/2)$, $S_p(n/4)$. A mapping $M\to MU(n/2)$, $M\to
MSU(n/2)$ and $MS_p(n/4)$ generates such restrictions.\end{theorem}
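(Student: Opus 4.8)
The plan is to reduce the whole statement to the Thom--Pontrjagin construction relating submanifolds with prescribed normal structure to maps into the appropriate Thom space $MG(n)$, exactly in the spirit of the earlier Pontrjagin--Thom discussion. I would treat part 1) in detail and then indicate the minor modifications needed for parts 2) and 3), since the argument is formally identical once the structure group is changed.

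First I would set up the correspondence in one direction. Suppose $N\subset M$ is a closed $s$-dimensional submanifold of the $(n+s)$-dimensional manifold $M$, with normal bundle $\nu\to N$ a rank-$n$ vector bundle with structure group $O(n)$. Choose (using the existence and uniqueness of tubular neighbourhoods) a tubular neighbourhood $U$ of $N$ in $M$ diffeomorphic to the total space of the disk bundle $\tilde E(\nu)$, and collapse everything outside the open tube to the basepoint; this produces a map $c:M=M^+\to \tilde E(\nu)/\partial\tilde E(\nu)=M(\nu)$. Next, classify $\nu$ by a bundle map $\nu\to V_{\infty,n}$ covering a map $N\to BO(n)$; applying the Thom-space functor gives a map $M(\nu)\to MO(n)$, and composing yields $f:M\to MO(n)$. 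The key computation is then that $f^*u$ equals the Poincar\'e dual $Dx$ of the fundamental class $x=[N]\in H_s(M;\mathbb{Z}_2)$: one uses the Thom isomorphism $H^n(M(\nu);\mathbb{Z}_2)\cong H^0(N;\mathbb{Z}_2)$ carrying the restriction of $u$ to the Thom class of $\nu$, together with the fact that under $c^*$ the Thom class of $\nu$ pulls back to the class dual to $[N]$ (this is precisely the geometric meaning of Poincar\'e--Lefschetz duality for the pair $(U,\partial U)$ inside $M$). So $f^*u=Dx$.

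Conversely, given $f:M\to MO(n)$ with $f^*u=Dx$, I would make $f$ transverse (in the stratified/Thom sense, using that $MO(n)$ is the Thom space of $V_{\infty,n}\to BO(n)$, whose zero section $BO(n)$ is a "submanifold" of the smooth locus) to the zero section $BO(n)\subset MO(n)$; this uses Thom's transversality theorem and the smoothness of $MO(n)$ away from the basepoint. Then $N\equiv f^{-1}(BO(n))$ is a closed smooth $s$-dimensional submanifold of $M$ whose normal bundle is the pullback of $V_{\infty,n}$, hence has the correct structure group, and whose fundamental class is dual to $f^*u=Dx$, i.e.\ realizes $x$. I expect the main obstacle to be the transversality step together with the verification that the homology class one extracts is exactly $Dx$ and not merely dual to it up to a correction: one must check carefully that the Thom class, the tubular-neighbourhood collapse, and the Poincar\'e duality operator $D$ are normalized compatibly, and that transversality can be achieved rel the basepoint stratum of $MO(n)$ (which is not a manifold point). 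For parts 2) and 3) I would replace $MO(n)$ throughout by $MSO(n)$, $MU(n/2)$, $MSU(n/2)$, $MS_p(n/4)$, working with $\mathbb{Z}$ coefficients and orientations; the statement about trivial normal bundle follows because a trivialization of $\nu$ is the same as a lift of the classifying map $N\to BO(n)$ (or $BSO(n)$) to a point, i.e.\ a factorization through $Me\cong S^n$, and a family of $k=n$ nonsingular equations $\psi_1=\cdots=\psi_k=0$ cutting out $N$ is exactly a section of a trivialized normal bundle, hence precisely a map $M\to S^n$ transverse to a point. I would cite \cite{THOM, STONG, WALL1} for the analytic details of the transversality and collapse constructions.
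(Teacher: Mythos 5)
Your proposal follows the paper's own argument essentially verbatim: in the forward direction, collapse the complement of a tubular neighbourhood of $N$ to the basepoint and compose with the map of Thom complexes induced by the classifying map of the normal bundle to obtain $f:M\to MO(n)$ with $f^*u = D[N]$ via the Thom isomorphism $\phi$; in the converse direction, homotope $f$ to be transversally regular along the zero section $BO(n)\subset MO(n)$ and take $N = f^{-1}(BO(n))$, then repeat with $MSO(n)$, $MU(n/2)$, $MSU(n/2)$, $MS_p(n/4)$, and $Me\cong S^n$ for the oriented, complex, and trivial-normal-bundle cases. The only difference is that you explicitly flag the normalization and basepoint-stratum transversality subtleties that the paper leaves implicit; otherwise the two proofs coincide.
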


\begin{proof} Let
us consider the following definitions and lemmas.

\begin{lemma}\label{lemma-natural-isomorphisms-algebraic-topology-a} One has the natural isomorphisms:
$$\begin{array}{l}
 \phi:H_i(B;A)\to H_{n+i}(M(\xi);A),\hskip 2pt \phi:H^i(B;A)\to H^{n+i}(M(\xi);A),\\
 z\mapsto\phi(z)\equiv p^*z\hskip 2pt(\MOD\hskip 2pt\partial\tilde E),\hskip 2pt i\ge 0,\hskip 2pt n=\dim F,\\
  \end{array}$$
where £ì$A\equiv\mathbb{Z}_2$ if $G=O(n)$, $A\equiv\mathbb{Z}$ if $G=SO(n)$, $A\equiv\mathbb{Q}$ if $G=U(n)$, $S_p(n)$. More precisely
$\phi=D_{\tilde E}\circ D_B$, where $D_X$ are the following duality operators:
$$\begin{array}{l}
D_B:H_q(B)\to H^{m-q}(B),\hskip 2pt \dim B=m,\\
D_{\tilde E}:H_{m-q}(\tilde E)\cong H^{m-q}(B)\to
H_{n+m-(m-q)}(\tilde E,\partial\tilde E)\cong
H_{q+n}(M(\xi)),\hskip 2pt q>0.\\
 \end{array}$$
 One has a {\em fundamental
class} in the {\em cohomology of Thom}  of $\xi$, i.e.,
$\phi(1)\in H^n(M(\xi))$. Furthermore, the following
identifications hold: $M(\xi)/B\equiv\xi$, $M(\xi)\setminus
B\cong\{*\}$, where $B$ is identified with a submanifold of
$M(\xi)$ by means of the zero section.\end{lemma}

\begin{lemma} The
Stiefel-Whitney class $w_i\in H^i(B;\mathbb{Z}_2)$ of a vector bundle
$\xi$ with base $B$ is related to the Thom complex $M(\xi)$ by the
following relation: $w_i=\phi^*s^i_q(\phi(1))$, where
$\phi:H^q(B;\mathbb{Z}_2)\to H^{n+q}(M(\xi);\mathbb{Z}_2)$ and $s^i_q$
are Steenrod squares, i.e., homomorphisms
$s^i_q:H^n(M(\xi);\mathbb{Z}_2)\to H^{n+i}(M(\xi);\mathbb{Z}_2)$. (See Theorem \ref{steenrod-algebra-and-stiefel-whitney-classes} and Tab. \ref{properties-steenrod-squares} for informations on Steenrod squares.)
\end{lemma}

\begin{definition} Let $X\subset Y$ be a smooth submanifold of a smooth manifold
$Y$, of codimension $k$. Let $M$ be another smooth manifold. Then
a smooth map $f:M\to Y$ is said to be {\em transversally
regular}  on $X$ if the rank of the map $Df(x):T_xM\to
T_{f(x)}Y/T_{f(x)}X$ is $k$ whenever $f(x)\in X$. In such a case
we write $f(M)\pitchfork X$.\end{definition}

The group $O(n)$ contains a subgroup of diagonal matrices $D(n)\subset O(n)$
such that $D(n)\cong\mathbb{Z}_2\times\cdots\times\mathbb{Z}_2$.
Furthermore, one has a canonical mapping between classifying
spaces
$$BD(n)\cong\mathbb{R}P_1^\infty\times\cdots\times\mathbb{R}P_n^\infty\mathop{\longrightarrow}\limits^i BO(n)$$
and the induced cohomology mapping
$$i^*:H^\bullet(BO(n);\mathbb{Z}_2)\to H^\bullet(BD(n);\mathbb{Z}_2).$$
One can see that $i^*$ is a monomorphism and that $\IM(i^*)$ is
the set of symmetric polynomials in $\chi_1,\cdots,\chi_n$, where
$0\not =\chi_i\in H^1(\mathbb{R}P^\infty;\mathbb{Z}_2)$.
Moreover, the Stiefel-Whitney classes are elementary symmetric
polynomials:
$i^*(w_q)=\sum_{i_1<\cdots<i_q}\chi_{i_1}\cdots\chi_{i_q}$. Recall
that the mapping $j:BSO(n)\to BO(n)$ induces an epimorphism
$$j^\bullet:H^\bullet(BO(n);\mathbb{Z}_2)\to H^\bullet(BSO(n);\mathbb{Z}_2),$$
with $\ker(j^\bullet)$ generated, as ideal, by the element $w_1\in
H^1(BO(n);\mathbb{Z}_2)$. Now, let $M(\xi)$ be the Thom complex of
the universal bundle $\xi$ on $BO(n)$. By means of the zero
section we can identify a mapping $f:BO(n)\to M(\xi)$, hence we
get also the following morphism
$$f^\bullet:H^\bullet(M(\xi);\mathbb{Z}_2)\to H^\bullet(BO(n);\mathbb{Z}_2),$$
that is a ring monomorphism and $\IM(f^\bullet)$ is the set of
polynomials in $w_i$ divisible by $w_n\in H^n(BO(n);\mathbb{Z}_2)$,
where $i^*w_n=\chi_1\cdots\chi_n$. Furthermore,
$$\begin{array}{l}
   f^*\phi(1)=w_n,\\
   f^*\phi(w_i)=s^i_q(w_n)=w_iw_n.\\
  \end{array}$$
In general $f^*\phi(x)=xw_n$. Similar results can be obtained for $H^\bullet(BSO(n);\mathbb{Z}_2)$.

\begin{lemma}\label{lemma-important-bordism-groups} The bordism groups
$\Omega_s$, ${}^+\Omega_s$, $\Omega_s^U$, $\Omega_s^{SU}$,
$\Omega_s^{Sp}$, are canonically isomorphic to the stable homotopy
groups. See Tab. \ref{bordism-groups-and-stable-homotopy-groups}.
\end{lemma}
\begin{table}
\caption{Bordism groups and stable homotopy groups.}
\label{bordism-groups-and-stable-homotopy-groups}
\begin{tabular}{|l|}
\hline
{\rm{\footnotesize $\Omega_s\cong\pi_{n+s}(MO(n))$}}\\
{\rm{\footnotesize ${}^+\Omega_s\cong\pi_{n+s}(MSO(n))$}}\\
{\rm{\footnotesize $\Omega_s^U\cong\pi_{n+s}(MU(n/2))$}}\\
{\rm{\footnotesize $\Omega_s^{SU}\cong\pi_{n+s}(MSU(n/2))$}}\\
{\rm{\footnotesize $\Omega_s^{Sp}\cong\pi_{n+s}(MSp(n/4))$}}\\
\hline
\end{tabular}
\end{table}

After above results let us prove theorem for $G=O(n)$. Let
$N\subset M$ be a $s$-dimensional closed submanifold of $M$, $\dim
M=n+s$. The normal bundle on $N$ defines the following commutative
diagram:
$$\xymatrix@C=70pt{M\ar[r]^f&MO(n)\\
N\ar@{^{(}->}[u]\ar[r]&BO(n)\ar@{^{(}->}[u]}$$ where the bottom mappping
is the classifying map. The complement of the neighborhood of $N$
in $M$ fully reduces to a point $\sigma^0$ by means of the
contraction of $\partial\tilde E$ in the construction of
$M(\xi)=MO(n)$. Then, we get $f^*\phi(1)\equiv f^*u=D[N]$, where
$\phi:H^0(BO(n);\mathbb{Z}_2)\to H^n(MO(n);\mathbb{Z}_2)$ (see Lemma
\ref{lemma-natural-isomorphisms-algebraic-topology-a}). So if $x\in H_s(M;\mathbb{Z}_2)$ is representable by a
submanifold $N\subset M$, then we should have $f^*\phi(1)=Dx$.
Conversely, if we assign a mapping $f:M\to MO(n)$ transversally
regular along $BO(n)\subset MO(n)$, the reciprocal image of
$N\equiv f^{-1}(BO(n))$ is such that $f^*u=D[N]$.\end{proof}

\begin{theorem} Any cycle $x\in H_n(M;\mathbb{Z})$, $\dim M=n+1$, can be realized
with a closed submanifold. Furthermore, any cycle $x\in H_n(M;\mathbb{Z})$, $n+1\le \dim M\le n+2$, can be realized with an orientable closed submanifold. If $s<n/2$, for any cycle $x\in H_s(M;\mathbb{Z})$, $\dim M=n$, there exists a $\lambda\not=0$ such that the
cycle $\lambda x$ is represented by an $s$-dimensional submanifold
$N\subset M$.\end{theorem}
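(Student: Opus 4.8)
The plan is to reduce each of the three assertions to the realization criterion of Theorem~\ref{realizations-cycles-theorems}(2), using the fact that in codimension one and two the relevant Thom complexes are Eilenberg--MacLane spaces (Tab.~\ref{mos-and-msos-as-kgn-complexes}), and, for the third assertion, combining a rational surjectivity statement for oriented bordism with a general-position argument. \textbf{Codimension one.} Given $x\in H_n(M;\mathbb{Z})$ with $\dim M=n+1$, write $Dx\in H^1(M;\mathbb{Z})$ for its Poincar\'e dual. Since $H^1(M;\mathbb{Z})\cong[M,K(\mathbb{Z},1)]$ and $K(\mathbb{Z},1)\approxeq S^1\approxeq Me$, where $Me\cong MSO(1)$ by Tab.~\ref{mos-and-msos-as-kgn-complexes}, every class of $H^1(M;\mathbb{Z})$ is of the form $f^*u$ for a map $f:M\to Me$ and the fundamental class $u\in H^1(Me;\mathbb{Z})$. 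Choosing $f$ with $f^*u=Dx$ and applying Theorem~\ref{realizations-cycles-theorems}(2) yields a closed oriented submanifold $N\subset M$ (with trivial normal bundle) realizing $x$ --- more than is claimed.

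\textbf{Codimension one or two.} The codimension-one case is the one just treated, and the submanifold produced there is orientable. For $\dim M=n+2$ I would run the same argument using $H^2(M;\mathbb{Z})\cong[M,K(\mathbb{Z},2)]$ together with $K(\mathbb{Z},2)\approxeq\mathbb{CP}^\infty\approxeq MSO(2)$ (again Tab.~\ref{mos-and-msos-as-kgn-complexes}): the dual class $Dx\in H^2(M;\mathbb{Z})$ equals $f^*u$ for some $f:M\to MSO(2)$ and the fundamental class $u\in H^2(MSO(2);\mathbb{Z})$, so Theorem~\ref{realizations-cycles-theorems}(2) delivers a closed oriented submanifold realizing $x$.

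\textbf{The range $s<n/2$.} Now $k\equiv n-s\geq 3$ and one loses the identification $[M,MSO(k)]=H^k(M;\mathbb{Z})$; the map $MSO(k)\to K(\mathbb{Z},k)$ classifying the Thom class is, however, a rational equivalence, the defect being controlled by the torsion groups $\Omega^{SO}_t$, $0<t\leq s$. More directly, the natural homomorphism ${}^+{\underline{\Omega}}_s(M)\to H_s(M;\mathbb{Z})$, $[(N,f)]\mapsto f_*[N]$, is surjective after tensoring with $\mathbb{Q}$ --- this is the rational oriented version of Quillen's lemma, namely ${}^+{\underline{\Omega}}_s(M)\otimes\mathbb{Q}\cong\bigoplus_{r+t=s}H_r(M;\mathbb{Q})\otimes_{\mathbb{Q}}({}^+\Omega_t\otimes\mathbb{Q})$, whose $t=0$ summand is $H_s(M;\mathbb{Q})$. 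Hence there exist $\lambda\neq 0$, a closed oriented $s$-manifold $N$, and a smooth map $f:N\to M$ with $f_*[N]=\lambda x$. Because $2s<n=\dim M$, the Whitney embedding theorem allows me to deform $f$ through a homotopy to a smooth embedding, whose image is an $s$-dimensional closed submanifold representing $\lambda x$.

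The duality identifications and the transversality step are routine --- the latter is already packaged inside Theorem~\ref{realizations-cycles-theorems}(2). The only genuine obstacle is the third assertion, where one needs to know that the Steenrod realization map is rationally onto (it is exactly the torsion of $\Omega^{SO}_\bullet$ in positive degrees that forces the auxiliary factor $\lambda$), and then to invoke the Whitney embedding theorem, which is precisely why the hypothesis $s<n/2$, i.e. $n\geq 2s+1$, is needed: without it one would in general only obtain an immersion with self-intersections rather than an embedded submanifold.
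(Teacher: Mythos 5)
The paper states this theorem without proof --- it is Thom's classical realization theorem \cite{THOM} --- so there is no in-paper argument to compare against. Your proposal is correct and is, in substance, the standard Thom argument: codimensions one and two are reduced via Theorem~\ref{realizations-cycles-theorems}(2) to the identifications $MSO(1)\approxeq K(\mathbb{Z},1)$ and $MSO(2)\approxeq K(\mathbb{Z},2)$ from Tab.~\ref{mos-and-msos-as-kgn-complexes}, while the range $s<n/2$ is handled by the rational surjectivity of the Steenrod homomorphism (equivalently, the rational degeneration of the Atiyah--Hirzebruch spectral sequence for ${}^+\underline{\Omega}_\bullet(-)$) followed by the Whitney embedding theorem, for which $s<n/2$, that is $n\geq 2s+1$, is exactly the right hypothesis. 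Two minor remarks. First, Theorem~\ref{realizations-cycles-theorems}(2) is stated for oriented $M$, so your first two steps implicitly assume $M$ orientable; this is unavoidable since the $\mathbb{Z}$-coefficient Poincar\'e duality $D:H_s(M;\mathbb{Z})\to H^{n-s}(M;\mathbb{Z})$ invoked there already requires it, and it is consistent with the first assertion of the theorem being merely a weak form of the second. Second, the opening remark ``$k\equiv n-s\geq 3$'' of your third step is not forced by $s<n/2$ alone (for instance $n=3$, $s=1$ gives $k=2$); this causes no harm, since the argument never uses $k\geq 3$, and in the cases $k\leq 2$ the first two parts already furnish $\lambda=1$.
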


\begin{theorem} Let $M$ be any finite cell complex. For any
cycle $x\in H_s(M;\mathbb{Z})$ there exists an $\lambda\not=0$ such
that $\lambda x$ is the image of an $s$-dimensional manifold $N$,
$\phi:N\to M$, $\phi_*[N]=\lambda x$. One has the natural
epimorphism
$${\underline{\Omega}}_s^{SO}(X,Y)\bigotimes\mathbb{Q}\to H_s(X,Y;\mathbb{Q}).$$
\end{theorem}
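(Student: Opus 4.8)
The plan is to prove both assertions by reducing everything to the Pontrjagin--Thom machinery and the rational realizability of cycles already established above. Recall the theorem states: for any finite cell complex $M$ and any $x\in H_s(M;\mathbb{Z})$ there is a $\lambda\not=0$ with $\lambda x=\phi_*[N]$ for some $s$-dimensional manifold $\phi:N\to M$, and moreover the natural map ${\underline{\Omega}}_s^{SO}(X,Y)\bigotimes\mathbb{Q}\to H_s(X,Y;\mathbb{Q})$ is an epimorphism.

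\textbf{First step: the absolute realizability statement.} First I would replace $M$ by a smooth manifold neighborhood: embed the finite complex $M$ in some $\mathbb{R}^N$ and thicken it to a compact smooth manifold with boundary $W$ that deformation retracts onto $M$, so $H_\bullet(W;-)\cong H_\bullet(M;-)$; this reduces the problem to the case treated in Theorem~\ref{realizations-cycles-theorems}. Given $x\in H_s(W;\mathbb{Z})$, pass to the Poincar\'e--Lefschetz dual cohomology class $Dx\in H^{N-s}(W,\partial W;\mathbb{Z})$ (or work in the interior) and, after clearing a nonzero integer $\lambda$, lift it to a map $f:W\to MSO(N-s)$ via the fundamental class $u\in H^{N-s}(MSO(N-s);\mathbb{Z})$ — this uses that $MSO(k)$ is $(k-1)$-connected and that its first nonvanishing homotopy/cohomology group is infinite cyclic generated by $u$, together with the obstruction-theory results stated earlier (extension of maps into spaces with $\pi_{i-1}=0$), the rational obstructions being the only ones that survive, hence killable by multiplying $x$ by a suitable $\lambda\not=0$. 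Then make $f$ transversally regular along $BSO(N-s)\subset MSO(N-s)$; by Theorem~\ref{realizations-cycles-theorems}(2) the preimage $N\equiv f^{-1}(BSO(N-s))$ is a closed oriented $s$-dimensional submanifold with $f^*u=D[N]=\lambda\,Dx$, whence, applying $D^{-1}$, the inclusion-induced map sends $[N]$ to $\lambda x$. Taking $\phi$ to be this inclusion composed with the retraction $W\to M$ gives $\phi_*[N]=\lambda x$.

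\textbf{Second step: the epimorphism onto rational homology.} For the relative statement I would first note that the couple $(N,\phi)$ built above is, by definition, an oriented $s$-cycle of $M$, hence represents a class in ${\underline{\Omega}}_s^{SO}(M)$ whose image under the natural transformation ${\underline{\Omega}}_s^{SO}(M)\to H_s(M;\mathbb{Z})$ is exactly $\phi_*[N]=\lambda x$. Tensoring with $\mathbb{Q}$, the element $\tfrac{1}{\lambda}[(N,\phi)]$ maps to $x\in H_s(M;\mathbb{Q})$, so the map ${\underline{\Omega}}_s^{SO}(M)\otimes\mathbb{Q}\to H_s(M;\mathbb{Q})$ is onto; the relative case $(X,Y)$ follows by the same argument applied with boundaries constrained to $Y$, or by a five-lemma chase on the long exact sequences of the pair (both bordism and homology are generalized homology theories, by the results of Section~\ref{spectra-algebraic-topology-section}, so the natural transformation is a map of such theories and surjectivity propagates along the exact sequences once it is known for a point, where ${\underline{\Omega}}_0^{SO}(*)\otimes\mathbb{Q}\cong\mathbb{Q}\cong H_0(*;\mathbb{Q})$). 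Alternatively, one invokes the Atiyah--Hirzebruch--Whitehead spectral sequence (Theorem~\ref{atiyah-hirzebruch-whitehead-theorem}) for the theory ${\underline{\Omega}}_\bullet^{SO}$: rationally ${}^+\Omega_\bullet\otimes\mathbb{Q}$ is a polynomial algebra on generators in positive degrees $4k$, the spectral sequence collapses rationally onto its bottom row $H_p(X;{}^+\Omega_0\otimes\mathbb{Q})=H_p(X;\mathbb{Q})$ plus higher-filtration pieces, and the edge homomorphism ${\underline{\Omega}}_s^{SO}(X)\otimes\mathbb{Q}\to H_s(X;\mathbb{Q})$ is visibly surjective.

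\textbf{Main obstacle.} The delicate point is the lifting in the first step: controlling the obstructions to lifting $Dx$ (equivalently extending a map over skeleta) through the tower for $MSO(k)$ so that, although they need not vanish integrally, they all become trivial after multiplication by a single nonzero integer $\lambda$ depending only on $M$ and the degree $s$. This is exactly where the hypothesis ''$\lambda x$'' rather than ''$x$'' is unavoidable, and it rests on the finiteness of the relevant obstruction groups modulo torsion — i.e. that $\widetilde{H}^\bullet(MSO(k);\mathbb{Z})$ has only finite $2$-torsion and finitely generated free part in the range concerned, so the primary and higher obstructions to the lift lie in finite groups once one has corrected the single integral obstruction in $H^{N-s}$. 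Everything else — transversality, the identification $f^*u=D[N]$, and the passage to ${\underline{\Omega}}_\bullet^{SO}\otimes\mathbb{Q}$ — is routine given the lemmas already proved in this section.
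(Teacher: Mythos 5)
The paper states this theorem without proof --- it is the classical realizability theorem of Thom (cited in the bibliography), and the preceding theorem (on realizing cycles by \emph{sub}manifolds) is likewise quoted from the standard literature. So there is no ``paper's own proof'' to compare against; what follows is an assessment of your argument on its own merits.

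Your overall strategy --- pass through $MSO$, identify the forgetful map $\underline{\Omega}^{SO}_s(M)\to H_s(M;\mathbb{Z})$ with the edge/Hurewicz map for the Thom spectrum, and show rational surjectivity --- is exactly the classical route, and the alternative you sketch via the Atiyah--Hirzebruch spectral sequence (rational collapse onto the bottom row, surjective edge homomorphism) is in fact the cleanest version of it. Two points, however, need to be repaired.

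First, in your ``main obstacle'' paragraph you justify the existence of $\lambda$ by asserting that ``the primary and higher obstructions to the lift lie in finite groups once one has corrected the single integral obstruction in $H^{N-s}$.'' This is not true as stated. The obstruction groups for extending a map $X^{(k)}\to MSO(k)$ over higher skeleta live in $H^{k+j+1}\bigl(X;\pi_{k+j}(MSO(k))\bigr)$, and $\pi_{k+j}(MSO(k))\cong\Omega^{SO}_j$ in the stable range; these groups are \emph{not} finite (e.g.\ $\Omega^{SO}_4\cong\mathbb{Z}$), so the obstruction groups themselves are not finite. What is true --- and what the $\lambda$ hinges on --- is that the relevant $k$-invariants of $MSO(k)$ over $K(\mathbb{Z},k)$ are torsion in the stable range, equivalently that $MSO$ rationalizes to a wedge of Eilenberg--MacLane spectra and the orientation map $MSO\to H\mathbb{Z}$ admits a rational section. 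This is Thom's computation of $H^{\bullet}(MSO;\mathbb{Q})$, not a finiteness statement about cohomology or homotopy groups. Your AHSS alternative uses this same nontrivial input correctly, so the simplest fix is to drop the obstruction-theoretic route and keep the spectral-sequence one (or the rational splitting of $MSO\to H\mathbb{Z}$, which makes the surjectivity literally split-surjective).

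Second, the ``five-lemma chase'' you propose for the relative case does not give surjectivity from surjectivity: the five-lemma variant that concludes surjectivity of the middle vertical arrow requires \emph{injectivity} of the last one, which you do not have. The clean way to obtain the relative statement is the one the paper itself supplies a few lines earlier, namely the isomorphism $\underline{\Omega}_s(X,Y)\cong\underline{\Omega}_s(X/Y)$ for CW pairs, together with the corresponding excision isomorphism in singular homology; then the relative epimorphism reduces immediately to the absolute one applied to the finite complex $X/Y$. Alternatively, the rational splitting of $MSO\to H\mathbb{Z}$ at the level of spectra gives split surjectivity of the natural transformation on \emph{all} pairs at once and makes any diagram chase unnecessary.
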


\begin{definition} An {\em $X$-structure} on a manifold $V$ is a
homotopy class of cross-sections of the bundle of geometric
objects with fiber $X$ over $V$. A {\em $X$-manifold} is a
manifold $V$ together with an $X$-structure on $V$.\end{definition}

\begin{proposition} If
$V$ is an $X$-manifold, then so is $\partial V$.\end{proposition}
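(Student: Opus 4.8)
The plan is to obtain the $X$-structure on $\partial V$ by restriction along the inclusion $i:\partial V\hookrightarrow V$, once one has identified the bundle of geometric objects of $\partial V$ with the restriction to $\partial V$ of the corresponding bundle over $V$. Writing $\pi_X:\mathcal{G}_X(V)\to V$ for the bundle of geometric objects with fibre $X$ over $V$ (an associated bundle of the frame bundle $FV\to V$, equivalently of the stable tangent bundle of $V$), the key geometric fact is the one already used above in the proof of Pontrjagin's theorem: along the boundary there is a canonical splitting $TV|_{\partial V}\cong T(\partial V)\oplus\epsilon^1$, where $\epsilon^1$ is the trivial line bundle spanned by the outward normal field, fixed once a Euclidean metric and a collar are chosen. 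This induces a canonical bundle map $F(\partial V)\to FV|_{\partial V}$ covering $i$, hence a canonical isomorphism of fibre bundles $\mathcal{G}_X(\partial V)\cong i^*\mathcal{G}_X(V)$ compatible with the structure-group action on the fibre $X$; here one uses that the class of geometric objects under consideration is stable under adding the trivial summand $\epsilon^1$, which is exactly the setting in which $X$-structures are posed.

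Granting this identification, I would proceed as follows. Given an $X$-structure on $V$ represented by a cross-section $s:V\to\mathcal{G}_X(V)$, define the restricted section $s|_{\partial V}\equiv s\circ i:\partial V\to\mathcal{G}_X(V)|_{\partial V}\cong\mathcal{G}_X(\partial V)$; this is a cross-section of the bundle of geometric objects of $\partial V$. If $s_0$ and $s_1$ represent the same $X$-structure on $V$, i.e. there is a homotopy $H:V\times I\to\mathcal{G}_X(V)$ through cross-sections with $H\circ i_0=s_0$ and $H\circ i_1=s_1$, then $H\circ(i\times\mathrm{id}_I):\partial V\times I\to\mathcal{G}_X(V)|_{\partial V}\cong\mathcal{G}_X(\partial V)$ is a homotopy through cross-sections connecting $s_0|_{\partial V}$ and $s_1|_{\partial V}$. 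Hence the homotopy class $[\,s|_{\partial V}\,]$ depends only on the $X$-structure $[s]$ of $V$, and by definition this class is an $X$-structure on $\partial V$, so that $(\partial V,[\,s|_{\partial V}\,])$ is an $X$-manifold.

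I expect the only delicate point to be the first step: checking that the bundle of geometric objects of the lower-dimensional manifold $\partial V$ is \emph{canonically} — not merely abstractly — isomorphic to the restriction of that of $V$. This amounts to verifying that the fibre type $X$ together with its structure-group action is insensitive to stabilization by the trivial summand $\epsilon^1$; for the tangential $X$-structures relevant here (orientations, framings, $(B,f)$-structures, and so on) this is automatic, and the collar makes the identification canonical up to a contractible space of choices, which therefore does not affect homotopy classes of sections. Everything else is a routine restriction/naturality argument, entirely parallel to the treatment of Stiefel--Whitney data on $\partial B$ given above.
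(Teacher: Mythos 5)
The paper actually states this proposition without any proof at all, so there is nothing in the source to compare against directly; what you have supplied is the standard argument, and it is correct. Your key move — using the collar/outward-normal splitting $TV|_{\partial V}\cong T(\partial V)\oplus\epsilon^1$ to identify the bundle of geometric objects $\mathcal{G}_X(\partial V)$ with $i^*\mathcal{G}_X(V)$, and then restricting a cross-section and its homotopies — is exactly the argument one finds in the standard references on bordism (Stong, Switzer) for $(B,f)$-type tangential structures. You also correctly flag the only genuinely delicate point, namely that the notion of $X$-structure must be stable under adding the trivial line bundle $\epsilon^1$; for the paper's intended examples (orientations, framings, $G$-structures on the stable tangent or normal bundle) this holds, and the collar makes the identification canonical up to a contractible choice, so homotopy classes of sections are unaffected. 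In short: the paper offers no proof to compare with, and your proof is the expected and correct one.
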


\begin{definition} Given
any closed $X$-manifold $V$ one can define a second $X$-manifold
$-V$ such that $\partial(V\times I)\cong V\sqcup(-V)$. Thus one can
define a bordism group for the class of $n$-dimensional
$X$-manifolds, denoted by ${\underline{\Omega}}^X_n$ and called the
{\em $n$-th $X$-bordism group}.\end{definition}

Spectra are also related to
the bordism groups. In fact, one has the following.

\begin{theorem}[R.Thom] One has the following isomorphism:
${\underline{\Omega}}^X_\bullet\cong\pi_\bullet(MX)$, where $MX$ is
the spectrum {\em(Thom spectrum)} associated to the
$X$-structure.\end{theorem}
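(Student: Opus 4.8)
The plan is to carry out the classical Pontryagin--Thom construction at the level of spectra, reusing the transversality machinery already deployed in the proof of Theorem~\ref{realizations-cycles-theorems} and the computations recorded in Lemma~\ref{lemma-important-bordism-groups}. The strategy splits into: construct a map $PT:{\underline{\Omega}}^X_n\to\pi_n(MX)$, show it is a well-defined homomorphism, and then show surjectivity and injectivity by transversality.

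First I would define $PT$. Given a closed $n$-dimensional $X$-manifold $V$, choose by Whitney's theorem an embedding $V\hookrightarrow\mathbb{R}^{n+k}$ for $k$ large, together with a tubular neighborhood $U$ diffeomorphic to the total space of the normal bundle $\nu\to V$. The $X$-structure on $V$, being a homotopy class of sections of the bundle of geometric objects with fiber $X$, is precisely a lift of the classifying (Gauss) map of $\nu$ to the base $BX_k$ of the universal $X$-bundle, covered by a bundle map $\nu\to EX_k$; this induces a map of Thom complexes $M(\nu)\to MX_k$. Composing with the collapse $S^{n+k}=\mathbb{R}^{n+k}\cup\{\infty\}\to U/\partial U=M(\nu)$ yields an element of $\pi_{n+k}(MX_k)$, and since each stabilization adds a trivial normal line these classes are compatible with the structure maps $SMX_k\to MX_{k+1}$, so they define $PT([V])\in\pi_n(MX)$. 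Uniqueness of tubular neighborhoods and the Whitney isotopy theorem (valid once $k>n+1$) show independence of all choices; a bordism $W$ from $V_0$ to $V_1$, embedded in $\mathbb{R}^{n+k}\times[0,1]$ with the product structure near the ends, collapses to a homotopy, so $PT$ descends to bordism classes. Embedding two summands in disjoint affine half-spaces exhibits disjoint union as the sum in $\pi_{n+k}(MX_k)$, so $PT$ is a homomorphism.

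Next I would prove surjectivity and injectivity. For surjectivity, represent a class in $\pi_n(MX)$ by $f:S^{n+k}\to MX_k$ with $k$ large; since $MX_k=EX_k\cup\{\infty\}$ contains $BX_k$ as the zero section, smooth $f$ and apply the Thom transversality theorem (as in the proof of Theorem~\ref{realizations-cycles-theorems}) to arrange $f\pitchfork BX_k$. Then $V\equiv f^{-1}(BX_k)$ is a closed $n$-dimensional submanifold whose normal bundle is induced from the universal $X$-bundle, hence inherits a canonical $X$-structure, and by construction $PT([V])=[f]$. For injectivity it suffices, $PT$ being a homomorphism, to show $PT([V])=0$ forces $V$ to bound: a null-homotopy $F:S^{n+k}\times[0,1]\to MX_k$, made transverse to $BX_k$ relative to the two ends, has $F^{-1}(BX_k)$ a compact $X$-manifold with boundary $V$, so $[V]=0$. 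Thus $PT$ is an isomorphism in each degree, and since products of manifolds correspond to smash products of collapse maps the grading (and ring structure, where present) matches, giving ${\underline{\Omega}}^X_\bullet\cong\pi_\bullet(MX)$.

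The main obstacle is the transversality and tubular-neighborhood bookkeeping together with the stable-range hypotheses: one must ensure every construction takes place for $k$ large enough that the required embeddings and isotopies exist and are unique up to the relevant equivalence, and one must verify carefully that the $X$-structure — a section of the bundle of geometric objects — corresponds exactly to a lift of the normal Gauss map and is preserved under transverse preimage. Once these points are secured (they are exactly what was used in the proof of Theorem~\ref{realizations-cycles-theorems} and in Lemma~\ref{lemma-important-bordism-groups}), the argument above goes through; specialized to $G=O(n),SO(n),U(n/2),SU(n/2),S_p(n/4)$ it recovers the entries of Table~\ref{bordism-groups-and-stable-homotopy-groups}.
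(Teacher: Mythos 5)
Your proposal is correct and is exactly the classical Pontryagin--Thom argument (embed, collapse onto the Thom space of the normal bundle, then use transversality for surjectivity and for injectivity via a null-homotopy), which is precisely the proof the paper delegates to its reference: the paper itself offers no argument beyond ``See, e.g., \cite{SWITZER}''. The only point you should keep flagged, as you do, is the translation between the paper's tangential notion of $X$-structure (a section of the bundle of geometric objects) and the normal $(B,f)$-structure that the Thom spectrum $MX$ actually classifies; once that dictionary is fixed in the stable range, your argument goes through.
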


\begin{proof} See, e.g., \cite{SWITZER} and works quoted there.\end{proof}

\begin{example} In
particular, for $X=BO$, $SO$, $U$, $SU$ and $S_p$, we get
the bordism groups considered in Theorem \ref{realizations-cycles-theorems} and Lemma \ref{lemma-important-bordism-groups} and reported in Tab. \ref{bordism-groups-and-stable-homotopy-groups}.\end{example}

\begin{definition} A {\em singular $X$-manifold} in the space $Y$ is a continuous map
({\em singular simplex}) $f:M\to Y$, where $M$ is a closed
$X$-manifold. Two singular $X$-manifolds $(M,f)$, $(M',f')$ in $Y$
are called {\em $X$-bordant} if there is a pair $(W,g)$ such
that $W$ is a compact $X$-manifold with boundary, $\partial W\cong
M\sqcup(-M')$, and $g$ is a continuous map $g:W\to Y$ such that
$g|_M=f$, $g|_{M'}=f'$. The corresponding bordism group, for
$n$-dimensional $X$-manifolds contained in $Y$, is denoted by
${\underline{\Omega}}^X_n(Y)$.\end{definition}

\begin{theorem} One has the following
isomorphisms:
$${\underline{\Omega}}^X_n(Y)\cong MX_n(Y^+)\cong\pi_n(MX\wedge Y^+),$$
where $Y^+\equiv Y\cup\{\infty\}\equiv Y/\emptyset $.
One has the following isomorphism:
$${\underline{\Omega}}^X_n(Y)\cong E_n(Y^+),$$
where $E_\bullet(Y^+)$ is the homology induced by the spectrum $MX$.
\end{theorem}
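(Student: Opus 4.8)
The final theorem asserts the isomorphisms
$$\underline{\Omega}^X_n(Y)\cong MX_n(Y^+)\cong\pi_n(MX\wedge Y^+),$$
together with $\underline{\Omega}^X_n(Y)\cong E_n(Y^+)$ where $E_\bullet$ is the homology theory determined by the Thom spectrum $MX$. This is a relative/parametrized version of the Thom isomorphism theorem $\underline{\Omega}^X_\bullet\cong\pi_\bullet(MX)$ already recorded in the excerpt (the R. Thom theorem), now with coefficients in a target space $Y$.

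\textbf{Proof proposal.} The plan is to reduce everything to the absolute Thom isomorphism $\underline{\Omega}^X_\bullet\cong\pi_\bullet(MX)$ (R. Thom's theorem above) together with the general machinery of Theorem \ref{co-homology-theories-associated-with-any-spectrum}, which identifies the homology theory $MX_\bullet(-)$ associated with the spectrum $MX$ with $\pi_\bullet(MX\wedge -)$. With this in hand, the isomorphism $MX_n(Y^+)\cong\pi_n(MX\wedge Y^+)$ is literally the definition of that homology theory evaluated on the pointed space $Y^+$, and the last displayed isomorphism $\underline{\Omega}^X_n(Y)\cong E_n(Y^+)$ is just the composite of the preceding two once one recalls that $E_\bullet$ in the statement denotes precisely this homology theory. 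So the whole weight of the proof rests on establishing the first isomorphism, $\underline{\Omega}^X_n(Y)\cong \pi_n(MX\wedge Y^+)$.

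For this I would set up the parametrized Pontrjagin--Thom construction. Given a singular $X$-manifold $(M,f)$ in $Y$ with $M$ closed of dimension $n$, choose an embedding $M\hookrightarrow\mathbb{R}^{n+k}\subset S^{n+k}$ for $k$ large, with normal bundle $\nu$ carrying the $X$-structure induced from that on $M$. Collapsing the complement of a tubular neighbourhood gives the usual map $S^{n+k}\to M(\nu)$; composing with the classifying map $M(\nu)\to (MX)_k$ of the $X$-structure and smashing with the pointed map $M^+\to Y^+$ induced by $f$, one obtains $S^{n+k}\to (MX)_k\wedge Y^+$, i.e. an element of $\pi_{n+k}((MX)_k\wedge Y^+)$. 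Passing to the colimit over $k$ yields a class in $\pi_n(MX\wedge Y^+)$; the standard arguments (isotopy of embeddings, independence of the tubular neighbourhood, compatibility with stabilization) show this is independent of all choices, and an $X$-bordism $(W,g)$ between $(M,f)$ and $(M',f')$ produces, via the same collapse applied to a neat embedding of $W$, a homotopy between the corresponding maps, so the assignment descends to a homomorphism $\underline{\Omega}^X_n(Y)\to\pi_n(MX\wedge Y^+)$, additive because disjoint union of singular manifolds matches the co-$H$ structure of $S^{n+k}$.

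The inverse is built by transversality. Represent a class by $\alpha:S^{n+k}\to (MX)_k\wedge Y^+$; the space $(MX)_k\wedge Y^+$ contains the image of the zero section $BX_k\times Y$, a smooth submanifold disjoint from the collapsed basepoint, and after a homotopy we may take $\alpha$ smooth and transverse to it. Then $M:=\alpha^{-1}(BX_k\times Y)$ is a closed smooth $n$-manifold whose normal bundle is the pullback of the universal $X$-bundle, hence inherits an $X$-structure, while the $Y$-coordinate of $\alpha|_M$ gives $f:M\to Y$; this yields $(M,f)$, well defined up to $X$-bordism by applying the same transversality to homotopies, and one checks it is a two-sided inverse of the Pontrjagin--Thom map. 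The final identifications $MX_n(Y^+)\cong\pi_n(MX\wedge Y^+)\cong E_n(Y^+)$ are then read off directly from Theorem \ref{co-homology-theories-associated-with-any-spectrum}.

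The main obstacle is the careful verification, in the parametrized setting, that $X$-bordant singular manifolds yield homotopic stable maps and, conversely, that homotopic maps yield $X$-bordant transverse preimages --- i.e. that the two constructions are genuinely mutually inverse bijections and not merely natural transformations. The extra smash factor $Y^+$ introduces no new geometry beyond carrying the reference map $f$ along, but one must check that transversality to the zero section can be achieved without disturbing the basepoint and that the colimit over $k$ is compatible with all the homotopies involved; everything else follows from the already-cited absolute case and the spectrum-level homology formalism (see, e.g., \cite{SWITZER} and works quoted there).
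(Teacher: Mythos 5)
The paper offers no proof of this theorem beyond citing Quillen and Switzer, and your proposal reconstructs exactly the standard parametrized Pontrjagin--Thom argument contained in those references; so you are taking the same route the paper implicitly delegates to the literature. One detail worth tightening: $(MX)_k\wedge Y^+$ is not a smooth manifold, so the transversality step should be arranged by factoring a given compact representative through a finite-dimensional Grassmannian model of $BX_k$ (or, equivalently, by making the map transverse to the zero section only in the Thom-space coordinate, the $Y^+$ factor carrying no smooth structure and imposing no transversality condition, merely recording the reference map).
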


\begin{proof} See, e.g., \cite{QUILLEN, SWITZER} and references quoted there.\end{proof}

\begin{example}[Framed cobordism and Pontrjagin-Thom construction]\label{framed-cobordism-pontrjagin-thom-construction}
Let $\Omega_\bullet^{fr}$ denote the graded ring of framed cobordism and let $\Omega_\bullet^{fr}(X)$ denote the graded ring of framed cobordism classes of maps $f:B\to X$, where $B$ are framed manifolds without boundary and $X$ a fixed topological space. One has the isomorphisms reported in {\em(\ref{isomorphisms-framed-cobordisms-stable-homotopy-groups})}.
\begin{equation}\label{isomorphisms-framed-cobordisms-stable-homotopy-groups}
    \left\{
    \begin{array}{l}
      \Omega_\bullet^{fr}\cong\pi^s_\bullet,\hskip 3pt(\pi_n^s\equiv\pi^s_n(S^0)=\mathop{\lim}\limits_{k}\pi_{n+k}(S^k,s_0))\\
      \Omega_\bullet^{fr}(X)\cong\pi^s_\bullet(X^+),\hskip 3pt(\pi_n^s(X^+)\equiv\mathop{\lim}\limits_{k}\pi_{n+k}(S^kX^+,\ast)).\\
\end{array}\right.
\end{equation}
Let $B$ has a stably trivial normal bundle, i.e., let $i:B\to \mathbb{R}^{n+r}$ be an embedding for enough large $r$ and its normal bundle $\nu(B,i)$ is trivial, $\nu(B,i)\cong B\times\mathbb{R}^{r}$. Then, there is a canonical homeomorphism $M(\nu(B,i))\thickapprox\Sigma^r(B^+)=S^r\wedge B^+$, between the Thom complex of $\nu(B,i)$ and the $r$-fold suspension of $B^+$, called {\em framing} of $\nu(B,i)$. The {\em Pontrjagin-Thom construction} is the mapping given in {\em(\ref{pontrjagin-thom-construction-a})}.
\begin{equation}\label{pontrjagin-thom-construction-a}
\xymatrix{S^{n+r}\equiv\mathbb{R}^{n+r}\bigcup\left\{\infty\right\}\equiv(\mathbb{R}^{n+r})^+\ar@/_1pc/[rr]_{\tau}\ar[r]&
M(\nu(B,i))\thickapprox\Sigma^r(B^+)=S^r\wedge B^+\ar[r]&S^r}
\end{equation}
The homotopy class of the map $\tau$ defines an element of $\pi_{n+r}(S^r)$. This construction induces an isomorphism of graded rings $\Omega^{fr}_{\bullet}\cong\pi^s_\bullet(S^0)$. This construction can be generalized to maps $f:B\to X$ obtaining the mapping given in {\em(\ref{pontrjagin-thom-construction-b})}.
\begin{equation}\label{pontrjagin-thom-construction-b}
\xymatrix{S^{n+r}\ar@/_1pc/[rr]_{\tau}\ar[r]&
M(\nu(B,i))\thickapprox\Sigma^r(B^+)\ar[r]&S^r(X^+)}
\end{equation}
Taking into account that there is a stable homotopy equivalence $X^+\approxeq X\vee S^0$ and a non-canonical isomorphism $\pi_\bullet^s(X^+)=\mathop{\lim}\limits_{\to}(\Sigma^r(X^+))\cong\pi_\bullet^s(X)\bigoplus\pi_\bullet^s(S^0)$, we get the other isomorphism in {\em(\ref{isomorphisms-framed-cobordisms-stable-homotopy-groups})}.\footnote{In Tab. \ref{spectra-and-generalized-co-homology-theories} are resumed relations between spectra, generalized (co)homologies, and some distinguished examples. Let us emphasize the relation with Brown's representable theorem. A functor $F:(\mathcal{W}^{\bullet'})^{op}\to \mathcal{S}_{et}$ is {\em representable}, i.e., $F$ is equivalent to $Hom_{\mathcal{W}^{\bullet'}}(-;C)$ for some CW-complex $C$, iff the following conditions are satisfied. (i)(Wedge axiom). $F(\vee_\alpha X_\alpha)\cong\prod_\alpha F(X_\alpha)$; (ii)(Mayer-Vietoris axiom). For any CW complex $W$ covered by two subcomplexes $U$ and $V$, and any elements $u\in F(U)$, $v\in F(V)$, such that $u$ and $v$ restrict to the same element of $F(U\bigcap V)$, there is an element $w\in F(W)$ restricting to $u$ and $v$, respectively. In the particular case of singular cohomology, one has $H^n(X;A)\cong Hom(X;K(A,n))$, i.e., the singular cohomology functor is representable. Thanks to extended versions of Brown's representable theorem one can prove that all homology theories come from spectra, or by considering multiplicative operations, all homology theories come from ring spectra with multiplication $\mu: E\wedge E\to E$ and the unity $\eta:E(S^0)\to E$.}
\end{example}

\begin{table}
\caption{Spectra $E=\{E_n\}$ and generalized (co)homology theories.}
\label{spectra-and-generalized-co-homology-theories}
\scalebox{0.8}{$\begin{tabular}{|l|l|l|}
\hline
{\rm{\footnotesize Name}}&{\rm{\footnotesize Isomorphism}}&{\rm{\footnotesize Spectra}}\\
\hline
{\rm{\footnotesize generalized homology}}&{\rm{\footnotesize $E_n(X)=\pi_n(E\wedge X)=[\Sigma^nS^0,E\wedge X]$}}&{\rm{\footnotesize $E$}}\\
{\rm{\footnotesize $E_\bullet(X)$}}&&\\
\hline
{\rm{\footnotesize generalized cohomology}}&{\rm{\footnotesize $E^n(X)=[(E(X),\Sigma^nE]\cong[\Sigma^{-n}S^2\wedge X,E]$}}&{\rm{\footnotesize $E$}}\\
{\rm{\footnotesize $E^\bullet(X)$}}&&\\
\hline
{\rm{\footnotesize stable homotopy}}&{\rm{\footnotesize $\pi^s_n(X)=\mathop{\lim}\limits_{k}\pi_{n+k}(S^kX,\ast)=\pi_n(S^0\wedge X)=S^0_n(X)$}}&{\rm{\footnotesize sphere spectrum}}\\
{\rm{\footnotesize $\pi_\bullet^s(X)$}}&&{\rm{\footnotesize $S^0\equiv E(S^0)$}}\\
\hline
{\rm{\footnotesize stable cohomotopy}}&{\rm{\footnotesize $\pi_s^n(X)=\mathop{\lim}\limits_{k}\pi_{n+k}(S^k\wedge X)=(S^0)^\bullet(X)$}}&{\rm{\footnotesize sphere spectrum}}\\
{\rm{\footnotesize $\pi^\bullet_s(X)$}}&&{\rm{\footnotesize $S^0\equiv E(S^0)$}}\\
\hline
{\rm{\footnotesize singular cohomology}}&{\rm{\footnotesize $H^n(X;A)\cong[X;K(A,n)]$}}&{\rm{\footnotesize Eilenberg-MacLane}}\\
{\rm{\footnotesize $H^n(X;A)$}}&&{\rm{\footnotesize $K(A,n)$}}\\
\hline
{\rm{\footnotesize topological K-theory}}&{\rm{\footnotesize $K^0(X)$}}&{\rm{\footnotesize $E_0=\mathbb{Z}\times BU$ }}\\
{\rm{\footnotesize $K^n(X)$}}&{\rm{\footnotesize $K^1(X)$}}&{\rm{\footnotesize $E_1=U$}}\\
&{\rm{\footnotesize $K^{2n}(X)\cong K^0(X)$}}&{\rm{\footnotesize $E_{2n}=\mathbb{Z}\times BU$}}\\
&{\rm{\footnotesize $K^{2n+1}(X)\cong K^1(X)$}}&{\rm{\footnotesize $E_{2n+1}=U$}}\\
\hline
{\rm{\footnotesize $X$-bordism}}&{\rm{\footnotesize $\underline{\Omega}_\bullet^X\cong\pi_\bullet(MX)$}}&{\rm{\footnotesize Thom spectrum}}\\
{\rm{\footnotesize $\underline{\Omega}_\bullet^X$}}&&{\rm{\footnotesize $MX$}}\\
\hline
{\rm{\footnotesize singular $X$-bordism}}&{\rm{\footnotesize $\underline{\Omega}_n^X(Y)\cong\pi_n(MX\wedge Y^+)\cong MX_n(Y^+)$}}&{\rm{\footnotesize Thom spectrum}}\\
{\rm{\footnotesize $\underline{\Omega}_\bullet^X(Y)$}}&&{\rm{\footnotesize $MX$}}\\
\hline
{\rm{\footnotesize framed bordism}}&{\rm{\footnotesize $\Omega^{fr}_n=\pi^s_n$}}&{\rm{\footnotesize sphere spectrum}}\\
{\rm{\footnotesize $\Omega^{fr}_\bullet$}}&&\\
\hline
{\rm{\footnotesize singular framed bordism}}&{\rm{\footnotesize $\Omega^{fr}_n(X)=\pi^s_n(X^+)$}}&{\rm{\footnotesize sphere spectrum}}\\
{\rm{\footnotesize $\Omega^{fr}_\bullet(X)$}}&&\\
\hline
\multicolumn{3}{l}{\rm{\footnotesize $A$=abelian group.}}\\
\multicolumn{3}{l}{\rm{\footnotesize $U$=infinite unitary group and $BU$ its classifying space.}}\\
\multicolumn{3}{l}{\rm{\footnotesize $K^0(X)$=Groethendieck group of complex vector bundles over $X$.}}\\
\multicolumn{3}{l}{\rm{\footnotesize $K^1(X)$=Groethendieck group of vector bundles over $SX$.}}\\
\multicolumn{3}{l}{\rm{\footnotesize coefficient groups of generalized homology theories: $E_n(S^0)=\pi_n(E\wedge S^0)=\pi_n(E)$.}}\\
\multicolumn{3}{l}{\rm{\footnotesize coefficient groups of generalized cohomology theories:}}\\
\multicolumn{3}{l}{\rm{\footnotesize $E^n(S^0)=[E(S^0),\Sigma^nE]\cong[S^0,\Sigma^nE]\cong[\Sigma^{-n }S^0,E]=\pi_{-n}(E)$.}}\\
\end{tabular}$}
\end{table}

\begin{theorem}[Steenrod algebra and Stiefel-Whitney classes]\label{steenrod-algebra-and-stiefel-whitney-classes}
There exists a graded algebra $\mathcal{A}^\bullet\equiv\mathcal{A}^\bullet(\mathbb{F}_p)$ ({\em Steenrod algebra}) such that $H^{\bullet}(X;\mathbb{Z}_p)$ has a natural structure of graded module over $\mathcal{A}^\bullet$.\footnote{These module structures $\mathcal{A}^\bullet\times H^\bullet(X;\mathbb{Z}_p)\to H^\bullet(X;\mathbb{Z}_p)$, allow us to understand that there are strong constraints just on the space $X$ in order to obtain cohomology spaces $H^\bullet(X;\mathbb{Z}_p)$ with a prefixed structure. For example, do not exist spaces $X$ with $H^\bullet(X;\mathbb{Z}_p)=\mathbb{Z}[\alpha]$, unless $\alpha$ has dimension $2$ or $4$, where there examples $\mathbb{C}P^\infty$ and $\mathbb{H}P^\infty$.}

In particular, $\mathcal{A}^\bullet$ over the prime field $\mathbb{F}_p$ has the interpretation as $\mathbb{Z}_p$-cohomology of the Eilenberg-MacLane spectrum $K(\mathbb{F}_p)$. By the base change $\mathcal{A}^\bullet\bigotimes_{\mathbb{F}_p}\mathbb{F}_q$  can be considered the $\mathbb{F}_q$-cohomology of the Eilenberg-MacLane spectrum $K(\mathbb{F}_q)$. By including $K(\mathbb{F}_p)$ into $K(\mathbb{F}_q)$ we can view the elements of $\mathcal{A}^\bullet\bigotimes_{\mathbb{F}_p}\mathbb{F}_q$ as defining stable cohomology operations in $\mathbb{F}_q$-cohomology. This allows us to interpret elements of ${\mathbb S}^{\bullet}(\mathbb{F}_q)$ as stable cohomology operations acting on the $\mathbb{F}_p$-cohomology of a topological space.\footnote{A {\em cohomology operations} is a natural transformation between cohomology functors. One says that a cohomology operation is {\em stable} if it commutes with the suspension functor $S$. For example the {\em cup product squaring operator} $H^n(X;R)\to H^{2n}(X;R)$, $x\mapsto x\cup x$, where $R$ is a ring and $X$ a topological space, is an instable cohomology operation. Instead, are stable the following Steenrod operations: $Sq^i:H^n(X;\mathbb{Z}_2)\to H^{n+i}(X;\mathbb{Z}_2)$ and $P^i:H^n(X;\mathbb{Z}_2)\to H^{n+2i(p-1)}(X;\mathbb{Z}_2)$. (In Tab. \ref{properties-steenrod-squares} are resumed some fundamental properties of $Sq^i$.)}

Furthermore, with respect this structure on $H^{\bullet}(X;\mathbb{Z}_2)$, the Stiefel-Whitney classes are generated by $w_{2^i}$.
\end{theorem}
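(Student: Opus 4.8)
The plan is to prove the three assertions in sequence, using the representability of ordinary mod-$p$ cohomology recorded above, $H^n(X;\mathbb{Z}_p)\cong[X;K(\mathbb{Z}_p,n)]$, together with the classical structure theory of stable cohomology operations. \emph{Step 1: construction of $\mathcal{A}^\bullet$ and the module structure.} First I would construct the generating operations --- for $p=2$ the Steenrod squares $Sq^i\colon H^n(X;\mathbb{Z}_2)\to H^{n+i}(X;\mathbb{Z}_2)$, and for odd $p$ the Bockstein $\beta$ and the reduced powers $P^i\colon H^n(X;\mathbb{Z}_p)\to H^{n+2i(p-1)}(X;\mathbb{Z}_p)$ --- via the extended-power construction: the diagonal $X\to X^{\times p}$ is covered $\mathbb{Z}/p$-equivariantly by $E(\mathbb{Z}/p)\times_{\mathbb{Z}/p}X\to E(\mathbb{Z}/p)\times_{\mathbb{Z}/p}X^{\times p}$, and expanding the pullback of the external $p$-th power of a class $u$ in the cohomology basis of $\mathbb{R}P^\infty$ (resp. of the relevant lens space) yields the classes $Sq^i(u)$ (resp. $P^i(u)$). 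One then verifies the axioms: naturality, stability (commutation with the suspension isomorphism), $Sq^0=\mathrm{id}$, $Sq^{\deg u}u=u^2$, $Sq^iu=0$ for $i>\deg u$, the Cartan formula, and the Adem relations rewriting each inadmissible composite $Sq^aSq^b$ ($a<2b$) as a sum of admissibles. One then \emph{defines} $\mathcal{A}^\bullet(\mathbb{F}_2)$ to be the free graded associative $\mathbb{F}_2$-algebra on symbols $Sq^i$ ($i\ge1$) modulo the two-sided ideal generated by the Adem relations (for odd $p$ one adjoins $\beta$ and the $P^i$ with their Adem relations). Granting the axioms, the assignment $u\mapsto Sq^Iu$ makes $H^\bullet(X;\mathbb{Z}_p)$ a graded left $\mathcal{A}^\bullet$-module, naturally in $X$, which is assertion~(1).

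\emph{Step 2: identification with the cohomology of $K(\mathbb{F}_p)$ and base change.} By Yoneda together with representability, a cohomology operation $H^n(-;\mathbb{Z}_p)\to H^{n+k}(-;\mathbb{Z}_p)$ is the same as an element of $[K(\mathbb{Z}_p,n),K(\mathbb{Z}_p,n+k)]=H^{n+k}(K(\mathbb{Z}_p,n);\mathbb{Z}_p)$; requiring it to commute with suspension is exactly compatibility under the maps $\Sigma K(\mathbb{Z}_p,n)\to K(\mathbb{Z}_p,n+1)$, so the stable operations of degree $k$ form $\varprojlim_n H^{n+k}(K(\mathbb{Z}_p,n);\mathbb{Z}_p)$, which is precisely the degree-$k$ part of the $\mathbb{Z}_p$-cohomology of the Eilenberg--MacLane spectrum $K(\mathbb{F}_p)$. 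The theorem asserts this graded ring is $\mathcal{A}^\bullet$, and this is the content of the Serre (for $p=2$) and Cartan (for $p$ odd) computation of $H^\bullet(K(\mathbb{Z}_p,n);\mathbb{Z}_p)$ as a free graded-commutative algebra on the admissible monomials of excess $<n$ applied to the fundamental class; in the stable limit this says every stable operation is uniquely a sum of admissible monomials, i.e. $H^\bullet(K(\mathbb{F}_p);\mathbb{Z}_p)\cong\mathcal{A}^\bullet$. The base-change statement is then formal: since $\mathbb{F}_q$ is free over $\mathbb{F}_p$, universal coefficients give $\mathcal{A}^\bullet\otimes_{\mathbb{F}_p}\mathbb{F}_q\cong H^\bullet(K(\mathbb{F}_p);\mathbb{F}_q)$, and the map of spectra $K(\mathbb{F}_p)\to K(\mathbb{F}_q)$ induced by $\mathbb{F}_p\hookrightarrow\mathbb{F}_q$ exhibits these classes as stable operations on $\mathbb{F}_p$-cohomology.

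\emph{Step 3: generation of the Stiefel--Whitney classes by the $w_{2^i}$.} Here I would work inside $H^\bullet(BO;\mathbb{Z}_2)=\mathbb{Z}_2[w_1,w_2,\dots]$, with the $\mathcal{A}^\bullet$-action computed by the Wu formula $Sq^i(w_j)=\sum_{t=0}^{i}\binom{j+t-i-1}{t}\,w_{i-t}w_{j+t}$ (itself obtained from the Cartan formula and the splitting principle, using $Sq(w_1)=w_1+w_1^2$ for a line bundle). The coefficient of the leading term $w_{i+j}$ is $\binom{j-1}{i}$, and taking $j=2^k$ one has $\binom{2^k-1}{i}\equiv1\pmod{2}$ for all $0\le i\le 2^k-1$, so that
$$Sq^i(w_{2^k})=w_{2^k+i}+(\text{a polynomial in the }w_\ell\text{ with }\ell<2^k+i).$$
A strong induction on $j$ then shows every $w_j$ lies in the smallest subalgebra of $H^\bullet(BO;\mathbb{Z}_2)$ closed under the $\mathcal{A}^\bullet$-action and containing $\{w_{2^i}\}$: writing $2^k\le j<2^{k+1}$ and $j=2^k+i$, the case $i=0$ is a generator, while for $i\ge1$ the displayed identity expresses $w_j$ through $Sq^i(w_{2^k})$ and products of lower Stiefel--Whitney classes, which are covered by the inductive hypothesis. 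Pulling back along the generalized Gauss map $\tau_M\colon M\to BO$ transports this to the Stiefel--Whitney classes of an arbitrary manifold (or vector bundle) $M$, giving the last assertion.

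\emph{The main obstacle.} Steps 1 and 3 are routine: constructing the $Sq^i$/$P^i$ and deriving the Adem relations is long but entirely standard, and the Wu-formula argument is an elementary binomial computation (the key point being $\binom{2^k-1}{i}\equiv1\pmod{2}$, from $(1+x)^{2^k-1}\equiv(1+x)^{2^k}/(1+x)$ over $\mathbb{F}_2$). The genuinely hard input is the equality $\mathcal{A}^\bullet=H^\bullet(K(\mathbb{F}_p);\mathbb{Z}_p)$ used in Step 2: it is not formal but rests on the Serre--Cartan determination of the mod-$p$ cohomology of Eilenberg--MacLane spaces, equivalently on the nontrivial fact that there are no stable mod-$p$ operations beyond those generated by the $Sq^i$ (and $\beta$, $P^i$) --- that the admissible monomials of excess $<n$ are linearly independent and span in the stable range. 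Thus the weight of the proof lies in reproducing (or citing, e.g. from \cite{SWITZER}) that theorem; the remainder is bookkeeping.
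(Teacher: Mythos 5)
Your proof is correct, but it takes a genuinely different route from the paper. You construct the Steenrod operations topologically via the $\mathbb{Z}/p$-equivariant extended-power construction $E(\mathbb{Z}/p)\times_{\mathbb{Z}/p}X^{\times p}$, derive the axioms and Adem relations from that model, and then identify $\mathcal{A}^\bullet$ with $H^\bullet(K(\mathbb{F}_p);\mathbb{Z}_p)$ via Yoneda representability together with the Serre--Cartan computation. The paper's proof instead follows the purely algebraic route (L. Smith's construction, cited in the bibliography): for a Galois field $\mathbb{F}_q$, $q=p^k$, one defines the total operation $P(\xi)\colon\mathbb{F}_q[V]\to\mathbb{F}_q[V][[\xi]]$ on the symmetric algebra $\mathbb{F}_q[V]\cong S^\bullet(V^*)$ as the $\mathbb{F}_q$-algebra map extending $\alpha\mapsto\alpha+\alpha^q\xi$ on linear forms; expanding in powers of $\xi$ gives the $P^i$ and $Sq^i$, the instability conditions and Cartan formula are immediate algebraic identities, the Adem--Wu relations are verified algebraically, and the Bockstein is adjoined via the graded algebra $H(V)=\mathbb{F}[V]\otimes\Lambda^\bullet(V^*)$. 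The algebraic approach buys generality (it works uniformly over any $\mathbb{F}_q$) and avoids the topological machinery you invoke; your topological approach buys a direct construction of the operations as natural transformations on $H^\bullet(X;\mathbb{Z}_p)$ for arbitrary spaces $X$, which the paper's purely algebraic version leaves implicit when passing from polynomial algebras to singular cohomology. For the final assertion, you are actually more complete than the paper: you carry out the induction on $j$ using the binomial identity $\binom{2^k-1}{i}\equiv1\pmod2$ extracted from Wu's formula, whereas the paper only records Wu's formula (\ref{wu-formula}) and leaves the generation of all $w_j$ from $\{w_{2^i}\}$ to the reader.
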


\begin{table}[h]
\caption{Properties of the Steenrod squares $Sq^i:H^n(X;\mathbb{Z}_2)\mapsto H^{n+i}(X;\mathbb{Z}_2)$.}
\label{properties-steenrod-squares}
\begin{tabular}{|l|l|}
  \hline
  \hfil{\rm{\footnotesize Name}}\hfil& \hfil{\rm{\footnotesize Properties}}\hfil\\
\hline
{\rm{\footnotesize naturality}}& {\rm{\footnotesize $f^\bullet(Sq^i(x))=Sq^i(f^\bullet(x)),\hskip 3pt f:X\to Y$}}\\
\hline
{\rm{\footnotesize additivity}}& {\rm{\footnotesize $Sq^i(x+y)=Sq^i(x)+Sq^i(y)$}}\\
\hline
{\rm{\footnotesize Cartan formula}}& {\rm{\footnotesize $Sq^i(x\cup y)=\sum_{r+s=i}(Sq^r(x))\cup(Sq^s(y))$}}\\
\hline
{\rm{\footnotesize stability}}& {\rm{\footnotesize $S\circ Sq^i-Sq^i\circ S=0$}}\\
\hline
{\rm{\footnotesize cup square}}& {\rm{\footnotesize $Sq^i(x)=x\cup x,\hskip 3pt \DEG(x)=i$}}\\
\hline
{\rm{\footnotesize $Sq^0$}}& {\rm{\footnotesize $Sq^0=1$}}\\
\hline
{\rm{\footnotesize $Sq^1$}}& {\rm{\footnotesize $Sq^1$= Bockstein homomorphism of the exact sequence}}\\
& {\rm{\footnotesize $\xymatrix{0\ar[r]&\mathbb{Z}_2\ar[r]&\mathbb{Z}_4\ar[r]&\mathbb{Z}_2\ar[r]&0}$}}\\
\hline
{\rm{\footnotesize Serre-Cartan basis}}& {\rm{\footnotesize $\{Sq^I\equiv Sq^{i_1}\cdots Sq^{i_k}\}_{i_j\ge 2i_{j+1}}$}}\\
\hline
{\rm{\footnotesize Adem's relations}}& {\rm{\footnotesize $\{Sq^iSq^j-I^{ij}=0\}_{i,j>0,\hskip 2pt i<2j}$}}\\
& {\rm{\footnotesize $I^{ij}\equiv \sum_{0\le k\le[i/2]}\binom {j-k-1} {i-2k} Sq^{i+1}Sq^{k}$.}}\\
\hline
 \multicolumn{2}{l}{\rm{\footnotesize $\mathcal{A}^\bullet(\mathbb{F}_2)=<Sq^i>/<Sq^iSq^j-I^{ij}>$.}}\\
\multicolumn{2}{l}{\rm{\footnotesize For any $p\ge 2$, $\mathcal{A}^\bullet(\mathbb{F}_p)$ is generated by $P^i$ and  the Bockstein operator $\beta$ associated to}}\\
\multicolumn{2}{l}{\rm{\footnotesize the short exact sequence $\scalebox{0.8}{$\xymatrix{0\ar[r]&\mathbb{Z}_p\ar[r]&\mathbb{Z}_{p^2}\ar[r]&\mathbb{Z}_p\ar[r]&0}$}$.}}\\
\end{tabular}
\end{table}

\begin{proof}
Let $V$ be a $n$-dimensional $\mathbb{F}_q$-vector space over the Galois field $\mathbb{F}_q$ of size $q=p^k$, with prime $p$ and positive integer $k\in\mathbb{N}_0$.\footnote{A {\em Galois field} (or {\em finite field}) is a field that contains only finitely many elements. These are classified by $q=p^k$ if they contains $q$ elements. Each Galois field with $q$ elements is the splitting field of the polynomial $x^q-x$. Recall that the {\em splitting field} of a polynomial $p(x)$ over a field $K$ is a field extension $L$ of $K$ over which $p(x)$ factorizes into linear factors $x-a_i$, and such that $a_i$ generates $L$ over $K$, i.e, $L=K(a_i)$. (In Tab. \ref{properties-field-extension} are reported some properties of field extensions useful in the paper.) The extension $L$ of minimal degree over $K$ in which $p$ splits exists and is unique up to isomorphism, identified by the {\em Galois group} of $p$. (In Tab. \ref{examples-galois-group} are reported some fundamental properties and examples of Galois groups.) } Let us consider the controvariant functor $\mathbb{F}_q[-]$, identified  by the correspondence $V \rightsquigarrow \mathbb{F}_q[V]\cong S^\bullet(V^*)$, where $S^\bullet(V^*)$ is the graded commutative symmetric algebra on the dual space $V^*$ of $V$. Let us define the $\mathbb{F}_q$-algebra homomorphism $P(\xi):\mathbb{F}_q\to\mathbb{F}_q[V][[\xi]]$, with the formula
$P(\xi)(\alpha)=\alpha+\alpha^q\xi\in \mathbb{F}_q[V][[\xi]]$, $\forall \alpha\in V^*$. Then we get the formulas in (\ref{definitions-steenrod-operations}).
\begin{equation}\label{definitions-steenrod-operations}
    P(\xi)(f)=\left\{\begin{array}{l}
                    \sum_{0\le i\le \infty}P^i(f)\xi^i,\hskip 3pt q\not=2\\
                    \sum_{0\le i\le \infty}Sq^i(f)\xi^i,\hskip 3pt q=2\\
                     \end{array}\right\}\hskip 3pt \forall\:  f\in\mathbb{F}_q[V].
\end{equation}
Equation (\ref{definitions-steenrod-operations}) defines the $\mathbb{F}_q$-linear maps $P^i,Sq^i:\mathbb{F}_q[V]\to\mathbb{F}_q[V]$. $P^i$ are called {\em Steenrod reduced power operations} and $Sq^i$ are called {\em Steenrod squaring operations}. For abuse of notation can be all denoted by $P^i$ and called {\em Steenrod operations}. These operations satisfy the conditions ({\em unstability conditions}), reported in (\ref{unstability-conditions-steenrod-operations}).
\begin{equation}\label{unstability-conditions-steenrod-operations}
    P^i(f)=\left\{\begin{array}{l}
                    f^q,\hskip 3pt i=\DEG(f)\\
                    0,\hskip 3pt i>\DEG(f)\\
                     \end{array}\right\}\hskip 3pt \forall\:  f\in\mathbb{F}_q[V], \hskip 3pt i,j,k\in\mathbb{N}_0.
\end{equation}
Moreover, one has the derivation {\em Cartan formulas} reported in (\ref{derivation-cartan-formulas}).
\begin{equation}\label{derivation-cartan-formulas}
    P^k(fg)=\sum_{i+j=k}P^i(f)P^j(g),\hskip 3pt f,g\in\mathbb{F}_q[V].
\end{equation}
Furthermore, one has the relations ({\em Adem-Wu relations} \cite{WU, ADEM, HCARTAN, SERRE}) reported in (\ref{adem-wu-relations}).
\begin{equation}\label{adem-wu-relations}
    P^iP^j=\sum_{0\le k\le[\frac{i}{q}]}(-1)^{(i-qk}\binom {(q-1)(j-k)-1} {i-qk}P^{i+j-k}P^k,\hskip 3pt \forall\:  i,j\ge 0, \: i<qj.
\end{equation}

For any Galois field $\mathbb{F}_q$ the coefficients are in the prime subfield $\mathbb{F}_p\vartriangleleft\mathbb{F}_q$.

Then the {\em Steenrod algebra} is the free associative $\mathbb{F}_q$-algebra generated by the reduced power operations $Pi$, modulo the Adem-Wu relations. The admissible monomials are an $\mathbb{F}_q$-basis for the Steenrod algebra.

The Steenrod algebra has a natural structure of Hopf algebra \cite{MILNOR2, MILNOR-MOORE, SMITH}.\footnote{$\mathcal{A}^\bullet$ has a natural structure of Hopf algebra with commutative, associative comultiplication $\psi:\mathcal{A}^\bullet\to\mathcal{A}^\bullet\otimes\mathcal{A}^\bullet$, given by $\psi(\mathcal{A}^k)=\sum_{i+j=k}\mathcal{A}^i\otimes\mathcal{A}^j$. Let us denote by $\mathcal{A}_\bullet\equiv Hom_{\mathbb{_q}}(\mathcal{A}^\bullet;\mathbb{F}_q)=\oplus_{n}\mathcal{A}_n=\oplus_{n} Hom_{\mathbb{F}_q}(\mathcal{A}^n;\mathbb{F}_q)$ the dual vector space to $\mathcal{A}^\bullet$. One has the canonical evaluation pairing $<,>:\mathcal{A}^\bullet\times\mathcal{A}_\bullet\to\mathbb{F}_q$, $<f,\alpha>=\alpha(f)$. One has the following isomorphism of $\mathbb{F}_q$-Hopf algebras $\mathcal{A}_\bullet\cong\mathbb{F}_q[\xi_1,\xi_2,\xi_3,\cdots,\xi_k,\cdots]$, where $\deg(\xi_k)=q^{k-1}$, $k\in\mathbb{N}$, and comultiplication $\phi:\mathcal{A}_\bullet\to\mathcal{A}_\bullet\otimes\mathcal{A}_\bullet$, given by $\phi(\xi_k)=\sum_{i+j=k}\xi_i^{q^j}\otimes\xi_j$.}

Set $H(V)\equiv \mathbb{F}[V]\bigotimes\Lambda^\bullet(V^*)$. One has two embeddings of $V^*$ into $H(V)$, given in (\ref{embeddings-dual-space-graded-space}).

\begin{equation}\label{embeddings-dual-space-graded-space}
\begin{array}{ll}
\scalebox{0.7}{$\xymatrix{&V^*\bigotimes\mathbb{F}\subset\mathbb{F}[V]\bigotimes_{\mathbb{F}}\mathbb{F}\cong\mathbb{F}[V]\subset H(V)\\
    V^*\ar[ur]^(0.4){a}\ar[dr]_(0.4){b}&\\
    &\mathbb{F}\bigotimes_{\mathbb{F}}V^*\cong V^*\subset\Lambda^\bullet(V^*)}$}&\scalebox{0.7}{$
    \xymatrix{&a(z)\equiv z\in V^*\subset\mathbb{F}[V]\\
    z\mapsto \ar[ur]^(0.4){a}\ar[dr]_(0.4){b}&\\
    &b(z)\equiv dz\in V^*\subset\Lambda^\bullet(V^*)}$}\\
                     \end{array}
                     \end{equation}
Let $\beta:H(V)\to H(V)$ be the unique derivation with the property that for an alternating linear form $dz$ one has $\beta(dz)=z$, and for any polynomial linear form $z$, one has $\beta(z)=0$. This derivation is called {\em Bockstein operator}.\footnote{The (co)homological interpretation of the Bockstein operator is associated to a short exact sequence $\scalebox{0.7}{$\xymatrix{0\ar[r]&A_\bullet\ar[r]^{\alpha}&B_\bullet\ar[r]^{\beta}&C_\bullet\ar[r]&0}$}$ of chain complexes in an abelian category. In fact to such a sequence there corresponds a long exact sequence $\scalebox{0.7}{$\xymatrix{\cdots\ar[r]&H_{n+1}(A_\bullet)\ar[r]^{\alpha_*}&H_{n+1}(B_\bullet)\ar[r]^{\beta_*}&
H_{n+1}(C_\bullet)\ar[r]^{\delta_{n+1}}&H_{n}(A_\bullet)\ar[r]^{\alpha_*}&H_n(B_\bullet)\ar[r]^{\beta_*}&
H_n(C_\bullet)\ar[r]^{\delta_n}&\cdots}$}$. The boundary maps $\delta_{n+1}:H_{n+1}(C_\bullet)\to H_n(A_\bullet)$ are just the {\em Bockstein homomorphisms}. In particular, if $\scalebox{0.7}{$\xymatrix{0\ar[r]&A\ar[r]^{\alpha}&B\ar[r]^{\beta}&C\ar[r]&0}$}$ is a short exact sequence of abelian groups and $A_\bullet=E_\bullet\otimes A$, $B_\bullet=E_\bullet\otimes B$, $C_\bullet=E_\bullet\otimes C$, with $E_\bullet$ a chain complex of free, or at least torsion free, abelian groups, then the Bockstein homomorphisms are induced by the corresponding short exact sequence $\scalebox{0.7}{$\xymatrix{0\ar[r]&E_\bullet\otimes A\ar[r]^{1\otimes\alpha}&E_\bullet\otimes B\ar[r]^{1\otimes\beta}&E_\bullet\otimes C\ar[r]&0}$}$. Similar considerations hold for cochain complexes. in such cases the Bockstein homomorphism increases the degree, i.e., $\beta:H^n(C^\bullet)\to H^{n+i}(A^\bullet)$.} Then the {\em full Steenrod algebra}, $\mathcal{A}^\bullet(\mathbb{F}_q)$, of the Galois field $\mathbb{F}_q$ is generated by $P^i$, $i\in\mathbb{N}_0$, and the Bockstein operator $\beta$. This a subalgebra of the algebra of endomorphisms of the functor $V\rightsquigarrow H(V)$.

Then the relation between Stiefel-Whitney classes and Steenrod squares is given by the relation ({\em Wu's relation}) reported in (\ref{wu-relation}).
\begin{equation}\label{wu-relation}
    Sq(\nu)=w\hskip 3pt\left\{
    \begin{array}{l}
      Sq^k(x)=\nu_k\cup x \\
      <Sq^k(x),\mu>=<\nu_k\cup x,\mu> \\
    \end{array}
    \right\}.
\end{equation}
This means that the total Stiefel-Whitney class $w$ is the Steenrod square of the total {\em Wu class} $\nu$ that is implicitly defined by the relation (\ref{wu-relation}). The natural short exact sequence $\xymatrix{\mathbb{Z}\ar[r]&\mathbb{Z}_2\ar[r]&0}$ induces the Bockstein homomorphism $\beta:H^i(X;\mathbb{Z}_2)\to H^{i+1}(X;\mathbb{Z})$. $\beta(w_i)\in  H^{i+1}(X;\mathbb{Z})$ is called the {$(i+1)$-integral Stiefel-Whitney class}.\footnote{The third integral Stiefel-Whitney class is the obstruction to a $spin^c$-structure on $X$.}  Thus, over the Steenrod algebra, the Stiefel-Whitney classes $w_{2^i}$ generate all the Stiefel-Whitney classes and satisfy the formula ({\em Wu's formula}) reported in (\ref{wu-formula}).
\begin{equation}\label{wu-formula}
    Sq^i(w_j)=\sum_{0\le k\le i}\binom {j+k-i-1} {k}w_{i-k}w_{j+k}.
\end{equation}

\end{proof}

\begin{table}[h]
\caption{Properties of field extension $L/K$.}
\label{properties-field-extension}
\scalebox{0.8}{$\begin{tabular}{|l|l|}
  \hline
  \hfil{\rm{\footnotesize Name}}\hfil& \hfil{\rm{\footnotesize Properties}}\hfil\\
\hline
{\rm{\footnotesize intermediate of $L/K$}}& {\rm{\footnotesize any extension $L/H$}}\\
& {\rm{\footnotesize such that $H/K$ is an extension field}}\\
\hline
{\rm{\footnotesize adjunction of subset $S\subset L$}}& {\rm{\footnotesize $K(S)$= smallest subfield containing $K$ and $S$.}}\\
\hline
{\rm{\footnotesize simple extension}}& {\rm{\footnotesize $L=K(\{s\})$, $s\in L$, $s$=primitive element}}\\
\hline
{\rm{\footnotesize degree of the extension}}& {\rm{\footnotesize $[L:K]=\dim_K(L)$}}\\
\hline
{\rm{\footnotesize trivial extension}}& {\rm{\footnotesize $[L:K]=1$, i.e., $L=K$}}\\
\hline
{\rm{\footnotesize quadratic (cubic) extension}}& {\rm{\footnotesize $[L:K]=2$, ($[L:K]=3$)}}\\
\hline
{\rm{\footnotesize finite (infinite) extension}}& {\rm{\footnotesize $[L:K]<\infty$, ($[L:K]=\infty$)}}\\
\hline
{\rm{\footnotesize Galois extension}}& {\rm{\footnotesize $L/K$ such that:}}\\
& {\rm{\footnotesize (a) (normality): $L$ is the splitting field}}\\
& {\rm{\footnotesize  of a family of polynomials in $K[x]$;}}\\
& {\rm{\footnotesize (b) (separability):}}\\
& {\rm{\footnotesize For every $\alpha\in L$, the minimal polynomial of $\alpha$ in $K$}}\\
& {\rm{\footnotesize is a sparable polynomial, i.e., has distinct roots.}}\\
\hline
{\rm{\footnotesize $[\mathbb{C}:\mathbb{R}]=2$}}& {\rm{\footnotesize This is a simple, Galois extension:}}\\
& {\rm{\footnotesize $\mathbb{C}=\mathbb{R}(i)$; $[\mathbb{C}:\mathbb{R}]=|Aut(\mathbb{C}/\mathbb{R})|=2$.}}\\
& {\rm{\footnotesize $\mathbb{C}/\mathbb{R}\cong \mathbb{R}[x]/(x^2+1)$.}}\\
\hline
{\rm{\footnotesize $[\mathbb{R}:\mathbb{Q}]=\mathfrak{c}$}}& {\rm{\footnotesize  This is an infinite extension.}}\\
& {\rm{\footnotesize  $\mathfrak{c}$=cardinality of the continuum.}}\\
\hline
{\rm{\footnotesize ($\clubsuit$): $H/\mathbb{Q}$}}& {\rm{\footnotesize  Splitting field of $p(x)=x^3-2$ over $\mathbb{Q}$.}}\\
{\rm{\footnotesize $H=\mathbb{Q}(a_1,a_2)\subset \mathbb{C}$}}& {\rm{\footnotesize  }}\\
{\rm{\footnotesize $\{a_1=2^{1/3}\in \sqrt[3]{2},a_2=-\frac{1}{2}+i\frac{\sqrt{3}}{2}\in\sqrt[3]{1}\}$}}& {\rm{\footnotesize  }}\\
\hline
\multicolumn{2}{l}{\rm{\footnotesize Artin's theorem Galois extension: For a finite extension $L/K$}}\\
\multicolumn{2}{l}{\rm{\footnotesize the following statements are equivalent.}}\\
\multicolumn{2}{l}{\rm{\footnotesize (i). $L/K$ is a Galois extension.}}\\
\multicolumn{2}{l}{\rm{\footnotesize (ii). $L/K$ is a normal extension and a separable extension.}}\\
\multicolumn{2}{l}{\rm{\footnotesize (iii). $L$ is the splitting field of a separable polynomial with coefficients in $K$.}}\\
\multicolumn{2}{l}{\rm{\footnotesize (iv). $[L:K]=|Aut(L/K)|$=order of $Aut(L/K)$.}}\\
\end{tabular}$}
\end{table}

\begin{table}[h]
\caption{Examples of Galois group of extension field $L/K$: $Gal(L/K)\equiv Aut(L/K)=\{\alpha\in Aut(L)\: |\: \alpha(x)=x,\: \forall x\in K\}$ ($\star$).}
\label{examples-galois-group}
\begin{tabular}{|l|l|}
  \hline
  \hfil{\rm{\footnotesize Examples}}\hfil& \hfil{\rm{\footnotesize Remarks}}\hfil\\
\hline
{\rm{\footnotesize $Gal(L/L)=\{1\}$}}& {\rm{\footnotesize }}\\
\hline
{\rm{\footnotesize $Gal(\mathbb{C}/\mathbb{R})=\{1,i\}$}}& {\rm{\footnotesize }}\\
\hline
{\rm{\footnotesize $Aut(\mathbb{R}/\mathbb{Q})=\{1\}$}}& {\rm{\footnotesize }}\\
\hline
{\rm{\footnotesize $Aut(\mathbb{C}/\mathbb{Q})$}}& {\rm{\footnotesize infinite group}}\\
\hline
{\rm{\footnotesize Galois group of polynomial $p(x)=x^3-2$ \ref{properties-field-extension}($\clubsuit$)}}& {\rm{\footnotesize   $Gal(p(x))=\{1,f,f^2,g,gf,gf^2\}$}}\\
& {\rm{\footnotesize  $f,g\in Aut(H)$}}\\
&{\rm{\footnotesize  $f(a_1)=a_1a_2$, $f(a_2)=a_2$,}}\\
&{\rm{\footnotesize   $g(a_1)=a_1$, $g(a_2)=a_2^2$.}}\\
\hline
\multicolumn{2}{l}{\rm{\footnotesize ($\star$) $Gal(L/K)$ does not necessitate to be an abelian group.}}\\
\multicolumn{2}{l}{\rm{\footnotesize Fundamental theorem Galois theory: Let $L/K$ be a finite and Galois field extension.}}\\
\multicolumn{2}{l}{\rm{\footnotesize Then there are bijective correspondences between }}\\
\multicolumn{2}{l}{\rm{\footnotesize its intermediate fields H and subgroups of its Galois group.}}\\
\multicolumn{2}{l}{\rm{\footnotesize For any subgroup $G_H\lhd Gal(L/K) \rightsquigarrow H=\{x\in L\: |\: \alpha(x)=x,\: \forall \alpha\in G_H\}\lhd L$.}}\\
\multicolumn{2}{l}{\rm{\footnotesize For any intermediate field $H$ of $L/K$,}}\\
\multicolumn{2}{l}{\rm{\footnotesize $H \rightsquigarrow G_H=\{\alpha\in Gal(L/K)\: | \: \alpha(x)=x,\: \forall x\in H\}\lhd Gal(L/K)$.}}\\
\multicolumn{2}{l}{\rm{\footnotesize In particular $L\: \rightsquigarrow Gal(L/K)$ and  $K\: \rightsquigarrow Gal(L/K)$.}}\\
\end{tabular}
\end{table}

\begin{table}[h]
\caption{Whitney-Stiefel classes $w(E)\in H^\bullet(X;\mathbb{Z}_2)$ properties.}
\label{}
\begin{tabular}{|l|l|}
  \hline
  \hfil{\rm{\footnotesize Name}}\hfil& \hfil{\rm{\footnotesize Properties}}\hfil\\
\hline
{\rm{\footnotesize naturality}}& {\rm{\footnotesize $w(f^\star E)=f^*w(E),\hskip 3pt f:Y\to X$}}\\
\hline
{\rm{\footnotesize zero-degree}}& {\rm{\footnotesize $w_0(E)=1\in H^0(X;\mathbb{Z}_2)=\mathbb{Z}_2$}}\\
\hline
{\rm{\footnotesize normalization}}& {\rm{\footnotesize $w_1(\gamma)=1\in\mathbb{Z}_2=H^1(\mathbb{R}P^1;\mathbb{Z}_2),\hskip 3pt \gamma$=canonical line bundle}}\\
\hline
{\rm{\footnotesize Whitney addition formula}}& {\rm{\footnotesize $w(E\oplus F)=w(E)\cup w(F)$}}\\
\hline
{\rm{\footnotesize Linearly independent}}& {\rm{\footnotesize iff $w_{n-r+1}(E)=\cdots=w_{n}(E)=0$}}\\
{\rm{\footnotesize sections $s_1,\cdots,s_r$}}&\\
\hline
{\rm{\footnotesize orientable bundle}}& {\rm{\footnotesize iff $w_1(E)=0$}}\\
{\rm{\footnotesize orientable manifold $X$}}& {\rm{\footnotesize iff $w_1(TX)=0$}}\\
\hline
{\rm{\footnotesize $spin$ structure on $E$}}& {\rm{\footnotesize iff $w_1(E)=w_2(E)=0$}}\\
{\rm{\footnotesize $spin$ structure on $X$}}& {\rm{\footnotesize iff $w_1(TX)=w_2(TX)=0$}}\\
\hline
{\rm{\footnotesize $spin^c$ structure on $X$}}& {\rm{\footnotesize iff $w_1(TX)=0$ and $w_2$ belongs to the image}}\\
& {\rm{\footnotesize $H^2(X;\mathbb{Z})\to H^2(X;\mathbb{Z}_2)$}}\\
\hline
{\rm{\footnotesize $X=\partial Y$}}& {\rm{\footnotesize iff $<w,[X]>=0$}}\\
\hline
\multicolumn{2}{l}{\rm{\footnotesize $w:[X;Gr_{n}]\cong\mathbb{V}_n(X)\to H^\bullet(X;\mathbb{Z}_2)$}}\\
\multicolumn{2}{l}{\rm{\footnotesize $Gr_n\equiv Gr_n(\mathbb{R}^\infty)$, $\mathbb{V}_n(X)$= set of real $n$-vector bundles over $X$.}}\\
\end{tabular}
\end{table}

\section{SPECTRA IN PDE's}\label{spectra-pdes-section}

In this section we give an explicit relation between integral
bordism groups for admissible integral manifolds of PDE's bording by
means of smooth solutions, singular solutions and weak solutions
respectively. In particular we shall relate such integral bordism groups with suitable spectra.
Analogous relations for the corresponding Hopf
algebras of PDE's, are considered. Then important spectral sequences, useful to characterize conservation laws and (co)homological properties of PDE's, are related to their integral bordism groups.\footnote{Let us also emphasize that we can recognize webs on PDE's, by looking inside the geometric structure
of PDE's. By means of such webs, we can solve (lower
dimensional) Cauchy problems. This is important in order to decide
about the ''admissibility '' of integral manifolds in integral
bordism problems. However these aspects are not explicitly considered in this paper. They are studied in some details in other previous works about the PDE's algebraic topology by A. Pr\'astaro \cite{AG-PRA1}. For complementary informations on geometry of PDE's, see, e.g., Refs. \cite{AG-PRA2, B-C-G-G-G, CARTAN, HCARTAN, GOLD1, GOLD2, GROMOV, LYCH-PRAS, PRA00, PRA01, PRA1, PRA3, PRA4, PRA5, PRA6}.}

\begin{remark}
Let us shortly recall some definitions about integral bordism groups
in PDE's as just considered in some companion previous works by Pr\'astaro. Let
$\pi:W\to M$ be a smooth fiber bundle between smooth manifolds of
dimension $m+n$ and $n$ respectively. Let us denote by $J^k_n(W)$
the $k$-jet space for $n$-dimensional submanifolds of $W$. Let
$E_k\subset J^k_n(W)$ be a partial differential equation (PDE). Let
$N_i\subset E_k$, $i=1,2$, be two $(n-1)$-dimensional compact closed
admissible integral manifolds. Then, we say that they are $E_k$-{\em
bordant} if there exists a solution $V\subset E_k,$ such that
$\partial V=N_1\sqcup N_2$ (where $\sqcup$ denotes disjoint union). We
write $N_1\sim_{ E_k }N_2$. The empty set\hskip 3pt $\varnothing$ will be
regarded as a $p$-dimensional compact closed admissible integral
manifold for all $p\ge 0$. $\sim_{ E_k }$ is an equivalence
relation. We will denote by $\Omega^{ E_k }_{n-1}$ the set of all $
E_k $-bordism classes $[N]_{ E_k }$ of $(n-1)$-dimensional compact
closed admissible integral submanifolds of $ E_k $. The operation of
taking disjoint union defines a sum $+$ on  $\Omega^{ E_k }_{n-1}$
such that it becomes an Abelian group. We call $\Omega^{ E_k
}_{n-1}$ the {\em integral bordism group} of $ E_k $.  A {\em
quantum bord} of $ E_k $ is a solution $V\subset J^k_n(W)$ such that
$\partial V$ is a $(n-1)$-dimensional compact admissible integral
manifold of $ E_k$. The quantum bordism is an equivalence relation.
The set of quantum bordism classes is denoted by $\Omega_{n-1}( E_k
)$.\footnote{In other words the quantum bordism group of $E_k$ is
the integral bordism group of $J^k_n(W)$ relative to $E_k$. (This
language reproduces one in algebraic topology for couples $(X,Y)$ of
differentiable manifolds, where $Y\subset X$.)} The operation of
disjoint union makes $\Omega_{n-1}( E_k )$ into an Abelian group. We
call $\Omega_{n-1}( E_k )$ the {\em quantum bordism group} of $ E_k
$. Similar definitions can be made for any $0\le p<n-1$. For an
''admissible'' $p$-dimensional, $p\in\{0,\cdots,n-1\}$, integral
manifold $N\subset E_k\subset J^k_n(W)$ we mean a $p$-dimensional
smooth submanifold of $E_k$, contained into a solution $V\subset
E_k$, that can be deformed into $V$, in such a way that the deformed
manifold $\tilde N$ is diffeomorphic to its projection $\tilde
X\equiv\pi_{k,0}(\tilde N)\subset W$. In such a case $\tilde
X{}^{(k)}=\tilde N$. Note that the $k$-prolongation, $X^{(k)}$, of a
$p$-dimensional submanifold $X\subset Y$, where $Y$ is a
$n$-dimensional submanifold of $W$, is given by: $
X^{(k)}=\{[Y]^k_a\hskip 2pt\vert a\in X\}\subset
Y^{(k)}\equiv\{[Y]^k_b\hskip 2pt\vert\hskip 2pt b\in
 Y\}$. Here $[Y]^k_a$ denotes the equivalence class of
 $n$-dimensional submanifolds of $W$, having in $b\in W$ a contact
 of order $k$ with the $n$-dimensional submanifold $Y\subset W$,
 passing for $p$.
 The existence of admissible $p$-dimensional manifolds is obtained
 solving Cauchy problems of order $p\in\{0,\cdots,n-1\}$, i.e.,
 finding $n$-dimensional admissible integral manifolds (solutions) of
 a PDE $E_k\subset J^k_n(W)$, that contains some fixed integral
 manifolds of dimension $p<n$. We
 call {\em low dimension Cauchy problems}, Cauchy problems of
 dimension
 $0\le p\le  n-2$. We simply say {\em Cauchy problems}, Cauchy problems of
 dimension
 $p=n-1$.

In a satisfactory theory of PDE's it is necessary to consider in a
systematic way also {\em weak solutions}, i.e., solutions $V$,
where the set $\Sigma(V)$ of singular points of $V$, contains also
discontinuity points, $q,q'\in V$, with
$\pi_{k,0}(q)=\pi_{k,0}(q')=a\in W$, or
$\pi_{k}(q)=\pi_{k}(q')=p\in M$. We denote such a set by
$\Sigma(V)_S\subset\Sigma(V)$, and, in such cases we shall talk
more precisely of {\em singular boundary} of $V$, like $(\partial
V)_S=\partial V\setminus\Sigma(V)_S$. However for abuse of
notation we shall denote $(\partial V)_S$, (resp. $\Sigma(V)_S$),
simply by $(\partial V)$, (resp. $\Sigma(V)$), also if no
confusion can arise. Solutions with such singular points are of
great importance and must be included in a geometric theory of
PDE's too.
\end{remark}

\begin{definition}
Let $\Omega_{n-1}^{E_k}$, (resp. $\Omega_{n-1,s}^{E_k}$, resp.
$\Omega_{n-1,w}^{E_k}$), be the integral bordism group for
$(n-1)$-dimensional smooth admissible regular integral manifolds
contained in $E_k$, bounding smooth regular integral
manifold-solutions,\footnote{This means that
$N_1\in[N_2]\in\Omega_{n-1}^{E_k}$, iff
$N_1^{(\infty)}\in[N^{(\infty)}_2]\in\Omega_{n-1}^{E_{\infty}}$.
(See Refs.\cite{PRA4, PRA14} for notations.)} (resp. piecewise-smooth or singular
solutions, resp. singular-weak solutions), of $E_k$.
\end{definition}

\begin{theorem}
Let $\pi:W\to M$ be a fiber bundle with $W$ and $M$ smooth
manifolds, respectively of dimension $m+n$ and $n$. Let $E_k\subset
J^k_n(W)$ be a PDE for $n$-dimensional submanifolds of $W$.  One has
the following exact commutative diagram relating the groups
$\Omega_{n-1}^{E_k}$, $\Omega_{n-1,s}^{E_k}$ and
$\Omega_{n-1,w}^{E_k}$:

\begin{equation}
\xymatrix{
&0\ar[d]&0\ar[d]&0\ar[d]&\\
0\ar[r]&K^{E_k}_{n-1,w/(s,w)}\ar[d]\ar[r]&
K^{E_k}_{n-1,w}\ar[d]\ar[r]&K^{E_k}_{n-1,s,w}\ar[d]\ar[r]&0\\
0\ar[r]&K^{E_k}_{n-1,s}\ar[d]\ar[r]&
\Omega^{E_k}_{n-1}\ar[d]\ar[r]&\Omega^{E_k}_{n-1,s}\ar[d]\ar[r]&0\\
&0\ar[r]&\Omega^{E_k}_{n-1,w}\ar[d]\ar[r]&\Omega^{E_k}_{n-1,w}\ar[d]\ar[r]&0\\
&&0&0&}
\end{equation}

and the canonical isomorphisms: $K^{E_k}_{n-1,w/(s,w)}\cong
K^{E_k}_{n-1,s}$; $\Omega^{E_k}_{n-1}/K^{E_k}_{n-1,s}\cong
\Omega^{E_k}_{n-1,s}$;
$\Omega^{E_k}_{n-1,s}/K^{E_k}_{n-1,s,w}\cong\Omega^{E_k}_{n-1,w}$;
$\Omega^{E_k}_{n-1}/K^{E_k}_{n-1,w}\cong\Omega^{E_k}_{n-1,w}$. In
particular, for $k=\infty$, one has the following canonical
isomorphisms: $ K^{E_\infty}_{n-1,w}\cong K^{E_\infty}_{n-1,s,w}$;
$K^{E_\infty}_{n-1,w/(s,w)}\cong K^{E_\infty}_{n-1,s}\cong 0$;
$\Omega^{E_\infty}_{n-1}\cong \Omega^{E_\infty}_{n-1,s}$;
$\Omega^{E_\infty}_{n-1}/K^{E_\infty}_{n-1,w}\cong\Omega^{E_\infty}_{n-1,s}/K^{E_\infty}_{n-1,s,w}\cong
\Omega^{E_\infty}_{n-1,w}$. If $E_k$ is formally integrable then
one has the following isomorphisms:
$\Omega^{E_k}_{n-1}\cong\Omega^{E_\infty}_{n-1}\cong\Omega^{E_\infty}_{n-1,s}$.
\end{theorem}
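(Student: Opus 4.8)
The plan is to present the whole statement as a consequence of a single diagram chase, after realizing the three integral bordism groups as quotients of one object by progressively coarser equivalence relations.

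First I would note that a smooth regular integral manifold-solution of $E_k$ is a fortiori a (piecewise-smooth or) singular solution, which is in turn a singular-weak solution, and that the disjoint-union sum does not depend on the type of solution admitted. Hence the identity on the set of $(n-1)$-dimensional compact closed admissible regular integral submanifolds of $E_k$ induces surjective group homomorphisms $\Omega^{E_k}_{n-1}\twoheadrightarrow\Omega^{E_k}_{n-1,s}\twoheadrightarrow\Omega^{E_k}_{n-1,w}$. I then \emph{define} $K^{E_k}_{n-1,s}$, $K^{E_k}_{n-1,w}$, $K^{E_k}_{n-1,s,w}$ as the kernels of $\Omega^{E_k}_{n-1}\to\Omega^{E_k}_{n-1,s}$, $\Omega^{E_k}_{n-1}\to\Omega^{E_k}_{n-1,w}$, $\Omega^{E_k}_{n-1,s}\to\Omega^{E_k}_{n-1,w}$, and $K^{E_k}_{n-1,w/(s,w)}$ as the kernel of the induced restriction $K^{E_k}_{n-1,w}\to K^{E_k}_{n-1,s,w}$. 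With these definitions the middle row, the middle column and the right-hand column of the displayed diagram are short exact by construction, the bottom row is the identity of $\Omega^{E_k}_{n-1,w}$ (because $\Omega^{E_k}_{n-1}\to\Omega^{E_k}_{n-1,w}$ factors through $\Omega^{E_k}_{n-1,s}$), and commutativity is automatic, since every arrow is induced by the identity on admissible integral manifolds.

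Next I would fill in the top row and the left column by applying the snake lemma to the morphism of short exact sequences
$$\xymatrix@C=18pt{0\ar[r]&K^{E_k}_{n-1,w}\ar[d]\ar[r]&\Omega^{E_k}_{n-1}\ar@{->>}[d]\ar[r]&\Omega^{E_k}_{n-1,w}\ar@{=}[d]\ar[r]&0\\ 0\ar[r]&K^{E_k}_{n-1,s,w}\ar[r]&\Omega^{E_k}_{n-1,s}\ar[r]&\Omega^{E_k}_{n-1,w}\ar[r]&0}$$
whose right vertical map is the identity and whose middle vertical map is onto. The six-term exact sequence then collapses to $0\to K^{E_k}_{n-1,w/(s,w)}\to K^{E_k}_{n-1,s}\to 0$ together with surjectivity of $K^{E_k}_{n-1,w}\to K^{E_k}_{n-1,s,w}$; this is exactly the exactness of the top row and of the left column, and it gives at once $K^{E_k}_{n-1,w/(s,w)}\cong K^{E_k}_{n-1,s}$. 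The four canonical isomorphisms are then the first isomorphism theorem applied to the middle row, to the right column, to the middle column and to the composite $\Omega^{E_k}_{n-1}\to\Omega^{E_k}_{n-1,w}$, respectively.

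For the specialization to $k=\infty$ the only non-formal ingredient is the resolution result for infinite prolongations already established in \cite{PRA4, PRA14}: at $E_\infty$ any singular-weak solution bounding smooth admissible integral manifolds can be deformed, keeping its boundary fixed, into a smooth regular integral manifold-solution, so $\Omega^{E_\infty}_{n-1}\to\Omega^{E_\infty}_{n-1,s}$ is injective, i.e. $K^{E_\infty}_{n-1,s}=0$. Feeding this back into the diagram yields $K^{E_\infty}_{n-1,w/(s,w)}\cong 0$, $\Omega^{E_\infty}_{n-1}\cong\Omega^{E_\infty}_{n-1,s}$ and $K^{E_\infty}_{n-1,w}\cong K^{E_\infty}_{n-1,s,w}$, and hence all the remaining $k=\infty$ isomorphisms. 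Finally, if $E_k$ is formally integrable the $\infty$-prolongation $N\mapsto N^{(\infty)}$ is a bijection on bordism classes (a smooth solution of $E_k$ prolongs to a smooth solution of $E_\infty$, and projection is the inverse), so $\Omega^{E_k}_{n-1}\cong\Omega^{E_\infty}_{n-1}$, which combined with the previous case gives $\Omega^{E_k}_{n-1}\cong\Omega^{E_\infty}_{n-1}\cong\Omega^{E_\infty}_{n-1,s}$. I expect the diagram chase to be entirely routine homological algebra of nine-lemma type; the substantive point, and the main obstacle, is the resolution-of-singularities statement at $E_\infty$, for which I would rely on the cited earlier work rather than reprove it here.
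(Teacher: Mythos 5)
Your proposal is correct and is essentially the same argument the paper intends: the paper's own proof consists only of the sentence ``The proof follows directly from the definitions and standard results of algebra,'' and what you have written is precisely the routine fill-in — defining the $K$-groups as kernels of the natural surjections $\Omega^{E_k}_{n-1}\twoheadrightarrow\Omega^{E_k}_{n-1,s}\twoheadrightarrow\Omega^{E_k}_{n-1,w}$, running the snake lemma, and invoking the first isomorphism theorem. Your identification of the one non-formal ingredient (injectivity of $\Omega^{E_\infty}_{n-1}\to\Omega^{E_\infty}_{n-1,s}$, delegated to the cited earlier work) and of the formal-integrability step via the prolongation bijection matches the framework set up in the paper's preceding definition and its footnote.
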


\begin{proof}
The proof follows directly from the definitions and standard
results of algebra. \end{proof}

\begin{theorem}\label{formal-integrability-integral-bordism-groups}
Let us assume that $E_k$ is formally integrable and completely
integrable, and such that $\dim E_k\ge 2n+1$. Then, one has the
following canonical isomorphisms: $
\Omega^{E_k}_{n-1,w}\cong\oplus_{r+s=n-1}H_r(W;{\mathbb Z
}_2)\otimes_{{\mathbb Z}_2}\Omega_s
\cong\Omega^{E_k}_{n-1}/K^{E_k}_{n-1,w}\cong
\Omega^{E_k}_{n-1,s}/K^{E_k}_{n-1,s,w}$. Furthermore, if
$E_k\subset J^k_n(W)$, has non zero symbols: $g_{k+s}\not=0$,
$s\ge 0$, (this excludes that can be $k=\infty$), then
$K^{E_k}_{n-1,s,w}=0$, hence
$\Omega^{E_k}_{n-1,s}\cong\Omega^{E_k}_{n-1,w}$.
\end{theorem}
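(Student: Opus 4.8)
The plan is to identify the weak integral bordism group $\Omega^{E_k}_{n-1,w}$ with an ordinary (unoriented, singular) bordism group of the total space $W$, and then to read off the remaining isomorphisms from the exact commutative diagram of the preceding theorem together with Quillen's lemma. First I would recall that, since $E_k$ is formally integrable and completely integrable, every $(n-1)$-dimensional admissible integral manifold $N\subset E_k$ is contained in a (possibly singular) solution, and that two such manifolds $N_1,N_2$ are weakly $E_k$-bordant precisely when their projections $\pi_{k,0}(N_1),\pi_{k,0}(N_2)\subset W$ are bordant as closed $(n-1)$-dimensional submanifolds of $W$. The dimension hypothesis $\dim E_k\ge 2n+1$ is exactly what makes this equivalence work: it provides enough room (general position and transversality, in the spirit of the Pontrjagin--Thom construction recalled in Example \ref{framed-cobordism-pontrjagin-thom-construction}) both to deform any admissible integral manifold so that it is diffeomorphic to its projection in $W$, and to lift a topological bordism $V'\subset W$ between the projections to a weak solution $V\subset E_k$ with $\partial V=N_1\sqcup N_2$, the singular set $\Sigma(V)_S$ absorbing the failure of the lift to be a genuine smooth solution. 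This gives a canonical isomorphism $\Omega^{E_k}_{n-1,w}\cong\underline{\Omega}_{n-1}(W)$.

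Next I would apply Quillen's lemma with $X=W$, which yields $\underline{\Omega}_{n-1}(W)\cong\bigoplus_{r+s=n-1}H_r(W;\mathbb{Z}_2)\otimes_{\mathbb{Z}_2}\Omega_s$ and hence establishes the first displayed isomorphism of the statement. The remaining two isomorphisms, $\Omega^{E_k}_{n-1}/K^{E_k}_{n-1,w}\cong\Omega^{E_k}_{n-1,w}$ and $\Omega^{E_k}_{n-1,s}/K^{E_k}_{n-1,s,w}\cong\Omega^{E_k}_{n-1,w}$, are precisely two of the canonical isomorphisms already furnished by the exact commutative diagram of the preceding theorem, so the full chain of isomorphisms follows with no further argument.

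For the last assertion, assume the prolonged symbols satisfy $g_{k+s}\neq 0$ for all $s\ge 0$ (which forces $k<\infty$). I would show that $K^{E_k}_{n-1,s,w}=0$, i.e.\ that the natural surjection $\Omega^{E_k}_{n-1,s}\to\Omega^{E_k}_{n-1,w}$ is injective. Given a weak solution $V$ bording $N_1\sqcup N_2$, the nonvanishing of every prolonged symbol supplies the local prolongability needed to deform $V$, in a neighbourhood of its discontinuity locus $\Sigma(V)_S$, into a piecewise-smooth (singular, but not weak) solution $\tilde V$ with the same boundary; intuitively, a nonzero symbol guarantees that the admissible integral manifold can be smoothed across $\Sigma(V)_S$ inside $E_k$. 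Consequently every class killed in $\Omega^{E_k}_{n-1,w}$ already vanishes in $\Omega^{E_k}_{n-1,s}$, so $K^{E_k}_{n-1,s,w}=0$ and $\Omega^{E_k}_{n-1,s}\cong\Omega^{E_k}_{n-1,w}$.

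The main obstacle is the geometric content of the first step: the construction, from a topological bordism in $W$, of an actual weak solution of $E_k$ with prescribed boundary. This is the global-existence-of-weak-solutions mechanism, and it is precisely here that complete integrability and the bound $\dim E_k\ge 2n+1$ are consumed; likewise the smoothing argument behind $K^{E_k}_{n-1,s,w}=0$ carries real geometric weight. Everything else is formal: Quillen's lemma and the exact diagram of the preceding theorem do the bookkeeping once the identification $\Omega^{E_k}_{n-1,w}\cong\underline{\Omega}_{n-1}(W)$ is in hand.
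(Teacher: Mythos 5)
Your proposal is correct and follows the same route the paper takes, namely: identify $\Omega^{E_k}_{n-1,w}$ with the singular unoriented bordism $\underline{\Omega}_{n-1}(W)$ of the base fiber bundle, apply Quillen's lemma to obtain the $\bigoplus_{r+s=n-1}H_r(W;\mathbb{Z}_2)\otimes_{\mathbb{Z}_2}\Omega_s$ decomposition, and read off the remaining isomorphisms from the exact commutative diagram of the preceding theorem. The only difference is that the paper's printed proof is a one-line citation (``follows from above theorem and results in \cite{PRA4}; furthermore, if $g_{k+s}\neq 0$, $s\ge 0$, we can always connect two branches of a weak solution with a singular solution''), whereas you have reconstructed what that citation points to: the lifting of a topological bordism in $W$ to a weak solution of $E_k$ (using complete integrability and $\dim E_k\ge 2n+1$ for general position), and the smoothing of weak solutions across $\Sigma(V)_S$ under $g_{k+s}\neq 0$ to show $K^{E_k}_{n-1,s,w}=0$. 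Your account is therefore a faithful unpacking of the same argument rather than a genuinely different one.
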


\begin{proof}
It follows from above theorem and results in \cite{PRA4}.
Furthermore, if $g_{k+s}\not=0$, $s\ge 0$, we can always connect two
branches of a weak solution with a singular solution of $E_k$.
\end{proof}

\begin{definition}
The {\em full space of $p$-conservation laws}, (or {\em full
$p$-Hopf algebra}), of $E_k$ is the following algebra: ${\bf
H}_p(E_k)\equiv{\mathbb R}^{\Omega_p^{E_k}}$.\footnote{This is, in
general, an extended Hopf algebra. (See Refs. \cite{PRA1, PRA2}.)} We call
{\em full Hopf algebra}, of $E_k$, the following algebra: ${\bf
H}_{n-1}(E_\infty)\equiv{\mathbb
R}^{\Omega_{n-1}^{E_\infty}}$.\end{definition}

\begin{definition}
The {\em space of (differential) conservation laws} of $E_k\subset
J^k_n(W)$, is ${\frak C}ons(E_k)={\frak I}(E_\infty)^{n-1}$, where
$${\frak I}(E_k)^{q}
\equiv\frac{\Omega^q(E_k)\cap
d^{-1}(C\Omega^{q+1}(E_k))}{d\Omega^{q-1}(E_k)\oplus\{C\Omega^q(E_k)\cap
d^{-1}(C\Omega^{q+1}(E_k))\}}$$ is the {\em space of characteristic
integral $q$-forms} on $E_k$ Here, $\Omega^q(E_k)$ is the space of
smooth $q$-differential forms on $E_k$ and $C\Omega^q(E_k)$ is the
space of Cartan $q$-forms on $E_k$, that are zero on the Cartan
distribution ${\bf E}_k$ of $E_k$. Therefore, $\beta\in
C\Omega^q(E_k)$ iff $\beta(\zeta_1,\cdots,\zeta_q)=0$, for all
$\zeta_i\in C^\infty({\bf E}_k)$.\footnote{${\frak C}ons(E_k)$ can
be identified with the spectral term $E_1^{0,n-1}$ of the spectral
sequence associated to the filtration induced in the graded algebra
$\Omega^\bullet(E_\infty)\equiv\oplus_{q\ge 0}\Omega^q(E_\infty)$,
by the subspaces $C\Omega^q(E_\infty)\subset\Omega^q(E_\infty)$.
(For abuse of language we shall call ''conservation laws of
$k$-order'', characteristic integral $(n-1)$-forms too. Note that
$C\Omega^0(E_k)=0$. See also Refs. \cite{PRA000, PRA01, PRA1, PRA2, PRA4}.)}
\end{definition}

\begin{theorem}{\em\cite{PRA4}}
The space of conservation laws of $E_k$ has a canonical
representation in ${\bf H}_{n-1}(E_\infty)$, (if the integral
bordism considered is not for weak-solutions).
\end{theorem}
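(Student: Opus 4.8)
The plan is to realize the claimed representation as the integration pairing between characteristic integral $(n-1)$-forms and integral bordism classes, and then to verify that this pairing is well defined. By definition $\mathfrak{C}ons(E_k)=\mathfrak{I}(E_\infty)^{n-1}$, so a conservation law is the class $[\omega]$ of a form $\omega\in\Omega^{n-1}(E_\infty)$ with $d\omega\in C\Omega^{n}(E_\infty)$, taken modulo $d\Omega^{n-2}(E_\infty)\oplus\bigl(C\Omega^{n-1}(E_\infty)\cap d^{-1}(C\Omega^{n}(E_\infty))\bigr)$; and $\mathbf{H}_{n-1}(E_\infty)=\mathbb{R}^{\Omega_{n-1}^{E_\infty}}$ is the algebra of real functions on the integral bordism group. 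First I would set, for any $(n-1)$-dimensional compact closed admissible integral submanifold $N\subset E_\infty$, the number $\langle[\omega],[N]\rangle\equiv\int_N\omega$, and then show that $[N]\mapsto\langle[\omega],[N]\rangle$ is a well-defined element of $\mathbf{H}_{n-1}(E_\infty)$. The assignment $[\omega]\mapsto\langle[\omega],\,\cdot\,\rangle$ is then $\mathbb{R}$-linear by construction, and it is the asserted canonical representation $\mathfrak{C}ons(E_k)\to\mathbf{H}_{n-1}(E_\infty)$.

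The two independence statements are both Stokes' theorem combined with Cartan-distribution bookkeeping. Independence of the bordism representative: if $N_1\sim_{E_\infty}N_2$, pick a solution $V\subset E_\infty$ with $\partial V=N_1\sqcup(-N_2)$; then $\int_{N_1}\omega-\int_{N_2}\omega=\int_{\partial V}\omega=\int_V d\omega$ by Stokes. Since $V$ is a solution it is an $n$-dimensional integral manifold of the Cartan distribution $\mathbf{E}_\infty$, so $T_pV\subseteq(\mathbf{E}_\infty)_p$ for all $p\in V$; as $d\omega\in C\Omega^{n}(E_\infty)$ vanishes on $\mathbf{E}_\infty$, one gets $i_V^{*}(d\omega)=0$ and hence $\int_V d\omega=0$. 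Independence of the form representative: two representatives of $[\omega]$ differ by $d\alpha+\gamma$ with $\alpha\in\Omega^{n-2}(E_\infty)$ and $\gamma\in C\Omega^{n-1}(E_\infty)$; since $N$ is closed, $\int_N d\alpha=\int_{\partial N}\alpha=0$, and since $N$ is an admissible integral manifold we have $T_pN\subseteq(\mathbf{E}_\infty)_p$, so $i_N^{*}\gamma=0$ and $\int_N\gamma=0$. Hence the pairing descends to a pairing $\mathfrak{C}ons(E_k)\times\Omega_{n-1}^{E_\infty}\to\mathbb{R}$. The hypothesis that the bordism is not taken for weak solutions is used exactly here: for a smooth or (piecewise-smooth) singular solution the singular locus has codimension $\ge 1$ and does not obstruct Stokes' theorem, whereas a weak solution may carry discontinuity points $\Sigma(V)_S$ across which $\partial V$ and Stokes' formula are not the naive ones, so the pairing need not be well defined there.

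Finally, since $\int_{N_1\sqcup N_2}\omega=\int_{N_1}\omega+\int_{N_2}\omega$, each function $\langle[\omega],\,\cdot\,\rangle$ is additive on $\Omega_{n-1}^{E_\infty}$, i.e. a primitive element of the (extended) Hopf algebra $\mathbf{H}_{n-1}(E_\infty)$, so the representation in fact takes values in its primitive part; and it is \emph{canonical} because it is built solely from the exterior calculus on $E_\infty$, the Cartan distribution, and Stokes' theorem, with no auxiliary choice. I expect the only genuinely delicate point to be the verification that a solution $V$ of $E_\infty$ really annihilates every Cartan $n$-form --- which rests on the precise description of the Cartan distribution of $E_\infty$ and of what an admissible integral manifold is --- together with the legitimacy of Stokes' theorem on $V$ as an oriented manifold with boundary and admissible singular set; once this is secured, the rest is the elementary linear algebra of the quotient defining $\mathfrak{I}(E_\infty)^{n-1}$ and the functoriality of integration.
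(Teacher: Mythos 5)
Your proposal is correct and follows the same route the paper uses in the proof of part 2 of Theorem \ref{conservation-laws-and-integral-characteristic-forms} (the theorem in question itself defers to \cite{PRA4}): the pairing $\phi([N])=\int_N\beta|_N$, with well-definedness via Stokes applied on a bording solution $V$ and the vanishing of Cartan forms on $\mathbf{E}_\infty$. Your elaboration of the two independence checks and the explanation of why weak solutions are excluded (discontinuity of $\Sigma(V)_S$ breaking Stokes) are more explicit than the paper's terse argument but do not depart from it in substance.
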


\begin{theorem}
Set: $\mathbf{H}_{n-1}(E_k)\equiv
\mathbb{R}^{\Omega^{E_k}_{n-1}}$, $\mathbf{H}_{n-1,s}(
E_k)\equiv \mathbb{R}^{\Omega^{E_k}_{n-1,s}}$, $\mathbf{
H}_{n-1,w}(E_k)\equiv \mathbb{R}^{\Omega^{E_k}_{n-1,w}}$.
One has the exact and commutative diagram reported in {\em(\ref{commutative-exact-diagram-conservation-laws})}, that define the following spaces: $\mathbf{K}^{E_k}_{n-1,w/(s,w)}$, $\mathbf{K}^{E_k}_{n-1,w}$,
$\mathbf{K}^{E_k}_{n-1,s,w}$, $\mathbf{K}^{E_k}_{n-1,s}$.

\begin{equation}\label{commutative-exact-diagram-conservation-laws}
 \xymatrix{&0&0&0&\\
0&\ar[l]\mathbf{K}^{E_k}_{n-1,w/(s,w)}\ar[u]& \ar[l]\mathbf{K}^{E_k}_{n-1,w}\ar[u]& \ar[l]\mathbf{K}^{E_k}_{n-1,s,w}\ar[u]& \ar[l]0\\
0&\ar[l]\mathbf{K}^{E_k}_{n-1,s}\ar[u]& \ar[l]\mathbf{H}_{n-1}(E_k)\ar[u]&
\ar[l]\mathbf{H}_{n-1,s}(E_k)\ar[u]& \ar[l]0\\
 &0\ar[u]&\ar[l]\mathbf{H}_{n-1,w}(E_k)\ar[u]&\ar[l]\mathbf{H}_{n-1,w}(E_k)\ar[u]& \ar[l]0\\
&&0\ar[u]&0\ar[u]&}
\end{equation}

More explicitly, one has the following canonical isomorphisms:
\begin{equation}
\left\{
\begin{array}{l} \mathbf{K}^{
E_k}_{n-1,w/(s,w)}\cong\mathbf{K}^{K^{
E_k}_{n-1,s}};\\
\mathbf{K}^{E_k}_{n-1,w}/\mathbf{K}^{
E_k}_{n-1,s,w}\cong\mathbf{K}^{K^{E_k}_{n-1,w/(s,w)}};\\
\mathbf{H}_{n-1}(E_k)/\mathbf{H}_{n-1,s}(E_k)\cong\mathbf{
K}^{E_k}_{n-1,s};\\
\mathbf{H}_{n-1}(E_k)/\mathbf{H}_{n-1,w}(E_k)\cong\mathbf{
K}^{E_k}_{n-1,w} \\
\cong\mathbf{H}_{n-1,s}(E_k)/\mathbf{H}_{n-1,w}(
E_k)\cong\mathbf{K}^{E_k}_{n-1,s,w}.\\
\end{array}\right.\end{equation}

If $E_k$
is formally integrable one has: ${\bf H}_{n-1}(E_\infty)\cong{\bf
H}_{n-1}(E_k)\cong {\bf H}_{n-1,s}(E_\infty)$.
\end{theorem}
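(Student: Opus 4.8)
The plan is to derive the whole statement by dualizing, through the contravariant functor $G\rightsquigarrow\mathbb{R}^{G}$, the exact commutative $3\times 3$ diagram of integral bordism groups established above (the one relating $\Omega^{E_k}_{n-1}$, $\Omega^{E_k}_{n-1,s}$ and $\Omega^{E_k}_{n-1,w}$, together with its four canonical isomorphisms). First I would record the elementary observation that $\mathbb{R}^{(-)}$ is an exact contravariant functor from abelian groups to (extended Hopf) $\mathbb{R}$-algebras: a surjection $B\twoheadrightarrow C$ induces an injective morphism $\mathbb{R}^{C}\hookrightarrow\mathbb{R}^{B}$; an injection $A\hookrightarrow B$ induces a surjection $\mathbb{R}^{B}\twoheadrightarrow\mathbb{R}^{A}$, since any function on the subgroup $A$ extends to $B$ (for instance by zero on a set-theoretic complement); and for a short exact sequence $0\to A\to B\to C\to 0$ the kernel of the restriction $\mathbb{R}^{B}\to\mathbb{R}^{A}$ is precisely the set of functions that are constant on the $A$-cosets, i.e.\ $\mathbb{R}^{C}$. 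Hence $0\to\mathbb{R}^{C}\to\mathbb{R}^{B}\to\mathbb{R}^{A}\to 0$ is exact and commutative squares go to commutative squares.

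Next I would apply this functor termwise. Putting $\mathbf{H}_{n-1}(E_k)=\mathbb{R}^{\Omega^{E_k}_{n-1}}$, $\mathbf{H}_{n-1,s}(E_k)=\mathbb{R}^{\Omega^{E_k}_{n-1,s}}$, $\mathbf{H}_{n-1,w}(E_k)=\mathbb{R}^{\Omega^{E_k}_{n-1,w}}$, and defining $\mathbf{K}^{E_k}_{n-1,s}\equiv\mathbb{R}^{K^{E_k}_{n-1,s}}$, $\mathbf{K}^{E_k}_{n-1,w}\equiv\mathbb{R}^{K^{E_k}_{n-1,w}}$, $\mathbf{K}^{E_k}_{n-1,s,w}\equiv\mathbb{R}^{K^{E_k}_{n-1,s,w}}$ and $\mathbf{K}^{E_k}_{n-1,w/(s,w)}\equiv\mathbb{R}^{K^{E_k}_{n-1,w/(s,w)}}$, the reversal of all arrows of the bordism diagram is exactly diagram (\ref{commutative-exact-diagram-conservation-laws}), whose rows and columns are then exact by the previous paragraph. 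The displayed canonical isomorphisms are the images under $\mathbb{R}^{(-)}$ of those of the bordism theorem: from $\Omega^{E_k}_{n-1}\twoheadrightarrow\Omega^{E_k}_{n-1,s}$ with kernel $K^{E_k}_{n-1,s}$ one reads off $\mathbf{H}_{n-1}(E_k)/\mathbf{H}_{n-1,s}(E_k)\cong\mathbb{R}^{K^{E_k}_{n-1,s}}=\mathbf{K}^{E_k}_{n-1,s}$, and the remaining three identifications follow in the same way, each quotient being the dual of the corresponding kernel.

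For the formally integrable case I would simply apply $\mathbb{R}^{(-)}$ to the isomorphisms $\Omega^{E_k}_{n-1}\cong\Omega^{E_\infty}_{n-1}\cong\Omega^{E_\infty}_{n-1,s}$ furnished by Theorem \ref{formal-integrability-integral-bordism-groups} and the bordism theorem preceding it, getting $\mathbf{H}_{n-1}(E_k)\cong\mathbf{H}_{n-1}(E_\infty)\cong\mathbf{H}_{n-1,s}(E_\infty)$, where $\mathbf{H}_{n-1}(E_\infty)$ is the full Hopf algebra of $E_\infty$. The one point that needs genuine care --- and the place where I expect the real work to sit --- is verifying that these isomorphisms respect the extended Hopf-algebra structures on the spaces $\mathbb{R}^{G}$ (pointwise product dual to the disjoint-union addition, coproduct dual to the group law, and the counit/antipode), not merely their $\mathbb{R}$-vector-space structures; this is handled exactly as in \cite{PRA1, PRA2, PRA4}, to which I would refer for the structural details.
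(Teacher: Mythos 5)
There is a genuine gap. Your key claim --- that for a short exact sequence $0\to A\to B\to C\to 0$ of abelian groups, the kernel of the restriction $\mathbb{R}^{B}\to\mathbb{R}^{A}$ equals the set of functions constant on $A$-cosets (i.e.\ $\mathbb{R}^{C}$), so that $0\to\mathbb{R}^{C}\to\mathbb{R}^{B}\to\mathbb{R}^{A}\to 0$ is exact --- is false. The kernel of the restriction is the set of functions on $B$ that \emph{vanish} on $A$, whereas the image of $\mathbb{R}^{C}\hookrightarrow\mathbb{R}^{B}$ is the set of functions \emph{constant} on $A$-cosets; the constant function $1$ lies in the latter but not in the former. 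Indeed the composite $\mathbb{R}^{C}\to\mathbb{R}^{B}\to\mathbb{R}^{A}$ is pullback along the zero map $A\to C$, namely $f\mapsto f(0)\cdot 1$, which is not zero, so the purported sequence is not even a complex. Taking $A=B=\mathbb{Z}/2$ and $C=0$ gives $\mathbb{R}^{B}/\mathbb{R}^{C}\cong\mathbb{R}^{2}/\mathbb{R}\cong\mathbb{R}$ while $\mathbb{R}^{A}\cong\mathbb{R}^{2}$, so the identification underlying your phrase ``each quotient being the dual of the corresponding kernel'' already fails here.

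Consequently the interpretation $\mathbf{K}^{E_k}_{n-1,s}\equiv\mathbb{R}^{K^{E_k}_{n-1,s}}$ cannot be squared with the exactness asserted in (\ref{commutative-exact-diagram-conservation-laws}). The statement itself signals the correct reading: the diagram \emph{defines} the spaces $\mathbf{K}^{E_k}_{n-1,\ast}$, i.e.\ they are the cokernels arising from the genuine subalgebra inclusions $\mathbf{H}_{n-1,w}(E_k)\subset\mathbf{H}_{n-1,s}(E_k)\subset\mathbf{H}_{n-1}(E_k)$, where $\mathbb{R}^{(-)}$ is used only to turn the quotient maps $\Omega^{E_k}_{n-1}\twoheadrightarrow\Omega^{E_k}_{n-1,s}\twoheadrightarrow\Omega^{E_k}_{n-1,w}$ into injections by pullback. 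Setting $\mathbf{K}^{E_k}_{n-1,s}\equiv\mathbf{H}_{n-1}/\mathbf{H}_{n-1,s}$, $\mathbf{K}^{E_k}_{n-1,w}\equiv\mathbf{H}_{n-1}/\mathbf{H}_{n-1,w}$, $\mathbf{K}^{E_k}_{n-1,s,w}\equiv\mathbf{H}_{n-1,s}/\mathbf{H}_{n-1,w}$ and $\mathbf{K}^{E_k}_{n-1,w/(s,w)}\equiv\mathbf{K}^{E_k}_{n-1,w}/\mathbf{K}^{E_k}_{n-1,s,w}$, the exactness, commutativity, and all the listed canonical isomorphisms are then direct applications of the second and third isomorphism theorems to this filtration; in particular $\mathbf{K}^{E_k}_{n-1,w/(s,w)}\cong\mathbf{K}^{E_k}_{n-1,s}$ is exactly the third isomorphism theorem. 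That formal argument, not exactness of $\mathbb{R}^{(-)}$, is what the paper is silently invoking. Your treatment of the formally integrable case is fine, since $\mathbb{R}^{(-)}$ does take isomorphisms to isomorphisms; the flaw is confined to the exactness claim and the resulting misidentification of the $\mathbf{K}$-spaces.
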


\begin{theorem}
Let us assume the same hypotheses considered in Theorem \ref{formal-integrability-integral-bordism-groups}. If
$N'\in[N]_{E_k}\in\Omega^{E_k}_{n-1,w}$, then there exists a
$n$-dimensional integral manifold (solution) bording $N'$ with
$N$, without discontinuities, i.e., a singular solution, iff all
the integral characteristic numbers of order $k$ of $N'$ are equal
to the integral characteristic numbers of the same order of $N$.
\end{theorem}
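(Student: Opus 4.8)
The plan is to transcribe into the category of admissible integral manifolds of $E_k$ the classical argument that two manifolds are cobordant iff all their characteristic numbers coincide --- in particular the direction reproduced in the excerpt as the Pontrjagin theorem. First I would fix notation: by Theorem \ref{formal-integrability-integral-bordism-groups}, under the running hypotheses one has $\Omega^{E_k}_{n-1,w}\cong\bigoplus_{r+s=n-1}H_r(W;\mathbb{Z}_2)\otimes_{\mathbb{Z}_2}\Omega_s$ (Quillen's lemma), and when the symbols $g_{k+s}$ are nonzero also $K^{E_k}_{n-1,s,w}=0$, i.e. $\Omega^{E_k}_{n-1,s}\cong\Omega^{E_k}_{n-1,w}$. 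By the integral characteristic numbers of order $k$ of an admissible $N\subset E_k$ one means the $\mathbb{Z}_2$-numbers $\langle(\pi_{k,0}|_N)^*z\cup w_J(N),\mu_N\rangle$, with $z$ running over a $\mathbb{Z}_2$-basis of $H^\bullet(W;\mathbb{Z}_2)$, $w_J(N)$ over the monomials in the Stiefel-Whitney classes of $N$, and total degree $n-1$; by the Thom-Conner-Floyd detection theorem these form a complete set of invariants for ${\underline{\Omega}}_{n-1}(W)\cong\Omega^{E_k}_{n-1,w}$.

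For necessity I would argue as in the Pontrjagin theorem above. Suppose a discontinuity-free singular solution $V\subset E_k$ has $(\partial V)_S=N\sqcup N'$; then $V$ is a compact $n$-dimensional integral manifold with boundary and the connecting homomorphism sends its $\mathbb{Z}_2$-fundamental class to $\partial\mu_V=\mu_N+\mu_{N'}$ (Remark \ref{fundamental-homology-class-manifold}). The outward normal field gives $TV|_{\partial V}\cong T(\partial V)\oplus\epsilon^1$, so each Stiefel-Whitney monomial of $N$, resp. of $N'$, is the restriction of the corresponding monomial on $V$, and the factors pulled back from $W$ extend over $V$ since $\pi_{k,0}$ is globally defined on $E_k$. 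Hence every order-$k$ characteristic class $v$ of degree $n-1$ on $\partial V$ equals $i^*\bar v$ for some $\bar v\in H^{n-1}(V;\mathbb{Z}_2)$; by exactness of $H^{n-1}(V)\xrightarrow{i^*}H^{n-1}(\partial V)\xrightarrow{\delta}H^n(V,\partial V)$ one has $\delta v=0$, whence
$$\langle v,\mu_N\rangle+\langle v,\mu_{N'}\rangle=\langle v,\partial\mu_V\rangle=\langle\delta v,\mu_V\rangle=0,$$
so the order-$k$ characteristic numbers of $N$ and $N'$ agree mod $2$.

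For sufficiency I would use the hypothesis $N'\in[N]_w$ to produce a weak solution $V_w\subset E_k$ with $\partial V_w=N\sqcup N'$, and then remove its discontinuity set $\Sigma(V_w)_S$. Passing to singular bordism classes, $[N]_s$ and $[N']_s$ have the same image in $\Omega^{E_k}_{n-1,w}$, so their difference lies in $K^{E_k}_{n-1,s,w}=\ker(\Omega^{E_k}_{n-1,s}\twoheadrightarrow\Omega^{E_k}_{n-1,w})$; if the symbols $g_{k+s}$ are nonzero this kernel vanishes and one is done. In general one performs surgery along $\Sigma(V_w)_S$ as in \cite{PRA4}: since $E_k$ is formally and completely integrable with $\dim E_k\ge 2n+1$, the branches of $V_w$ across a discontinuity can be joined locally by integral manifolds, and the global obstruction to this lies in $K^{E_k}_{n-1,s,w}$ and is read off precisely from the differences of the order-$k$ characteristic numbers of the boundary pieces --- which vanish by assumption, yielding the desired singular solution bording $N'$ with $N$.

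The main obstacle will be the sufficiency direction, and more precisely the identification of the obstruction to upgrading a weak bording to a discontinuity-free one with the difference of the order-$k$ characteristic numbers. This is the PDE analogue of the completeness half of the Thom-Conner-Floyd theorem and must be extracted from the structural results of Theorem \ref{formal-integrability-integral-bordism-groups}: formal and complete integrability together with $\dim E_k\ge 2n+1$ are exactly what guarantee that admissible integral manifolds realize the relevant bordism classes and that the exact diagram relating $\Omega^{E_k}_{n-1}$, $\Omega^{E_k}_{n-1,s}$, $\Omega^{E_k}_{n-1,w}$ is available. Necessity, by contrast, is a routine transcription of the argument already carried out for the Pontrjagin theorem.
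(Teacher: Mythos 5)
Your proposal rests on a misidentification of the key invariants. In this paper, the ``integral characteristic numbers of order $k$'' of an admissible integral manifold $N\subset E_k$ are \emph{not} the $\mathbb{Z}_2$-valued Stiefel--Whitney/Thom--Conner--Floyd numbers $\langle(\pi_{k,0}|_N)^*z\cup w_J(N),\mu_N\rangle$ that you use; they are the real-valued conservation-law integrals $i[N]\equiv\langle [N]_{E_k},[\alpha]\rangle=\int_N\alpha$ for $[\alpha]\in\mathcal{I}(E_k)^{n-1}$, where $\mathcal{I}(E_k)^q$ is the space of characteristic integral $q$-forms built from the Cartan/characteristic-spectral-sequence machinery (see Theorem~\ref{necessary-sufficient-conditions-existence-integral-quantum-solutions-bording} and the definition of $\mathcal{I}(E_k)^p$). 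Your necessity argument transcribes the Pontrjagin argument for Stiefel--Whitney numbers and is therefore proving the wrong statement; the necessity direction for the paper's invariants instead follows from the bar Stokes formula, because for $[\alpha]\in\mathcal{I}(E_k)^{n-1}$ one has $d\alpha\in C\Omega^n(E_\infty)$, which vanishes on any integral manifold $V$ of $E_\infty$, so $\int_N\alpha-\int_{N'}\alpha=\int_V d\alpha=0$ whenever $V$ is a discontinuity-free singular solution with $\partial V=N\sqcup N'$.

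The paper's own proof is essentially a citation: it invokes the theorem established in Refs.~\cite{PRA01,PRA1} (and quoted here as Theorem~\ref{necessary-sufficient-conditions-existence-integral-quantum-solutions-bording}) that $N$ and $N'$ bound a smooth (hence a singular) integral manifold iff their integral characteristic numbers of order $k$, in the conservation-law sense, coincide. Your sufficiency sketch also has an internal tension: you appeal at one point to $K^{E_k}_{n-1,s,w}=0$, but that requires the extra hypothesis $g_{k+s}\ne 0$ (not part of the standing assumptions of Theorem~\ref{formal-integrability-integral-bordism-groups}), and under that extra hypothesis $[N]_s=[N']_s$ holds for \emph{every} $N'\in[N]_w$, making the characteristic-number condition vacuous rather than the genuine criterion the theorem asserts. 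To make your argument work you would need to (i) replace the Stiefel--Whitney numbers with the $\int_N\alpha$ for $[\alpha]\in\mathcal{I}(E_k)^{n-1}$ throughout, and (ii) derive the sufficiency direction from the exact commutative diagrams of Theorem~\ref{necessary-sufficient-conditions-existence-integral-quantum-solutions-bording}, in particular the injection $\Omega^{E_k}_{n-1,s}\hookrightarrow\bar H^{n-1}(E_k;\mathbb{R})^*$ and the identification of $K\bar H_{n-1}$ via vanishing of all singular integral characteristic numbers, rather than from a surgery argument on the discontinuity locus.
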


\begin{proof}
In fact we can consider a previous theorem given in Refs. \cite{PRA01, PRA1},
where it is proved that $N$ bounds with $N'$ a smooth
integral manifold iff the respective integral characteristic numbers
of order $k$ are equal.
\end{proof}

\begin{theorem}
Under the same hypotheses of Theorem \ref{formal-integrability-integral-bordism-groups}, and with
$g_{k+s}\not=0$, $s\ge 0$, one has the following canonical
isomorphism: ${\bf H}_{n-1,s}(E_k)\cong{\bf H}_{n-1,w}(E_k)$.
Furthermore, we can represent differential conservation laws of
$E_k$, coming from ${\frak I}(E_k)^{n-1}$, in ${\bf
H}_{n-1,w}(E_k)$.
\end{theorem}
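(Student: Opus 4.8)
The plan is to reduce the asserted isomorphism to the corresponding statement at the level of integral bordism groups, dualize, and then transport the known representation of conservation laws along the resulting isomorphism.

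First I would invoke Theorem \ref{formal-integrability-integral-bordism-groups}: under its hypotheses together with $g_{k+s}\not=0$ for all $s\ge 0$, that theorem gives $K^{E_k}_{n-1,s,w}=0$ and hence a canonical isomorphism of abelian groups $\Omega^{E_k}_{n-1,s}\cong\Omega^{E_k}_{n-1,w}$. Geometrically this is the statement that two $(n-1)$-dimensional admissible integral manifolds which bound a singular-weak solution of $E_k$ already bound a singular solution: non-vanishing symbols at every prolongation order provide enough functional freedom to glue the branches of a weak solution across its discontinuity set by a singular (non-weak) solution of $E_k$.

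Next I would apply the contravariant functor $X\rightsquigarrow\mathbb{R}^{X}$ (real-valued functions on a set), which carries an isomorphism of abelian groups to a (canonical, natural) isomorphism of the associated extended Hopf algebras. Since by definition $\mathbf{H}_{n-1,s}(E_k)=\mathbb{R}^{\Omega^{E_k}_{n-1,s}}$ and $\mathbf{H}_{n-1,w}(E_k)=\mathbb{R}^{\Omega^{E_k}_{n-1,w}}$, the isomorphism of the previous step induces a canonical isomorphism $\mathbf{H}_{n-1,s}(E_k)\cong\mathbf{H}_{n-1,w}(E_k)$; equivalently, in diagram (\ref{commutative-exact-diagram-conservation-laws}) one has $\mathbf{K}^{E_k}_{n-1,s,w}=0$, so the canonical surjection $\mathbf{H}_{n-1,s}(E_k)\to\mathbf{H}_{n-1,w}(E_k)$ with kernel $\mathbf{K}^{E_k}_{n-1,s,w}$ is an isomorphism. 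For the second assertion I would recall that the space of differential conservation laws ${\frak C}ons(E_k)={\frak I}(E_\infty)^{n-1}$ admits a canonical representation in $\mathbf{H}_{n-1}(E_\infty)$ provided the bordism used is not the weak one (the theorem citing \cite{PRA4}); formal integrability of $E_k$ gives $\mathbf{H}_{n-1}(E_\infty)\cong\mathbf{H}_{n-1}(E_k)\cong\mathbf{H}_{n-1,s}(E_\infty)$, so a characteristic integral $(n-1)$-form, evaluated by integration over admissible integral manifolds, determines an element of $\mathbf{H}_{n-1,s}(E_k)$ (its value on a bordism class being the corresponding integral characteristic number of order $k$, a singular-bordism invariant). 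Composing this representation of ${\frak I}(E_k)^{n-1}$ with the isomorphism $\mathbf{H}_{n-1,s}(E_k)\cong\mathbf{H}_{n-1,w}(E_k)$ just obtained yields the desired representation in $\mathbf{H}_{n-1,w}(E_k)$.

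The only genuinely non-formal point is the first step, namely that $g_{k+s}\not=0$ for all $s\ge 0$ really forces $K^{E_k}_{n-1,s,w}=0$: this uses geometry of PDE's (prolongation of a solution across a branch discontinuity by a singular solution) rather than homological algebra, and is exactly the place where the hypothesis on the symbols is consumed. Everything else — naturality of the functor $\mathbb{R}^{(-)}$, the induced isomorphism of extended Hopf algebras, and transporting the representation of conservation laws — is formal once this geometric input and Theorem \ref{formal-integrability-integral-bordism-groups} are available.
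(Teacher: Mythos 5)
Your proof follows essentially the same route as the paper's. The paper's (terse) argument does exactly what you propose: it takes the group isomorphism $\Omega^{E_k}_{n-1,s}\cong\Omega^{E_k}_{n-1,w}$ from Theorem \ref{formal-integrability-integral-bordism-groups} (for which $g_{k+s}\not=0$ is used to kill $K^{E_k}_{n-1,s,w}$), reads off $\mathbf{H}_{n-1,s}(E_k)\cong\mathbf{H}_{n-1,w}(E_k)$ via the function-space duality, and then checks that restricting the canonical representation $j:\mathcal{C}ons(E_k)\to\mathbf{H}_{n-1}(E_\infty)$ to $\mathfrak{I}(E_k)^{n-1}$ lands in $\mathbf{H}_{n-1,s}(E_k)$ because, for $N'\in[N]_{E_k,s}\in\Omega^{E_k}_{n-1,s}\cong\Omega^{E_k}_{n-1,w}$, one has $\int_{N'}\beta=\int_N\beta$ for all $[\beta]\in\mathfrak{I}(E_k)^{n-1}$ — the singular-bordism invariance of the integral characteristic numbers, which you state in your parenthetical. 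The only stylistic difference is that you insert the chain $\mathbf{H}_{n-1}(E_\infty)\cong\mathbf{H}_{n-1}(E_k)\cong\mathbf{H}_{n-1,s}(E_\infty)$ before reaching $\mathbf{H}_{n-1,s}(E_k)$; the paper bypasses that and goes directly via the invariance check, which is in any case what is needed since $\mathbf{H}_{n-1,s}(E_\infty)$ is not literally $\mathbf{H}_{n-1,s}(E_k)$. Your proof is correct and substantively identical to the paper's.
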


\begin{proof}
Let us note that ${\frak I}(E_k)^{n-1}\subset{\frak
I}(E_\infty)^{n-1}$. If $j:{\frak C}ons(E_k)\to{\bf
H}_{n-1}(E_\infty)$, is the canonical representation of the space
of the differential conservation laws in the full Hopf algebra of
$E_k$, (corresponding to the integral bordism groups for regular
smooth solutions), it follows that one has also the following
canonical representation $j|_{{\frak I}(E_k)^{n-1}}:{\frak
I}(E_k)^{n-1}\to{\bf H}_{n-1,s}(E_k)\cong {\bf H}_{n-1,w}(E_k)$.
In fact, for any
$N'\in[N]_{E_k,s}\in\Omega_{n-1,s}^{E_k}\cong\Omega_{n-1,w}^{E_k}$,
one has $\int_{N'}\beta=\int_{N}\beta$, $\forall[\beta]\in {\frak
I}(E_k)^{n-1}$, i.e., the integral characteristic numbers of $N$
and $N'$ coincide.
\end{proof}

\begin{theorem}
Let $E_k\subset J^k_n(W)$ be a formally integrable and completely
integrable PDE, with $\dim E_k\ge 2n+1$. Let $\pi:W\to M$, be an
affine fiber bundle, over a $4$-dimensional affine space-time $M$.
Let us consider admissible only the closed $3$-dimensional
time-like smooth regular integral manifolds $N\subset E_k$. We
consider admissible only ones $N$ with zero all the integral
characteristic numbers. Then, there exists a smooth time-like
regular integral manifold-solution $V$, such that $\partial V=N$.
\end{theorem}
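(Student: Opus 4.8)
The plan is to exhibit $N$ as the boundary of a smooth solution by bording it — first by a weak solution, then by a discontinuity-free singular solution, and finally by a genuinely smooth one — to the empty set, the only delicate point being the last passage, which in dimension four is precisely what the smooth $4$-dimensional Poincar\'e conjecture (Lemma \ref{the-smooth-poincare-conjecture}) and the smooth $4$-dimensional h-cobordism theorem (Corollary \ref{smooth-4-dimensional-h-cobordism-theorem}) of this paper are designed to supply.

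First I would fix $n=4$, so that $N$ is a closed $3$-dimensional time-like admissible integral manifold of $E_k\subset J^k_4(W)$, and observe that $\dim E_k\ge 2n+1=9$ together with formal and complete integrability places us under the hypotheses of Theorem \ref{formal-integrability-integral-bordism-groups}. That theorem identifies $\Omega^{E_k}_{3,w}$ with $\bigoplus_{r+s=3}H_r(W;\mathbb{Z}_2)\otimes_{\mathbb{Z}_2}\Omega_s$, which, since $\Omega_0=\Omega_2=\mathbb{Z}_2$ and $\Omega_1=\Omega_3=0$, collapses to $H_1(W;\mathbb{Z}_2)\oplus H_3(W;\mathbb{Z}_2)$, and this identification is realized by the integral characteristic numbers of order $k$. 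Hence the hypothesis that all integral characteristic numbers of $N$ vanish translates into $[N]_{E_k,w}=0$, i.e. $N\sim_{E_k,w}\varnothing$. Then I would apply the theorem asserting that two weakly bordant admissible manifolds bord by a discontinuity-free solution iff their order-$k$ integral characteristic numbers agree, with $N'\equiv\varnothing$: since $N$ and $\varnothing$ lie in the same class of $\Omega^{E_k}_{3,w}$ and their integral characteristic numbers of order $k$ coincide (all zero), there is a $4$-dimensional integral manifold-solution $\widehat V\subset E_k$ bording $\varnothing$ with $N$ without discontinuities, i.e. $\partial\widehat V=N$ and $\Sigma(\widehat V)_S=\varnothing$. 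The evolutionary time-like character of $N$ propagates into a collar of $\partial\widehat V$ in $\widehat V$, so $\widehat V$ is a time-like singular solution.

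The hard part is to pass from the singular solution $\widehat V$ to a \emph{smooth regular} solution $V$ with the same time-like boundary $N$, i.e. to remove the residual (non-discontinuity) singular locus $\Sigma(\widehat V)\subset\widehat V$, which is a stratified subset of codimension $\ge 1$ with interior strata of dimension $\le 3$. Near each such point $\widehat V$ is modelled on a cone over a compact link of dimension $\le 3$ which, by complete integrability and $\dim E_k\ge 2n+1$, may be taken to be a homotopy sphere; links of dimension $\le 2$ are standard, and for links of dimension $3$ the smooth $4$-dimensional Poincar\'e conjecture (Lemma \ref{the-smooth-poincare-conjecture}) forces the link to be $S^3$ and the cone neighborhood to be a standard disk $D^4$, which — using the smooth $4$-dimensional h-cobordism theorem (Corollary \ref{smooth-4-dimensional-h-cobordism-theorem}) to guarantee that the relevant filling h-cobordisms are products — can be replaced by a smooth solution-piece. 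These replacements are surgeries on $\Sigma(\widehat V)$ supported in the interior, away from $\partial\widehat V$; iterating them over the (finitely many, by compactness) strata eliminates the singular locus and yields the desired smooth time-like regular integral manifold-solution $V$ with $\partial V=N$. I expect the sole genuine obstacle to be this smoothing step — hence the restriction to a $4$-dimensional space-time, where the newly available smooth Poincar\'e conjecture closes exactly the gap that exotic $4$-dimensional phenomena would otherwise leave open; one would still need to check that the proof of Lemma \ref{the-smooth-poincare-conjecture} is logically independent of the present statement, so as to avoid circularity.
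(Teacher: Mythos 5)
Your proposal is a genuinely different argument from the one the paper gives, and it contains a gap that the paper's route avoids entirely.

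The paper's proof is short and stays inside the bordism machinery of Section~3. The crucial observation you have missed is that the hypothesis that $\pi\colon W\to M$ be an \emph{affine} fiber bundle over a $4$-dimensional \emph{affine} space-time $M$ forces $W$ to be contractible, so $H_r(W;\mathbb{Z}_2)=0$ for all $r>0$ and $\bigoplus_{r+s=3}H_r(W;\mathbb{Z}_2)\otimes_{\mathbb{Z}_2}\Omega_s$ vanishes outright (the only surviving summand being $H_0\otimes\Omega_3$, and $\Omega_3=0$). Your computation reduces the sum to $H_1(W;\mathbb{Z}_2)\oplus H_3(W;\mathbb{Z}_2)$ — correct algebraically — but then mislocates the role of the characteristic numbers: they are not what identifies the class of $N$ in $\Omega^{E_k}_{3,w}$ with an element of this homology group, and they are not needed to conclude $[N]_{E_k,w}=0$. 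That vanishing is automatic from the affine hypotheses. The characteristic-number hypothesis is used in the \emph{other} step, namely to pass from the weak bordism conclusion to a smooth one: by the criterion of Theorem~\ref{necessary-sufficient-conditions-existence-integral-quantum-solutions-bording}\,(4) (orientability plus $\mathcal{I}(E_k)^{n-1}\neq 0$ gives $\ker(j_{n-1})=0$), a manifold $N$ whose integral characteristic numbers all vanish satisfies $[N]_{E_\infty}=0$ in the \emph{smooth} integral bordism group $\Omega^{E_\infty}_{3}$, hence bounds a smooth regular solution directly. That is the whole argument.

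Your final smoothing step is where a genuine gap opens. You propose to take the singular (discontinuity-free) solution $\widehat V$ and remove its residual singular locus by local surgery, modeling each singular point on a cone over a link, invoking Lemma~\ref{the-smooth-poincare-conjecture} to standardize the $3$-dimensional links and Corollary~\ref{smooth-4-dimensional-h-cobordism-theorem} to fill them. Nothing in the paper justifies the claim that the singular set of an integral manifold-solution is a stratified set whose local links are homotopy spheres, nor that such a cone neighborhood can be replaced by a smooth solution-piece of $E_k$ rather than merely a smooth manifold-piece: the replacement must remain an integral manifold of the Cartan distribution, and a surgery producing a smooth manifold need not produce a solution. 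The paper never performs such a surgery; it sidesteps it entirely by the characteristic-number criterion. Moreover, this theorem sits in Section~\ref{spectra-pdes-section}, while Lemma~\ref{the-smooth-poincare-conjecture} and Corollary~\ref{smooth-4-dimensional-h-cobordism-theorem} appear only in Section~\ref{spectre-in-exotic-pdes-section}; making this statement depend on them inverts the paper's logical order for no gain, even though it happens not to create an outright circularity.
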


\begin{proof}
In fact, $E_k$ is equivalent, from the point of view of the
regular smooth solutions, to $E_\infty$. On the other hand, we
have:
$$\Omega^{E_\infty}_{3}/K^{E_\infty}_{3,w}\cong\Omega^{E_\infty}_{3,w}\cong
\bigoplus_{r+s=3}H_r(W;{\mathbb Z }_2)\otimes_{{\mathbb
Z}_2}\Omega_s=0.$$ Therefore, $\Omega^{E_\infty}_{3}\cong
K^{E_\infty}_{3,w}$. This means that any closed smooth time-like
regular integral manifold $N\subset E_k$, is the boundary of a weak
solution in $E_\infty$. On the other hand, since we have considered
admissible only such manifolds $N$ with zero integral characteristic
numbers, it follows that one has: $ \Omega^{E_\infty}_{3}=0$.
\end{proof}

\begin{definition} In Tab. \ref{important-spaces-associated-to-pde} we define some important spaces associated to a PDE
$E_k\subset J^k_n(W)$.\end{definition}

\begin{table}
\caption{Important spaces associated to PDE $E_k$.}
\label{important-spaces-associated-to-pde}
\begin{tabular}{|l|}
\hline
{\rm{\footnotesize (Space of characteristic $q$-forms, $q=1,2,\cdots$)}}\\
{\rm{\footnotesize $Ch\Omega^q(E_k)\equiv\{\beta\in\Omega^q(E_k) |
\beta(\zeta_1,\cdots,\zeta_q)(p)=0, \zeta_i(p)\in \mathbf{C}har(E_k)_p \hskip 2pt ,\hskip 2pt\forall p\in
E_k\};$}}\\
{\rm{\footnotesize $Ch\Omega^0(E_k)=0$.}}\\
\hline
{\rm{\footnotesize (Space of Cartan $q$-forms, $q=1,2,\cdots$)}}\\
{\rm{\footnotesize $C\Omega^q(E_k)\equiv\{\beta\in\Omega^q(E_k) |
\beta(\zeta_1,\cdots,\zeta_q)(p)=0, \zeta_i(p)\in (\mathbf{E}_k)
\hskip 2pt , \hskip 2pt\forall p\in E_k\};$}}\\
{\rm{\footnotesize $C\Omega^0(E_k)=0$.}}\\
\hline
{\rm{\footnotesize (Space of $p$-characteristic $q$-forms, $q=1,2,\cdots$)}}\\
{\rm{\footnotesize $Ch^p\Omega^q(E_k)\equiv\{\beta\in\Omega^q(E_k) |
\beta(\zeta_1,\cdots,\zeta_q)=0$, with condition $(\spadesuit)\}$.}}\\
{\rm{\footnotesize ($\spadesuit$): (If at least $q-p+1$ of the fields $\zeta_1,\cdots,\zeta_q$ are characteristic).}}\\
\hline
{\rm{\footnotesize (Space of $p$-Cartan $q$-forms, $q=1,2,\cdots$)}}\\
{\rm{\footnotesize $C^p\Omega^q(E_k)\equiv\{\beta\in\Omega^q(E_k) |
\beta(\zeta_1,\cdots,\zeta_q)=0$, with condition $(\clubsuit)\}$.}}\\
{\rm{\footnotesize ($\clubsuit$): (If at least $q-p+1$ of the fields $\zeta_1,\cdots,\zeta_q$ are Cartan).}}\\
\hline
\end{tabular}
\end{table}

\begin{remark} If the fiber dimension of $\mathbf{C}har(E_k)_p$ is $s$ one has:
$Ch\Omega^q(E_k)=\Omega^q(E_k)$, $q>s$.
If the fiber dimension of $\mathbf{E}_k$ is $r$ one has: $C\Omega^q(E_k)=\Omega^q(E_k),\quad q>r$.
If $k=\infty$ one has: $Ch\Omega^q(E_k)=C\Omega^q(E_\infty)$, $Ch^p\Omega^q(E_k)=C^p\Omega^q(E_\infty)$, $C\Omega^q(E_\infty)=\Omega^q(E_\infty)=Ch\Omega^q(E_k)$, $q>n$.
$\alpha\in Ch\Omega^q(E_k)$, iff $\alpha|_V=0$, for all the characteristic integral manifolds of $E_k$. If $E_\infty\subset J^k_n(W)$, then $\mathbf{C}har(E_\infty)=
\mathbf{E}_\infty$, and $Ch\Omega^q(E_\infty)=C\Omega^q(E_\infty)$. Furthermore, even if for any $p\in E_\infty$, one has
an infinity number of maximal integral manifolds (of dimension $n$) passing for $p$, one has that all these integral manifolds have at $p$ the same tangent space $(\mathbf{E}_\infty)_p$. Hence, a differential $q$-form on $E_\infty$ is Cartan iff it is zero on all the integral manifolds of $E_\infty$. One has the following natural differential complex:
\begin{equation}\label{characteristic-differential-complex-pde}
\begin{array}{l}
\xymatrix{0\ar[r]& Ch\Omega^1(E_k)\ar[r]^{d}&Ch\Omega^2(E_k)\ar[r]^{d}&\cdots}\\
\xymatrix{\cdots\ar[r]& Ch\Omega^s(E_k)\ar[r]^{d}&Ch\Omega^{s+1}(E_k)\ar[r]^{d}&\cdots\ar[r]&
Ch\Omega^r(E_k)\ar[r]^{d}&0},\\
\end{array}
\end{equation}
where $s=$ fiber dimension of $\mathbf{C}har(E_k)$, and $r=\dim E_k$, with
$k\le\infty$. In particular, if   $k=\infty$ we can write above
complex by fixing $Ch\Omega^q(E_\infty)=C\Omega^q(E_\infty)$. One
has: $d:Ch^p\Omega^q(E_k)\to Ch^p\Omega^{q+1}(E_k)$, $k\le\infty$.
In particular, for $k=\infty$ we can write
$d:C^p\Omega^q(E_\infty)\to C^p\Omega^{q+1}(E_\infty)$. One has
the following filtration compatible with the exterior
differential:
$$\begin{array}{l}
Ch^0\Omega^q(E_k)\equiv\Omega^q(E_k)\supset Ch^1\Omega^q(E_k)\equiv
Ch\Omega^q(E_k)\supset Ch^2\Omega^q(E_k)\supset\cdots\\
\cdots\supset Ch^q\Omega^q(E_k)\supset 0,\\
\end{array}$$

for $k\le\infty$. As a consequence
we have associated a spectral sequence ({\em characteristic
spectral sequence}\index{characteristic spectral sequence} of $E_k$): $\{E_r^{p,q}(E_k), d_r^{p,q}\}$. In
particular, if   $E_\infty$ is the infinity prolongation of a PDE
$E_k\subset J^k_n(W)$ above spectral sequence applied to
$E_\infty$ coincides with the $\mathcal{C} $-spectral sequence of
$E_k$ \cite{PRA1, PRA5}. Of particular importance is the following
spectral term:
$$E_1^{0,q}(E_k)={{\Omega^q(E_k)\cap d^{-1}Ch\Omega^{q+1}(E_k)}\over
{d\Omega^{q-1}(E_k)\oplus Ch\Omega^q(E_k)}}$$
that for $k=\infty$ can be also written
$$E_1^{0,q}(E_k)={{\Omega^q(E_k)\cap d^{-1}C\Omega^{q+1}(E_k)}\over
{d\Omega^{q-1}(E_k)\oplus C\Omega^q(E_k)}}.$$ Set:
$\bar\Omega^q(E_k)=\Omega^q(E_k)/Ch\Omega^q(E_k)$, $\le\infty$.
Then, one has the following differential complex associated to
$E_k$ ({\em bar de Rham complex}\index{bar de Rham complex} of $E_k$):
$$\begin{array}{l}
\xymatrix{0\ar[r]&\bar\Omega^0(E_k)\ar[r]^{\bar d}&
\bar\Omega^1(E_k)\ar[r]^{\bar d}&\bar\Omega^2(E_k)\ar[r]^{\bar
d}&\cdots0}\\
\xymatrix{\cdots\ar[r]&\bar\Omega^s(E_k)\ar[r]^{\bar d}&0}.\\
\end{array}$$

We call {\em bar de Rham cohomology}\index{bar de Rham cohomology} of $E_k$ the corresponding homology
$\bar H^q(E_k)$. One has the following canonical isomorphism:
$E_1^{0,q}(E_k)\cong \bar H^q(E_k)\hskip 2pt,\quad k\le\infty$.
\end{remark}

\begin{definition} Set
$$\begin{array}{ll}
  \mathcal{I}(E_k)^p&\equiv{{\Omega^p(E_k)\cap d^{-1}(C\Omega^{p+1}(E_k))}\over
{d\Omega^{p-1}(E_k)\oplus\{C\Omega^p(E_k)\cap d^{-1}(C\Omega^{p+1}(E_k))\}}},\\
\mathcal{Q}^k_n(W)^p&\equiv\mathcal{I}(J^k_n(W))^p.\\
  \end{array}$$
\end{definition}

\begin{remark} For $k=\infty$ one has: $\mathcal{I}(E_\infty)^p \cong E_1^{0,p}(E_\infty)\cong \bar
H^p(E_\infty)$.
\end{remark}

\begin{theorem} \cite{PRA01, PRA1}\label{necessary-sufficient-conditions-existence-integral-quantum-solutions-bording} {\em 1)} Let us assume that
$\mathcal{I}(E_k)^p\not=0$. One has a natural group homomorphism:
$$\begin{array}{l}
j_p:\Omega^{E_k}_p\to(\mathcal{I}(E_k)^p)^*\\
       {[N]_{E_k}\mapsto j_p([N]_{E_k}),\quad j_p([N]_{E_k})([\alpha])=
       \int_N\alpha\equiv<[N]_{E_k},[\alpha]>.}\\
       \end{array}$$
We call $i[N]\equiv <[N]_{E_k},[\alpha]>$ {\em integral characteristic numbers} of $N$ for all $[\alpha]\in \mathcal{I}(E_k)^p$. Then a necessary condition that $N'\in [N]_{E_k}$ is
the following
\begin{equation}\label{necessary-condition-belonging-same-integral-bordism-class}
i[N']=i[N] \hskip 2pt , \hskip 2pt\forall [\alpha]\in \mathcal{I}(E_k)^p.\end{equation}
Above condition is also sufficient for $k=\infty$ in
order to identify elements belonging to the same singular integral bordism
classes of $\Omega^{E_\infty}_{p,s}$.  In fact, one has the following exact
commutative diagram:
$$\xymatrix{&&0\ar[d]&\\
      &\Omega^{E_\infty}_p\ar[d]_{j_p}\ar[r]^(0.4){i_p}&\Omega^{E_\infty}_{p,s}\equiv\bar H_p(E_\infty;\mathbb{R})\ar[d]&\\
      0\ar[r]&(\mathcal{I}(E_\infty)^p)^*\ar[r]&\bar H^p(E_\infty;\mathbb{R})^*\ar[r]&0}$$

{\em 2)} For any $k\le \infty$ one has the following exact commutative diagram:
$$\xymatrix{&&&0\ar[d]&\\
      0\ar[r]&\overline{K^{E_k}_p}\ar[r]&\Omega^{E_k}_p\ar[dr]_{j_p}\ar[r]&\overline{\Omega^{E_k}_p}\ar[d]\ar[r] &0\\
      &&&(\mathcal{I}(E_k)^p)^*&}$$
Therefore, we can write
$$\begin{array}{l}
 \overline{K^{E_k}_p}\equiv\{ [N]_{E_k} |<[\alpha],[N]_{E_k}>=0,
\forall [\alpha]\in \mathcal{I}(E_k)^p \}\\
N'\in\overline{[N]_{E_k}}\in\overline{\Omega^{E_k}_p}\Leftrightarrow
\int_{N'}\alpha=\int_N\alpha \hskip 2pt , \hskip 2pt\forall [\alpha]\in\mathcal{I}(E_k)^p.\\
  \end{array}$$

{\em 3)} Let us assume that $\mathcal{Q}^k_n(W)^p\not=0$. One has a natural group
homomorphism
$$\begin{array}{l}
\bar j_p:\Omega_p(E_k)\to (\mathcal{Q}^k_n(W)^p)^*\\
      { [N]_{\overline{E_k}}\mapsto \bar j_p([N]_{\overline{E_k}})}\\
       {\bar j_p([N]_{\overline{E_k}})([\alpha]) =\int_N\alpha
       \equiv<[N]_{\overline{E_k}},[\alpha]>.}\\
       \end{array}$$
We call $q[N]\equiv<[N]_{\overline{E_k}},[\alpha]>$ {\em
quantum characteristic numbers} of $N$, for all
$[\alpha]\in\mathcal{Q}^k_n(W)^p$. Then, a necessary condition that
$N'\in [N]_{\overline{E_k}}$ is that
\begin{equation}\label{necessary-condition-belonging-same-quantum-bordism-class}
q[N']=q[N] \hskip 2pt , \hskip 2pt\forall [\alpha]\in\mathcal{Q}^k_n(W)^p.\end{equation}

{\em 4) (Criterion in order condition (\ref{necessary-condition-belonging-same-integral-bordism-class}) should be sufficient).} Let us assume
that $E_k\subset J^k_n(W)$ is such that all its $p$-dimensional
compact closed admissible integral submanifolds are orientable and
$\mathcal{I}(E_k)^p\not=0$.\footnote{It is important to note that
can be $\mathcal{I}(E_k)^p\not=0$ even if $E_k$ is $p$-cohomologic trivial, i.e.,
$H^p(E_k;\mathbb{R})=0$. This, for example, can happen
if $E_k$ is contractible to a point.} Then,
$\ker(j_p)=0$, i.e.,
$$N'\in[N]_{E_k}\Leftrightarrow \int_{N'}\alpha=\int_N\alpha \hskip 2pt , \hskip 2pt \forall
[\alpha]\in\mathcal{I}(E_k)^p.$$
In particular, for $k=\infty$, one has $\Omega^{E_\infty}_p\cong\Omega^{E_\infty}_{p,s}$ as  $\mathcal{I}(E_\infty)^p\cong\bar H^p(E_\infty)$.

{\em 5)} Under the same hypotheses of above theorem one has
$$N'\in[N]_{\overline{E_k}}\Leftrightarrow \int_{N'}\alpha=\int_N\alpha
\hskip 2pt,\quad
\forall [\alpha]\in\mathcal{Q}^k_n(W)^p.$$
\end{theorem}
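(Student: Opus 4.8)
The plan is to set up the pairings $\langle[N]_{E_k},[\alpha]\rangle=\int_N\alpha$ and $\langle[N]_{\overline{E_k}},[\alpha]\rangle=\int_N\alpha$ as well-defined group homomorphisms $j_p$ and $\bar j_p$, to read off the necessity conditions (\ref{necessary-condition-belonging-same-integral-bordism-class}) and (\ref{necessary-condition-belonging-same-quantum-bordism-class}) as bordism invariance, and then to obtain the $k=\infty$ sufficiency and the orientability criterion by importing the bar-de Rham description of the spectral term $E_1^{0,p}(E_\infty)$. First I would fix a closed $p$-dimensional admissible oriented integral submanifold $N\subset E_k$ and check that $\int_N\alpha$ depends only on the class $[\alpha]\in\mathcal{I}(E_k)^p$: replacing $\alpha$ by $\alpha+d\beta$, $\beta\in\Omega^{p-1}(E_k)$, changes the integral by $\int_Nd\beta=0$ (Stokes, since $\partial N=\varnothing$), and replacing $\alpha$ by $\alpha+\gamma$ with $\gamma\in C\Omega^p(E_k)\cap d^{-1}(C\Omega^{p+1}(E_k))$ changes it by $\int_N\gamma=0$, because $N$ is an integral manifold, so $T_pN\subset(\mathbf{E}_k)_p$ and any Cartan $p$-form restricts to zero on $N$. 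Linearity in $[\alpha]$ and additivity under disjoint union then make $j_p$ a homomorphism $\Omega^{E_k}_p\to(\mathcal{I}(E_k)^p)^*$; repeating the argument inside $J^k_n(W)$, with $\mathcal{Q}^k_n(W)^p=\mathcal{I}(J^k_n(W))^p$, gives $\bar j_p$.

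Next I would establish the necessity conditions by bordism invariance. If $N_1\sim_{E_k}N_2$, i.e., a solution $V\subset E_k$ satisfies $\partial V=N_1\sqcup(-N_2)$, then for $[\alpha]\in\mathcal{I}(E_k)^p$ Stokes gives $\int_{N_1}\alpha-\int_{N_2}\alpha=\int_Vd\alpha$; but $d\alpha\in C\Omega^{p+1}(E_k)$ and $V$ is an integral manifold, so $d\alpha|_V=0$ and the right side vanishes, yielding $i[N_1]=i[N_2]$, which is (\ref{necessary-condition-belonging-same-integral-bordism-class}). The identical computation with a quantum bord $V\subset J^k_n(W)$ and $[\alpha]\in\mathcal{Q}^k_n(W)^p$ (so that $d\alpha$ is Cartan on $J^k_n(W)$) gives (\ref{necessary-condition-belonging-same-quantum-bordism-class}). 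Part 2) is then formal: the displayed commutative exact diagram is the first isomorphism theorem for $j_p$, forcing $\overline{K^{E_k}_p}=\ker j_p=\{[N]_{E_k}:\langle[\alpha],[N]_{E_k}\rangle=0,\ \forall[\alpha]\in\mathcal{I}(E_k)^p\}$ and $\overline{\Omega^{E_k}_p}=\Omega^{E_k}_p/\overline{K^{E_k}_p}\cong\IM(j_p)\hookrightarrow(\mathcal{I}(E_k)^p)^*$; part 5) is the verbatim $J^k_n(W)$-analogue of part 4), obtained by replacing $E_k$, $\Omega^{E_k}_p$, $\mathcal{I}(E_k)^p$ and $E_k$-bordism by $J^k_n(W)$, $\Omega_p(E_k)$, $\mathcal{Q}^k_n(W)^p$ and quantum bordism.

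For the sufficiency when $k=\infty$ (the commutative diagram in part 1)) and for the criterion of part 4) I would rely on three inputs: (i) the identifications $\mathcal{I}(E_\infty)^p\cong E_1^{0,p}(E_\infty)\cong\bar H^p(E_\infty;\R)$ furnished by the $\mathcal{C}$-spectral sequence, already recorded above; (ii) the identification $\Omega^{E_\infty}_{p,s}\cong\bar H_p(E_\infty;\R)$, i.e., the ``de Rham theorem for $E_\infty$'' of \cite{PRA01, PRA1}, which says that a singular integral $p$-cycle of $E_\infty$ bounds a singular solution precisely when all its periods against characteristic integral $p$-forms vanish; and (iii) the nondegeneracy $\bar H_p(E_\infty;\R)\cong\bar H^p(E_\infty;\R)^*$ of the bar-de Rham pairing. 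Since a smooth regular solution is in particular a singular solution, the natural map $i_p:\Omega^{E_\infty}_p\to\Omega^{E_\infty}_{p,s}$ is defined, and by construction $j_p$ is the composite of $i_p$ with the isomorphisms of (i)--(iii); hence $\ker j_p=\ker i_p$ and $\IM(j_p)\cong\Omega^{E_\infty}_{p,s}\cong(\mathcal{I}(E_\infty)^p)^*$, which is exactly the exact commutative diagram of part 1). Under the orientability hypothesis of part 4) the periods $\int_N\alpha$ are genuine real numbers and (ii) says precisely $\ker j_p=\ker i_p=0$; combined with surjectivity of $i_p$, supplied in the completely integrable case by Theorem \ref{formal-integrability-integral-bordism-groups}, this gives $\Omega^{E_\infty}_p\cong\Omega^{E_\infty}_{p,s}$ and the stated equivalence $N'\in[N]_{E_\infty}\Leftrightarrow i[N']=i[N]$ for all $[\alpha]\in\mathcal{I}(E_\infty)^p$.

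I expect the one genuine obstacle to be input (ii): showing that the elementary necessary condition ``all periods vanish'' is also sufficient for a singular integral cycle to bound a singular solution of $E_\infty$. This is where the geometry of the infinite prolongation enters in full --- formal integrability, the structure of the Cartan distribution on $E_\infty$ (every point carrying a well-defined tangent $(\mathbf{E}_\infty)_p$), and the degeneration of the $\mathcal{C}$-spectral sequence producing $E_1^{0,p}(E_\infty)\cong\bar H^p(E_\infty)$ --- and it is precisely the structural result imported from \cite{PRA01, PRA1}. Everything else reduces to Stokes' theorem and the first isomorphism theorem once this is granted.
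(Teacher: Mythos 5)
The paper does not actually spell out a proof of this theorem: its ``proof'' is the bare citation ``See \cite{PRA01, PRA1}.'' So there is no internal argument against which your reconstruction can be checked line by line; what can be assessed is whether your sketch is self-consistent and correctly identifies what must be imported.

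Parts 1)--3) and 5) of your sketch are sound. Well-definedness of $j_p$ and $\bar j_p$ on $\mathcal{I}(E_k)^p$ and $\mathcal{Q}^k_n(W)^p$ by Stokes plus vanishing of Cartan forms on integral manifolds, necessity of $i[N']=i[N]$ and $q[N']=q[N]$ by the same Stokes computation on the bounding solution $V$, and part 2) as the first isomorphism theorem for $j_p$ --- all of this is correct and is exactly the kind of elementary work the cited references would do before the genuine content. Likewise your identification of the bar-de Rham theorem for $E_\infty$ (periods detect singular bordism, and $\bar H_p\cong\bar H^p{}^*$) together with $\mathcal{I}(E_\infty)^p\cong E_1^{0,p}(E_\infty)\cong\bar H^p(E_\infty)$ as the engine behind the exact diagram in part 1) is the right diagnosis, and you are right to flag that the sufficiency direction of that de Rham statement is the non-trivial step one has to import.

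Part 4) is where there is a genuine gap. You write that ``(ii) says precisely $\ker j_p=\ker i_p=0$,'' but (ii) is only the isomorphism $\Omega^{E_\infty}_{p,s}\cong\bar H_p(E_\infty;\mathbb{R})$; it identifies the \emph{codomain} of $i_p$ and tells us when a cycle bounds a \emph{singular} solution. It says nothing about whether $i_p:\Omega^{E_\infty}_p\to\Omega^{E_\infty}_{p,s}$ is injective, i.e., whether a smooth Cauchy manifold that bounds a singular solution also bounds a smooth one. That is precisely what ``$\ker(j_p)=0$'' asserts, and the chain $\ker j_p=\ker i_p$ (which your part-1 diagram does establish for $k=\infty$) by itself just converts the problem, it does not solve it. The orientability hypothesis must be the thing that kills $\ker i_p$, and you never explain how it does so. Moreover, part 4) is stated for \emph{arbitrary} $k\le\infty$, where there is no bar-de Rham identification available at all; your argument only ever engages $k=\infty$. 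So the crux of part 4) --- why orientability of all admissible $p$-dimensional integral submanifolds forces $j_p$ to be injective on $\Omega^{E_k}_p$ for general $k$ --- remains unaddressed in your sketch.

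In short: you have correctly reconstructed the formal skeleton and correctly named the key hard input (sufficiency in the bar-de Rham theorem), which is all the paper itself commits to here; but the argument you offer for the injectivity claim in part 4) is circular at $k=\infty$ and absent at finite $k$.
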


\begin{proof} See \cite{PRA01, PRA1}.\end{proof}

\begin{remark} In above criterion $\Omega^{E_k}_p$ (resp. $ \Omega_p(E_k)$) does not
necessarily coincides with the oriented version of the integral (resp. quantum)
bordism groups. In fact, the M\"obius band\index{moebius@M\"{o}bius band} is an example of non orientable
manifold $B$ with $\partial  B\cong S^1$, that, instead, is an orientable
manifold.\end{remark}

\begin{remark} The oriented version of integral and quantum bordism can be similarly obtained by substituting the groups $\Omega_p$ with the corresponding groups ${}^+\Omega_p $ for oriented manifolds. We will not go in to details.\end{remark}

Let us give, now, a full characterization of singular integral and quantum (co)bordism groups by means of suitable characteristic numbers.

\begin{definition} {\em 1)} Let $E_k\subset J^k_n(W)$ be a PDE. We call {\em bar
singular chain complex}\index{bar singular chain complex}, {\em with coefficients into an abelian
group $G$}, of $E_k$ the chain complex $\{\bar
C_p(E_k;G),\bar\partial\}$, where $\bar C_p(E_k;G)$ is the
$G$-module of formal linear combinations, with coefficients in
$G$, $\sum \lambda_i c_i$, where $c_i$ is a singular $p$-chain
$f:\bigtriangleup ^p\to E_k$ that extends on a neighborhood
$U\subset\mathbb{R}^{p+1}$, such that $f$ on $U$ is differentiable
and $Tf(\bigtriangleup^p)\subset\mathbf{E}_k$. Denote by $\bar
H_p(E_k;G)$ the corresponding homology ({\em bar singular
homology with coefficients in $G$})\index{bar singular homology} of $E_k$. Let $\{\bar
C^p(E_k;G)\equiv Hom_{\mathbb{Z}}(\bar C_p(E_k;\mathbb{Z});G),
\bar\delta\}$ be the corresponding dual complex and $\bar
H^p(E_k;G)$ the associated homology spaces ({\em bar singular
cohomology}\index{bar singular cohomology}, {\em with coefficients into $G$} of $E_k$).

{\em 2)} A {\em $G$-singular $p$-dimensional integral manifold}\index{G@$G$-singular integral manifold} of
$E_k\subset J^k_n(W),$ is a bar singular $p$-chain $V$ with $p\le
n$, and coefficients into an abelian group $G$, such
that $V\subset E_k$.

{\em 3)} Set $\bar B_{\bullet}(E_k;G)\equiv \IM(\bar\partial)$, $\bar Z_{\bullet}(E_k;G)\equiv \ker(\bar\partial)$. Therefore, one has the following exact commutative diagram:
$$\xymatrix{&&0\ar[d]&0\ar[d]&&\\
&0\ar[r]&\bar B_{\bullet}(E_k;G)\ar[d]\ar[r]&\bar Z_{\bullet}(E_k;G)\ar[d]\ar[r]&\bar H_{\bullet}(E_k;G)\ar[r]&0\\
&&\bar C_{\bullet}(E_k;G)\ar[d]\ar@{=}[r]&\bar C_{\bullet}(E_k;G)\ar[d]&&\\
0\ar[r]&{}^G\Omega^{E_k}_{\bullet,s}\ar[r]&\bar Bor_{\bullet}(E_k;G)\ar[d]\ar[r]&\bar Cyc_{\bullet}(E_k;G)\ar[d]\ar[r]&0&\\
&&0&0&&}$$
where $\bar Bor_{\bullet}(E_k;G)\equiv$ {\em bordism group};
$b\in{}^G[a]_{E_k}\in\bar Bor_{\bullet}(E_k;G)\Rightarrow\exists
c\in \bar C_{\bullet}(E_k;G): \bar\partial c=a-b$; $\bar
Cyc_{\bullet}(E_k;G)\equiv$ {\em cyclism group};
$b\in{}^G[a]_{E_k}\in\bar Cyc_{\bullet}(E_k;G)\Rightarrow
\bar\partial(a-b)=0$; ${}^G\Omega^{E_k}_{\bullet,s}\equiv$ {\em closed bordism group};
$b\in{}^G[a]_{E_k}\in{}^G\Omega^{E_k}_{\bullet,s}\Rightarrow
\left\{\begin{array}{l}
\bar\partial a=\bar\partial b=0\\
a-b=\bar\partial c\\
       \end{array}\right\}$.
\end{definition}

\begin{theorem} {\em 1)} One has the following canonical isomorphism: ${}^G\Omega^{E_k}_{\bullet,s}\cong\bar H_{\bullet}(E_k;G)$.

{\em 2)} If ${}^G\Omega^{E_k}_{\bullet,s}=0$ one has: $\bar Bor_{\bullet}(E_k;G)\cong\bar Cyc_{\bullet}(E_k;G)$.

{\em 3)} If $\bar Cyc_{\bullet}(E_k;G)$ is a free $G$-module, then the bottom horizontal exact sequence, in above diagram, splits and one has the isomorphism:
$$\bar Bor_{\bullet}(E_k;G)\cong {}^G\Omega( E_k)_{\bullet,s}\bigoplus\bar Cyc_{\bullet}(E_k;G).$$
\end{theorem}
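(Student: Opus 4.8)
The plan is to deduce all three assertions directly from the exact commutative diagram set up in the Definition immediately preceding the theorem, together with a little elementary homological algebra; there is no deep content beyond reading off the diagram once its exactness is granted. The four short exact sequences I shall keep in view are the top row $0\to\bar B_{\bullet}(E_k;G)\to\bar Z_{\bullet}(E_k;G)\to\bar H_{\bullet}(E_k;G)\to 0$, the two middle columns $0\to\bar B_{\bullet}(E_k;G)\to\bar C_{\bullet}(E_k;G)\to\bar Bor_{\bullet}(E_k;G)\to 0$ and $0\to\bar Z_{\bullet}(E_k;G)\to\bar C_{\bullet}(E_k;G)\to\bar Cyc_{\bullet}(E_k;G)\to 0$, and the bottom row $0\to{}^G\Omega^{E_k}_{\bullet,s}\to\bar Bor_{\bullet}(E_k;G)\to\bar Cyc_{\bullet}(E_k;G)\to 0$.

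For {\em 1)} I would first note that $\bar\partial^2=0$ forces $\bar B_{\bullet}(E_k;G)\subseteq\bar Z_{\bullet}(E_k;G)$, so the two middle columns identify $\bar Bor_{\bullet}(E_k;G)\cong\bar C_{\bullet}(E_k;G)/\bar B_{\bullet}(E_k;G)$ and $\bar Cyc_{\bullet}(E_k;G)\cong\bar C_{\bullet}(E_k;G)/\bar Z_{\bullet}(E_k;G)$, and the bottom-row map $\bar Bor_{\bullet}(E_k;G)\to\bar Cyc_{\bullet}(E_k;G)$ becomes the canonical surjection $\bar C_{\bullet}(E_k;G)/\bar B_{\bullet}(E_k;G)\twoheadrightarrow\bar C_{\bullet}(E_k;G)/\bar Z_{\bullet}(E_k;G)$, whose kernel is $\bar Z_{\bullet}(E_k;G)/\bar B_{\bullet}(E_k;G)$. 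By exactness of the bottom row this kernel is ${}^G\Omega^{E_k}_{\bullet,s}$, while by the top row (which is just the definition of bar singular homology) it is $\bar H_{\bullet}(E_k;G)$; hence ${}^G\Omega^{E_k}_{\bullet,s}\cong\bar H_{\bullet}(E_k;G)$. Equivalently one may simply unwind the defining relation $b\in{}^G[a]_{E_k}\Leftrightarrow(\bar\partial a=\bar\partial b=0,\ a-b=\bar\partial c)$, which says verbatim that ${}^G\Omega^{E_k}_{\bullet,s}=\ker\bar\partial/\IM\bar\partial$, or apply the snake lemma to the map of the two column sequences (identity on $\bar C_{\bullet}(E_k;G)$), whose connecting piece yields the same isomorphism.

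For {\em 2)}, setting ${}^G\Omega^{E_k}_{\bullet,s}=0$ collapses the bottom row to $0\to\bar Bor_{\bullet}(E_k;G)\to\bar Cyc_{\bullet}(E_k;G)\to 0$, i.e. $\bar Bor_{\bullet}(E_k;G)\cong\bar Cyc_{\bullet}(E_k;G)$. For {\em 3)}, I would invoke the splitting lemma in the form: any short exact sequence of $G$-modules with free (hence projective) right-hand term splits; choosing a $G$-basis of $\bar Cyc_{\bullet}(E_k;G)$ and lifting it through the surjection $\bar Bor_{\bullet}(E_k;G)\to\bar Cyc_{\bullet}(E_k;G)$ produces a $G$-linear section, and therefore $\bar Bor_{\bullet}(E_k;G)\cong{}^G\Omega^{E_k}_{\bullet,s}\oplus\bar Cyc_{\bullet}(E_k;G)$, which by {\em 1)} may also be written with $\bar H_{\bullet}(E_k;G)$ in place of the first summand. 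Everything is carried out degree by degree, so the grading plays no role.

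The only place where any care at all is needed is the bookkeeping in {\em 1)}: one must check that the map appearing in the bottom row of the Definition's diagram really is the natural quotient $\bar C_{\bullet}(E_k;G)/\bar B_{\bullet}(E_k;G)\to\bar C_{\bullet}(E_k;G)/\bar Z_{\bullet}(E_k;G)$ rather than something else, and that the inclusion $\bar B_{\bullet}(E_k;G)\subseteq\bar Z_{\bullet}(E_k;G)$ holds --- both immediate from $\bar\partial^2=0$ and from the explicit descriptions of $\bar Bor_{\bullet}$, $\bar Cyc_{\bullet}$ and ${}^G\Omega^{E_k}_{\bullet,s}$ given in that Definition. So I do not expect a genuine obstacle; the entire theorem is a formal consequence of the exact commutative diagram already established.
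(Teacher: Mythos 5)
Your proposal is correct, and the paper in fact gives no proof at all — the theorem is stated as an immediate consequence of the exact commutative diagram in the preceding Definition, which is exactly what you spell out. Your unwinding of the bottom-row kernel as $\bar Z_\bullet/\bar B_\bullet$, the collapse of the sequence when ${}^G\Omega^{E_k}_{\bullet,s}=0$, and the use of projectivity of free modules for the splitting are all precisely the intended (and only reasonable) arguments.
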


\begin{remark} In the following we shall consider only closed
bordism groups $ {}^G\Omega (E_k)_{s,\bullet}$. So, we will omit
the term "closed". Similar definitions and results can be
obtained in dual form by using the cochain complex $ \{\bar
C^\bullet(E_k;G);\bar\delta\}$.\end{remark}

\begin{definition} A {\em $G$-singular $p$-dimensional quantum manifold}\index{G@$G$-singular quantum manifold} of
$E_k$ is a bar singular $p$-chain $V\subset J^k_n(W)$, with $p\le
n$, and coefficients into an abelian group $G$, such that
$\partial  V\subset E_k$. Let us denote by ${}^G\Omega_{p,s}(E_k)$
the corresponding (closed) bordism groups in the singular case.
Let us denote also by ${}^G[N]_{\overline{E_k}}$ the equivalence
classes of quantum singular bordisms respectively.
\end{definition}

\begin{remark} In the following, for $G=\mathbb{R}$ we will omit the apex $G$
in the symbols ${}^G\Omega_{p,s}^{E_k}$, ${}^G\Omega^{p,s}_{E_k}$
and ${}^G\Omega_{p,s}(E_k)$.\end{remark}

\begin{theorem}[Bar de Rham theorem for PDEs]\index{bar de Rham theorem for PDEs} One has a natural
bilinear mapping: $<,>:\bar C_p(E_k;\mathbb{R})\times\bar
C^p(E_k;\mathbb{R})\to\mathbb{R}$ such that: ({\em bar Stokes
formula}) $<\bar\delta\alpha,c>+(-1)^p<\alpha,\bar\partial c>=0$.
One has the canonical isomorphism: $\bar H^p(E_k;\mathbb{R})\cong
Hom_\mathbb{R}(\bar H_p(E_k;\mathbb{R});\mathbb{R})\equiv\bar H_p(E_k;\mathbb{R})^*$, and a nondegenerate mapping: $$<,>:\bar H_p(E_k;\mathbb{R})\times \bar H^p(E_k;\mathbb{R})\to \mathbb{R}.$$ Hence one has the
following short exact sequence
$$\xymatrix{0\ar[r]&\bar H_p(E_k;\mathbb{R})\ar[r]&\bar H^p(E_k;\mathbb{R})^*}.$$
This means that if $c$ is a
$\bar\partial$-closed bar singular $p$-chain ($\bar\partial c=0$)
of $E_k$, $c$ is the boundary of a bar-singular $(p+1)$-chain $c'$
of $E_k$ ($\bar\partial c'=c$) , iff $<c,\alpha>=0$, for all the
$\bar\delta$-closed bar singular $p$-cochains $\alpha$ of $E_k$. Furthermore, if   $\alpha$ is a $\bar\delta$-closed bar singular
$p$-cochain of $E_k$, $\alpha$ is $\bar\delta$-exact,
($\alpha=\bar\delta\beta$) iff $<c,\alpha>=0$, for all the
$\bar\partial$-closed bar singular $p$-chains $c$ of $E_k$.
\end{theorem}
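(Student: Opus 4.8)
The statement to be proved is the "Bar de Rham theorem for PDEs": one needs (i) a bilinear pairing $\langle\,,\,\rangle:\bar C_p(E_k;\mathbb{R})\times\bar C^p(E_k;\mathbb{R})\to\mathbb{R}$ satisfying the bar Stokes formula $\langle\bar\delta\alpha,c\rangle+(-1)^p\langle\alpha,\bar\partial c\rangle=0$; (ii) the isomorphism $\bar H^p(E_k;\mathbb{R})\cong\bar H_p(E_k;\mathbb{R})^*$ together with the nondegenerate pairing on homology; and (iii) the two consequences characterizing $\bar\partial$-boundaries and $\bar\delta$-coboundaries via vanishing of the pairing. The natural strategy is to mimic, step by step, the classical proof of the de Rham theorem for the singular (co)chain complex, but working inside the restricted complexes $\bar C_\bullet(E_k;\mathbb{R})$ of integral singular chains (those $f:\triangle^p\to E_k$ extending differentiably on a neighborhood with $Tf(\triangle^p)\subset\mathbf{E}_k$) and their algebraic duals $\bar C^\bullet(E_k;\mathbb{R})=Hom_{\mathbb{Z}}(\bar C_\bullet(E_k;\mathbb{Z});\mathbb{R})$.

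\textbf{Step 1: the pairing and bar Stokes formula.} First I would define $\langle\alpha,c\rangle\equiv\alpha(c)$ for $c\in\bar C_p(E_k;\mathbb{R})$ and $\alpha\in\bar C^p(E_k;\mathbb{R})$, simply the evaluation of a cochain on a chain (extended $\mathbb{R}$-bilinearly). Since $\bar\delta$ is, by the very definition of the dual complex, the transpose of $\bar\partial$ up to sign, the bar Stokes formula $\langle\bar\delta\alpha,c\rangle+(-1)^p\langle\alpha,\bar\partial c\rangle=0$ is just the statement $(\bar\delta\alpha)(c)=(-1)^{p+1}\alpha(\bar\partial c)$, i.e. the normalization convention for the coboundary; this is immediate and requires only fixing signs consistently with the chain complex $\{\bar C_p(E_k;\mathbb{R}),\bar\partial\}$ already introduced in the preceding definition.

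\textbf{Step 2: descent to (co)homology and the isomorphism.} The pairing on chains descends to a pairing $\bar H_p(E_k;\mathbb{R})\times\bar H^p(E_k;\mathbb{R})\to\mathbb{R}$ precisely because of Step 1: if $\bar\partial c=0$ and $\alpha=\bar\delta\beta$, then $\langle\alpha,c\rangle=\pm\langle\beta,\bar\partial c\rangle=0$, and symmetrically. For the isomorphism $\bar H^p(E_k;\mathbb{R})\cong\bar H_p(E_k;\mathbb{R})^*$ the key point is that we are over a field $\mathbb{R}$: I would invoke the universal coefficient theorem for cohomology, $\bar H^p(E_k;\mathbb{R})\cong Hom_{\mathbb{R}}(\bar H_p(E_k;\mathbb{R});\mathbb{R})\oplus Ext(\cdots)$, where the Ext-term vanishes because $\mathbb{R}$ is a field (every $\mathbb{R}$-module is free/injective). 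Hence the evaluation map $\bar H^p(E_k;\mathbb{R})\to\bar H_p(E_k;\mathbb{R})^*$ is an isomorphism, which in particular gives the nondegeneracy of the induced pairing and the injection $\bar H_p(E_k;\mathbb{R})\hookrightarrow\bar H^p(E_k;\mathbb{R})^*$ (an isomorphism when the homology is finite-dimensional). The one technical point to verify is that $\bar C_\bullet(E_k;\mathbb{Z})$ is a chain complex of free $\mathbb{Z}$-modules — which holds because it is by construction the free module on the set of integral singular simplices — so that the algebraic universal coefficient machinery applies verbatim.

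\textbf{Step 3: the two characterizations.} These now follow formally. A $\bar\partial$-closed chain $c$ is a boundary iff its class in $\bar H_p(E_k;\mathbb{R})$ is zero; by the nondegeneracy established in Step 2 (i.e. $\bar H_p\hookrightarrow\bar H^p{}^*$ is injective), $[c]=0$ iff $\langle[c],[\alpha]\rangle=0$ for every $\bar\delta$-closed $\alpha$, i.e. iff $\langle c,\alpha\rangle=0$ for all such $\alpha$. Dually, a $\bar\delta$-closed cochain $\alpha$ is $\bar\delta$-exact iff $[\alpha]=0$ in $\bar H^p(E_k;\mathbb{R})$, and since $\bar H^p\cong\bar H_p{}^*$ exactly, $[\alpha]=0$ iff it annihilates all of $\bar H_p$, i.e. iff $\langle c,\alpha\rangle=0$ for every $\bar\partial$-closed $c$. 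The main obstacle, and the only place where the geometry of PDE's (as opposed to pure homological algebra) enters, is making sure that the restricted complex $\bar C_\bullet(E_k;\mathbb{R})$ behaves well enough — that $\bar\partial$ indeed maps integral chains to integral chains (clear, since faces of a simplex tangent to $\mathbf{E}_k$ are again tangent to $\mathbf{E}_k$) and that the freeness hypothesis needed for the universal coefficient theorem holds; once these are in place, the proof is the classical de Rham-type argument over a field. I would therefore present the argument as: (1) define evaluation pairing, check signs; (2) apply UCT over $\mathbb{R}$ using freeness of $\bar C_\bullet(E_k;\mathbb{Z})$; (3) read off the two equivalences from nondegeneracy.
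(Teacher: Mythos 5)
Your proof is correct and almost certainly follows the same route as the cited references (the paper itself does not give a proof here, deferring entirely to \cite{PRA01, PRA1}): define the pairing by evaluation, observe that $\bar\delta$ is by definition the (signed) transpose of $\bar\partial$, apply the universal coefficient theorem over $\mathbb{R}$, and read off the two ``vanishing of periods'' characterizations from nondegeneracy. The only point worth tightening is the remark that ``the Ext-term vanishes because $\mathbb{R}$ is a field'': since the paper defines $\bar C^p(E_k;G)=Hom_{\mathbb{Z}}(\bar C_p(E_k;\mathbb{Z});G)$, the relevant vanishing is of $Ext^1_{\mathbb{Z}}(-;\mathbb{R})$, which holds because $\mathbb{R}$ is divisible (hence injective) as a $\mathbb{Z}$-module, not because of any property of $\mathbb{R}$-modules. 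Equivalently one may first note the canonical identification $Hom_{\mathbb{Z}}(\bar C_p(E_k;\mathbb{Z});\mathbb{R})\cong Hom_{\mathbb{R}}(\bar C_p(E_k;\mathbb{R});\mathbb{R})$ (tensor-hom adjunction, using $\bar C_p(E_k;\mathbb{R})=\bar C_p(E_k;\mathbb{Z})\otimes_{\mathbb{Z}}\mathbb{R}$) and then run the universal coefficient argument entirely over the field $\mathbb{R}$, where it is the elementary fact that taking cohomology commutes with dualizing a chain complex of vector spaces. With that adjustment your Step 2 is airtight, and Steps 1 and 3 are formally correct; you also rightly identify that the only PDE-specific input is the stability of the Cartan-tangency condition under taking faces, which guarantees $\bar C_\bullet(E_k;\mathbb{Z})$ is a genuine (free) chain complex.
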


\begin{proof} The full proof has been given in \cite{PRA01, PRA1}.\end{proof}

\begin{remark} {\em 1)} Similarly to the classical case, we can also define the
{\em relative (co)homology spaces}\index{relative homology spaces ! relative cohomology spaces} $\bar H^p(E_k,X;\mathbb{R})$ and
$\bar H_p(E_k,X;\mathbb{R})$, where $X\subset E_k$ is a bar singular
chain.

{\em 2)}  One has the following exact sequence:
$$\begin{array}{l}
   \xymatrix{\cdots\ar[r]&\bar H_p(X;\mathbb{R})\ar[r]& \bar H_p(E_k;\mathbb{R})\ar[r]&\bar H_p(E_k,X;\mathbb{R})
\ar[r]&\bar H_{p-1}(X;\mathbb{R})\ar[r]&\cdots}\\
\xymatrix{\cdots\bar H_0(X;\mathbb{R})\ar[r]&\bar H_0(E_k;\mathbb{R})\ar[r]&\bar H_0(E_k,X;\mathbb{R})\ar[r]& 0}.\\
\end{array}$$

{\em 3)} One has the following isomorphisms:
$\bar H_p(E_k,*;\mathbb{R})\cong \bar H_p(E_k;\mathbb{R})$, with $ p>0$; $ \bar H_0(E_k,*;\mathbb{R})=0$
if $E_k$ is arcwise connected.\end{remark}

\begin{theorem} Let us assume that $E_k\subset J^k_n(W)$ is a formally integrable PDE.

{\em 1)} As $\pi_\infty:E_\infty\to E_k$ is surjective, one has the following
short exact sequence of chain complexes:
$$\begin{array}{l}
\xymatrix{\bar C_{\bullet}(E_\infty;\mathbb{R})\ar[r]&\bar C_{\bullet}(E_k;\mathbb{R})\ar[r]& 0},\\
\xymatrix{\bar C^\bullet(E_\infty;\mathbb{R})&\bar C^\bullet(E_k;\mathbb{R})\ar[l]&0\ar[l]}.\\
\end{array}$$
These induce the following homomorphisms of vector spaces:
$\bar H_p(E_\infty;\mathbb{R})\to\bar H_p(E_k;\mathbb{R})$,
       $\bar H^p(E_\infty;\mathbb{R})\leftarrow\bar H^p(E_k;\mathbb{R})$.

{\em 2)} One has the following isomorphisms:
$\Omega^{E_k}_{p,s}\cong\bar H_p(E_k;\mathbb{R}),\quad k\le\infty$,
       $\Omega_{p,s}(E_k)\cong\bar H_p(J^k_n(W),E_k;\mathbb{R})$.

{\em 3)} One has the following exact sequences of vector spaces:
$$\begin{array}{l}
\xymatrix{\Omega^{E_k}_{n-1,s}\ar[r]^{a_{n-1}}&\Omega^{J^k_n(W)}_{n-1,s}
\ar[r]^{b_{n-1}}&\Omega_{n-1,s}(E_k)\ar[r]^{c_{n-1}}&\Omega^{E_k}_{n-2,s}\ar[r]^{a_{n-2}}&\cdots}\\
\xymatrix{\cdots\ar[r]^{a_{0}}&\Omega^{E_k}_{0,s}\ar[r]^{b_{0}}&\Omega^{J^k_n(W)}_{0,s}\ar[r]^{c_{0}}&\Omega_{0,s}(E_k)
\ar[r]&0}.\\
\end{array}$$
Therefore, one has unnatural splits:
$$\Omega_{p,s}(E_k)\cong\underline{\Omega^{J^k_n(W)}_{p,s}}\times
\overline{\Omega^{E_k}_{p-1,s}}\hskip 2pt ;\quad
\Omega^{J^k_n(W)}_{p,s}\cong\underline{\Omega^{E_k}_{p,s}}\times\overline{\Omega_{p,s}(E_k)},$$
where
$$\begin{array}{l}
\underline{\Omega^{J^k_n(W)}_{p,s}}\equiv \IM(b_p)\cong\ker(c_p),\\
\overline{\Omega^{E_k}_{p-1,s}}\equiv\IM(c_p)\cong\COIM(c_p)\equiv\Omega_{p,s}(E_k)/\ker(c_p)
\cong\COKER(b_p)\equiv\Omega_{p,s}(E_k)/\IM(b_p),\\
\underline{\Omega^{E_k}_{p,s}}\equiv\IM(a_p)\cong\ker(b_p),\\
\overline{\Omega_{p,s}(E_k)}\equiv\IM(b_p)\cong\COIM(b_p)\equiv
\Omega^{J^k_n(W)}_{p,s}/\ker(b_p)\cong\COKER(a_p)\equiv\Omega^{J^k_n(W)}_{p,s}/\IM(a_p).\\
\end{array}$$

{\em 4)} One has a natural homomorphism:
${\pi_{\infty,k}}_*:\Omega^{E_\infty}_{p,s}\to\Omega^{E_k}_{p,s}$.
\end{theorem}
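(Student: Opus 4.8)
The plan is to reduce the whole statement to the \emph{bar de Rham theorem for PDEs} already proved above, to the identification ${}^G\Omega^{E_k}_{\bullet,s}\cong\bar H_\bullet(E_k;G)$ established earlier, and to elementary homological algebra of chain complexes of $\mathbb{R}$-vector spaces; formal integrability enters only in part (1). First I would prove (1). Formal integrability of $E_k$ means that every projection $\pi_{k+r+1,k+r}\colon E_{k+r+1}\to E_{k+r}$ is a surjective affine bundle map, hence the inverse-limit projection $\pi_\infty\colon E_\infty\to E_k$ is surjective. The real point is that the induced map of bar singular chain complexes $(\pi_\infty)_{\#}\colon\bar C_\bullet(E_\infty;\mathbb{R})\to\bar C_\bullet(E_k;\mathbb{R})$ is surjective: an admissible bar $p$-chain of $E_k$ is a linear combination of singular simplices $f\colon\bigtriangleup^p\to E_k$ that extend differentiably on a neighbourhood in $\mathbb{R}^{p+1}$ with tangent planes inside $\mathbf{E}_k$, so locally each sits on an $n$-dimensional integral manifold of $E_k$; every such integral manifold prolongs canonically ($X\mapsto X^{(\infty)}$) to an integral manifold of $E_\infty$, and the prolonged simplices assemble into a bar $p$-chain of $E_\infty$ projecting onto the given one. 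Passing to kernels yields the short exact sequence $0\to\ker(\pi_\infty)_{\#}\to\bar C_\bullet(E_\infty;\mathbb{R})\to\bar C_\bullet(E_k;\mathbb{R})\to 0$ of chain complexes (dually, left exactness of the cochain sequence), and the associated long exact (co)homology sequences provide the homomorphisms $\bar H_p(E_\infty;\mathbb{R})\to\bar H_p(E_k;\mathbb{R})$ and $\bar H^p(E_k;\mathbb{R})\to\bar H^p(E_\infty;\mathbb{R})$.

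For (2), the isomorphism $\Omega^{E_k}_{p,s}\cong\bar H_p(E_k;\mathbb{R})$ is precisely the case $G=\mathbb{R}$ of ${}^G\Omega^{E_k}_{\bullet,s}\cong\bar H_\bullet(E_k;G)$: a closed singular integral bordism class is a bar cycle modulo bar boundaries. The relative statement is the identical computation for quantum bordism: an $\mathbb{R}$-singular $p$-dimensional quantum manifold is a bar $p$-chain $V\subset J^k_n(W)$ with $\partial V\subset E_k$, i.e. a relative bar cycle of the pair $(J^k_n(W),E_k)$, and two such are (closed) quantum-bordant iff they differ by a relative bar boundary; hence $\Omega_{p,s}(E_k)\cong\bar H_p(J^k_n(W),E_k;\mathbb{R})$. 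Invoking the bar de Rham theorem (nondegeneracy of $\langle\ ,\ \rangle\colon\bar H_p\times\bar H^p\to\mathbb{R}$) identifies these spaces with the relevant duals, reproducing for $k=\infty$ the diagram with $(\mathcal I(E_\infty)^p)^*$ used earlier; cf. \cite{PRA01, PRA1}.

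Part (3) is then purely formal: the long exact homology sequence of the pair $(J^k_n(W),E_k)$ in bar singular homology with real coefficients, rewritten through the isomorphisms of (2), is exactly the displayed sequence, with $a_p$ induced by inclusion of cycles $\bar C_\bullet(E_k;\mathbb{R})\hookrightarrow\bar C_\bullet(J^k_n(W);\mathbb{R})$, $b_p$ the induced map on homology, and $c_p$ the connecting boundary map. Since all terms are $\mathbb{R}$-vector spaces, each short exact piece $0\to\COKER\to(\ \cdot\ )\to\ker\to 0$ splits, giving the (unnatural) decompositions $\Omega_{p,s}(E_k)\cong\underline{\Omega^{J^k_n(W)}_{p,s}}\times\overline{\Omega^{E_k}_{p-1,s}}$ and $\Omega^{J^k_n(W)}_{p,s}\cong\underline{\Omega^{E_k}_{p,s}}\times\overline{\Omega_{p,s}(E_k)}$, with $\underline{(\ )}$ the images/kernels and $\overline{(\ )}\cong\COIM\cong\COKER$ as named in the statement. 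Part (4) follows from functoriality of bar singular homology applied to the smooth morphism of PDEs $\pi_{\infty,k}\colon E_\infty\to E_k$ (which respects Cartan distributions and admissible integral chains): it induces $(\pi_{\infty,k})_*\colon\bar H_p(E_\infty;\mathbb{R})\to\bar H_p(E_k;\mathbb{R})$, hence $(\pi_{\infty,k})_*\colon\Omega^{E_\infty}_{p,s}\to\Omega^{E_k}_{p,s}$, with naturality inherited from that of the connecting homomorphism and of the identifications in (2).

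The main obstacle is the surjectivity of $(\pi_\infty)_{\#}$ on bar chains in part (1): one must check carefully that an arbitrary admissible bar singular simplex of $E_k$ — together with its local differentiable extension tangent to $\mathbf{E}_k$ — genuinely lifts to an admissible bar simplex of $E_\infty$ with tangent planes in $\mathbf{E}_\infty$, coherently across a whole chain and across faces. Everything else is either a definition-chase (parts (2) and (4)) or homological algebra over the field $\mathbb{R}$ (part (3)).
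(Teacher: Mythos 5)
The paper states this theorem without a \verb|proof| block, so there is no in-text argument to compare against; what you have produced is a reconstruction. Your treatment of parts (2), (3), and (4) is the natural one and is sound: part (2) is the case $G=\mathbb{R}$ of the earlier identification ${}^G\Omega^{E_k}_{\bullet,s}\cong\bar H_\bullet(E_k;G)$ together with its relative analogue, part (3) is the long exact sequence of the pair $(J^k_n(W),E_k)$ in bar singular homology plus the fact that short exact sequences of $\mathbb{R}$-vector spaces split, and part (4) is functoriality of bar singular homology under the smooth Cartan-preserving map $\pi_{\infty,k}$.

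There is, however, a genuine gap in the justification you offer for the surjectivity of $(\pi_\infty)_\#\colon\bar C_\bullet(E_\infty;\mathbb{R})\to\bar C_\bullet(E_k;\mathbb{R})$, and you correctly sense this is where the difficulty lies, but the step you sketch does not go through. You assert that each admissible bar $p$-simplex $f\colon\bigtriangleup^p\to E_k$ with $Tf(\bigtriangleup^p)\subset\mathbf{E}_k$ ``locally sits on an $n$-dimensional integral manifold of $E_k$,'' and then prolongs. For finite $k$ the Cartan distribution $\mathbf{E}_k$ is of dimension strictly greater than $n$ (it contains the whole vertical symbol $g_k$), and an arbitrary $p$-plane inside $\mathbf{E}_k$ need not be an isotropic/integral plane, hence need not be contained in any $n$-dimensional integral plane. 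For such a simplex the tangent image cannot be pushed into the $n$-dimensional involutive distribution $\mathbf{E}_\infty$, so the asserted lift does not exist. Either the definition of bar $p$-chains is to be read as requiring the simplices to land on solutions (in which case this should be said explicitly and the argument becomes correct), or surjectivity must be established by a different route that does not reduce to prolongation of integral manifolds; as written, the proposed justification of (1) does not establish the short exact sequence on the chain level.
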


\begin{definition}\label{singular-integral-characteristic-numbers-bar-closed-singular-integral-manifolds}
We call {\em singular integral characteristic numbers}\index{singular integral characteristic numbers} of a $p$-dimensional
$\bar\partial$-closed singular integral manifold $N\subset
E_k\subset J^k_n(W)$ the numbers $i[N]\equiv<N,\alpha>\in\mathbb{R}$,
where $\alpha$ is a $\bar\delta$-closed bar singular $p$-cochain
of $E_k$.\end{definition}

\begin{definition}\label{singular-quantum-characteristic-numbers-bar-closed-singular-integral-manifolds}
 We call {\em singular quantum characteristic numbers}\index{singular quantum characteristic numbers} of a $p$-dimensional
$\bar\partial$-closed singular integral manifold $N\subset
E_k\subset J^k_n(W),$ the numbers $q[N]\equiv<N,\alpha>\in\mathbb{R}$, where $\alpha$ is a $\bar\delta$-closed bar singular
$p$-cochain of $J^k_n(W)$.\end{definition}

\begin{theorem} {\em 1)}
$N'\in[N]^s_{E_k}\Leftrightarrow N'$ and $N$ have equal all the
singular integral characteristic numbers: $i[N']=i[N]$.

{\em 2)} $N'\in [N]^s_{\overline{E_k}}\Leftrightarrow N'$ and $N$ have
equal all the singular quantum characteristic numbers:
$q[N']=q[N]$.
\end{theorem}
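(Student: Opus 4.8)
The plan is to reduce both equivalences to the bar de Rham theorem for PDE's, combined with the identification of the singular bordism groups with bar singular homology established in the preceding theorems.

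For the first statement, I would first unwind the definitions: saying $N'\in[N]^s_{E_k}$ means that $N$ and $N'$ are $\bar\partial$-closed bar singular $p$-chains of $E_k$ whose difference is a bar singular boundary, $N-N'=\bar\partial c$ with $c\subset E_k$; by the canonical isomorphism ${}^{\mathbb R}\Omega^{E_k}_{\bullet,s}\cong\bar H_\bullet(E_k;\mathbb{R})$ (equivalently $\Omega^{E_k}_{p,s}\cong\bar H_p(E_k;\mathbb{R})$, $k\le\infty$) this is exactly the equality of homology classes $[N]=[N']$ in $\bar H_p(E_k;\mathbb{R})$. Next, the bar Stokes formula $<\bar\delta\alpha,c>+(-1)^p<\alpha,\bar\partial c>=0$ shows that the number $<N,\alpha>$ depends only on the class of $N$ in $\bar H_p(E_k;\mathbb{R})$ and on the class of $\alpha$ in $\bar H^p(E_k;\mathbb{R})$; hence the singular integral characteristic numbers of $N$ assemble into a well-defined linear functional $j_p([N])\in\bar H^p(E_k;\mathbb{R})^*$ that depends only on $[N]$. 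Finally I would invoke the bar de Rham theorem, which asserts $\bar H^p(E_k;\mathbb{R})\cong\bar H_p(E_k;\mathbb{R})^*$ together with nondegeneracy of the pairing $\bar H_p(E_k;\mathbb{R})\times\bar H^p(E_k;\mathbb{R})\to\mathbb{R}$; in particular $\bar H_p(E_k;\mathbb{R})\hookrightarrow\bar H^p(E_k;\mathbb{R})^*$ is injective. Therefore $[N]=[N']$ if and only if $j_p([N])=j_p([N'])$, i.e. if and only if $<N,\alpha>=<N',\alpha>$ for every $\bar\delta$-closed bar singular $p$-cochain $\alpha$ of $E_k$, which is precisely $i[N']=i[N]$.

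For the second statement I would run the identical argument with $J^k_n(W)$ in place of $E_k$ as the ambient space both for the bounding chains and for the test cochains. A closed $p$-dimensional singular integral manifold $N\subset E_k\subset J^k_n(W)$ is in particular a $\bar\partial$-closed bar singular $p$-chain of $J^k_n(W)$, hence represents a class $[N]\in\bar H_p(J^k_n(W);\mathbb{R})$, and the quantum bordism relation $N'\in[N]^s_{\overline{E_k}}$ is the statement $N-N'=\bar\partial c$ for some bar singular $(p+1)$-chain $c\subset J^k_n(W)$, i.e. $[N]=[N']$ in $\bar H_p(J^k_n(W);\mathbb{R})$. Applying the bar de Rham theorem to $J^k_n(W)$ yields the nondegenerate pairing $\bar H_p(J^k_n(W);\mathbb{R})\times\bar H^p(J^k_n(W);\mathbb{R})\to\mathbb{R}$, so $[N]=[N']$ if and only if $<N,\alpha>=<N',\alpha>$ for every $\bar\delta$-closed bar singular $p$-cochain $\alpha$ of $J^k_n(W)$, which is exactly $q[N']=q[N]$.

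The substance of the proof is thus entirely the bar Stokes formula (which makes the characteristic numbers functions of the relevant (co)homology classes only) and the nondegeneracy clause of the bar de Rham theorem (which makes those numbers a complete invariant). The only point that requires care is the bookkeeping of ambient spaces in the quantum case: one must check that for a closed $N$ the quantum bordism class $[N]^s_{\overline{E_k}}$ is genuinely the image of $[N]$ under the natural map into $\bar H_p(J^k_n(W);\mathbb{R})$ — equivalently that the description $\Omega_{p,s}(E_k)\cong\bar H_p(J^k_n(W),E_k;\mathbb{R})$ specializes correctly when the boundaries vanish — and that the bar de Rham duality of the previous theorem is available on $J^k_n(W)$. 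Both follow from the constructions already set up, so no new ingredient is needed.
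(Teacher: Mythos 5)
Your argument is correct and follows the paper's own route: both reduce the claim to the nondegeneracy clause of the bar de Rham theorem, which the paper simply records as the injections $0\to\Omega^{E_k}_{p,s}\to\bar H^p(E_k;\mathbb{R})^*$ and $0\to\Omega_{p,s}(E_k)\to\bar H^p(J^k_n(W),E_k;\mathbb{R})^*$, leaving tacit the bookkeeping you supply (the bar Stokes formula makes $i[N]$, $q[N]$ invariants of the (co)homology classes; nondegeneracy then makes them a complete set of invariants). The one small divergence is in part 2: you pair with absolute $\bar\delta$-closed bar cochains on $J^k_n(W)$ --- which is what Definition~\ref{singular-quantum-characteristic-numbers-bar-closed-singular-integral-manifolds} of $q[N]$ literally prescribes --- whereas the paper's displayed sequence invokes the relative space $\bar H^p(J^k_n(W),E_k;\mathbb{R})^*$ in line with the identification $\Omega_{p,s}(E_k)\cong\bar H_p(J^k_n(W),E_k;\mathbb{R})$. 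Your reading is the one consistent with the stated definition of $q[N]$ (note that a $\bar\partial$-closed $N\subset E_k$ vanishes in the relative homology, so only the absolute pairing carries information about such $N$), and you rightly flag this ambient-space bookkeeping as the delicate point; both formulations rest on the same bar de Rham nondegeneracy mechanism.
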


 \begin{proof} It follows from the bar de Rham theorem
that one has the following short exact sequences:
$$\begin{array}{l}
\xymatrix{0\ar[r]&\Omega^{E_k}_{p,s}\ar[r]&\bar H^p(E_k;\mathbb{R})^*},\\
\xymatrix{0\ar[r]&\Omega_{p,s}(E_k)\ar[r]&\bar H^p(J^k_n(W),E_k;\mathbb{R})^*}.\\
\end{array}$$
\end{proof}

\begin{theorem} The relation between singular integral (quantum) bordism groups and
homology is given by the following exact commutative diagrams:
$$\xymatrix{&&&0\ar[d]&\\
      0\ar[r]& K\bar H_p(E_k;\mathbb{R})\ar[r]&\Omega^{E_k}_{p,s}\ar[r]&\widehat{\bar H}_p(E_k;\mathbb{R})\ar[d]\ar[r]&0\\
      &&&H_p(E_k;\mathbb{R})&}$$
where
$$\begin{array}{ll}
   K\bar H_p(E_k;\mathbb{R})&\equiv\{[N]^s_{E_k}| N=\partial V, V=\hbox{singular
       $p$-chain in $E_k$}\}\\
       &\equiv\{[N]^s_{E_k}| <[\alpha]|[N]^s_{E_k}>=0,
       \forall [\alpha]\in H^p(E_k;\mathbb{R})\}.\\
  \end{array}$$
We call $s[N]\equiv <[\alpha]|[N]^s_{E_k}>\equiv$ {\em singular
characteristic numbers}\index{singular characteristic numbers} of  $[N]^s_{E_k}$.
$$\xymatrix{&&&0\ar[d]&\\
      0\ar[r]&K\bar H_p(J^k_n(W),E_k;\mathbb{R})\ar[r]&\Omega_{p,s}(E_k)\ar[r]&{\widehat{\bar H}_p(J^k_n(W),E_k;\mathbb{R})}\ar[d]\ar[r]&0\\
      &&&H_p(J^k_n(W),E_k;\mathbb{R})&}$$
where
$$\begin{array}{l}
K\bar H_p(J^k_n(W),E_k;\mathbb{R})\\
\equiv\{[N]^s_{\overline{E_k}}| N=\partial V, V=\hbox{\rm singular$p$-chain in $J^k_n(W)$}\}\\
\equiv\{[N]^s_{\overline{E_k}}| <[\alpha]|[N]^s_{\overline{E_k}}>=0,
       \forall [\alpha]\in H^p(J^k_n(W),E_k;\mathbb{R})\}.\\
       \end{array}$$
We call {\em singular characteristic numbers}\index{singular characteristic numbers} of
$[N]^s_{\overline{E_k}}$ the numbers $s_q[N]\equiv
<[\alpha]|[N]^s_{\overline{E_k}}>$.
\end{theorem}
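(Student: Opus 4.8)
The plan is to reduce the whole statement to the elementary comparison between the bar singular homology $\bar H_p(E_k;\mathbb{R})$ and the ordinary singular homology $H_p(E_k;\mathbb{R})$, exploiting the identification $\Omega^{E_k}_{p,s}\cong\bar H_p(E_k;\mathbb{R})$ already established, together with the bar de Rham theorem proved in the excerpt.

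First I would observe that the bar singular chain complex $\{\bar C_p(E_k;\mathbb{R}),\bar\partial\}$ is, by its very construction, a subcomplex of the ordinary singular chain complex $\{C_p(E_k;\mathbb{R}),\partial\}$: a bar singular $p$-chain is in particular a singular $p$-chain, and $\bar\partial$ is the restriction of $\partial$. Hence the inclusion induces a natural homomorphism $\varphi_p\colon\bar H_p(E_k;\mathbb{R})\to H_p(E_k;\mathbb{R})$. Composing with the isomorphism $\Omega^{E_k}_{p,s}\cong\bar H_p(E_k;\mathbb{R})$, I define $\widehat{\bar H}_p(E_k;\mathbb{R})\equiv\IM(\varphi_p)$ and $K\bar H_p(E_k;\mathbb{R})\equiv\ker(\varphi_p)$. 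This yields at once the short exact sequence $0\to K\bar H_p(E_k;\mathbb{R})\to\Omega^{E_k}_{p,s}\to\widehat{\bar H}_p(E_k;\mathbb{R})\to 0$ together with the monomorphism $\widehat{\bar H}_p(E_k;\mathbb{R})\hookrightarrow H_p(E_k;\mathbb{R})$, so that the asserted diagram is exact and commutative by construction.

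Next I would establish the two descriptions of $K\bar H_p(E_k;\mathbb{R})$. By definition $[N]^s_{E_k}\in\ker(\varphi_p)$ iff, regarded as an ordinary singular cycle, $N$ bounds in the ordinary singular chain complex of $E_k$ (by a chain that need not be an integral one); this is the first description. For the second, I invoke the bar de Rham theorem together with the de Rham/universal-coefficient duality over $\mathbb{R}$, which gives the injection $H_p(E_k;\mathbb{R})\hookrightarrow H^p(E_k;\mathbb{R})^*$ through the Kronecker pairing $<[\alpha],[N]>$. Thus $\varphi_p([N])=0$ iff $<[\alpha],[N]>=0$ for every $[\alpha]\in H^p(E_k;\mathbb{R})$, i.e. iff all the singular characteristic numbers $s[N]$ vanish. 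Combining the two, $K\bar H_p(E_k;\mathbb{R})$ admits both stated descriptions, and $s[N]$ is well defined on $\widehat{\bar H}_p(E_k;\mathbb{R})$.

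Finally, the quantum case is completely parallel: one replaces $E_k$ by the pair $(J^k_n(W),E_k)$, uses the identification $\Omega_{p,s}(E_k)\cong\bar H_p(J^k_n(W),E_k;\mathbb{R})$, and the natural homomorphism $\varphi_p\colon\bar H_p(J^k_n(W),E_k;\mathbb{R})\to H_p(J^k_n(W),E_k;\mathbb{R})$ induced by the inclusion of relative bar chains into relative singular chains, setting $\widehat{\bar H}_p(J^k_n(W),E_k;\mathbb{R})\equiv\IM(\varphi_p)$ and $K\bar H_p(J^k_n(W),E_k;\mathbb{R})\equiv\ker(\varphi_p)$; the relative bar de Rham theorem and relative de Rham duality then give the characterization of the kernel via the relative characteristic numbers $s_q[N]$. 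The only delicate point is the second description of the kernels, which rests on the non-degeneracy of the Kronecker pairing over $\mathbb{R}$, i.e. on $H_p\hookrightarrow(H^p)^*$; this is precisely what the de Rham theorem and its relative and bar versions already recorded in the excerpt provide, and everything else is formal from the inclusion of (relative) bar chains into (relative) singular chains.
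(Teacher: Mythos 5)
The paper states this theorem without an explicit proof, so there is no ``paper's own argument'' to compare against word for word; but your reconstruction is the natural one implicit in the surrounding results and it is correct. You correctly note that $\bar C_\bullet(E_k;\mathbb{R})$ is by construction a subcomplex of the (smooth) singular chain complex, so the inclusion induces $\varphi_p\colon\bar H_p(E_k;\mathbb{R})\to H_p(E_k;\mathbb{R})$; composing with the identification $\Omega^{E_k}_{p,s}\cong\bar H_p(E_k;\mathbb{R})$ from the preceding theorem and taking kernel/image produces the asserted exact row and the vertical monomorphism, and the two descriptions of the kernel come respectively from the definition of $\varphi_p$ and from the nondegeneracy of the Kronecker pairing over the field $\mathbb{R}$ (the injection $H_p\hookrightarrow (H^p)^*$ from universal coefficients, which is the ordinary counterpart of the bar de Rham theorem stated earlier). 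The relative/quantum case is handled by the same construction applied to the pair $(J^k_n(W),E_k)$. One small point worth being explicit about: in the first description the bounding chain $V$ must of course be a $(p+1)$-chain (the ``$p$-chain'' in the paper's display is a typographical slip), and it bounds in the ordinary singular complex even though it need not be a bar chain; your parenthetical ``by a chain that need not be an integral one'' already records exactly this, so the argument is sound.
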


\begin{theorem} {\em 1)} The integral
bordism group $\Omega^{E_k}_p, 0\le p\le n-1$, is an extension of
a subgroup $\hat\Omega^{E_k}_{p,s}$ of the singular integral
bordism group $\Omega^{E_k}_{p,s}$.

{\em 2)} The quantum
bordism group $\Omega_p(E_k), 0\le p\le n-1$, is an extension of a
subgroup $\hat\Omega_{p,s}(E_k)$ of the singular quantum bordism
group $\Omega_{p,s}(E_k)$.
\end{theorem}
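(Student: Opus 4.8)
The plan is to realize each of the two statements via the first isomorphism theorem applied to a single, essentially tautological homomorphism: the one that regards a smooth regular integral manifold-solution as a particular singular (indeed bar singular) one. So the work is almost entirely in checking that the two bordism structures are compatible, after which the conclusion is formal.

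\textbf{Part 1).} First I would define a map $i_p\colon\Omega^{E_k}_p\to\Omega^{E_k}_{p,s}$ by sending the class $[N]_{E_k}$ of a $p$-dimensional compact closed admissible \emph{regular} integral manifold $N\subset E_k$ to its class $[N]^s_{E_k}$ in the singular theory. The only point requiring attention is well-definedness. If $N_1\sim_{E_k}N_2$ through a smooth regular integral manifold-solution $V\subset E_k$ with $\partial V=N_1\sqcup N_2$, then $V$ is in particular an admissible piecewise-smooth (hence singular) solution; moreover, since $Tf(\triangle^{p+1})\subset\mathbf{E}_k$ along $V$, it is also a legitimate bar singular $(p+1)$-chain of $E_k$ in the sense of $\bar C_{p+1}(E_k;\mathbb{R})$, and each $N_i$, being regular admissible (so $\tilde X{}^{(k)}=\tilde N$), is a $\bar\partial$-closed singular integral manifold. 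Hence $N_1$ and $N_2$ represent the same class in $\Omega^{E_k}_{p,s}$, and since disjoint union is carried to disjoint union, $i_p$ is a homomorphism of abelian groups. Then I would set $\hat\Omega^{E_k}_{p,s}\equiv\IM(i_p)\subseteq\Omega^{E_k}_{p,s}$, a subgroup, and invoke the first isomorphism theorem to get $\Omega^{E_k}_p/\ker(i_p)\cong\hat\Omega^{E_k}_{p,s}$, that is, a short exact sequence $0\to\ker(i_p)\to\Omega^{E_k}_p\to\hat\Omega^{E_k}_{p,s}\to 0$ exhibiting $\Omega^{E_k}_p$ as an extension of the subgroup $\hat\Omega^{E_k}_{p,s}$ of $\Omega^{E_k}_{p,s}$. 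For $p=n-1$ the surjectivity already recorded in the earlier diagrams gives $\hat\Omega^{E_k}_{n-1,s}=\Omega^{E_k}_{n-1,s}$ and $\ker(i_{n-1})=K^{E_k}_{n-1,s}$.

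\textbf{Part 2).} The argument is entirely parallel at the quantum level. I would define $\bar i_p\colon\Omega_p(E_k)\to\Omega_{p,s}(E_k)$ by $[N]_{\overline{E_k}}\mapsto[N]^s_{\overline{E_k}}$: a quantum bord of $E_k$, i.e.\ a smooth regular solution $V\subset J^k_n(W)$ whose boundary $\partial V$ is a $p$-dimensional compact admissible integral manifold of $E_k$, is in particular a bar singular $(p+1)$-chain of $J^k_n(W)$ with $\partial V\subset E_k$, hence represents a singular quantum bordism. Well-definedness and the homomorphism property are checked exactly as in Part~1), $\hat\Omega_{p,s}(E_k)\equiv\IM(\bar i_p)$ is a subgroup of $\Omega_{p,s}(E_k)$, and $\Omega_p(E_k)/\ker(\bar i_p)\cong\hat\Omega_{p,s}(E_k)$ exhibits $\Omega_p(E_k)$ as an extension of it.

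\textbf{Where the difficulty lies.} The single non-formal ingredient — and the main obstacle — is the verification that the relevant geometric objects match across the two frameworks: that a regular admissible $p$-dimensional integral manifold is simultaneously a $\bar\partial$-closed bar singular integral manifold, and that a regular integral manifold-solution is a bar singular chain realizing the bordism; this rests precisely on the admissibility identity $X^{(k)}=\tilde N$, which forces the pertinent tangent planes into $\mathbf{E}_k$. If in addition one wanted an explicit description of $\ker(i_p)$ and $\ker(\bar i_p)$ — not needed for the statement — one would pair with $\bar\delta$-closed bar singular cochains and use the bar de Rham theorem for PDEs together with Theorem~\ref{necessary-sufficient-conditions-existence-integral-quantum-solutions-bording} to identify these kernels with $K\bar H_p(E_k;\mathbb{R})$ and $K\bar H_p(J^k_n(W),E_k;\mathbb{R})$ respectively; that identification is the step carrying whatever real content the result has.
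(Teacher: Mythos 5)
Your proposal is correct and takes essentially the same approach as the paper: the paper likewise produces the canonical homomorphism $j_p\colon\Omega^{E_k}_p\to\Omega^{E_k}_{p,s}$ (and $\bar j_p$ for the quantum case), sets $\hat\Omega^{E_k}_{p,s}\equiv\Omega^{E_k}_p/\ker(j_p)\cong\IM(j_p)$, and reads off the extension from the resulting exact diagram, only adding a characterization of the kernels by singular (quantum) characteristic numbers that you correctly flag as optional for the stated claim.
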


\begin{proof} 1) In fact, one has a
canonical group-homomorphism $j_p:\Omega^{E_k}_p\to
\Omega^{E_k}_{p,s}$, that generates the following exact commutative diagram:
$$\xymatrix{&&&0\ar[d]&&\\
      0\ar[r]&K^{E_k}_{p,s}\ar[r]&\Omega^{E_k}_p\ar[dr]_{j_p}\ar[r]^{i_p}&\hat\Omega^{E_k}_{p,s}\ar[d]\ar[r]& 0&\\
      &&0\ar[r]&\Omega^{E_k}_{p,s}\ar[r]&\bar H_p(E_k;\mathbb{R})\ar[r]&0}$$
where  $K^{E_k}_{p,s}\equiv\ker(j_p)$ and
$\hat\Omega^{E_k}_{p,s}\equiv\Omega^{E_k}_p/K^{E_k}_{p,s}$. Furthermore,
$K^{E_k}_{p,s}$ can be characterized by means of characteristic numbers. In
fact we get
$$\begin{array}{l}
K^{E_k}_{p,s}\\
=\left\{[N]^s_{E_k}|\exists (p+1)-\hbox{\rm dimensional singular
            integral submanifold $V\subset E_k$, with $\partial  V=N$}\right\}\\
=\left\{[N]^s_{E_k}| i[N]=0\hskip 2pt \hbox{\rm for all
            singular integral characteristic numbers}\right\}.\\
  \end{array}$$

2) In fact, one has a canonical group homomorphism
$\bar j_p:\Omega_p(E_k)\to\Omega_{p,s}(E_k)$, hence one has the following exact
commutative diagram:
$$\xymatrix{&&&0\ar[d]&&\\
      0\ar[r]& K_{p,s}(E_k)\ar[r]&\Omega_p(E_k)\ar[dr]_{j_p}\ar[r]&\hat\Omega_{p,s}(E_k)\ar[d]\ar[r]& 0&\\
      &&0\ar[r]&\Omega_{p,s}(E_k)\ar[r]&\bar H_p(J^k_n(W),E_k;\mathbb{R})\ar[r]& 0}$$
where $$K_{p,s}(E_k)\equiv\{[N]^s_{\overline{E_k}}
|q[N]=0, \hskip 2pt\hbox{\rm for all singular quantum characteristic
numbers}\}.$$\end{proof}

In \cite{PRA1} we have also related integral (co)bordism groups of
PDEs to some spectrum in such a way to generalize also to PDEs the
Thom-Pontrjagin construction usually adopted for bordism theories.
In fact we have the following theorem.

\begin{theorem}[Integral spectrum of PDEs]\label{integral-spectrum-pdes}

 {\em 1)} Let $E_k\subset
J^k_n(W)$ be a PDE. Then there is a spectrum $\{\Xi_s\}$ ({\em
singular integral spectrum of PDEs}), such that
$\Omega_{p,s}^{E_k}=\lim_{r\to\infty}\pi_{p+r}(E^+_k\wedge
\Xi_r)$,
$\Omega^{p,s}_{E_k}=\lim_{r\to\infty}[S^rE^+_k,\Xi_{p+r}]$,
$p\in\{0,1,\dots,n-1\}$.

{\em 2)} There exists a spectral
sequence $\{E^r_{p,q}\}$, (resp. $\{E_r^{p,q}\}$), with
$E^2_{p,q}=H_p(E_k,E_q(*))$, (resp. $E_2^{p,q}=H^p(E_k,E^q(*))$),
converging to $\Omega^{E_k}_{\bullet,s}$, (resp.
$\Omega_{E_k}^{\bullet,s}$). We call the spectral sequences
$\{E^r_{p,q}\}$ and $\{E_r^{p,q}\}$ the {\em integral singular
spectral sequences}\index{integral singular spectral sequences} of $E_k$.
\end{theorem}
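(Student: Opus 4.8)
The plan is to transplant the classical Thom–Pontrjagin/Atiyah–Hirzebruch machinery developed in Section~\ref{spectra-algebraic-topology-section} to the setting of bar singular integral manifolds inside $E_k$. Recall from the bar de Rham theorem for PDEs and the preceding theorems that the singular integral bordism groups are representable homologically: $\Omega^{E_k}_{p,s}\cong\bar H_p(E_k;\mathbb{R})$ and $\Omega_{p,s}(E_k)\cong\bar H_p(J^k_n(W),E_k;\mathbb{R})$. So the first step is to identify the bar singular homology $\bar H_\bullet(E_k;-)$ as a generalized homology theory in the sense of Theorem~\ref{co-homology-theories-associated-with-any-spectrum}: one checks the Eilenberg–Steenrod-type axioms (homotopy invariance via $\pi_{\infty,k}$ and the integral-manifold deformations described in the Remark, the exact sequence of a bar singular chain pair, excision on the Cartan-distribution-compatible chains, and the wedge axiom), restricted to the appropriate category of admissible integral submanifolds. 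This is essentially bookkeeping already carried out in \cite{PRA1}.

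Second, I would invoke the (extended) Brown representability theorem — exactly the version quoted in the footnote to Example~\ref{framed-cobordism-pontrjagin-thom-construction} — to produce a spectrum $\{\Xi_s\}$ whose associated homology theory is $\bar H_\bullet(-;\mathbb{R})$ on this category, and likewise whose cohomology theory gives $\Omega^{\bullet,s}_{E_k}$. Concretely, the Thom-spectrum construction of Example~\ref{thom-spectra} is mimicked: one forms, for each $r$, the Thom-type complex built from the normal geometry of $(n-1)$-dimensional admissible integral manifolds inside $E_k$ together with the suspension maps coming from stabilizing the embedding, and takes $\Xi_s$ to be the resulting collection of CW-complexes. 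The Pontrjagin–Thom construction (\ref{pontrjagin-thom-construction-b}) then gives, for a bar-closed $p$-dimensional singular integral manifold $N\subset E_k$ with stably framed normal bundle in a large ambient jet-coordinate chart, a well-defined class in $\pi_{p+r}(E^+_k\wedge\Xi_r)$; transversality (the notion of transversal regularity recalled before Lemma~\ref{lemma-important-bordism-groups}) shows this assignment is a bijection onto the bordism group in the limit $r\to\infty$, and dually for the cohomotopy description $\Omega^{p,s}_{E_k}=\lim_{r\to\infty}[S^rE^+_k,\Xi_{p+r}]$.

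Third, for part {\em 2)} the spectral sequences are then automatic: having realized $\Omega^{E_k}_{\bullet,s}$ and $\Omega^{\bullet,s}_{E_k}$ as the generalized (co)homology of $E_k$ with coefficients in the spectrum $\{\Xi_s\}$, the Atiyah–Hirzebruch–Whitehead theorem (Theorem~\ref{atiyah-hirzebruch-whitehead-theorem}) applied to the CW-complex $E_k$ (or rather a CW-substitute for it) immediately yields spectral sequences $\{E^r_{p,q}\}$ with $E^2_{p,q}\cong H_p(E_k;E_q(*))$ converging to $\Omega^{E_k}_{\bullet,s}$, and $\{E_r^{p,q}\}$ with $E_2^{p,q}\cong H^p(E_k;E^q(*))$ converging to $\Omega_{E_k}^{\bullet,s}$, where $E_\bullet(*)=\pi_\bullet(\{\Xi_s\})$ are the coefficient groups. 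Naturality in $E_k$ and compatibility with $\pi_{\infty,k}$ follow from naturality of the AHW spectral sequence.

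The main obstacle — and the step requiring genuine care rather than formal citation — is the construction of $\{\Xi_s\}$ and the verification that the bar singular bordism relation (bounding by a bar singular chain whose tangent planes lie in the Cartan distribution $\mathbf{E}_k$) is precisely the one detected by the Pontrjagin–Thom map. Unlike the classical case, the "manifolds" here are constrained integral submanifolds of $E_k$, so one must ensure that the transversality arguments can be performed without leaving the class of admissible integral manifolds; this is where the formal integrability / complete integrability hypotheses (and the passage to $E_\infty$ when needed) enter, guaranteeing enough prolongations to deform a given chain into admissible position. Once that geometric point is settled, everything else is a translation of Section~\ref{spectra-algebraic-topology-section}. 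The full argument is carried out in \cite{PRA1}.
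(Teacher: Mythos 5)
The paper's own ``proof'' of Theorem~\ref{integral-spectrum-pdes} is a one-line citation to \cite{PRA1}, so there is nothing internal to compare against; your proposal has to stand on its own.

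Your overall strategy --- build a Thom-type spectrum $\{\Xi_s\}$ from the normal geometry of admissible integral submanifolds, use a Pontrjagin--Thom map to identify bar singular bordism with stable homotopy of $E_k^+\wedge\Xi_r$, and then read off part~2) from Theorem~\ref{atiyah-hirzebruch-whitehead-theorem} --- is the natural transplant of Section~\ref{spectra-algebraic-topology-section} and is, in outline, what \cite{PRA1} does. You also correctly isolate the genuinely nontrivial geometric step: transversality must be arranged \emph{within} the class of chains tangent to the Cartan distribution $\mathbf{E}_k$, which is not a formal corollary of the classical transversality theorem. So far so good.

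There is, however, a real gap in the first of your three steps. You propose to recognize $\bar H_\bullet(E_k;-)$ as a ``generalized homology theory in the sense of Theorem~\ref{co-homology-theories-associated-with-any-spectrum}'' and then invoke Brown representability to manufacture $\{\Xi_s\}$. But the theories covered by Theorem~\ref{co-homology-theories-associated-with-any-spectrum} and by the Brown theorem quoted in the footnote to Example~\ref{framed-cobordism-pontrjagin-thom-construction} are functors on $\mathcal{W}'_\bullet$, i.e., they depend only on the homotopy type of the CW-complex on which they are evaluated. Bar singular homology $\bar H_\bullet(E_k;\mathbb{R})$ is \emph{not} such a functor: it is attached to the pair (space, Cartan distribution), and two homotopy-equivalent $E_k$'s with different Cartan structures generically have different bar homology. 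Consequently one cannot obtain the statement $\Omega_{p,s}^{E_k}=\lim_{r\to\infty}\pi_{p+r}(E_k^+\wedge\Xi_r)$ by applying Brown representability to $\bar H_\bullet$ as a theory on spaces; the distribution data has to be packed into the spectrum $\Xi$ itself (which is exactly what your Thom-complex description is trying to do), and this construction has to be carried out explicitly rather than conjured by representability. In other words, your step~1 and step~2 are not two independent routes to the same $\Xi$: step~1 as written does not apply, and the whole burden falls on the geometric step~2, which you have sketched but not carried out. Once that construction is in place, your reduction of part~2) to Theorem~\ref{atiyah-hirzebruch-whitehead-theorem} is correct and standard.
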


\begin{proof} See \cite{PRA1}.\end{proof}

Let us, now, relate integral bordism to the spectral term $E^{0,n-1}_1$ of the $\mathcal{C} $-spectral sequence, that represents the space of conservation laws of PDEs. In fact we
represent $E^{0,n-1}_1$ into Hopf algebras that give the true full meaning of conservation laws of PDEs.

\begin{definition} We define {\em conservation law}\index{conservation law} of a PDE $E_k\subset
J^k_n(W) $, any differential $(n-1)$-form $\beta $ belonging to
the following quotient space:
$$\mathcal{C} ons(E_k)\equiv{\Omega^{n-1}(E_\infty)\cap{d^{-1}C\Omega^n(E_\infty)}\over
        {C\Omega^{n-1}(E_\infty)\bigoplus d\Omega^{n-2}(E_\infty)}}, $$
where $\Omega^q(E_\infty), q=0,1,2,\dots ,$ is the space of
differential $q$-forms on $E_\infty \hskip 2pt $,
$C\Omega^q(E_\infty)$ is the space of all Cartan $q$forms on
$E_\infty$, $q=1,2,\dots$, (see Tab. \ref{important-spaces-associated-to-pde}), and
$C\Omega^o(E_\infty)\equiv 0$,
$C\Omega^q(E_\infty)\equiv\Omega^q(E_\infty)$, for $q>n$,
$\Omega^{-1}(E_\infty)=0 $. Thus a conservation law is a
$(n-1)$-form on $E_\infty$ non trivially closed on the (singular)
solutions of $E_k $. The space of conservation laws of $E_k$
can be identified with the spectral term $E^{0,n-1}_1$ of the
$\mathcal{C} $-spectral sequence associated to $E_k$. One can see that
locally we can write
$$\mathcal{C} ons(E_k)={{\big\{\omega\in\Omega^{n-1}(E_\infty)\vert tial\omega=0\big\}}
          \over{\big\{\omega=tial\theta \vert \theta\in\Omega^{n-2}(E_\infty)\big\}}},$$
where
$$\partial \omega=\sum_{\mu_0,\dots,\mu_{n-1}}(\partial_{[\mu_0}
 \omega_{\mu_1\dots\mu_{n-1}]})dx^{\mu_0}\wedge\cdots\wedge dx^{\mu_{n-1}},$$
with
$$\omega=\sum_{\mu_1,\dots,\mu_{n-1}}\omega_{\mu_1\dots\mu_{n-1}}(x^\mu,y^j)
  dx^{\mu_1}\wedge\cdots\wedge dx^{\mu_{n-1}} mod \hskip 2pt C\Omega^{n-1}(E_\infty)$$
and
$$\partial _\mu\equiv\partial x_\mu+\sum_{i\in I}A^i_\mu(x,y)\partial y_i \hskip 2pt , \hskip 2pt \mu=1,\dots,n, $$
basis Cartan fields of $ \mathbf{E}_\infty$, where
$\{ x^\mu ,y^j\}_{1\le\mu\le k,j\in I}$ are adapted coordinates.
\end{definition}

\begin{theorem}\label{conservation-laws-and-integral-characteristic-forms} {\em 1)} One has the canonical isomorphism:
$\mathcal{I}(E_\infty)^{n-1}\cong\mathcal{C} ons(E_\infty)$.
So that integral numbers of $E_\infty$ can be considered as conserved charges of $E_k$.

{\em 2)} One has the following homomorphism of vector spaces
\begin{equation}\label{homomorphism-vector-spaces-conservation-laws-function-bordism}
j:E^{0,n-1}_1\to\mathbb{R}^{\Omega^{E_\infty}_{n-1}}.\end{equation}

Then  $E_1^{0,n-1}$ identifies a subspace $E^{0,n-1}$ of
$\mathbb{R}^{\Omega^{E_\infty}_{n-1}}$,
where
$$E^{0,n-1}\equiv im(j)=
\big\{ \phi\in \mathbb{R}^{\Omega^{E_\infty}_{n-1}} \vert
\exists\beta\in E^{0,n-1}_1 ,\hskip 2pt
\phi([N]_{E_\infty})=\int_N\beta\vert_N \big\}.$$
\end{theorem}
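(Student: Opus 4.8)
The plan is to treat the two assertions separately: part 1) is a purely algebraic identity between two quotient vector spaces built on the \emph{same} numerator, while part 2) is a well-definedness check resting on Stokes' theorem and on the vanishing of Cartan forms on integral manifolds.

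For part 1), set $N\equiv\Omega^{n-1}(E_\infty)\cap d^{-1}(C\Omega^n(E_\infty))$, which is the common numerator of $\mathcal{I}(E_\infty)^{n-1}$ and of $\mathcal{C}ons(E_\infty)$; only the subspaces one quotients by differ in \emph{form}. First I would observe that $d\Omega^{n-2}(E_\infty)\subseteq N$, since $d\circ d=0$ and $0\in C\Omega^n(E_\infty)$; hence quotienting $N$ by $C\Omega^{n-1}(E_\infty)\oplus d\Omega^{n-2}(E_\infty)$ really means quotienting by the intersection of that sum with $N$. By the modular (Dedekind) law, using $d\Omega^{n-2}(E_\infty)\subseteq N$, this intersection equals $\bigl(C\Omega^{n-1}(E_\infty)\cap N\bigr)+d\Omega^{n-2}(E_\infty)$. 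Finally, since $C\Omega^{n-1}(E_\infty)\subseteq\Omega^{n-1}(E_\infty)$, one has $C\Omega^{n-1}(E_\infty)\cap N=C\Omega^{n-1}(E_\infty)\cap d^{-1}(C\Omega^n(E_\infty))$, which is exactly the second summand in the denominator of $\mathcal{I}(E_\infty)^{n-1}$. So the two denominators coincide once restricted to $N$, giving the canonical isomorphism $\mathcal{I}(E_\infty)^{n-1}\cong\mathcal{C}ons(E_\infty)$. Since the integral characteristic numbers of $(n-1)$-dimensional admissible integral manifolds are, by Theorem \ref{necessary-sufficient-conditions-existence-integral-quantum-solutions-bording}, defined through the pairing with $\mathcal{I}(E_\infty)^{n-1}$, transporting them along this isomorphism exhibits them as periods of conservation laws, i.e.\ as conserved charges of $E_k$.

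For part 2), given a conservation law $\beta$ I would set $j(\beta)([N]_{E_\infty})=\int_N\beta|_N$ and verify independence of the two choices involved. First, if $\beta'=\beta+d\theta+\gamma$ with $\theta\in\Omega^{n-2}(E_\infty)$ and $\gamma\in C\Omega^{n-1}(E_\infty)$, then on an integral manifold $N$ one has $\gamma|_N=0$ (a Cartan form vanishes on every integral manifold of $E_\infty$), while $\int_N d\theta|_N=\int_N d(\theta|_N)=\int_{\partial N}\theta|_N=0$ because $N$ is closed; thus $j(\beta)$ depends only on the class of $\beta$ in $E^{0,n-1}_1=\mathcal{C}ons(E_\infty)$. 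Second, if $N'\in[N]_{E_\infty}$, choose a smooth solution $V\subset E_\infty$ with $\partial V=N'\sqcup(-N)$; then $\int_{N'}\beta-\int_N\beta=\int_{\partial V}\beta=\int_V d\beta$, and $d\beta|_V=0$ since $d\beta\in C\Omega^n(E_\infty)$ (as $\beta\in N$) and $V$ is an integral manifold, so the difference vanishes. Linearity of $j$ is immediate from linearity of the integral, so $E^{0,n-1}\equiv im(j)$ is a subspace of $\mathbb{R}^{\Omega^{E_\infty}_{n-1}}$, described exactly as in the statement.

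The Stokes-type computations are routine; the only genuine subtlety, and the step I would be most careful with, is the bookkeeping in part 1) --- namely the conventions $C\Omega^q(E_\infty)=\Omega^q(E_\infty)$ for $q>n$ and the meaning of restricting differential forms on the infinite prolongation $E_\infty$ to finite-dimensional integral submanifolds --- together with the modular-law identification of the two denominators, which is precisely what makes the isomorphism \emph{canonical} rather than a mere coincidence of dimensions.
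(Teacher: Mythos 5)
Your proposal is correct and follows essentially the same route as the paper: part 2) is the same Stokes-type computation the paper gives (you add the routine check that $j$ is constant on the Cartan and exact parts of a representative, which the paper leaves implicit), and part 1) is what the paper dismisses as ``a direct consequence of previous definitions and results.'' Your modular-law bookkeeping showing that $\bigl(C\Omega^{n-1}(E_\infty)\oplus d\Omega^{n-2}(E_\infty)\bigr)\cap N$ equals $d\Omega^{n-2}(E_\infty)\oplus\bigl(C\Omega^{n-1}(E_\infty)\cap d^{-1}(C\Omega^{n}(E_\infty))\bigr)$ is the detail the paper omits, and it is exactly the right way to make the identification of denominators precise.
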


\begin{proof} 1) It
is a direct consequence of previous definitions and results.

2) In fact, to any conservation law $\beta :E_\infty\rightarrow
\Lambda ^o_{n-1}(E_\infty)$ we can associate a function
$j(\beta)\equiv\phi :\Omega ^{E_\infty}_{n-1}\rightarrow \mathbb{R}$,
$\phi ([N])=\int _N\beta \vert _N $. This definition has sense as
it does not depend on the representative used for
$[N]_{E_\infty}$. In fact, if   $\beta $ is a conservation law,
then $\forall V\in\Omega(E_\infty)_c $, with $\partial  V=N_0\sqcup N_1 $, we have
$$\int _{\partial V}\beta \vert _{\partial V}=
\int _Vd\beta \vert _V=0\Rightarrow
 \int _{N_0}\beta \vert _{N_0}=\int _{N_1}\beta \vert _{N_1} .$$
Furthermore, the mapping $j$ is not necessarily injective. Indeed
one has
\begin{equation}\label{kernel-homomorphism-vector-spaces-conservation-laws-function-bordism}
\ker(j)=\left\{\beta\in E^{0,n-1}_1  \left|\begin{array}{l}
   \int_N\beta\vert_N=0\\
   \hbox{\rm for all $(n-1)$-dimensional admissible}\\
   \hbox{\rm integral manifolds of $E_\infty$}\\
   \end{array}\right.\right\}.\end{equation}
So $\ker(j)$ can be
larger than the zero-class $[0]\in\mathcal{C} ons(E_k) $.\footnote{For example for the d'Alembert equation one can see that for any conservation law $\omega$ one has
$<\omega,N>=0$, where $N$ is any
admissible $1$-dimensional compact integral manifold
of $d'A)$, but $\omega\not\in[0]\in E_1^{0,n-1}$.} \end{proof}

\begin{remark}  Note that one has the following short exact sequence:
$$\xymatrix{0\ar[r]&\mathbb{R}^{\widehat{\Omega}_{n-1,s}^{E_\infty}}\ar[r]^{i_*}&\mathbb{R}^{\Omega_{n-1}^{E_\infty}}}$$
where $ i_*$ is the mapping $ i_*:\phi\mapsto\phi\circ i$,
$\forall \phi\in \mathbb{R}^{\widehat{\Omega}_{n-1,s}^{E_\infty}}$, and
$ i\equiv i_{n-1}$ is the canonical mapping defined in the
following commutative diagram:
$$\xymatrix@C=40pt{\mathbb{R}\ar@{=}[r]&\mathbb{R}&\\
        \Omega_{n-1}^{E_\infty}\ar[u]^{i_*(\phi)}\ar[r]_{i_{n-1}}&\hat\Omega_{n-1,s}^{E_\infty}\ar[u]\ar[r]^(0.5){\phi}&0}$$
As $ i_{n-1}$ is surjective it follows that $ i_*$ is injective. So any function on
$ \Omega_{n-1,s}^{E_\infty}$ can be identified with a function on
$ \Omega_{n-1}^{E_\infty}$. In particular, if $ \hat\Omega^{E_\infty}_{n-1,s}\cong
\Omega^{E_\infty}_{n-1,s}$ then any function on $ \Omega^{E_\infty}_{n-1,s}$ can be identified
with a function on $ \Omega^{E_\infty}_{n-1}$.\end{remark}

By means of Theorem \ref{conservation-laws-and-integral-characteristic-forms}  we are able to represent
$E^{0,n-1}_1$ by means of a Hopf algebra. In the following
$\Omega$ can be considered indifferently one of previously
considered "bordism groups".

\begin{lemma} Denote by $\mathbb{K}\Omega$ the free $\mathbb{K}$-module generated
by $\Omega $. Then, $\mathbb{K}\Omega$ has a natural structure of
$\mathbb{K}$-bialgebra ({\em group $\mathbb{K}$-bialgebra}\index{key@$\mathbb{K}$-bialgebra ! group}. (Here
$\mathbb{K}=\mathbb{R}$).\end{lemma}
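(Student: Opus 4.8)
The plan is to reproduce, in this setting, the classical fact that the group algebra of an abelian group is a commutative bialgebra (in fact a Hopf algebra). First I would fix notation: since $\mathbb{K}=\mathbb{R}$ is a field, $\mathbb{K}\Omega$ is simply the $\mathbb{R}$-vector space with basis $\Omega$, whose elements are finite formal sums $\sum_i\lambda_i[N_i]$ with $\lambda_i\in\mathbb{K}$ and $[N_i]\in\Omega$. The input I would use is the fact, recalled above, that $\Omega$ (whichever of the integral/quantum bordism groups of $E_k$ is meant) is an \emph{abelian} group under disjoint union $\sqcup$, with neutral element the class $[\varnothing]$ of the empty admissible integral manifold, and that disjoint union is well defined on bordism classes (juxtaposing two bording solutions bords the juxtaposition).

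For the algebra structure I would define the product on basis elements by $[N]\cdot[N']\equiv[N\sqcup N']$ and extend it $\mathbb{K}$-bilinearly; this is well defined on classes, associative and commutative because $\sqcup$ is, and has two-sided unit $1_{\mathbb{K}\Omega}\equiv[\varnothing]$, so $(\mathbb{K}\Omega,\cdot,[\varnothing])$ is a commutative unital $\mathbb{K}$-algebra — precisely the group algebra of $\Omega$. For the coalgebra structure I would take the ``diagonal'' comultiplication and counit, defined on basis elements by $\Delta([N])=[N]\otimes_{\mathbb{K}}[N]$ and $\varepsilon([N])=1$, again extended $\mathbb{K}$-linearly. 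Coassociativity $(\Delta\otimes\mathrm{id})\circ\Delta=(\mathrm{id}\otimes\Delta)\circ\Delta$ and the counit axioms $(\varepsilon\otimes\mathrm{id})\circ\Delta=\mathrm{id}=(\mathrm{id}\otimes\varepsilon)\circ\Delta$ are checked immediately on basis elements (both composites send $[N]$ to $[N]\otimes[N]\otimes[N]$, resp. back to $[N]$) and then hold on all of $\mathbb{K}\Omega$ by linearity.

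The compatibility (bialgebra) axiom is the one step I would write out in full: one verifies that $\Delta$ and $\varepsilon$ are morphisms of $\mathbb{K}$-algebras, where $\mathbb{K}\Omega\otimes_{\mathbb{K}}\mathbb{K}\Omega$ carries the componentwise product. On basis elements, $\Delta([N]\cdot[N'])=\Delta([N\sqcup N'])=[N\sqcup N']\otimes[N\sqcup N']=([N]\otimes[N])\cdot([N']\otimes[N'])=\Delta([N])\cdot\Delta([N'])$ and $\Delta([\varnothing])=[\varnothing]\otimes[\varnothing]=1\otimes 1$, while $\varepsilon([N]\cdot[N'])=1=\varepsilon([N])\,\varepsilon([N'])$ and $\varepsilon([\varnothing])=1$; $\mathbb{K}$-linear extension gives the bialgebra axioms, so $\mathbb{K}\Omega$ is the group $\mathbb{K}$-bialgebra of $\Omega$. (One could go further: the $\mathbb{K}$-linear extension of $S([N])=[N]^{-1}$ is an antipode, so $\mathbb{K}\Omega$ is in fact a commutative Hopf algebra, but only the bialgebra structure is asserted here.) I do not expect a genuine obstacle: everything rests on $\Omega$ being an honest abelian group and on $\sqcup$ descending to bordism classes, both of which are already established; the only point deserving a line of care is the componentwise multiplication on the tensor square used in the compatibility check, which is why I would spell that step out explicitly.
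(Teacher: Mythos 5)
Your proof is correct and follows essentially the same route as the paper: both construct $\mathbb{K}\Omega$ as the group algebra of $\Omega$ (multiplication induced by the group law, unit the class of the neutral element) and equip it with the diagonal comultiplication $\Delta(s)=s\otimes s$ and counit $\epsilon(s)=1$. The only difference is cosmetic — you write the group operation concretely as $\sqcup$ and explicitly check the bialgebra compatibility axiom, which the paper leaves implicit.
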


\begin{proof} In fact define on the free $\mathbb{K}$-module $\mathbb{K}\Omega$ the multiplication
$$(\sum_{x\in\Omega}a_xx)(\sum_{y\in\Omega}b_yy)=
          \sum_{z\in\Omega}(\sum_{xy=z}a_xb_y)z.$$
Then, $\mathbb{K}\Omega$ becomes a ring. The map  $\eta_{\mathbb{K}\Omega}:\mathbb{K}\to\mathbb{K}\Omega \hskip 2pt,\quad \eta_{\mathbb{K}\Omega}(\lambda)=a1 $, where $1$ is the unit in $\Omega ,$
makes  $\mathbb{K}\Omega$ an $\mathbb{K}$-algebra. Furthermore, if we define
$\mathbb{K}$-linear maps $\triangle:\mathbb{K}\Omega\to\mathbb{K}\Omega\bigotimes_{\mathbb{K}}\mathbb{K}\Omega$, $\triangle(s)=s\otimes s $ and $\epsilon:\mathbb{K}\Omega\to\mathbb{K}$, $\epsilon(s)=1$,
then $(\mathbb{K}\Omega,\triangle,\epsilon)$ becomes a $\mathbb{K}$-coalgebra. \end{proof}

\begin{lemma} The dual linear space $(\mathbb{K}\Omega)^*$ of $\mathbb{K}\Omega$ can be identified
with the set: $\mathbb{R}^{\Omega}\equiv Map(\Omega,\mathbb{K}) $, where the dual $\mathbb{K}$-algebra structure of $\mathbb{K}\Omega$ is given by
$$\left\{\begin{array}{l}
 (f+g)(s)=f(s)+g(s)\\
(fg)(s)=f(s)g(s)\\
(af)(s)=af(s) \hskip 2pt ,\hskip 2pt \forall
f,g\in Map(\Omega,\mathbb{K}) , s\in\Omega, a\in\mathbb{K}\\
         \end{array}\right\}.$$
\end{lemma}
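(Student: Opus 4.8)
The plan is to combine two standard facts: the linear dual of a free module is the module of all functions on a basis, and the linear dual of a coalgebra carries a canonical convolution algebra structure. First I would observe that, since $\Omega$ is a $\mathbb{K}$-basis of $\mathbb{K}\Omega$, restriction to $\Omega$ defines a $\mathbb{K}$-linear bijection $\rho:(\mathbb{K}\Omega)^*\to Map(\Omega,\mathbb{K})=\mathbb{R}^\Omega$, $\rho(\phi)=\phi|_\Omega$, whose inverse sends $f:\Omega\to\mathbb{K}$ to its unique linear extension. Because $(\mathbb{K}\Omega)^*=(\bigoplus_{s\in\Omega}\mathbb{K})^*\cong\prod_{s\in\Omega}\mathbb{K}$, the target is indeed all of $Map(\Omega,\mathbb{K})$, not only the finitely supported functions. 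It is then immediate from the definition of $\rho$ that $\rho(f+g)=\rho(f)+\rho(g)$ and $\rho(af)=a\,\rho(f)$, which reproduces the first and third displayed formulas.

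Next I would transport the algebra structure through $\rho$. The product on $(\mathbb{K}\Omega)^*$ dual to the comultiplication $\triangle$ is the convolution product: $(f\cdot g)(x)$ is the image of $\triangle x$ under $f\otimes g$ followed by the multiplication map $\mathbb{K}\otimes\mathbb{K}\to\mathbb{K}$, and the unit element is $\epsilon$. (Only the coalgebra structure of $\mathbb{K}\Omega$ is used here, so no finiteness hypothesis is needed.) Evaluating on a basis element $s\in\Omega$ and using $\triangle(s)=s\otimes s$ gives $(f\cdot g)(s)=(f\otimes g)(s\otimes s)=f(s)g(s)$, so under $\rho$ the convolution product becomes the pointwise product, which is exactly the second displayed formula. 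Likewise $\epsilon(s)=1$ for every $s\in\Omega$, so the unit of $(\mathbb{K}\Omega)^*$ corresponds under $\rho$ to the constant function $1\in\mathbb{R}^\Omega$, consistently with $\eta_{\mathbb{K}\Omega}$ being dual to $\epsilon$.

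Finally I would record that associativity, commutativity and the unit axioms for the pointwise operations on $\mathbb{R}^\Omega$ hold; these are inherited from $\mathbb{K}$ (equivalently, they are dual to coassociativity, cocommutativity and the counit property of $\triangle$ and $\epsilon$, all of which are trivially verified for $\triangle(s)=s\otimes s$, $\epsilon(s)=1$), so nothing needs to be computed. I do not expect a genuine obstacle: the only point deserving care is setting up the identification of $(\mathbb{K}\Omega)^*$ with $Map(\Omega,\mathbb{K})$ correctly, namely as all functions via restriction to the basis, and applying the convolution formula on basis elements before extending by linearity; once that is in place the three displayed formulas fall out immediately. The statement is the familiar assertion that the linear dual of the grouplike coalgebra $\mathbb{K}\Omega$ is the corresponding function algebra.
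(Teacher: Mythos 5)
Your proof is correct, and it is worth noting that the paper itself gives no proof of this lemma at all (unlike the surrounding lemmas, this one is stated without a proof environment), so there is no argument in the source to compare against. Your two steps — identifying $(\mathbb{K}\Omega)^{*}$ with $\prod_{s\in\Omega}\mathbb{K}\cong Map(\Omega,\mathbb{K})$ by restriction to the basis, and then transporting the convolution product dual to the coalgebra structure $\triangle(s)=s\otimes s$, $\epsilon(s)=1$ — are exactly the standard route, and you correctly flag the one point deserving care: the convolution product on $C^{*}$ is defined for an arbitrary coalgebra $C$ via $C^{*}\otimes C^{*}\to(C\otimes C)^{*}\xrightarrow{\triangle^{*}}C^{*}$, so no finiteness hypothesis on $\Omega$ is needed, even though $(C\otimes C)^{*}\neq C^{*}\otimes C^{*}$ in general. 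Evaluating on grouplike basis elements yields the pointwise product and constant-$1$ unit exactly as in the displayed formulas. This also makes precise what the paper's slightly loose phrase ``the dual $\mathbb{K}$-algebra structure of $\mathbb{K}\Omega$'' must mean: it is the algebra structure on the dual space induced by the \emph{coalgebra} structure of $\mathbb{K}\Omega$, not the (ill-behaved for infinite $\Omega$) dual of its algebra structure.
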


\begin{lemma} If $\Omega$ is a finite group $A\equiv Map(\Omega,\mathbb{K})$ has a natural structure of
$\mathbb{K}$-bialgebra $(\mu,\eta,\triangle,\epsilon) $, with
$$\begin{array}{ll}
(a)\qquad&\mu:A\bigotimes_{\mathbb{K}}A\to A \hskip 2pt,\quad \mu(f\otimes g)=f.g ;\\
        (b)\qquad&\eta:\mathbb{K}\to A \hskip 2pt,\quad \eta(\lambda)(s)=\lambda  \hskip 2pt , \hskip 2pt\forall s\in\Omega ;\\
        (c)\qquad&\triangle:A\to A\bigotimes_{\mathbb{K}}A \hskip 2pt,\quad\triangle(f)(x,y)=f(xy) ;\\
        (d)\qquad&\epsilon:A\to \mathbb{K} \hskip 2pt,\quad \epsilon(f)=f(1) .\\
  \end{array}$$
\end{lemma}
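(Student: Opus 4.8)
The plan is to reduce every bialgebra axiom for $A=Map(\Omega,\mathbb{K})$ to the two group axioms of $\Omega$ (associativity and the existence of an identity), exploiting the finiteness of $\Omega$ throughout. First I would record that, since $\Omega$ is finite, $A$ is a finite-dimensional $\mathbb{K}$-vector space with the canonical basis $\{e_s:s\in\Omega\}$, where $e_s(t)=1$ if $t=s$ and $e_s(t)=0$ otherwise; the pointwise operations make $(A,\mu,\eta)$ a commutative, associative, unital $\mathbb{K}$-algebra (it is literally the product ring $\mathbb{K}^{|\Omega|}$), the unit being $\eta(1)=\sum_{s\in\Omega}e_s$, and both the associativity of $\mu$ and the unit axiom for $\eta$ are inherited pointwise from $\mathbb{K}$.

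The crucial observation is that the natural $\mathbb{K}$-linear map $A\otimes_{\mathbb{K}}A\to Map(\Omega\times\Omega,\mathbb{K})$, $f\otimes g\mapsto\big((x,y)\mapsto f(x)g(y)\big)$, is an isomorphism of $\mathbb{K}$-algebras, because $\Omega\times\Omega$ is finite and this map carries $e_s\otimes e_t$ to $e_{(s,t)}$. Under this identification $\triangle(f)$, defined by $\triangle(f)(x,y)=f(xy)$, is indeed an element of $A\otimes_{\mathbb{K}}A$, so $\triangle$ is well defined. Iterating, $A^{\otimes 3}\cong Map(\Omega\times\Omega\times\Omega,\mathbb{K})$, and I would then verify coassociativity by computing that $(\triangle\otimes\mathrm{id})\circ\triangle(f)$ and $(\mathrm{id}\otimes\triangle)\circ\triangle(f)$ both correspond to the function $(x,y,z)\mapsto f(xyz)$, the two bracketings agreeing precisely because the multiplication in $\Omega$ is associative. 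The counit axiom follows the same way: $(\epsilon\otimes\mathrm{id})\triangle(f)$ is $x\mapsto f(1\cdot x)=f(x)$ and $(\mathrm{id}\otimes\epsilon)\triangle(f)$ is $x\mapsto f(x\cdot 1)=f(x)$, using that $1$ is the identity of $\Omega$; hence $(A,\triangle,\epsilon)$ is a $\mathbb{K}$-coalgebra.

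Finally I would check the compatibility that upgrades the pair of structures to a bialgebra, namely that $\triangle$ and $\epsilon$ are homomorphisms of $\mathbb{K}$-algebras (equivalently, that $\mu$ and $\eta$ are homomorphisms of $\mathbb{K}$-coalgebras — the very same identities). This is immediate pointwise: $\triangle(fg)(x,y)=(fg)(xy)=f(xy)g(xy)=\big(\triangle(f)\triangle(g)\big)(x,y)$ and $\triangle(\eta(\lambda))=\eta_{A\otimes A}(\lambda)$ since the constant function $\lambda$ satisfies $\lambda(xy)=\lambda$; likewise $\epsilon(fg)=(fg)(1)=f(1)g(1)=\epsilon(f)\epsilon(g)$ and $\epsilon(\eta(\lambda))=\lambda$. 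There is no genuine obstacle in this argument; the single place where a hypothesis is actually used is the identification $A\otimes_{\mathbb{K}}A\cong Map(\Omega\times\Omega,\mathbb{K})$, which needs $\Omega$ (hence $A$) to be finite — without finiteness $\triangle$ need not take values in $A\otimes_{\mathbb{K}}A$ — so that is the step I would present with the most care.
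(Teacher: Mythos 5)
Your proof is correct, and every step checks out. The paper does not print a proof of this lemma, but the implied route is structural: the two preceding lemmas construct the group bialgebra $\mathbb{K}\Omega$ (with $\triangle(s)=s\otimes s$, $\epsilon(s)=1$) and identify $A=Map(\Omega,\mathbb{K})$ with the linear dual $(\mathbb{K}\Omega)^*$; for finite $\Omega$ the bialgebra structure then transports to the dual by transposing $\mu\leftrightarrow\triangle$ and $\eta\leftrightarrow\epsilon$, precisely because $(\mathbb{K}\Omega\otimes\mathbb{K}\Omega)^*\cong(\mathbb{K}\Omega)^*\otimes(\mathbb{K}\Omega)^*$ in finite dimension. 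You instead verify the axioms directly on $A$, with the single nontrivial ingredient being the isomorphism $A\otimes_{\mathbb{K}}A\cong Map(\Omega\times\Omega,\mathbb{K})$ (again finiteness). The two arguments use finiteness at the same spot and are essentially dual to one another: the paper's approach shows abstractly \emph{why} the structure exists (it is the finite dual of the group bialgebra, and this viewpoint feeds directly into the Hopf-algebra lemma that follows), whereas your approach is more elementary and self-contained, reducing coassociativity and the counit law transparently to associativity and the identity of $\Omega$, and making explicit that the compatibility axiom is the pointwise computation $\triangle(fg)(x,y)=f(xy)g(xy)$. Either is a perfectly good proof; you correctly flag $A\otimes_{\mathbb{K}}A\cong Map(\Omega\times\Omega,\mathbb{K})$ as the step that genuinely uses the hypothesis that $\Omega$ is finite, without which $\triangle$ would not land in $A\otimes_{\mathbb{K}}A$.
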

\begin{lemma}
$\mathbb{K}\Omega$ has a natural structure of $\mathbb{K}$-Hopf algebra.
\end{lemma}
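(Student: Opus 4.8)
The plan is to exhibit the antipode explicitly and then verify the antipode identities on the group-like generators. By the preceding lemmata $\mathbb{K}\Omega$ already carries the structure of a $\mathbb{K}$-bialgebra $(\mu,\eta,\triangle,\epsilon)$, with $\triangle(s)=s\otimes s$ and $\epsilon(s)=1$ for every $s\in\Omega$. Since $\Omega$ is one of the integral (or quantum) bordism groups considered above, it is in particular an abelian group, so every $s\in\Omega$ has an inverse $s^{-1}\in\Omega$. First I would define $S\colon\mathbb{K}\Omega\to\mathbb{K}\Omega$ to be the unique $\mathbb{K}$-linear map determined by $S(s)=s^{-1}$ for all $s\in\Omega$; this is well defined precisely because $\Omega$ is a $\mathbb{K}$-basis of $\mathbb{K}\Omega$.

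Next I would check the antipode axioms $\mu\circ(S\otimes\mathrm{id})\circ\triangle=\eta\circ\epsilon=\mu\circ(\mathrm{id}\otimes S)\circ\triangle$. Because all the maps involved are $\mathbb{K}$-linear, it suffices to evaluate both sides on a basis element $s\in\Omega$. From $\triangle(s)=s\otimes s$ one gets $\mu\bigl((S\otimes\mathrm{id})(s\otimes s)\bigr)=S(s)\,s=s^{-1}s=1$ and, symmetrically, $\mu\bigl((\mathrm{id}\otimes S)(s\otimes s)\bigr)=s\,S(s)=s\,s^{-1}=1$, where $1$ denotes the unit of $\Omega$ viewed inside $\mathbb{K}\Omega$. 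On the other hand $(\eta\circ\epsilon)(s)=\eta(1)=1$. Hence the three composites coincide on the basis $\Omega$, and therefore on all of $\mathbb{K}\Omega$.

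Consequently $(\mathbb{K}\Omega,\mu,\eta,\triangle,\epsilon,S)$ satisfies all the Hopf algebra axioms: the bialgebra part is the content of the previous lemma, and the antipode identity has just been verified. There is nothing further to check, since for any bialgebra the antipode is automatically an anti-homomorphism of algebras and of coalgebras; here in fact $S$ is even an algebra homomorphism because $\Omega$ is abelian. Moreover $S$ is the only possible antipode: as $\triangle$ is group-like, any antipode $S'$ would have to satisfy $S'(s)\,s=1$, which forces $S'(s)=s^{-1}$.

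I do not expect a genuine obstacle here: the argument is purely formal, the one point worth recording being that $\Omega$ is a group, so the inversion map is available; this is exactly what upgrades $\mathbb{K}\Omega$ from a bialgebra to a Hopf algebra.
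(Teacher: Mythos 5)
Your argument is essentially the one the paper gives: define $S$ on the group basis by $S(s)=s^{-1}$, extend $\mathbb{K}$-linearly, and verify the antipode identity $\mu\circ(S\otimes\mathrm{id})\circ\triangle=\eta\circ\epsilon$ on the group-like elements $s\in\Omega$, which suffices by linearity. The extra observations you add (uniqueness of the antipode, $S$ being an algebra homomorphism because $\Omega$ is abelian) are correct but not needed; the core step matches the paper's proof exactly.
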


\begin{proof} Define the $\mathbb{K}$-linear map  $S:\mathbb{K}\Omega\to\mathbb{K}\Omega$, $S(x)=x^{-1}$, $\forall  x\in\Omega$. Then, $(1*S)(x)=xS(x)=xx^{-1}=1=\epsilon(x)1=\eta\circ\epsilon(x) , x\in\Omega $. Then, $S$ is the antipode of $\mathbb{K}\Omega$ so that $\mathbb{K}\Omega$ becomes a $\mathbb{K}$-Hopf algebra.\end{proof}

\begin{lemma} If $\Omega$ is a finite group $A\equiv Map(\Omega,\mathbb{K})$ has a natural structure of
$\mathbb{K}$-Hopf algebra. If $ \Omega$ is not a finite group $ Map(\Omega;\mathbb{K})$ has a
structure of Hopf algebra in extended sense, i.e., an extension of an Hopf $ \mathbb{K}$-algebra $
 K$ contained into $ Map(\Omega;\mathbb{K})$. More precisely, $ K=R_\mathbb{K}(\Omega)$ is the Hopf $
 \mathbb{K}$-algebra of all the representative functions on $ \Omega$. In fact, one has the
following short exact sequence:
$$\xymatrix{0\ar[r]& R_\mathbb{K}(\Omega)\ar[r]&Map(\Omega;\mathbb{K})\ar[r]& H\ar[r]& 0},$$
 where $ H$ is the quotient
algebra. (If $ \Omega$ is a finite group then $ H=0$.) Therefore,
$ <E^{1,0}>$ is, in general, an Hopf algebra in this extended
sense.\end{lemma}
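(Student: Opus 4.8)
The plan is to handle the finite case by plain linear duality, the infinite case by cutting down to the representative functions, and then to read off the assertion about $E^{0,n-1}$. Throughout I use the previous two lemmas, which give the $\mathbb{K}$-bialgebra structure $(\mu,\eta,\triangle,\epsilon)$ on $A\equiv Map(\Omega,\mathbb{K})$ with $\triangle(f)(x,y)=f(xy)$, $\eta(\lambda)(s)=\lambda$, and the $\mathbb{K}$-Hopf algebra structure on $\mathbb{K}\Omega$ with antipode $S(x)=x^{-1}$.

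First I would treat the case $\Omega$ finite. Then the evaluation pairing identifies $A$ with the full linear dual $(\mathbb{K}\Omega)^{*}$, and, crucially because $\Omega$ is finite, the natural injection $A\bigotimes_{\mathbb{K}}A\to Map(\Omega\times\Omega,\mathbb{K})$, $f\otimes g\mapsto\big((x,y)\mapsto f(x)g(y)\big)$, is an isomorphism. Hence each of the four structure maps of $\mathbb{K}\Omega$ transposes to a $\mathbb{K}$-linear map on $A$, and one checks these transposes are exactly $\mu,\eta,\triangle,\epsilon$. Transposing the antipode $S$ gives $S^{*}:A\to A$, $(S^{*}f)(x)=f(x^{-1})$, and transposing the two antipode identities valid in $\mathbb{K}\Omega$ yields $\mu\circ(S^{*}\otimes 1)\circ\triangle=\eta\circ\epsilon=\mu\circ(1\otimes S^{*})\circ\triangle$ in $A$. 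Thus $A$ is a $\mathbb{K}$-Hopf algebra, proving the first assertion.

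Next, suppose $\Omega$ is infinite. The only failure is that $\triangle(f)(x,y)=f(xy)$ need not lie in $A\otimes_{\mathbb{K}}A$, since $(x,y)\mapsto f(xy)$ is not in general a finite sum $\sum_{i}u_i(x)v_i(y)$. So I would pass to $R_{\mathbb{K}}(\Omega)$, the subspace of $f\in Map(\Omega,\mathbb{K})$ whose two-sided translates $x\mapsto f(a^{-1}xb)$, $a,b\in\Omega$, span a finite-dimensional subspace, equivalently $f(xy)=\sum_{i=1}^{r}u_i(x)v_i(y)$ with $u_i,v_i$ again representative. One verifies that $R_{\mathbb{K}}(\Omega)$ is a subalgebra stable under $\epsilon$, under $S^{*}$ (using $f\mapsto f(x^{-1})$), and under $\triangle$, with $\triangle$ actually landing in $R_{\mathbb{K}}(\Omega)\otimes_{\mathbb{K}}R_{\mathbb{K}}(\Omega)$; all Hopf axioms then hold pointwise exactly as in the finite case. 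Setting $K=R_{\mathbb{K}}(\Omega)$ and letting $H$ be the quotient algebra $Map(\Omega,\mathbb{K})/R_{\mathbb{K}}(\Omega)$ produces the short exact sequence displayed in the statement, so $Map(\Omega,\mathbb{K})$ is a Hopf algebra in the extended sense. When $\Omega$ is finite every function is representable, so $R_{\mathbb{K}}(\Omega)=Map(\Omega,\mathbb{K})$ and $H=0$, consistently with the first part.

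Finally, the closing assertion follows by taking $\mathbb{K}=\mathbb{R}$ and for $\Omega$ one of the (generally infinite) integral or quantum bordism groups: by Theorem~\ref{conservation-laws-and-integral-characteristic-forms} the term $E^{0,n-1}_1$ maps into $\mathbb{R}^{\Omega}=Map(\Omega,\mathbb{R})$ via $j$, so the subalgebra $<E^{0,n-1}>\subset Map(\Omega,\mathbb{R})$ generated by its image inherits, through the above short exact sequence, the structure of a Hopf algebra in the extended sense. The main obstacle is not the finite case, which is pure duality, but verifying that $R_{\mathbb{K}}(\Omega)$ is genuinely closed under the coproduct and antipode and that the representative-function condition is precisely the obstruction to $\triangle$ being defined; everything else is bookkeeping transported from $\mathbb{K}\Omega$.
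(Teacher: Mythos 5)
Your proof is correct and follows essentially the same route as the paper's, but fills in the details the paper leaves implicit: the paper's own proof is a single sentence defining $S(f)(x)=f(x^{-1})$ and asserting the antipode identities, relying on the preceding lemmas for the bialgebra structure. What you add, usefully, is the explicit observation that for infinite $\Omega$ the natural map $A\otimes_{\mathbb{K}}A\to Map(\Omega\times\Omega,\mathbb{K})$ is only injective, so $\triangle(f)(x,y)=f(xy)$ does not in general factor through the tensor product, and that the representative functions $R_{\mathbb{K}}(\Omega)$ are precisely the subspace where it does; the paper states this only implicitly through its sequence of lemmas. You also silently repair a slip in the statement: the quoted $\langle E^{1,0}\rangle$ must be $\langle E^{0,n-1}\rangle$, as the theorem that follows makes clear, and your closing paragraph addresses the correct object.

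One small caveat worth making explicit if you expand the write-up: when you say $\langle E^{0,n-1}\rangle$ ``inherits, through the short exact sequence, the structure of a Hopf algebra in the extended sense,'' the honest statement is that the coproduct $\triangle(f)(x,y)=f(xy)$ is defined as a map into $Map(\Omega\times\Omega,\mathbb{R})$, and the Hopf axioms hold there pointwise; the ``extended sense'' means exactly that $\triangle$ need not land in the tensor square, and becomes a genuine Hopf coproduct only on the part of $\langle E^{0,n-1}\rangle$ lying inside $R_{\mathbb{R}}(\Omega)$. This is consistent with what the paper means, but it does not follow formally from the short exact sequence alone.
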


\begin{proof} In fact one can define the antipode $
S(f)(x)=f(x^{-1})$, $ \forall  f\in A , x\in\Omega$. It satisfies
the equalities: $\mu(1\otimes S)\triangle=\mu(S\otimes
1)\triangle=\eta\circ\epsilon $.\end{proof}

\begin{theorem} The space of conservation laws $E^{0,n-1}_1$ of a PDE
identifies in a natural way a $\mathbb{K}$-Hopf algebra: $<E^{0,n-1}>\subset\mathbf{H}(E_\infty)\equiv Map(\Omega_{n-1}^{E_\infty},\mathbb{R})$. If $E^{0,n-1}=0\in Map(\Omega_{n-1}^{E_\infty},\mathbb{R})$, we put for definition $<E^{0,n-1}>\equiv\mathbf{H}(E_\infty)$. We call $<E^{0,n-1}>$ the {\em Hopf algebra}\index{Hopf algebra} of $E_k$.
\end{theorem}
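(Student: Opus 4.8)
The plan is to use the fact, recorded in the lemmas preceding the statement, that the integral bordism set $\Omega_{n-1}^{E_\infty}$ is an abelian group under disjoint union, written additively, so that $\mathbf{H}(E_\infty)=Map(\Omega_{n-1}^{E_\infty},\mathbb{R})$ carries a canonical Hopf algebra structure $(\mu,\eta,\triangle,\epsilon,S)$, in the extended sense when $\Omega_{n-1}^{E_\infty}$ is infinite, with $\mu(f\otimes g)=f\cdot g$, $\triangle(f)(x,y)=f(x+y)$, $\epsilon(f)=f(0)$ and $S(f)(x)=f(-x)$, whose honest Hopf algebra ``core'' is the subalgebra $R_{\mathbb{R}}(\Omega_{n-1}^{E_\infty})$ of representative functions. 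First I would recall from Theorem \ref{conservation-laws-and-integral-characteristic-forms} the $\mathbb{R}$-linear map $j\colon E_1^{0,n-1}\to\mathbf{H}(E_\infty)$, $\beta\mapsto\phi_\beta$ with $\phi_\beta([N]_{E_\infty})=\int_N\beta|_N$, whose image is the linear subspace $E^{0,n-1}$, and then let $<E^{0,n-1}>$ be the $\mathbb{R}$-subalgebra of $\mathbf{H}(E_\infty)$ generated by $E^{0,n-1}$.

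The crucial observation is that every $\phi_\beta\in E^{0,n-1}$ is a group homomorphism $\Omega_{n-1}^{E_\infty}\to(\mathbb{R},+)$. Indeed, integration of an $(n-1)$-form over a disjoint union is additive, so $\phi_\beta([N_1]_{E_\infty}+[N_2]_{E_\infty})=\int_{N_1\sqcup N_2}\beta=\int_{N_1}\beta+\int_{N_2}\beta=\phi_\beta([N_1]_{E_\infty})+\phi_\beta([N_2]_{E_\infty})$ and $\phi_\beta(0)=\int_\varnothing\beta=0$, the value depending only on the bordism class by Theorem \ref{conservation-laws-and-integral-characteristic-forms}. Consequently, in $\mathbf{H}(E_\infty)$ one has $\triangle(\phi_\beta)=\phi_\beta\otimes 1+1\otimes\phi_\beta$, $\epsilon(\phi_\beta)=0$ and $S(\phi_\beta)=-\phi_\beta$; that is, all elements of $E^{0,n-1}$ are primitive. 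In particular the translates of a non-zero $\phi_\beta$ span the two-dimensional space $<\phi_\beta,1>$, so $E^{0,n-1}\subset R_{\mathbb{R}}(\Omega_{n-1}^{E_\infty})$ and hence $<E^{0,n-1}>\subset R_{\mathbb{R}}(\Omega_{n-1}^{E_\infty})\subset\mathbf{H}(E_\infty)$.

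Next I would verify that $<E^{0,n-1}>$ is a Hopf subalgebra. Since $\mu$, $\eta$, $\triangle$, $\epsilon$ are $\mathbb{R}$-algebra homomorphisms and $S$, $\mathbf{H}(E_\infty)$ being commutative, is an algebra homomorphism, closure of $<E^{0,n-1}>$ under all the structure maps need only be checked on the generating set $E^{0,n-1}$ and then extends multiplicatively. By the previous paragraph $\triangle(\phi_\beta)\in<E^{0,n-1}>\otimes<E^{0,n-1}>$, $\epsilon(\phi_\beta)\in\mathbb{R}$ and $S(\phi_\beta)\in<E^{0,n-1}>$, so $\mu,\eta,\triangle,\epsilon,S$ all restrict to $<E^{0,n-1}>$, which is therefore a Hopf subalgebra of $R_{\mathbb{R}}(\Omega_{n-1}^{E_\infty})$, i.e. a genuine $\mathbb{K}$-Hopf algebra with $\mathbb{K}=\mathbb{R}$. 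When $E^{0,n-1}=0$ the ad hoc definition $<E^{0,n-1}>\equiv\mathbf{H}(E_\infty)$ is consistent, because $\mathbf{H}(E_\infty)$ is itself a Hopf algebra (in the extended sense) by the last lemma. Naturality is obtained by noting that a morphism of PDEs $\varphi\colon E_k\to E'_h$ induces a group homomorphism $\varphi_*\colon\Omega_{n-1}^{E_\infty}\to\Omega_{n-1}^{{E'_\infty}}$, hence a morphism of group bialgebras $\varphi^*\colon\mathbf{H}({E'_\infty})\to\mathbf{H}(E_\infty)$ compatible with the maps $j$ and carrying $<{E'}^{0,n-1}>$ into $<E^{0,n-1}>$.

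The step I expect to require most care is the bookkeeping around the ``extended'' Hopf structure when $\Omega_{n-1}^{E_\infty}$ is infinite: one must ensure that the coproduct of $\mathbf{H}(E_\infty)$ restricted to $<E^{0,n-1}>$ actually takes values in $<E^{0,n-1}>\otimes<E^{0,n-1}>$ and not merely in the larger space $Map(\Omega_{n-1}^{E_\infty}\times\Omega_{n-1}^{E_\infty},\mathbb{R})$. This is precisely what the primitivity of the generators $\phi_\beta$ secures, and it is why one is forced to work inside the representative-function core $R_{\mathbb{R}}(\Omega_{n-1}^{E_\infty})$; the remaining verifications are the routine checks that finite sums and products of primitive elements generate a sub-bialgebra stable under the antipode.
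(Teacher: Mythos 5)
Your proof is correct and follows the same basic scheme as the paper: identify $<E^{0,n-1}>$ as the subalgebra of $\mathbf{H}(E_\infty)=Map(\Omega_{n-1}^{E_\infty},\mathbb{R})$ generated by the image $E^{0,n-1}=j(E_1^{0,n-1})$, then restrict the group--Hopf-algebra structure maps of the preceding lemmas. But you supply a verification the paper leaves implicit: the paper's own proof just writes down $\triangle(f)(x,y)=f(xy)$, $S(f)(x)=f(x^{-1})$ and asserts they land in $<E^{0,n-1}>$ resp. $<E^{0,n-1}>\bigotimes_{\mathbb{K}}<E^{0,n-1}>$, which for an arbitrary $f\in\mathbf{H}(E_\infty)$ would be false when $\Omega_{n-1}^{E_\infty}$ is infinite. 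Your observation that each $\phi_\beta$ is additive over disjoint union, hence a group homomorphism $\Omega_{n-1}^{E_\infty}\to(\mathbb{R},+)$, hence primitive with $\triangle(\phi_\beta)=\phi_\beta\otimes 1+1\otimes\phi_\beta$, is exactly the hypothesis needed: it places $E^{0,n-1}$ inside the representative-function core $R_{\mathbb{R}}(\Omega_{n-1}^{E_\infty})$ (each $\phi_\beta$ has a two-dimensional translate space $\langle\phi_\beta,1\rangle$), and it guarantees the coproduct of a generator stays in $<E^{0,n-1}>\bigotimes_{\mathbb{K}}<E^{0,n-1}>$, after which stability under $\mu,\eta,\triangle,\epsilon,S$ propagates to the full subalgebra by multiplicativity. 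So the two proofs reach the same conclusion along the same general route, but yours makes the one substantive step explicit, namely \emph{why} $<E^{0,n-1}>$ is a sub-bialgebra of $R_{\mathbb{R}}(\Omega_{n-1}^{E_\infty})$ and not merely a subalgebra of $Map(\Omega_{n-1}^{E_\infty},\mathbb{R})$; the paper's proof relies on the preceding lemma about the extended Hopf structure without confirming that the generators actually fall in the Hopf core. Your naturality remark is a harmless addition not in the paper's proof.
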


\begin{proof} It is an immediate consequence of Theorem \ref{conservation-laws-and-integral-characteristic-forms}
and above lemmas, and taking into
account the following commutative diagram
$$\xymatrix{{\mathbb{R}^{\Omega_{n-1}^{E_\infty}}\times\mathbb{R}^{\Omega_{n-1}^{E_\infty}}}\ar[r]
                     &{\mathbb{R}^{\Omega_{n-1}^{E_\infty}}}\\
         {E^{0,n-1}\times E^{0,n-1}}\ar[u]\ar[r]&{<E^{0,n-1}>}\ar[u]}$$
where $<E^{0,n-1}>$   is the Hopf subalgebra of $\mathbb{R}^{\Omega_{n-1}^{E_\infty}}$ generated by
$E^{0,n-1} $. We denote by $f_\alpha$ the image of the conservation law $\alpha\in E^{0,n-1}_1$
into $\mathbb{R}^{\Omega_{n-1}^{E_\infty}} $. So in $<E^{0,n-1}>$ we have the following product:
$<E^{0,n-1}>\times <E^{0,n-1}>\to <E^{0,n-1}>$, $(f_\alpha,f_\beta)\mapsto f_\alpha.f_\beta$.
Furthermore, we can explicitly write
$$\begin{array}{ll}
   \overline\eta:&\mathbb{K}\to <E^{0,n-1}>\hskip 2pt,\quad \overline\eta(\lambda)(s)=\lambda,\\
       \triangle:&<E^{0,n-1}>\to <E^{0,n-1}>\bigotimes_{\mathbb{K}}<E^{0,n-1}>
       \hskip 2pt,\quad \triangle(f)(x,y)=f(xy),\\
       \epsilon:&<E^{0,n-1}>\to\mathbb{K}\hskip 2pt,\quad \epsilon(f)=f(1), \\
       S:&<E^{0,n-1}>\to <E^{0,n-1}> \hskip 2pt,\quad S(f)(x)=f(x^{-1}).\\
  \end{array}$$
So the proof is complete.\end{proof}

\begin{definition} We call {\em full $p$-Hopf algebra} of $E_k\subset
J^k_n(W)$ the following Hopf algebra: $\mathbf{H}_p(E_k)\equiv \mathbb{R}^{\Omega_p^{E_k}}$. In particular for $p=n-1$ we write $\mathbf{H}(E_k)\equiv\mathbf{H}_{n-1}(E_k)$ and we call it {\em full Hopf
algebra}\index{full Hopf algebra} of $E_k$.

If $<E^{0,n-1}>\cong \mathbf{H}(E_\infty)\equiv\mathbb{R}^{\Omega_{n-1}^{E_\infty}} $, we say that $E_k$ is {\em wholly Hopf-bording}\index{wholly Hopf-bording}. Furthermore, we say also that  $\mathbb{R}^{\Omega_{p}^{E_k}}\equiv\mathbf{H}_p(E_k)$ is the {\em space of the full $p$-conservation laws}\index{p@$p$-conservation laws ! full ! space of} of $E_k$.\end{definition}

\begin{theorem} If $\Omega_{n-1}^{E_\infty}$ is trivial then $E_k$ is
wholly Hopf-bording. Furthermore, in such a case $E^{0,n-1}=0$.
\end{theorem}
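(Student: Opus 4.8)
The plan is to prove first that the homomorphism $j:E^{0,n-1}_1\to\mathbb{R}^{\Omega^{E_\infty}_{n-1}}$ introduced above has image reduced to $\{0\}$ — so that $E^{0,n-1}=\IM(j)=0$ — and then to deduce the wholly Hopf-bording property from the structure of $\mathbf{H}(E_\infty)$. To begin with, I would record the two elementary consequences of the triviality of $\Omega^{E_\infty}_{n-1}$. On the one hand, $\Omega^{E_\infty}_{n-1}$ then consists of the single class $[\varnothing]_{E_\infty}=[0]$, so $\mathbf{H}(E_\infty)=\mathbb{R}^{\Omega^{E_\infty}_{n-1}}\cong\mathbb{R}$ is one-dimensional over $\mathbb{R}$. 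On the other hand, every $(n-1)$-dimensional compact closed admissible integral manifold $N\subset E_\infty$ satisfies $[N]_{E_\infty}=[\varnothing]_{E_\infty}$, hence there is a (possibly singular) solution $V\subset E_\infty$ with $\partial V=N\sqcup\varnothing=N$; that is, every such $N$ bounds a solution of $E_\infty$.

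The key step is a Stokes argument. Let $\beta\in E^{0,n-1}_1$ be any conservation law. By definition $d\beta\in C\Omega^n(E_\infty)$ is a Cartan $n$-form, so it restricts to zero on every integral manifold of $E_\infty$; in particular $d\beta|_V=0$. Applying the (bar) Stokes formula to $V$ with $\partial V=N$ gives
$$\int_N\beta|_N=\int_{\partial V}\beta|_{\partial V}=\int_V d\beta|_V=0.$$
As $N$ was arbitrary, the function $j(\beta)=\phi$ defined by $\phi([N]_{E_\infty})=\int_N\beta|_N$ is identically zero, and therefore $E^{0,n-1}=\IM(j)=0$. This proves the second assertion of the theorem.

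For the wholly Hopf-bording property, note that since $\mathbf{H}(E_\infty)\cong\mathbb{R}$ is one-dimensional over $\mathbb{R}$, any Hopf subalgebra of it — in particular $<E^{0,n-1}>$ — must contain the unit $\mathbb{R}\cdot 1$ and hence coincide with $\mathbf{H}(E_\infty)$; equivalently, once $E^{0,n-1}=0$ one sets, by the definition in the degenerate case, $<E^{0,n-1}>\equiv\mathbf{H}(E_\infty)$. In either reading $<E^{0,n-1}>\cong\mathbf{H}(E_\infty)$, i.e. $E_k$ is wholly Hopf-bording. The only genuinely non-formal ingredient in the whole argument is the vanishing $d\beta|_V=0$ in the case where $V$ is a singular solution; this is exactly what the bar de Rham formalism recalled above provides, since there $V$ may be taken as a bar singular $n$-chain whose tangent spaces lie in $\mathbf{E}_\infty$, over which the Cartan $n$-form $d\beta$ integrates to $0$.
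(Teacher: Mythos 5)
Your proof is correct and follows essentially the same route as the paper's: triviality of $\Omega^{E_\infty}_{n-1}$ means every admissible closed $N$ bounds a solution $V$, and then the Stokes argument $\int_N\beta=\int_V d\beta=0$ (using that $d\beta$ is a Cartan $n$-form vanishing on integral manifolds) shows $j(\beta)=0$ for all $\beta$, whence $E^{0,n-1}=0$ and, by the definition in the degenerate case, $<E^{0,n-1}>\cong\mathbf{H}(E_\infty)$. The extra observation that $\mathbf{H}(E_\infty)\cong\mathbb{R}$ is harmless but not needed; the paper invokes the definitional convention directly.
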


\begin{proof} In fact, in such a case one has
$\int_N\omega=\int_{\partial V}=\int_Vd\omega=0$, $\forall
[\omega]\in E^{0,n-1}_1$, $[N]\in\Omega^{E_\infty}_{n-1}$, and
$V=n$-dimensional admissible integral manifold contained into
$E_\infty$. Hence, for definition one has $<E^{0,n-1}>\cong\mathbf{H}(E_\infty)$.\end{proof}
\begin{table}[t]
\caption{Properties of the Cartan spectral sequence $\{E^{\bullet,\bullet}_r,d_r\}$ of PDE $E_k\subset J^k_n(W)$.}
\label{c-spectral-sequences-pde}
\begin{tabular}{|l|l|l|}
  \hline
  \hfil{\rm{\footnotesize $r$}}\hfil& \hfil{\rm{\footnotesize $E^{\bullet,\bullet}_r$}}\hfil& \hfil{\rm{\footnotesize Particular cases}}\hfil\\
\hline
\hfil{\rm{\footnotesize $[2,\infty]$}}\hfil& {\rm{\footnotesize $E_r^{p,q}=0,\hskip 3pt (p>0,q\not=n)$}}& \\
& {\rm{\footnotesize $E_r^{p,q}=0,\hskip 3pt (p=0,q>n)$}}& \\
\hline
\hfil{\rm{\footnotesize $0$}}\hfil& {\rm{\footnotesize $E^{0,q}_0=\overline{\Omega}^q(E\infty), \hskip 3pt d_0=\bar d$}}&\\
& {\rm{\footnotesize $E^{p,q}_0=0, \hskip 3pt q<0$}}&\\
& {\rm{\footnotesize $E^{p,q}_0=0, \hskip 3pt q>n$}}&\\
\hline
\hfil{\rm{\footnotesize 1}}\hfil& {\rm{\footnotesize $E^{0,q}_1=\overline{H}^q(E_\infty)$}}&{\rm{\footnotesize $E_1^{0,n-1}=\overline{H}^{n-1}(E_\infty)\cong \mathcal{C}ons(E_k)$}}\\
&&{\rm{\footnotesize $E_1^{0,n}=\overline{H}^{n}(E_\infty)\cong \mathcal{L}agr(E_k)\hskip 3pt(\spadesuit)$}}\\
&{\rm{\footnotesize $E_1^{p,q}=0,\hskip 3pt(p>0,q\not=0)$}}&\\
&{\rm{\footnotesize $E_1^{0,q}=E^{0,q}_\infty,\hskip 3pt q<n)$}}&\\
\hline
\hfil{\rm{\footnotesize 2}}\hfil& {\rm{\footnotesize $E_2^{p,n}=E^{p,n}_\infty,\hskip 3pt p>0$}}&\\
& {\rm{\footnotesize $E_2^{q-n,n}=H^q(E_\infty),\hskip 3pt q\ge n$}}&\\
\hline
\multicolumn{3}{|c|}{\rm{\footnotesize $E_k\equiv M=n$-dimensional manifold with $\mathbf{E}^k_n\equiv 0\subset TM\hskip 3pt(\clubsuit)$.}}\\
\hline
\hfil{\rm{\footnotesize 0}}\hfil& {\rm{\footnotesize $E^{p,0}_0=\Omega^p(M),\hskip 3pt d_0=0$}}& \\
& {\rm{\footnotesize $E^{p,q}_0=0,\hskip 3pt q>0$}}& \\
\hline
\hfil{\rm{\footnotesize 1}}\hfil& {\rm{\footnotesize $E^{p,q}_1=E^{p,q}_0$}}& \\
& {\rm{\footnotesize $d_1=d:E_1^{p,0}=\Omega^p(M)\to E_1^{p+1,0}=\Omega^{p+1}(M)=0$}}&\\
\hline
\hfil{\rm{\footnotesize 2}}\hfil& {\rm{\footnotesize $E^{p,0}_2=H^p(M),\hskip 3pt d_2=d:H^p(M)\to H^{p+1}(M)$}}& \\
& {\rm{\footnotesize $E^{p+1,0}_2=H^{p+1}(M)$}}& \\
\hline
\multicolumn{3}{l}{\rm{\footnotesize $(\spadesuit)\hskip 2pt\mathcal{L}agr(E_k)$= space of Lagrangian densities.}}\\
\multicolumn{3}{l}{\rm{\footnotesize $(\spadesuit)\hskip 2pt d_1\omega =0$ is the Euler-Lagrange equation of $[\omega]\in \mathcal{L}agr(E_k)$.}}\\
\multicolumn{3}{l}{\rm{\footnotesize $\{E^{\bullet,\bullet}_r,d_r\}$, converges to the de Rham cohomology algebra $H^\bullet(E_\infty)$.}}\\
\multicolumn{3}{l}{\rm{\footnotesize If $E_k$ is formally integrable $H^\bullet(E_\infty)\cong H^\bullet(E_k)$.}}\\
\multicolumn{3}{l}{\rm{\footnotesize If $E_k=J^k_n(W)$ then $H^q(E_k)\cong H^q(W)$.}}\\
\multicolumn{3}{l}{\rm{\footnotesize If $\overline{\Omega}^{s+1}(E_\infty)=0$ for $s\ge 0$, then $E^{p,q}_0=0$ for $q>s$.}}\\
\multicolumn{3}{l}{\rm{\footnotesize $H^q(E_\infty)=\overline{H}^q(E_\infty)$ if $q<n$.}}\\
\multicolumn{3}{l}{\rm{\footnotesize $(\clubsuit)\hskip 3pt C\Omega^\bullet=\Omega^\bullet(M), \hskip 3pt C^k\Omega^\bullet=\oplus_{i>k}\Omega^i(M)$.}}\\
\end{tabular}
\end{table}

\begin{table}[t]
\caption{Properties of the (co)homology integral Leray-Serre spectral sequences of PDE $E_k\subset J^k_n(W)$.}
\label{leray-serre-spectral-sequences-pde}
\begin{tabular}{|l|l|l|l|}
  \hline
  \hfil{\rm{\footnotesize $r$}}\hfil& \hfil{\rm{\footnotesize $\{E_{\bullet,\bullet}^r,d^r\}$ and $\{E^{\bullet,\bullet}_r,d_r\}$}}\hfil& \hfil{\rm{\footnotesize Convergence space}}\hfil& \hfil{\rm{\footnotesize Particular cases}}\hfil\\
\hline
\hfil{\rm{\footnotesize $2$}}\hfil& {\rm{\footnotesize $E^2_{p,q}\cong H_p(E_k;H_q(F_k;G))$}}& {\rm{\footnotesize $H_\bullet(I(E_k);G)$}}&\\
\hline
\hfil{\rm{\footnotesize $2$}}\hfil& {\rm{\footnotesize $E_2^{p,q}\cong H^p(E_k;H^q(F_k;R))$}}&{\rm{\footnotesize $H^\bullet(I(E_k);R)$}}& {\rm{\footnotesize $E_2^{p,q}\cong H^p(I(E_k);\mathbb{K})\otimes H^q(F_k;\mathbb{K})$}}\\
&&& {\rm{\footnotesize if $H^q(F_k;R)$ simple and $R=\mathbb{K}$=field}}\\
\hline
\multicolumn{4}{l}{\rm{\footnotesize $G$=abelian group; $R$=commutative ring with unit.}}\\
\multicolumn{4}{l}{\rm{\footnotesize $F_k$=fibre of the fibre bundle $I(E_k)\to E_k$.}}\\
\multicolumn{4}{l}{\rm{\footnotesize Similar formulas hold for oriented fibre bundle $I^+(E_k)\to E_k$ with oriented fibre $F^+_k$.}}\\
\end{tabular}
\end{table}

\begin{theorem}[Cartan spectral sequences and integral Leray-Serre spectral sequences of PDE's]\label{c-spectral sequences-and-leray-serre-spectral-sequences-of-pdes}
Let $E_k\subset J^k_n(W)$ be a PDE on the fiber bundle $\pi:W\to M$, $\dim W=n+m$, $\dim M=n$. Let $I(E_k)\to E_k$, (resp. $I^+(E_k)\to E_k$) be the Grassmannian bundle of integral planes (resp. oriented integral planes), of $E_k$.
with fibre $F_k$ (resp. $F^+_k$). Then we can identifies two (co)homology spectral sequences: {\em(a) Cartan spectral sequences} and {\em(b) integral Leray-Serre spectral sequences}, such that if $E_k$ is formally integrable and the following conditions occur:

{\em (i)} $I(E_k)$ is path-connected;

{\em (ii)} $H^q(F_k;\mathbb{R})$ is simple;

{\em (iii)} $F_k$ is totally non-homologous to $0$ in $I(E_k)$ with respect to $\mathbb{R}$;

{\em (iv)} $H^q(F_k;\mathbb{R})=0$ if $q>0$, or $H^q(F_k;\mathbb{R})=\mathbb{R}$ if $q=0$;

then the above cohomology spectral sequences of $E_k$ converge to the same space $H^\bullet(E_\infty;\mathbb{R})\cong H^\bullet(E_k;\mathbb{R})$. (A similar theorem holds for oriented case.)

All above spectral sequences are natural with respect to fibred preserving maps and fibrations.
\end{theorem}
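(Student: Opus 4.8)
The plan is to produce the two spectral sequences from standard filtered-complex and fibration machinery and then to identify their abutments. First I would realize the \emph{Cartan spectral sequence} as the spectral sequence of the decreasing, $d$-compatible filtration $C^0\Omega^q(E_\infty)\supset C^1\Omega^q(E_\infty)\supset\cdots$ of the de Rham complex $\Omega^\bullet(E_\infty)$ by the $p$-Cartan forms of Table \ref{important-spaces-associated-to-pde}; this yields $\{E^{p,q}_r(E_k),d_r\}$ with $E^{0,q}_0=\bar\Omega^q(E_\infty)$, $d_0=\bar d$, hence $E^{0,q}_1=\bar H^q(E_\infty)$ and, in the relevant column, $E^{0,n-1}_1\cong\bar H^{n-1}(E_\infty)\cong\mathcal{C}ons(E_k)$, exactly as tabulated in Table \ref{c-spectral-sequences-pde}. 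Since this filtration is exhaustive and, in each total degree $q\le n$, bounded (one has $C\Omega^q(E_\infty)=\Omega^q(E_\infty)$ for $q>n$), the associated spectral sequence converges to the de Rham cohomology $H^\bullet(E_\infty;\mathbb{R})$; and when $E_k$ is formally integrable the projection $\pi_{\infty,k}:E_\infty\to E_k$ induces $H^\bullet(E_\infty;\mathbb{R})\cong H^\bullet(E_k;\mathbb{R})$, by the results on the $\mathcal{C}$-spectral sequence in \cite{PRA1, PRA5}. The homological Cartan spectral sequence is obtained dually.

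Next I would construct the \emph{integral Leray-Serre spectral sequence} by applying the Leray-Serre theorem (in its cohomological form, the homological one being dual) to the Grassmannian fibration $p:I(E_k)\to E_k$ of integral planes, with fibre $F_k$. Hypothesis (i) makes $E_k$ $0$-connected, since it is the continuous surjective image of the path-connected $I(E_k)$; hypothesis (ii) makes the local systems $H^q(F_k;\mathbb{R})$ on $E_k$ simple, so $p$ is orientable with respect to $H^\bullet(-;\mathbb{R})$ and one gets $\{E^{p,q}_r,d_r\}$ with $E_2^{p,q}\cong H^p(E_k;H^q(F_k;\mathbb{R}))$ converging to $H^\bullet(I(E_k);\mathbb{R})$; hypothesis (iii) records that $F_k$ is totally non-homologous to zero, which controls the edge homomorphism. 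Now invoke (iv): $H^q(F_k;\mathbb{R})=0$ for $q>0$ and $H^0(F_k;\mathbb{R})=\mathbb{R}$, so $E_2^{p,q}=0$ for $q>0$ while $E_2^{p,0}=H^p(E_k;\mathbb{R})$; all differentials $d_r$ with $r\ge2$ then vanish for degree reasons, the spectral sequence degenerates at $E_2$, and its abutment is $H^\bullet(I(E_k);\mathbb{R})\cong H^\bullet(E_k;\mathbb{R})$. Comparing with the previous paragraph, both spectral sequences of $E_k$ converge to one and the same space $H^\bullet(E_k;\mathbb{R})\cong H^\bullet(E_\infty;\mathbb{R})$.

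It remains to record naturality. A fibred morphism $\phi:W\to W'$ over $\psi:M\to M'$ carrying $E_k$ into $E'_k$ induces, by pull-back of differential forms, a morphism $\Omega^\bullet(E'_\infty)\to\Omega^\bullet(E_\infty)$ of filtered complexes respecting the Cartan filtrations, hence a morphism of Cartan spectral sequences; it likewise induces a bundle map $I(E_k)\to I(E'_k)$ over $E_k\to E'_k$ (and its oriented analogue $I^+(E_k)\to I^+(E'_k)$), hence a morphism of Leray-Serre spectral sequences with compatible edge homomorphisms, and the same applies to fibrations. The genuine obstacle here is not this bookkeeping but the two convergence assertions behind ``abutment $=H^\bullet(E_\infty)$'': that the Cartan filtration of $\Omega^\bullet(E_\infty)$ is regular enough (locally finite in each degree, exhaustive) for the associated spectral sequence to converge strongly — which rests on $E_\infty$ being a projective limit of finite-dimensional manifolds and on $\bar\Omega^{s+1}(E_\infty)=0$ past the relevant range — and that formal integrability genuinely yields $H^\bullet(E_\infty;\mathbb{R})\cong H^\bullet(E_k;\mathbb{R})$ together with, under (ii)-(iv), $H^\bullet(I(E_k);\mathbb{R})\cong H^\bullet(E_k;\mathbb{R})$. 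These facts are established in \cite{PRA1, PRA4, PRA5} and quoted here in Tables \ref{c-spectral-sequences-pde} and \ref{leray-serre-spectral-sequences-pde}, so the rest is the routine identification of $E_2$-pages and diagram chase indicated above.
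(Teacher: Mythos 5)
Your argument follows essentially the same route as the paper's: construct the Cartan spectral sequence from the $C^p\Omega^\bullet(E_\infty)$ filtration of the de Rham complex on $E_\infty$, obtain the integral Leray-Serre spectral sequence by applying the Leray-Serre theorem to the Grassmannian bundle $I(E_k)\to E_k$, identify the common abutment $H^\bullet(E_\infty;\mathbb{R})\cong H^\bullet(E_k;\mathbb{R})$ using formal integrability plus conditions (i)--(iv), and close with functoriality. The paper's proof itself only recalls these constructions and delegates the detailed verification to \cite{PRA00}; you supply a bit more bookkeeping (the boundedness of the Cartan filtration in each total degree, the path-connectedness of $E_k$ from that of $I(E_k)$, and the $E_2$-collapse under (iv)), but you too ultimately rest the convergence claims on the cited references, so the two proofs coincide in substance.
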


\begin{proof}
A detailed proof of this theorem is given in \cite{PRA00}. Here let us recall only that the Cartan spectral sequence of a PDE $E_k$ is induced by the filtration (\ref{cartan-spectral-sequence-pde-filtration}) of $\Omega^\bullet(E_\infty)\equiv C^\infty(\oplus_{i\ge 0}\Lambda^0_i(E_\infty))$.
\begin{equation}\label{cartan-spectral-sequence-pde-filtration}
    \Omega^\bullet(E_\infty)=C^0\Omega^\bullet(E_\infty)\supset C^1\Omega^\bullet(E_\infty)\supset\cdots\supset C^k\Omega^\bullet(E_\infty)\supset\cdots
\end{equation}
where $C^k\Omega^\bullet(E_\infty)$ is the $k$-th degree of differential ideal $C\Omega^\bullet(E_\infty)=\bigoplus_{i\ge 0}C\Omega^i(E_\infty)$, with $C\Omega^i(E_\infty)\equiv C\Omega^1(E_\infty)\wedge\Omega^{i-1}(E_\infty)$ and $C\Omega^1(E_\infty)$ is the annulator of the Cartan distribution $\mathbf{E}^k_n\subset TE_\infty$. Furthermore, $\overline{\Omega}^i(E_\infty)\equiv\Omega^i(E_\infty)/C\Omega^i(E_\infty)$ and $\overline{d}_i:\overline{\Omega}^i(E_\infty)\to\overline{\Omega}^{i+1}(E_\infty)$ is induced by the exterior differential $d$ on $\Omega^i(E_\infty)$. Put $\overline{H}^i(E_\infty)\equiv\ker(\overline{d}_i)/\IM(\overline{d}_{i-1})$. The Cartan spectral sequence of $E_k$ converges to the de Rham cohomology algebra $H^\bullet(E_\infty)$. If $E_k$ is formally integrable one has $H^\bullet(E_\infty)\cong H^\bullet(E_k)$. In Tab. \ref{c-spectral-sequences-pde} are resumed some remarkable properties of the Cartan spectral sequences.

The (co)homology integral Leray-Serre spectral sequences of a PDE, are obtained as (co)homology Leray-Serre spectral sequences of the fiber bundles $I(E_k)\to E_k$ (resp. $I^+(E_k)\to E_k$). In Tab. \ref{leray-serre-spectral-sequences-pde} are resumed some remarkable properties of the Cartan spectral sequences.
\end{proof}

\section{SPECTRA IN EXOTIC PDE's}\label{spectre-in-exotic-pdes-section}

In this last section results of the previous two sections are utilized to obtain some new results in the geometric theory of PDE's. More precisely, it is introduced the new concept of {\em exotic PDE}, i.e., a PDE having Cauchy integral manifolds with exotic differential structures. Integral (co)bordism groups for such exotic PDE's are characterized by suitable spectra, and local and global existence theorems are obtained. The main result is Theorem \ref{integral-h-cobordism-in-Ricci-flow-pdes} characterizing global solutions in Ricci flow equation, that extends some previous results in \cite{PRA17} also to dimension $n=4$. In fact, we have proved that the smooth Poincar\'e conjecture is true. As a by-product we get also that the smooth $4$-dimensional h-cobordism theorem holds. This extends to the category of smooth manifolds, the well-known result by Freedman obtained in the category of topological manifolds.

\begin{definition}[Exotic PDE's]\label{exotic-pdes}
Let $E_k\subset J^k_n(W)$ be a $k$-order PDE on the fiber bundle $\pi:W\to M$, $\dim W=m+n$, $\dim M=n$. We say that $E_k$ is an {\em exotic PDE} if it admits Cauchy integral manifolds $N\subset E_k$, $\dim N=n-1$, such that one of the following two conditions is verified.

{\em(i)} $\Sigma^{n-2}\equiv\partial N$ is an exotic sphere of dimension $(n-2)$, i.e. $\Sigma^{n-2}$ is homeomorphic to $S^{n-2}$, ($\Sigma^{n-2}\thickapprox S^{n-2}$) but not diffeomorphic to $S^{n-2}$, ($\Sigma^{n-2}\not\cong S^{n-2}$).

{\em(ii)} $\varnothing=\partial N$ and $N\thickapprox S^{n-1}$, but $N\not\cong S^{n-1}$.\footnote{The following Refs. \cite{BOTT-MILNOR, CERF, CHEEGER, FERRY-RANICKI-ROSENBERG, HIRSCH1, HIRSCH2, KAWA, KERVAIRE-MILNOR, KIRBY-SIEBENMAN, KLING, MAZUR, MILNOR1, MILNOR2, MILNOR3, MILNOR3, MOISE1, MOISE2, MUNKRES, NASH, OHKAWA, RADO, SCHOEN-YAU, SMALE1, SMALE2, SMALE3, SULLIVAN, TOGNOLI, TUSCH, WHITEHEAD, WEYL} are important background for differential structures and exotic spheres.}
\end{definition}

\begin{example}
The Ricci flow equation is an exotic PDE for $n$-dimensional Riemannian manifolds of dimension $n\ge 7$. (See \cite{PRA4, PRA14, PRA16, PRA17}.) (For complementary informations on the Ricci flow equation see also the following Refs. \cite{CHOW-CHU-GLIE-GUE-ISE-IVEY-KNOP-LU-LUO-NI-1, CHOW-CHU-GLIE-GUE-ISE-IVEY-KNOP-LU-LUO-NI-2, HAMIL1, HAMIL2, HAMIL3, HAMIL4, HAMIL5, PER1, PER2}.)
\end{example}

\begin{example}
The Navier-Stokes equation can be encoded on the affine fiber bundle $\pi:W\equiv M\times\mathbf{I}\times\mathbb{R}^2\to M$, $(x^\alpha,\dot x^i,p,\theta)_{0\le\alpha\le 3,1\le i\le 3}\mapsto(x^\alpha)$. (See \cite{PRA1, PRA6, PRA7, PRA9, PRA11, PRA14}.) Therefore, Cauchy manifolds are $3$-dimensional space-like manifolds. For such dimension do not exist exotic spheres. Therefore, the Navier-Stokes equation cannot be an exotic PDE. Similar considerations hold for PDE's of the classical continuum mechanics.
\end{example}

\begin{example}
The $n$-d'Alembert equation on $\mathbb{R}^n$ can be an exotic PDE for $n$-dimensional Riemannian manifolds of dimension $n\ge 7$. (See \cite{PRA18}.)
\end{example}

\begin{example}
The Einstein equation can be an exotic PDE for $n$-dimensional space-times of dimension $n\ge 7$. Similar considerations hold for generalized Einstein equations like Einstein-Maxwell equation, Einstein-Yang-Mills equation and etc.
\end{example}

\begin{theorem}[Integral bordism groups in exotic PDE's and stability]\label{bordism-groups-in-exotic-pdes-stability}
Let $E_k\subset J^k_n(W)$ be an exotic formally integrable and completely integrable PDE on the fiber bundle $\pi:W\to M$, $\dim W=n+m$, $\dim M=n$, such that $\dim E_k\ge 2n+1$, $\dim g_k\not=0$ and $\dim g_{k+1}\not=0$.
Then there exists a spectrum $\Xi_s$ such that for the singular integral $p$-(co)bordism groups can be expressed by means of suitable homotopy groups as reported in {\em(\ref{singular-integral-p-co-bordism-groups-exotic-pdes})}.
\begin{equation}\label{singular-integral-p-co-bordism-groups-exotic-pdes}
    \left\{
    \begin{array}{l}
      \Omega_{p,s}^{E_k}=\mathop{\lim}\limits_{r\to\infty}\pi_{p+r}(E_k^+\wedge\Xi_r)\\
      \Omega^{p,s}_{E_k}=\mathop{\lim}\limits_{r\to\infty}[S^rE_k^+,\Xi_{p+r}]\\
      \end{array}
    \right\}_{p\in\{0,1,\cdots,n-1\}}
\end{equation}

Furthermore, the singular integral bordism group for admissible smooth closed compact Cauchy manifolds, $N\subset E_k$, is given in {\em(\ref{singular-integral-bordism-group-n-1})}.
\begin{equation}\label{singular-integral-bordism-group-n-1}
    \Omega_{n-1,s}^{E_k}\cong\bigoplus_{p+q=n}H_p(W;\mathbb{Z}_2)\otimes_{\mathbb{Z}_2}\Omega_q.
\end{equation}
In the {\em homotopy equivalence full admissibility hypothesis}, i.e., by considering admissible only $(n-1)$-dimensional smooth Cauchy integral manifolds identified with homotopy spheres,
one has $\Omega^{E_k}_{n-1,s}=0$, when the space of conservation laws is not zero. So that $E_k$ becomes an extended $0$-crystal PDE. Then, there exists a global singular attractor, in the sense that all Cauchy manifolds, identified with homotopy $(n-1)$-spheres, bound singular manifolds.

Furthermore, if in $W$ we can embed all the homotopy $(n-1)$-spheres, (i.e. $\dim W\ge 2n+1$, and all such manifolds identify admissible smooth $(n-1)$-dimensional Cauchy manifolds of $E_k$), then two of such Cauchy manifolds bound a smooth solution iff they are diffeomorphic and one has the following bijective mapping: $\Omega^{E_k}_{n-1}\leftrightarrow\Theta_{n-1}$.\footnote{For the definition of the groups $\Theta_n$, see \cite{PRA17}.}

Moreover, if in $W$ we cannot embed all homotopy $(n-1)$-spheres, but only $S^{n-1}$, then in the {\em sphere full admissible hypothesis}, i.e., by considering admissible only $(n-1)$-dimensional smooth Cauchy integral manifolds identified with $S^{n-1}$, then $\Omega^{E_k}_{n-1}=0$. Therefore $E_k$ becomes a $0$-crystal PDE and there exists a global smooth attractor, in the sense that two of such smooth Cauchy manifolds, identified with $S^{n-1}$ bound smooth manifolds. Instead, two Cauchy manifolds identified with exotic $(n-1)$-spheres bound by means of singular solutions only.

All above smooth or singular solutions are unstable. Smooth solutions can be stabilized.
\end{theorem}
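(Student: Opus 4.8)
The plan is to deduce everything from the structural results on integral bordism of PDE's already available, specialized under the exotic hypothesis; the exotic character of the Cauchy data enters only at the very end. First I would apply Theorem \ref{integral-spectrum-pdes}: since $E_k$ is formally integrable and completely integrable with $\dim E_k\ge 2n+1$, the Thom--Pontrjagin type construction of \cite{PRA1} produces the singular integral spectrum $\{\Xi_s\}$ and hence the isomorphisms {\em(\ref{singular-integral-p-co-bordism-groups-exotic-pdes})} for every $p\in\{0,1,\cdots,n-1\}$. Then formula {\em(\ref{singular-integral-bordism-group-n-1})} is obtained from Theorem \ref{formal-integrability-integral-bordism-groups}: $\dim E_k\ge 2n+1$ yields the Quillen decomposition of $\Omega^{E_k}_{n-1,w}$, while the non-vanishing of the symbols, $\dim g_k\not=0$ and $\dim g_{k+1}\not=0$, forces $K^{E_k}_{n-1,s,w}=0$, so that $\Omega^{E_k}_{n-1,s}\cong\Omega^{E_k}_{n-1,w}$ and the right-hand side of {\em(\ref{singular-integral-bordism-group-n-1})} follows.

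For the homotopy equivalence full admissibility hypothesis I would argue that every admissible Cauchy manifold is now a homotopy $(n-1)$-sphere $\Sigma^{n-1}$, hence a $\mathbb{Z}_2$-homology sphere with $H_\bullet(\Sigma^{n-1};\mathbb{Z}_2)\cong H_\bullet(S^{n-1};\mathbb{Z}_2)$; consequently its class under the Quillen isomorphism, together with all its Stiefel--Whitney and integral characteristic numbers, coincides with that of $S^{n-1}$. Since $S^{n-1}=\partial D^n$, Pontrjagin's theorem makes those numbers vanish, and by parts 1) and 4) of Theorem \ref{necessary-sufficient-conditions-existence-integral-quantum-solutions-bording} --- applicable because homotopy spheres are orientable and the space of conservation laws $\mathcal{I}(E_k)^{n-1}$ is assumed non-zero, so that the integral characteristic numbers are a complete bordism invariant --- the class $[\Sigma^{n-1}]_s$ is the trivial one, whence $\Omega^{E_k}_{n-1,s}=0$. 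This is precisely the assertion that $E_k$ is an extended $0$-crystal PDE, with a global singular attractor: every such Cauchy manifold bounds a singular solution.

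Next I would treat the smooth bordism statement. When every homotopy $(n-1)$-sphere embeds in $W$ and thus identifies an admissible Cauchy manifold, the assignment $[\Sigma^{n-1}]_{E_k}\mapsto[\Sigma^{n-1}]\in\Theta_{n-1}$ is well defined and surjective; diffeomorphic spheres are $E_k$-bordant through the prolongation of a cylinder solution, so the map factors through diffeomorphism classes, while conversely one shows that a \emph{smooth} solution $V$ with $\partial V=\Sigma^{n-1}_1\sqcup\Sigma^{n-1}_2$ restricts, on its projection, to a smooth simply connected cobordism which is an $h$-cobordism, forcing $\Sigma^{n-1}_1\cong\Sigma^{n-1}_2$ by the smooth $h$-cobordism theorem in the pertinent dimension $n-1\ge 5$ (\cite{SMALE1, SMALE2}); therefore $\Omega^{E_k}_{n-1}\leftrightarrow\Theta_{n-1}$. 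If instead only $S^{n-1}$ embeds in $W$, then under the sphere full admissibility hypothesis $\Omega^{E_k}_{n-1}$ contains only the class of $S^{n-1}=\partial D^n$, which bounds the prolongation of $D^n$ as a smooth solution, so $\Omega^{E_k}_{n-1}=0$, $E_k$ is a $0$-crystal PDE with a global smooth attractor, and an exotic $(n-1)$-sphere, not being the boundary of any smooth disc in $W$, can be bounded only by a singular solution. Finally, instability of all these solutions, and stabilization of the smooth ones, follows as in the companion papers \cite{PRA14, PRA16, PRA17}: the linearization of the flow along such a solution has non-empty unstable spectrum in the natural Sobolev setting, whereas averaging over a suitable bordism (the quantum regularization) yields a stable representative.

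The step I expect to be the main obstacle is the assertion, used above, that a \emph{smooth} $E_k$-solution joining two homotopy spheres is necessarily an $h$-cobordism: since every homotopy sphere is null-cobordant in the ordinary sense, ordinary cobordism cannot detect $\Theta_{n-1}$, so one must genuinely exploit the integrality constraint imposed by $E_k$ on $V$ --- this is the point where the exotic structure of the Cauchy data must be reconciled with the geometry of $E_k$, and it is precisely this step, combined with the analysis of the Ricci flow equation, that will be invoked in Theorem \ref{integral-h-cobordism-in-Ricci-flow-pdes} and in the proof of the smooth Poincar\'e conjecture.
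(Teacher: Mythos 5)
Your overall skeleton tracks the paper's own proof: apply Theorem \ref{integral-spectrum-pdes} for (\ref{singular-integral-p-co-bordism-groups-exotic-pdes}), apply Theorem \ref{formal-integrability-integral-bordism-groups} (with the symbol conditions $g_k,g_{k+1}\not=0$ forcing $K^{E_k}_{n-1,s,w}=0$) for (\ref{singular-integral-bordism-group-n-1}), then examine the admissibility hypotheses, and identify the $h$-cobordism step as the hard point. Where the paper handles that step by invoking the same claim for the Ricci flow from \cite{PRA14,PRA17,AG-PRA1}, you acknowledge it and defer to Theorem \ref{integral-h-cobordism-in-Ricci-flow-pdes}; that is fair.

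The genuine gap is in your derivation that $\Omega^{E_k}_{n-1,s}=0$ under the homotopy equivalence full admissibility hypothesis. You argue: a homotopy $(n-1)$-sphere is a $\mathbb{Z}_2$-homology sphere, hence its Quillen class, together with ``all its Stiefel--Whitney and integral characteristic numbers,'' coincides with that of $S^{n-1}$, and these vanish by Pontrjagin's theorem. This conflates two distinct families of invariants. Pontrjagin's theorem makes the \emph{Stiefel--Whitney numbers} of a homotopy sphere vanish, which controls the $\Omega_\bullet$-factor of the Quillen decomposition, but it says nothing about the \emph{integral characteristic numbers} $i[N]=\int_N\alpha$, $[\alpha]\in\mathcal{I}(E_k)^{n-1}$, which are pairings with conservation laws of the specific PDE and depend on how $N$ sits inside $E_\infty$, not merely on the diffeomorphism (or $\mathbb{Z}_2$-homology) type of $N$. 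Two distinct admissible Cauchy homotopy spheres could a priori have different values of $\int_N\alpha$, and Theorem \ref{necessary-sufficient-conditions-existence-integral-quantum-solutions-bording} then gives you nothing. Moreover, even the Quillen class of $(\pi_{k,0}(N),\iota)\in\underline{\Omega}_{n-1}(W)$ is not determined by the intrinsic $\mathbb{Z}_2$-homology of $N$: its $H_\bullet(W;\mathbb{Z}_2)$-components depend on the homology class $\iota_*[N]$ in $W$, which different embeddings can change. The paper does not claim to derive equality of integral characteristic numbers from topology alone; it asserts it outright and cites the Ricci flow computations in \cite{PRA14,PRA17,AG-PRA1}, where that equality is established by exhibiting the explicit conservation laws and showing the relevant pairings do not distinguish homotopy spheres. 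Your argument would need a comparable PDE-specific input (or at least a reduction to those results) rather than a purely topological one.

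A secondary remark: for the bijection $\Omega^{E_k}_{n-1}\leftrightarrow\Theta_{n-1}$ you invoke the smooth $h$-cobordism theorem ``in the pertinent dimension $n-1\ge 5$.'' The paper's statement is not so restricted, and the companion Theorem \ref{integral-h-cobordism-in-Ricci-flow-pdes} and Lemma \ref{smooth-3-dimensional-h-cobordism-theorem} are precisely concerned with extending the relevant $h$-cobordism conclusion down to dimensions $3$ and $4$. So your appeal to the classical $h$-cobordism theorem only covers the high-dimensional half of what is claimed; the low-dimensional cases still have to be routed through the smooth Poincar\'e conjecture machinery of Section 4.
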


\begin{proof}
The relations (\ref{singular-integral-p-co-bordism-groups-exotic-pdes}) are direct applications of Theorem \ref{integral-spectrum-pdes}. Furthermore, under the hypotheses of theorem we can apply Theorem \ref{formal-integrability-integral-bordism-groups}. Thus we get directly (\ref{singular-integral-bordism-group-n-1}). Furthermore, under the homotopy equivalence full admissibility hypothesis, all admissible smooth $(n-1)$-dimensional Cauchy manifolds of $E_k$, are identified with all possible homotopy $(n-1)$-spheres. Moreover, all such Cauchy manifolds have same integral characteristic numbers. (The proof is similar to the one given for Ricci flow PDE's in \cite{PRA14, PRA17, AG-PRA1}.) Therefore, all such Cauchy manifolds belong to the same singular integral bordism class, hence $\Omega^{E_k}_{n-1,s}=0$. Thus in such a case $E_k$ becomes an extended $0$-crystal PDE. When $W\ge 2n+1$, all homotopy $(n-1)$-spheres can  be embedded in $W$ and so that in each smooth integral bordism class of $\Omega^{E_k}_{n-1}$ are contained homotopy $(n-1)$-spheres. Then, since two homotopy $(n-1)$-spheres bound a smooth solution of $E_k$ iff they are diffeomorphic, it follows that one has the bijection (but not isomorphism) $\Omega^{E_k}_{n-1}\cong\Theta_{n-1}$. In the sphere full admissibility hypothesis we get $\Omega^{E_k}_{n-1}=0$ and $E_k$ becomes a $0$-crystal PDE.

Let us assume now, that in $W$ we can embed only $S^{n-1}$ and not all exotic $(n-1)$-spheres. Then smooth Cauchy $(n-1)$-manifolds identified with exotic $(n-1)$-spheres are necessarily integral manifolds with Thom-Boardman singularities, with respect to the canonical projection $\pi_{k,0}:E_k\to W$. So solutions passing through such Cauchy manifolds are necessarily singular solutions. In such a case smooth solutions bord Cauchy manifolds identified with $S^{n-1}$, and two diffeomorphic Cauchy manifolds identified with two exotic $(n-1)$-spheres belonging to the same class in $\Theta_{n-1}$ cannot bound smooth solutions. Finally, if also $S^{n-1}$ cannot be embedded in $W$, then there are not smooth solutions bording smooth Cauchy $(n-1)$-manifolds in $E_k$, identified with $S^{n-1}$ or $\Sigma^{n-1}$ (i.e., exotic $(n-1)$-sphere). In other words $\Omega^{E_k}_{n-1}$ is not defined in such a case !
\end{proof}

We are ready to state the main result of this paper that completes Theorem 4.59 in \cite{PRA17}.

\begin{theorem}[Integral h-cobordism in Ricci flow PDE's]\label{integral-h-cobordism-in-Ricci-flow-pdes}
The Ricci flow equation for $n$-dimensional Riemannian manifolds, admits that starting from a $n$-dimensional sphere $S^n$, we can dynamically arrive, into a finite time, to any $n$-dimensional homotopy sphere $M$. When this is realized with a smooth solution, i.e., solution with characteristic flow without singular points, then $S^n\cong M$. The other homotopy spheres $\Sigma^n$, that are homeomorphic to $S^n$ only, are reached by means of singular solutions.

In particular, for $1\le n\le 6$,  one has also that any smooth $n$-dimensional homotopy sphere $M$ is diffeomorphic to $S^n$, $M\cong S^n$. In particular, the case $n=4$, is related to the proof that the smooth Poincar\'e  conjecture is true.
\end{theorem}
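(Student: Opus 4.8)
The plan is to realise the Ricci flow equation as an exotic PDE in the sense of Definition \ref{exotic-pdes} and then to extract the statement from Theorem \ref{bordism-groups-in-exotic-pdes-stability}. First I would encode $\partial_t g=-2\,\mathrm{Ric}(g)$, for $n$-dimensional Riemannian manifolds, on a suitable affine fiber bundle $\pi:W\to M$ with $\dim M=n+1$, where $M$ is an $(n+1)$-dimensional space-time with a distinguished time direction, obtaining a second order PDE $(RF)\subset J^2_{n+1}(W)$ whose space-like admissible Cauchy integral manifolds are $n$-dimensional Riemannian manifolds. Exactly as in \cite{PRA4, PRA14, PRA16, PRA17} one checks that $(RF)$ is formally integrable and completely integrable (the geometric reformulation of DeTurck--Hamilton short-time existence), that $\dim(RF)\ge 2(n+1)+1$, and that its symbols satisfy $\dim g_2\not=0$, $\dim g_3\not=0$. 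Hence, with the parameter ``$n$'' of Theorem \ref{bordism-groups-in-exotic-pdes-stability} taken equal to our ``$n+1$'', its hypotheses hold and $(RF)$ is an exotic PDE for $n\ge 7$.

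Next I would pass to the homotopy equivalence full admissibility hypothesis, declaring admissible precisely the closed $n$-dimensional space-like Cauchy manifolds $N\subset(RF)$ that are homotopy $n$-spheres, and check --- the computation being that of \cite{PRA14, PRA17, AG-PRA1}, using that a homotopy sphere has the rational (co)homology of $S^n$ --- that all such $N$ carry equal integral characteristic numbers of order $2$. By Theorem \ref{bordism-groups-in-exotic-pdes-stability} this forces $\Omega^{(RF)}_{n,s}=0$, so $(RF)$ is an extended $0$-crystal PDE and any two homotopy $n$-spheres $N_0,N_1$ bound a (possibly singular) solution $V\subset(RF)_\infty$ with $\partial V=N_0\sqcup N_1$. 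Taking $N_0=S^n$, $N_1=M$ and reparametrising the time coordinate on $V$, this solution is a Ricci flow starting at $S^n$ and reaching $M$ in finite time, which proves the first assertion.

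Then I would treat the smooth case: if the bording solution $V$ has characteristic (time) flow free of singular points, then $\pi_{2,0}|_V$ is an embedding, $V$ is a genuine smooth $(n+1)$-dimensional cobordism from $S^n$ to $M$, and the flow of the characteristic vector field supplies a smooth isotopy through diffeomorphisms $S^n\to M$, whence $S^n\cong M$. Conversely, a homotopy $n$-sphere only homeomorphic to $S^n$ admits no such nonsingular characteristic flow and is reached solely through solutions with Thom--Boardman singularities relative to $\pi_{2,0}$; enlarging the fiber dimension $m$ so that every homotopy $n$-sphere embeds in $W$, this produces the bijection $\Omega^{(RF)}_n\leftrightarrow\Theta_n$ of Theorem \ref{bordism-groups-in-exotic-pdes-stability}.

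Finally, for $1\le n\le 6$ the additional claim is $\Theta_n=0$. For $n=1,2,3$ this is classical (Moise), and for $n=5,6$ it is Kervaire--Milnor; the remaining case $n=4$ is the smooth $4$-dimensional Poincar\'e conjecture, which I would prove separately in Lemma \ref{the-smooth-poincare-conjecture} by upgrading, via the $0$-crystal structure above together with Freedman's topological h-cobordism theorem, the topological h-cobordism between $S^4$ and a homotopy $4$-sphere $M$ to a smooth bording solution of $(RF)$, which by the previous step forces $M\cong S^4$. The hard part is exactly this last point: showing that in dimension four the singular solution reaching an a priori exotic $\Sigma^4$ can in fact be taken smooth. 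None of the bordism-group computations settle this --- they only yield $\Omega^{(RF)}_4\leftrightarrow\Theta_4$, not $\Theta_4=0$ --- so the argument must rely on the exotic-PDE desingularisation of Lemma \ref{the-smooth-poincare-conjecture}, which is where the genuinely new input enters, after which Corollary \ref{smooth-4-dimensional-h-cobordism-theorem} follows.
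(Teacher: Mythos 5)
Your overall architecture is right and matches the paper: the bulk of the theorem is inherited from Theorem 4.59 of \cite{PRA17} via the encoding of the Ricci flow as an exotic PDE satisfying the hypotheses of Theorem \ref{bordism-groups-in-exotic-pdes-stability}, the full admissibility hypothesis gives $\Omega^{(RF)}_{n,s}=0$ and the bijection with $\Theta_n$, and the single new ingredient that the present paper must supply is the case $n=4$, i.e.\ Lemma \ref{the-smooth-poincare-conjecture}. You also correctly supply the classical inputs for $1\le n\le 3$ (Moise) and $n=5,6$ (Kervaire--Milnor), which the paper leaves implicit.

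The problem is that you do not actually prove Lemma \ref{the-smooth-poincare-conjecture}, and the sketch you offer is circular. You propose to ``upgrade, via the $0$-crystal structure together with Freedman's topological h-cobordism theorem, the topological h-cobordism between $S^4$ and a homotopy $4$-sphere $M$ to a smooth bording solution of $(RF)$, which by the previous step forces $M\cong S^4$.'' But the ability to replace the topological h-cobordism by a smooth bording solution \emph{is} the smooth $4$-dimensional h-cobordism theorem (Corollary \ref{smooth-4-dimensional-h-cobordism-theorem}), which is what one is trying to establish; the $0$-crystal computation only gives $\Omega^{(RF)}_{4,s}=0$ and the bijection $\Omega^{(RF)}_4\leftrightarrow\Theta_4$, neither of which says $\Theta_4=0$. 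So at the decisive step the proposal invokes the conclusion as a tool. The paper's actual Lemma \ref{the-smooth-poincare-conjecture} does something quite different and entirely non-PDE-theoretic: it argues that an exotic $4$-disk (Mazur manifold) would have to have boundary a homology $3$-sphere which, by Lemmas \ref{properties-homology-3-spheres} and \ref{structures-of-homology-3-spheres} and Lemma \ref{smooth-3-dimensional-h-cobordism-theorem}, must be diffeomorphic to $S^3$; it then compactifies an exotic $\mathbb{R}^4$ with compactly supported exoticity to $\Sigma^4$, uses Freedman's topological h-cobordism to get $V\thickapprox D^5$ with $\partial V=\Sigma^4$, and observes that $V\not\cong D^5$ would force an exotic $D^5$ and hence an exotic $S^5$, which does not exist; it finally argues that the exotic domain $K$ collapses to $\{\infty\}$ in the compactification, so $\Sigma^4\cong S^4$. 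You should either reproduce an argument of this kind or be explicit that you are importing Lemma \ref{the-smooth-poincare-conjecture} as a black box; as written, the ``upgrade'' step leaves a genuine gap at exactly the place you yourself flag as ``where the genuinely new input enters.''
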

\begin{figure}[h]
\centering
\centerline{\includegraphics[height=4cm]{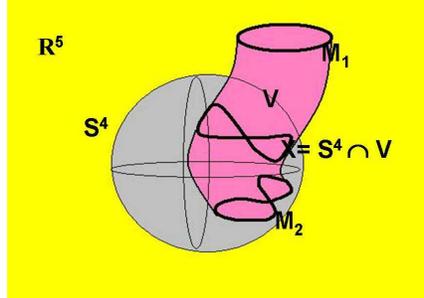}}
\caption{Embeddings of $3$-dimensional homotopy spheres in $S^4$ and smooth $3$-dimensional h-cobordism.}
\label{inside-outside-embeddings-3-homotopy-sphere-4-sphere}
\end{figure}

\begin{proof}
Let us consider some lemmas and definitions.
\begin{lemma}[Smooth $3$-dimensional h-cobordism theorem]\label{smooth-3-dimensional-h-cobordism-theorem}
Let $N_1$ and $N_2$ be $3$-dimensional smooth homotopy spheres. Then there exists a trivial smooth h-cobordism $V$, i.e., a $4$-dimensional manifold $V$, such that the following conditions are satisfied:

{\em(i)} $\partial V=N_1\sqcup N_2$;

{\em(ii)} The inclusions $N_i\hookrightarrow V$, $i=1,2$, are homotopy equivalences;

{\em(iii)} $V$ is diffeomorphic to the smooth manifold $N_1\times I$.
\end{lemma}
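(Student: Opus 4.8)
The plan is to prove the smooth $3$-dimensional h-cobordism theorem (Lemma \ref{smooth-3-dimensional-h-cobordism-theorem}) by combining the geometric theory of PDE's developed in the previous sections with the known validity of the Poincar\'e conjecture in dimension three. First I would recall that, by the resolution of the $3$-dimensional Poincar\'e conjecture (via the Ricci flow equation, as recalled in the companion paper \cite{PRA17} and as specialized by Theorem \ref{bordism-groups-in-exotic-pdes-stability} above), every smooth $3$-dimensional homotopy sphere is diffeomorphic to $S^3$; in particular $N_1\cong S^3\cong N_2$. Moreover, since in dimension $3$ there are no exotic spheres, $\Theta_3=0$, so the bijective mapping $\Omega^{E_k}_{n-1}\leftrightarrow\Theta_{n-1}$ of Theorem \ref{bordism-groups-in-exotic-pdes-stability}, applied with $n=4$ to the Ricci flow PDE encoded on a fiber bundle $\pi:W\to M$ with $\dim M=4$ and $\dim W\ge 9$, forces $\Omega^{E_k}_{3}=0$. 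Thus $E_k$ is a $0$-crystal PDE in the sphere full admissibility hypothesis, and any admissible smooth closed $3$-dimensional Cauchy manifold bounds a smooth solution; in particular $N_1\sqcup N_2$ bounds such a $V\subset E_k\subset J^k_n(W)$.

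Next I would promote this bounding manifold $V$ to a genuine h-cobordism with the required product structure. Condition (i), $\partial V=N_1\sqcup N_2$, is immediate from the construction of $V$ as a solution bording $N_1$ with $N_2$. For condition (ii), I would argue that since $N_1$ and $N_2$ are homotopy $3$-spheres and $V$ is a compact smooth $4$-manifold with $\partial V=N_1\sqcup N_2$, one may, after possibly modifying $V$ by surgeries below the middle dimension (which is permitted because the relevant obstructions live in homotopy and bordism groups that vanish here, using Theorem \ref{realizations-cycles-theorems} and Lemma \ref{lemma-important-bordism-groups}), arrange the inclusions $N_i\hookrightarrow V$ to be homotopy equivalences, i.e. $V$ is an h-cobordism. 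For condition (iii), I would invoke the fact that $N_1\cong S^3$ together with the standard gluing argument: a trivial h-cobordism between $3$-spheres, once one knows each boundary component is a standard $S^3$, is diffeomorphic to $S^3\times I$; this is where the smooth Poincar\'e statement in dimension $3$ (equivalently, the triviality of the group $\Theta_3$ and of $\pi_0(\mathrm{Diff}(S^3))$ in the relevant sense) does the essential work, and one concludes $V\cong N_1\times I$.

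The main obstacle I anticipate is step (iii): passing from \emph{some} smooth cobordism $V$ provided by the PDE machinery to the \emph{product} cobordism $N_1\times I$. The PDE-theoretic results guarantee existence of a smooth solution bording the two Cauchy manifolds and, via the $0$-crystal property, that all such Cauchy manifolds lie in a single integral bordism class, but they do not by themselves say the bordism is a product. To close this gap I would either (a) appeal directly to the classical fact that in dimension $4$ an h-cobordism between homotopy $3$-spheres which are both diffeomorphic to $S^3$ is a product — a consequence of Cerf's theorem $\Gamma_4=0$ together with the triviality of pseudo-isotopies of $S^3$ — or (b) note that the figure \ref{inside-outside-embeddings-3-homotopy-sphere-4-sphere} construction, embedding $3$-dimensional homotopy spheres inside $S^4$ and using the inside/outside decomposition, realizes $V$ explicitly as a collar $N_1\times I$. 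Either route reduces everything already known in dimension three, so no genuinely new low-dimensional input is needed; the novelty is the packaging within the exotic PDE framework.

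With Lemma \ref{smooth-3-dimensional-h-cobordism-theorem} in hand, the remainder of the proof of Theorem \ref{integral-h-cobordism-in-Ricci-flow-pdes} proceeds as follows. One runs the Ricci flow equation as an exotic PDE with $n=4$ and applies Theorem \ref{bordism-groups-in-exotic-pdes-stability}: under the homotopy equivalence full admissibility hypothesis $\Omega^{E_k}_{3,s}=0$, so every $4$-dimensional homotopy sphere $M$ is reached dynamically from $S^4$ by a (possibly singular) solution, and by the smooth part of that theorem the solution is smooth precisely when $M\cong S^4$. To deduce the smooth $4$-dimensional Poincar\'e conjecture (Lemma \ref{the-smooth-poincare-conjecture}) one takes an arbitrary smooth homotopy $4$-sphere $M$, notes that it is the boundary-connected-sum constituent reached by such a flow, and uses Lemma \ref{smooth-3-dimensional-h-cobordism-theorem} applied to a $3$-dimensional equatorial slice together with the $0$-crystal property to conclude the characteristic flow can be taken without singular points, whence $M\cong S^4$. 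The cases $1\le n\le 6$ follow from $\Theta_1=\Theta_2=\Theta_3=\Theta_4=\Theta_5=\Theta_6=0$, completing Theorem 4.59 of \cite{PRA17} in the outstanding case $n=4$.
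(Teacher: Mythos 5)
Your proposal identifies the correct central difficulty — that the PDE machinery gives \emph{some} smooth $4$-manifold $V$ bording $N_1$ and $N_2$ but does not by itself make $V$ a product — yet your two proposed closures for that gap are not what the paper does, and both are problematic. The paper's proof is very short: it simply cites \cite{PRA14}, where (it is claimed) the $N_i$ are realized as smooth Cauchy data of the Ricci flow equation and the smooth solutions $V'$ bording them are \emph{necessarily} trivial h-cobordisms, the point being the dynamical one that a nonsingular Ricci flow is a one-parameter family of metrics on a fixed underlying $3$-manifold, so the world-tube $V'$ is tautologically $N_1\times I$. No surgery below the middle dimension, no handle cancellation, and no appeal to Cerf or to any Sch\"onflies picture enters the paper's argument. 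You have replaced that dynamical observation by classical topological tools, and that substitution does not work.

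Concretely, your option (a) claims that Cerf's $\Gamma_4=0$ plus pseudo-isotopy triviality for $S^3$ yields the smooth $4$-dimensional h-cobordism theorem between copies of $S^3$. That is not a classical fact. $\Gamma_4=0$ and pseudo-isotopy statements control uniqueness of product structures (isotopy classes of diffeomorphisms, gluing ambiguities), not their existence; producing a product structure on an h-cobordism requires geometric cancellation of algebraically cancelling handles, and the Whitney trick is exactly what fails in the ambient dimension $4$. Indeed, the statement that every smooth h-cobordism $V^4$ between two copies of $S^3$ is $S^3\times I$ is essentially equivalent to the smooth $4$-dimensional Poincar\'e conjecture (cap one end with $D^4$ to get a compact contractible $4$-manifold with boundary $S^3$; standardness of all such is the Poincar\'e problem), which is precisely the new claim the paper is building toward via Lemma~\ref{the-smooth-poincare-conjecture}. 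So option (a) implicitly assumes the conclusion. Your option (b) is circular within the paper's own architecture: the inside/outside embedding picture (Figure~\ref{inside-outside-embeddings-3-homotopy-sphere-4-sphere}) is used in the proof of the Sch\"onflies Lemma~\ref{smooth-4-dimensional-schoenflies-problem}, which itself invokes Lemma~\ref{smooth-3-dimensional-h-cobordism-theorem}; you cannot use it to establish this lemma. Your concluding remark that ``no genuinely new low-dimensional input is needed'' is therefore at odds with the paper's own logic, where Lemma~\ref{smooth-3-dimensional-h-cobordism-theorem} is the novel and load-bearing input supplied by the Ricci-flow/PDE theory of \cite{PRA14}, not a repackaging of classical facts.

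Separately, the surgery step you propose to secure condition (ii) is also absent from the paper's proof; the h-cobordism property of the bounding solution is asserted directly from \cite{PRA14}, and in any case once one accepts that a smooth solution is $N_1\times I$, condition (ii) is automatic and no modification of $V$ is required. In short: the gap you flagged is the right one, but the paper closes it by the intrinsic product structure of a smooth flow, and the alternative closures you propose are either unavailable or circular.
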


\begin{proof}
This lemma is a direct consequence of the Poincar\'e conjecture as proved by A. Pr\'astaro in \cite{PRA14}. In fact, there it is proved that $N_i$, $i=1,2$, can be identified with two smooth Cauchy manifolds of the Ricci flow equation $(RF)$, bording singular solutions $V$, that are h-cobordisms, but also smooth solutions $V'$ that are necessarily trivial h-bordisms. (See also \cite{PRA17}.) As a by-product it follows that $V'\cong N_i\times I$.
\end{proof}

\begin{lemma}[Smooth $4$-dimensional generalized Jordan-Brouwer-Sch\"onflies problem]\label{smooth-4-dimensional-schoenflies-problem}
A smoothly (piecewise-linearly) embedded $3$-sphere in the $4$-sphere $S^4$ bounds a smooth (piecewise-linear) $4$-disk $D^4\subset S^4$: any embedded $3$-sphere in $S^4$ separates it into two components having the same homology groups of a point.\footnote{It is well known that the Sch\"onflies problem is related to extensions of the Jordan-Brouwer theorem. (See, e.g., \cite{MAZUR}.) Let us emphasize that the lemma does not necessitate to work in the category of topological spaces. In fact, it is well known that topological embeddings $f:S^2\to S^3$ do not necessarily have simply connected the two separate components of $S^3\setminus f(S^2)$. In fact this is just the case of the {\em Alexander horned sphere} $\Sigma^2\subset S^3$ \cite{ALEXANDER}.}
\end{lemma}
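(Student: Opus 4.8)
The plan is to prove Lemma~\ref{smooth-4-dimensional-schoenflies-problem} by reducing it to the smooth $3$-dimensional $h$-cobordism already established in Lemma~\ref{smooth-3-dimensional-h-cobordism-theorem}, which in turn rests on the $3$-dimensional Poincar\'e conjecture as proved in \cite{PRA14}. First I would fix a smooth embedding $f:S^3\hookrightarrow S^4$ and analyse the complement $S^4\setminus f(S^3)$. By Alexander duality (valid in the smooth category since a smoothly embedded sphere has a tubular neighbourhood homeomorphic to $S^3\times(-1,1)$, hence is locally flat), $f(S^3)$ separates $S^4$ into exactly two open connected pieces $U_1,U_2$, and $\widetilde H_*(U_i;\mathbb{Z})=0$ for all $*$: each $U_i$ has the homology of a point. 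Taking closures $W_i=\overline{U_i}$, one obtains two compact smooth $4$-manifolds with common boundary $\partial W_1=\partial W_2=f(S^3)$, and $S^4=W_1\cup_{f(S^3)}W_2$.

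The core step is to show that each $W_i$ is diffeomorphic to the standard $4$-disk $D^4$. Here I would invoke a Mayer--Vietoris/van Kampen argument to see that $W_i$ is simply connected (the complement of a locally flat $S^3$ in $S^4$ is simply connected, since $\pi_1$ of $S^4$ is trivial and the separating sphere is simply connected, so each side must itself be simply connected), and then use the homology computation to conclude $W_i$ is a contractible smooth $4$-manifold with boundary a homotopy $3$-sphere $\Sigma_i=f(S^3)$. Now delete a small open smooth $4$-ball from the interior of $W_i$: the resulting manifold $V_i$ has two boundary components, $\Sigma_i$ and a standard $S^3$, and by excision and the long exact sequence of the pair $(W_i, V_i)$ together with contractibility of $W_i$, the inclusions of both boundary components into $V_i$ are homotopy equivalences. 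Thus $V_i$ is a smooth $h$-cobordism between the homotopy $3$-sphere $\Sigma_i$ and $S^3$. By Lemma~\ref{smooth-3-dimensional-h-cobordism-theorem}, such an $h$-cobordism is diffeomorphic to $S^3\times I$; gluing back the removed $4$-ball yields $W_i\cong D^4$. In particular $\Sigma_i=f(S^3)$ bounds the smooth disk $W_i\subset S^4$, which is the assertion of the lemma.

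The main obstacle is the step just described: classically the smooth $4$-dimensional Sch\"onflies problem is open precisely because one cannot run the $h$-cobordism machinery in the middle dimension, and the usual topological proof (Brown, Mazur) produces only a topological, not smooth, disk. The resolution offered here is that the relevant $h$-cobordism is $3$-dimensional (the boundary level), not $4$-dimensional, so the obstruction is moved to Lemma~\ref{smooth-3-dimensional-h-cobordism-theorem}, which has already been secured via the PDE-theoretic proof of the $3$-dimensional Poincar\'e conjecture in \cite{PRA14, PRA17}. I would therefore be careful to check that the $h$-cobordism $V_i$ really is between genuine smooth $3$-spheres: $\Sigma_i$ is a smooth homotopy $3$-sphere by the homology and $\pi_1$ computation, hence by the $3$-dimensional Poincar\'e conjecture it is already diffeomorphic to $S^3$, so Lemma~\ref{smooth-3-dimensional-h-cobordism-theorem} applies verbatim.

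Finally I would assemble the conclusion: since both $W_1$ and $W_2$ are smooth $4$-disks meeting along their common boundary $3$-sphere, the embedded $3$-sphere $f(S^3)$ separates $S^4$ into two components each having the homology of a point, and it bounds a smooth $4$-disk in $S^4$; the piecewise-linear statement follows by the same argument carried out in the PL category, using PL versions of the tubular neighbourhood theorem and of Lemma~\ref{smooth-3-dimensional-h-cobordism-theorem}. This completes the proof of Lemma~\ref{smooth-4-dimensional-schoenflies-problem}.
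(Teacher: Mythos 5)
Your proof reaches the same conclusion by a genuinely different route from the paper's, and one that to my reading is more transparent. Both proofs open with the Alexander-duality computation showing $S^4\setminus f(S^3)$ has two acyclic components. Thereafter you work \emph{intrinsically}: cut $S^4$ along $f(S^3)$ into two compact pieces $W_1,W_2$, show each is a compact contractible smooth $4$-manifold with boundary $f(S^3)\cong S^3$ (van Kampen plus Hurewicz--Whitehead), delete an open $4$-ball from the interior of $W_i$ to manufacture a smooth $h$-cobordism $V_i$ between $f(S^3)$ and a standard $S^3$, invoke Lemma~\ref{smooth-3-dimensional-h-cobordism-theorem} to get $V_i\cong S^3\times I$, and glue the ball back (using $\Gamma_4=0$) to obtain $W_i\cong D^4$. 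This is the classical reduction of Sch\"onflies to $h$-cobordism. The paper instead argues \emph{extrinsically}: it fixes $S^4\subset\mathbb{R}^5$, places homotopy $3$-spheres $M_1$ outside and $M_2$ inside $S^4$, uses $\Omega_3=0$ to produce a compact $4$-manifold $V\subset\mathbb{R}^5$ with $\partial V=M_1\sqcup M_2$ and $V\cap S^4=X\equiv f(M_1)$, and then asserts that triviality of $V$ via Lemma~\ref{smooth-3-dimensional-h-cobordism-theorem} forces $X$ to be unknotted, hence $X\cong\partial D^4$. The paper does not justify the existence of such a $V$ with $V\cap S^4=X$, nor the inference from ``$V$ is a trivial $h$-cobordism'' to ``$X$ is unknotted''; your decomposition of $S^4$ itself makes the role of Lemma~\ref{smooth-3-dimensional-h-cobordism-theorem} fully explicit. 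One shared caveat: both routes read Lemma~\ref{smooth-3-dimensional-h-cobordism-theorem} as a \emph{uniqueness} statement (every smooth $h$-cobordism between homotopy $3$-spheres is a product), whereas it is phrased, and its proof sketch only supplies, existence of some trivial $h$-cobordism; you need the uniqueness form to know that the specific $V_i$ you built is a product, just as the paper needs it for its specific $V$. A small redundancy in your write-up: since $f$ is an embedding of $S^3$, the piece $\Sigma_i=f(S^3)$ is automatically diffeomorphic to $S^3$, so no appeal to the $3$-dimensional Poincar\'e conjecture is needed to identify it before applying Lemma~\ref{smooth-3-dimensional-h-cobordism-theorem}.
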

\begin{proof}
All the reduced homology groups of the complements $Y\equiv S^n\setminus f(D^n)\subset S^n$ of smooth embeddings $f:D^k\to S^n$ are trivial ones: $\widetilde{H}_p(Y;\mathbb{Z})=0$, $p\ge 0$.\footnote{The {\em reduced homology groups} $\widetilde{H}_p(X)$, of non-empty space $X$, are the homology groups of the augmented chain complex: $\xymatrix{\cdots C_2(X)\ar[r]^{\partial_2}&C_1(X)\ar[r]^{\partial_1}&C_0(X)\ar[r]^{\epsilon}&\mathbb{Z}\ar[r]&0}$, where $\epsilon$ can be considered generated by the chain $[\varnothing]\mapsto X$, sending the simplex with no-vertices ({\em empty simplex}) to $X$, i.e., $\epsilon(\sum_in_i\sigma)=\sum_in_i$. Since $\epsilon\partial_i=0$, $\epsilon$ induces a map $H_0(X)\to\mathbb{Z}$ with kernel $\widetilde{H}_0(X)$, so one has $H_0(X)\cong \widetilde{H}_0(X)\bigoplus\mathbb{Z}$, and $H_p(X)\cong \widetilde{H}_p(X)$, $\forall p>0$. Therefore, we get $\widetilde{H}_0(pt)\cong 0$. Furthermore, one has $\widetilde{H}_p(X,A)\cong H_p(X,A)$, for any couple $(X,A)$, $X\supset A\not=\varnothing$, and $\widetilde{H}_p(X)=H_p(X,x_0)$.}
In (\ref{homology-n-sphere-setminus-embedded-k-sphere}) are given the reduced homology groups of $S^n\setminus f(S^k)$, for any smooth embedding $f:S^k\to S^n$, $k<n$.
\begin{equation}\label{homology-n-sphere-setminus-embedded-k-sphere}
  \scalebox{0.8}{$\widetilde{H}_p(S^n\setminus f(S^k);\mathbb{Z})=\left\{
  \begin{array}{ll}
    \mathbb{Z}&p=n-k-1\\
    0&\hbox{\rm otherwise.}\\
  \end{array}
  \right\}\Rightarrow\hskip 3pt \widetilde{H}_p(S^n\setminus f(S^{n-1});\mathbb{Z})=\left\{\begin{array}{ll}
    \mathbb{Z}&p=0\\
    0&\hbox{\rm otherwise.}\\
  \end{array}
  \right.$}
\end{equation}
In particular for $n=4$, we get $\widetilde{H}_0(S^4\setminus f(S^{3});\mathbb{Z})=\mathbb{Z}$, i.e., $H_0(S^4\setminus f(S^{3});\mathbb{Z})=\mathbb{Z}\bigoplus\mathbb{Z}$. Since $\widetilde{H}_\bullet$ preserves coproducts, i.e., takes arbitrary disjoint unions to direct sums, we get that $Z\equiv S^4\setminus f(S^{3}$ is made by two contractible, separate components of $S^4$. This agrees with Lemma \ref{smooth-3-dimensional-h-cobordism-theorem}. In fact, let consider a fixed $S^4\subset\mathbb{R}^5$, identified with the equation $\sum_{1\le i\le 5}(x^i)^2-1=0$, as representative of the framed cobordism class of $4$-dimensional spheres, since $\Omega^{fr}_4\cong\pi_4^S(S^0)=0$. Let $M_1\subset\mathbb{R}^5$ be a $3$-dimensional smooth homotopy sphere outside $S^4$, and let $f:M_1\to S^4$ be any embedding. Set $f(M_1)\equiv X\subset S^4$. Let $M_2\subset\mathbb{R}^5$ be another $3$-dimensional smooth homotopy sphere inside $S^4$. (See Fig. \ref{inside-outside-embeddings-3-homotopy-sphere-4-sphere}.) Since $\Omega_3=0$, we can find a smooth $4$-dimensional manifold $V$ such that $\partial V=M_1\sqcup M_2$ and such that $V\bigcap S^4=X$. From Lemma \ref{smooth-3-dimensional-h-cobordism-theorem} we can assume that $V$ is a trivial h-cobordism. This implies that $X$ cannot be knotted. As a by-product it follows that $X\cong\partial D^4\subset S^4$. The same result can be obtained by considering framed cobordism classes in $\Omega^{fr}_3\cong\pi_3^S(S^0)\cong\mathbb{Z}_{28}$, i.e., by considering intersections of $S^4\subset \mathbb{R}^5$ with $4$-dimensional planes $\mathbb{R}^4$, where embed representatives of $3$-dimensional framed homotopy spheres.
\end{proof}

\begin{lemma}[The smooth Poincar\'e conjecture]\label{the-smooth-poincare-conjecture}
The smooth ($4$-dimensional) Poincar\'e conjecture is true. In other words all compact, closed, $4$-dimensional smooth manifolds, $\Sigma^4$, homotopy equivalent to $S^4$, are diffeomorphic (other than homeomorphic) to $S^4$: $\Sigma^4\cong S^4$.
\end{lemma}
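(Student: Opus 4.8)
The plan is to derive the smooth $4$-dimensional Poincaré conjecture from the machinery already assembled in this section, chiefly Theorem \ref{bordism-groups-in-exotic-pdes-stability} applied to the Ricci flow PDE, together with Lemma \ref{smooth-3-dimensional-h-cobordism-theorem} and Lemma \ref{smooth-4-dimensional-schoenflies-problem}. First I would fix a compact, closed, smooth $4$-manifold $\Sigma^4$ homotopy equivalent to $S^4$, and recall that by Freedman's theorem $\Sigma^4$ is homeomorphic to $S^4$; the entire content to be proved is the upgrade from $\thickapprox$ to $\cong$. The strategy is to realize $\Sigma^4$ and $S^4$ as two admissible smooth Cauchy integral manifolds of the Ricci flow equation $(RF)$ on a suitable fiber bundle $\pi:W\to M$ with $\dim W\ge 2\cdot 5+1$, so that all $4$-dimensional homotopy spheres embed in $W$ and Theorem \ref{bordism-groups-in-exotic-pdes-stability} applies with $n=5$. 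In the homotopy-equivalence full admissibility hypothesis one gets $\Omega^{(RF)}_{4,s}=0$, so $S^4$ and $\Sigma^4$ are singularly bordant through a solution $V$ of $(RF)$; and the sharper statement there says that two such Cauchy manifolds bound a \emph{smooth} solution iff they are diffeomorphic, with $\Omega^{(RF)}_{4}\leftrightarrow\Theta_4$.

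Next I would bring in the dynamical picture of Theorem \ref{integral-h-cobordism-in-Ricci-flow-pdes}: starting from the round $S^4$ we can flow, in finite time, to any homotopy $4$-sphere $\Sigma^4$; if the characteristic flow of the solution is nonsingular then $\Sigma^4\cong S^4$, while genuinely exotic targets (if any) are reached only through solutions with Thom--Boardman singularities. So the proof reduces to showing that the flow connecting $S^4$ to $\Sigma^4$ can be taken \emph{smooth} (singularity-free). Here is where Lemma \ref{smooth-4-dimensional-schoenflies-problem} enters: the smooth $4$-dimensional Schoenflies property says every smoothly embedded $3$-sphere in $S^4$ bounds a smooth $4$-disk, equivalently the complement $S^4\setminus f(S^3)$ has the two contractible components forced by \eqref{homology-n-sphere-setminus-embedded-k-sphere}. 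Combined with the trivial smooth $3$-dimensional h-cobordism theorem (Lemma \ref{smooth-3-dimensional-h-cobordism-theorem}), which guarantees that a $3$-dimensional homotopy sphere embedded in $S^4$ cannot be knotted, this removes the only obstruction to deforming the characteristic flow across the Cauchy data without creating singular points: any would-be singularity of the projection $\pi_{k,0}:(RF)_k\to W$ along the solution $V$ is detected by a nontrivially embedded $3$-sphere slice, and the Schoenflies lemma rules these out.

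Concretely the steps, in order: (1) set up $(RF)$ as a formally integrable, completely integrable exotic PDE with $\dim (RF)_k\ge 2n+1$, $\dim g_k\ne 0$, $\dim g_{k+1}\ne 0$, $n=5$, verifying the hypotheses of Theorem \ref{bordism-groups-in-exotic-pdes-stability}; (2) invoke that theorem to get $\Omega^{(RF)}_{4,s}=0$ and the bijection $\Omega^{(RF)}_4\leftrightarrow\Theta_4$, so that $S^4$ and $\Sigma^4$ cobound a solution; (3) using Lemma \ref{smooth-4-dimensional-schoenflies-problem} and Lemma \ref{smooth-3-dimensional-h-cobordism-theorem}, show that the singular set $\Sigma(V)$ of the connecting solution can be pushed off, i.e. the solution can be chosen smooth with nonsingular characteristic flow; (4) apply the smooth-case conclusion of Theorem \ref{integral-h-cobordism-in-Ricci-flow-pdes} to conclude $\Sigma^4\cong S^4$. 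Finally I would note that this is exactly the $n=4$ instance of the general claim $M\cong S^n$ for smooth homotopy $n$-spheres in the stated range, consistent with $\Theta_4=0$.

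The main obstacle is step (3): genuinely showing that the Ricci-flow solution bording $S^4$ and $\Sigma^4$ can be taken free of Thom--Boardman singularities. In dimensions $n\ge 7$ the existence of exotic spheres not embeddable with trivial normal data is precisely what forces singular solutions, so the heart of the matter is that in dimension $4$ the Schoenflies lemma (Lemma \ref{smooth-4-dimensional-schoenflies-problem}) together with the trivial $3$-dimensional h-cobordism theorem (Lemma \ref{smooth-3-dimensional-h-cobordism-theorem}, itself resting on the already-proved $3$-dimensional Poincaré conjecture) supplies the unknottedness needed to desingularize the characteristic flow. Everything else is an application of the integral-bordism and spectrum formalism of Sections \ref{spectra-pdes-section} and the present one; the delicate point is checking that the desingularization is compatible with the PDE structure of $(RF)$, i.e. that the deformed $V$ is still a genuine (smooth) solution with boundary $S^4\sqcup\Sigma^4$.
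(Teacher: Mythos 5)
Your approach departs fundamentally from the paper's, and it contains both a circularity and a genuine gap.

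\textbf{Circularity.} Your step (4) invokes ``the smooth-case conclusion of Theorem~\ref{integral-h-cobordism-in-Ricci-flow-pdes}'' to conclude $\Sigma^4\cong S^4$. But Lemma~\ref{the-smooth-poincare-conjecture} is stated and proved \emph{inside} the proof of Theorem~\ref{integral-h-cobordism-in-Ricci-flow-pdes}: the lemma is one of the supporting results that make that theorem's $n=4$ case work. You cannot use the theorem to prove one of the lemmas that its own proof depends on. Similarly, the bijection $\Omega^{(RF)}_{4}\leftrightarrow\Theta_4$ from Theorem~\ref{bordism-groups-in-exotic-pdes-stability} merely translates the question; it tells you nothing about whether $\Theta_4$ vanishes, which is exactly what needs to be proved.

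\textbf{The gap in step (3).} You correctly identify the desingularization of the characteristic flow as the heart of your argument, but the mechanism you offer --- that Lemma~\ref{smooth-4-dimensional-schoenflies-problem} and Lemma~\ref{smooth-3-dimensional-h-cobordism-theorem} supply enough unknottedness to ``push off'' the Thom--Boardman singular set of a Ricci-flow solution $V$ with $\partial V = S^4\sqcup\Sigma^4$ --- is asserted, not demonstrated. You claim that any singularity of $\pi_{k,0}$ along $V$ is ``detected by a nontrivially embedded $3$-sphere slice,'' but no reason is given for this detection principle, and more importantly the relevant singularities concern $4$-dimensional slices (the Cauchy level-sets) and the embeddability of homotopy $4$-spheres, not $3$-spheres. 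The Schoenflies lemma is about $S^3\hookrightarrow S^4$; what you would need to control is how $\Sigma^4$ sits inside $W$, and neither of your cited lemmas addresses that. This is not a small technical hole; it is the entire content of the result.

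\textbf{What the paper actually does.} The paper's proof of this lemma is geometric-topological, not PDE-theoretic, and runs on a completely different axis. It first uses Lemma~\ref{smooth-3-dimensional-h-cobordism-theorem} and the structure theory of homology $3$-spheres (prime decomposition, the Boileau--Paoluzzi--Zimmermann characterization of $S^3$, Rokhlin-invariant obstructions) to argue that any exotic $D^4$ would have boundary $\cong S^3$, forcing its exoticity into the interior. It then localizes the exoticity of an exotic $\widetilde{\mathbb{R}}^4$ in a compact $K$, compactifies to a point to get $\Sigma^4\thickapprox S^4$, and asks whether $\Sigma^4$ bounds a smooth contractible $V$. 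By Freedman's topological $4$-dimensional h-cobordism, $V\thickapprox D^5$; if $V\not\cong D^5$ one would get an exotic $D^5$ whose exoticity, since $\mathbb{R}^5$ admits no exotic smoothings, must live on $\partial\widetilde{D}^5$, and doubling produces an exotic $S^5$, contradicting $\Theta_5=0$. Hence $V\cong D^5$ and $\Sigma^4\cong S^4$. None of Theorem~\ref{bordism-groups-in-exotic-pdes-stability}, the spectra $\Xi_s$, or the Ricci-flow desingularization machinery appear in the paper's argument. Your Steps (1)--(2) set up scaffolding the paper never uses for this lemma, and the ingredients the paper actually leans on --- Freedman's theorem in the form $V\thickapprox D^5$, non-existence of exotic $\mathbb{R}^5$, and $\Theta_5=0$ --- are entirely absent from your outline.
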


\begin{proof}
Existence of exotic $4$-spheres is related to the existence of exotic $4$-disks. Thus let us recall the definition of exotic $4$-disks.
\begin{definition}
An {\em exotic $4$-disk} (or {\em Mazur manifold}), is a contractible, compact, smooth $4$-dimensional manifold $\widetilde{D}^4$ which is homeomorphic, but not diffeomorphic, to the standard $4$-disk $D^4$.
\end{definition}
The boundary of an exotic $4$-disk is necessarily an homology $3$-sphere.
 So it is important to study the structure of such homology $3$-spheres. With this respect we shall introduce some further definitions and lemmas.
\begin{definition}
A periodic diffeomorphism $f$ of an orientable $3$-manifold $M$ has {\em trivial quotient} if the corresponding space of orbits, say $M_f$, is homeomorphic to $S^3$: $M_f\thickapprox S^3$.
\end{definition}
\begin{example}
The standard $S^3$ admits a periodic diffeomorphism $f$ of any order and with trivial quotient: $S^3_f\thickapprox S^3$.
\end{example}
It is well known from a theorem by Kervaire that for $n\ge 4$ the h-cobordism classes of homotopy $n$-spheres are isomorphic to the ones of the h-cobordism classes of homology $n$-spheres. The situation is instead different in dimension $n=3$. This depends from the following lemmas.

\begin{lemma}[Properties of homology $3$-spheres]\label{properties-homology-3-spheres}

{\em 1)} The connected sum of two homology $3$-spheres is a homology $3$-sphere too.

{\em 2) (Prime decomposition of $3$-manifolds) \cite{MILNOR3, SCOTT}}
Every homology $3$-sphere can be written as a connected sum of prime homology $3$-spheres in an essentially unique way. (A homology $3$-sphere that cannot be written as a connected sum of two homology $3$-spheres is called {\em irreducible} (or {\em prime}).)
\end{lemma}
\begin{example}
If $p$, $q$ and $r$ are pairwise relatively prime positive integers, then the {\em Brieskorn $3$-sphere} $\Sigma(p,q,r)$ is the homology $3$-sphere identified by the equations {\em(\ref{brieskorn-3-spheres-equations})} in $\mathbb{C}^3\cong\mathbb{R}^6$.
\begin{equation}\label{brieskorn-3-spheres-equations}
   (\Sigma(p,q,r)):\hskip 3pt\left\{
  \begin{array}{l}
    x^2+y^2+z^2-1=0\\
    x^p+y^q+z^r=0\\
  \end{array}
   \right\}\subset\mathbb{R}^6
\end{equation}
Thus $\Sigma(p,q,r)$ is a framed $3$-dimensional manifold $\Sigma(p,q,r)\subset\mathbb{R}^{3+n}$, $n\ge 3$. Furthermore $\Sigma(p,q,r)$ is homeomorphic to $S^3$ if one of $p$, $q$ and $r$ is $1$. Furthermore, $\Sigma(2,3,5)$ is the {\em Poincar\'e (homology) sphere}, called also {\em Poincar\'e dodecahedral space}. Its fundamental group ({\em binary icosahedral group}) is $\mathbb{Z}_{120}$. $\Sigma(2,3,5)$ cannot bound a contractible manifold because the {\em Rochlin invariant} provides an obstruction, hence the Poincar\'e homology sphere cannot be the boundary of an exotic $4$-disk.\footnote{The {\em Rokhlin invariant} of a spin $3$-manifold $X$ is the signature of any spin $4$-manifold $V$, such that $\partial V=X$, is well defined mod 16. A spin structure exists on a manifold $M$, if its second Stiefel-Whitney class is trivial: $w_2(M)=0$. These structures are classified by $H^1(M;\mathbb{Z}_2)\cong H_1(M;\mathbb{Z}_2)$. Therefore, homology $3$-spheres have an unique spin structure, hence for them the Rokhlin invariant is well defined. In particular the Poincar\'e homology sphere bounds a spin $4$-manifold with intersection form $E_8$, so its Rokhlin invariant is $1$.}
\end{example}
\begin{example}
Let $a_1,\cdots,a_r$, be integers all at least $2$ such that any are coprime. Then the {\em Seifert fiber space} \footnote{These are $3$-dimensional manifolds endowed with a $S^1$-bundle structure over a $2$-dimensional orbifold. (See, e.g., \cite{SCOTT}.)} $\{b,(o_1,0),(a_1,b_1),\cdots,(a_r,b_r)\}$, with  $b+\frac{b_1}{a_1}+\cdots+\frac{b_r}{a_r}=\frac{1}{a_1\cdots a_r}$, over the sphere with exceptional fibers of degrees $a_1,\cdots,a_r$, is a homology $3$-sphere. If $r$ is at most $2$, one has the standard $S^3$. If the $a$'s are $2$, $3$, and $5$ one has the Poincar\'e sphere. If there are at least three $a$'s not $2$, $3$, $5$, then one has an acyclic homology $3$-sphere with infinite fundamental groups that has a Thurston geometry modeled on the universal cover of $SL_2(\mathbb{R})$.
\end{example}

\begin{lemma}[Structures of homology $3$-spheres \cite{BOILEAU-PAOLUZZI-ZIMMERMANN}]\label{structures-of-homology-3-spheres}
An homology $3$-sphere $M$ is $S^3$ iff it admits four periodic diffeomorphisms $f_i$, $i=1,2,3,4$, with parwise different odd prime orders whose space of orbits is $S^3$, i.e., $S^3\thickapprox M_{f_i}$, $i=1,2,3,4$.

An irreducible, homology $3$-sphere, different from $S^3$, is the cyclic branched cover of odd prime order of at most four knots in $S^3$.
\end{lemma}

Now, Lemma \ref{smooth-3-dimensional-h-cobordism-theorem} implies that cannot exist exotic $4$-disks obtained by smoothly embedding $S^3$ into $S^4$. This means that the boundary of an exotic $4$-disk must necessarily be an homology $3$-sphere.
On the other hand, from the above lemmas it follows also that smooth homology spheres that can bound a contractible manifold are the ones homeomorphic to $S^3$. It follows that the boundary of an exotic $4$-disk cannot be any homology $3$-sphere, but only $3$-dimensional manifolds, homeomorphic to $S^3$, hence, after the proof of the Poincar\'e conjecture, must necessarily be $\partial\widetilde{D}^4\cong S^3$. So if there exist exotic $4$-disks, their exoticity must be localized in their interiors.

Let us, now, consider the relation between exotic $\mathbb{R}^4$'s, say $\widetilde{\mathbb{R}}^4$, and $4$-spheres. For our purposes it is enough to consider the case where the exoticity of $\widetilde{\mathbb{R}}^4$ is localized in a open compact subset $K\subset\widetilde{\mathbb{R}}^4$. (See Refs.\cite{DONALDSON1, DONALDSON2, FREEDMAN, FREEDMAN-QUINN, FREEDMAN-TAYLOR, GOMPF, TAUBES1, TAUBES2}.) Let us compactify $\widetilde{\mathbb{R}}^4$ to a point: $(\widetilde{\mathbb{R}}^4)^+\equiv\widetilde{\mathbb{R}}^4\bigcup\{\infty\}\equiv\Sigma^4 $. Then, the relation between $\Sigma^4$ and $S^4$ is given by the exact commutative diagram in (\ref{relation-between-sigma-4-and-s4}).
This diagram shows that the smooth $4$-dimensional manifold $\Sigma^4$ is homeomorphic to $S^4$, and it is a fiber bundle over $S^4$. This should have the consequence that $\Sigma\not\cong S^4$, unless $a$ is the identity mapping. Now the question is the following: does $\Sigma^4$ bound a contractible manifold $V$? (From results in \cite{PRA17} we know that $\Sigma^4$ bounds singular solutions of the Ricci flow equation.) Since $S^4=\partial D^5$, and taking into account the h-cobordism theorem in dimension $n=4$, in the category of topological manifolds, (Freedman), we can assume that $V\thickapprox S^4\times I$, hence $V\thickapprox D^5$. Whether $V$ is not diffeomorphic to $D^5$, we should conclude that there exist exotic $5$-disks. On the other hand, it is well known that do not exist exotic $\mathbb{R}^n$, for $n\not=4$. Therefore if there exists an exotic $D^5$, say $\widetilde{D}^5$, its exoticity must be localized on its boundary $\partial \widetilde{D}^5$. On the other hand, $\widetilde{D}^5\bigcup_{\partial \widetilde{D}^5}\widetilde{D}^5$ should be an exotic $5$-sphere. This is impossible, since do not exist exotic $5$-spheres. Therefore, must necessarily be $V\cong D^5$, hence $\Sigma^4=\partial V\cong \partial D^5=S^4$. This means that the process of compactification to a point of $\widetilde{\mathbb{R}}^4$ necessarily produces the collapse of $K$ to $\{\infty\}$. Really the closure $\overline{K}$ of the compact domain of exoticity in $\widetilde{\mathbb{R}}^4$ cannot have as boundary $\partial \overline{K}$ a simply connected $3$-dimensional manifold homotopy equivalent to $S^3$. In fact, in this case it should be $\partial \overline{K}\cong S^3$, hence $\overline{K}\cong D^4$, but this contradicts the assumption that in $K$ is localized the exoticity of $\widetilde{\mathbb{R}}^4$. On the other hand $\partial \overline{K}$ should coincide also with the boundary of the complement of $\overline{K}$ in $\Sigma^4$, that is necessarily an open $4$-disk $\mathop{D}\limits^{\circ}{}^4$, whether we assume that in the process of compactification the exoticity remains localized in $K$. This contradiction means that just in this process of compactification $K$ collapses to ${\infty}$ too: $ K\to{\infty}$. Thus we can conclude that the mapping $a$ in diagram (\ref{relation-between-sigma-4-and-s4}) is necessarily the identity, hence $\Sigma^4\cong S^4$.
\begin{equation}\label{relation-between-sigma-4-and-s4}
   \xymatrix{{\widetilde{\mathbb{R}}^4}\ar[d]_{\thickapprox}\ar@{^{(}->}[r]&
   {\widetilde{\mathbb{R}}^4}\bigcup\{\infty\}\equiv\Sigma^4\ar[d]_{\thickapprox}^{a}\ar[r]&
   \Sigma^4/K\bigcup\{\infty\}\ar@{=}[d]_{\wr}\\
   {\mathbb{R}^4}\ar@{^{(}->}[r]&\mathbb{R}^4\bigcup\{\infty\}\equiv S^4\ar@{=}[r]\ar[d]\ar@{=}[r]&S^4\\
   &0&}
\end{equation}

 Therefore, if do not exist exotic $4$-spheres, do not exist exotic  $4$-disks and vice versa. In fact, if there exists an exotic $4$-disk $\widetilde{D}^4$, we get that $\widetilde{D}^4\sharp \widetilde{D}^4\equiv\Sigma^4\thickapprox S^4$, where $\Sigma^4$ is an exotic $4$-sphere, (hence homeomorphic to $S^4$). Vice versa if one has an exotic $4$-sphere $\Sigma^4$, we can write $\Sigma^4\cong A\bigcup_XB$, where $X$ is a $3$-dimensional smooth manifold that separates $\Sigma^4$.\footnote{The existence of such a manifold $X$ can be proved following a strategy similar to the one to prove Lemma \ref{smooth-4-dimensional-schoenflies-problem}.} Then at least one of the submanifolds $A$ and $B$ should be an exotic $4$-disk. On the other hand, from the above results it follows that cannot exist exotic $4$-spheres.
 Therefore the smooth Poincar\'e conjecture is true.
\end{proof}

As direct consequences of above lemmas and by considering the proof of Theorem 4.59 in \cite{PRA17}, it follows that this theorem can be extended, now, to the case $n=4$ too, i.e., by using the same symbols defined in \cite{PRA17}, we can say that $\Theta_4=\Gamma_4=0$. In conclusion the proof of Theorem \ref{integral-h-cobordism-in-Ricci-flow-pdes} is down.
\end{proof}

\begin{cor}[Smooth $4$-dimensional h-cobordism theorem]\label{smooth-4-dimensional-h-cobordism-theorem}
The smooth h-cobordism theorem holds in dimension $4$.
\end{cor}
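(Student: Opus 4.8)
The plan is to deduce the corollary from the smooth $4$-dimensional Poincar\'e conjecture (Lemma \ref{the-smooth-poincare-conjecture}), from the smooth $3$-dimensional h-cobordism theorem (Lemma \ref{smooth-3-dimensional-h-cobordism-theorem}), and from the nonexistence of exotic $5$-disks already exploited inside the proof of Lemma \ref{the-smooth-poincare-conjecture}. Let $V$ be a compact smooth $5$-dimensional h-cobordism between smooth $4$-manifolds $N_1$ and $N_2$: $\partial V=N_1\sqcup N_2$, and both inclusions $N_i\hookrightarrow V$ are homotopy equivalences. In the $1$-connected situation of interest here $N_1$, $N_2$ and $V$ are simply connected and, by Poincar\'e--Lefschetz duality for the pair $(V,\partial V)$, $V$ has trivial reduced homology relative to each boundary component. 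First I would treat the case in which $N_1$ and $N_2$ are homotopy $4$-spheres, which may also be realized as smooth admissible Cauchy integral manifolds of the Ricci flow equation in the sense of Theorem \ref{integral-h-cobordism-in-Ricci-flow-pdes}; by Lemma \ref{the-smooth-poincare-conjecture} one then has $N_1\cong N_2\cong S^4$.

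Next I would cap off one end. Since $N_2\cong S^4=\partial D^5$, set $V'\equiv V\cup_{N_2}D^5$. Because $N_2\hookrightarrow V$ is a homotopy equivalence, the attached $5$-disk kills the homotopy type of $V$, so (Mayer--Vietoris, Hurewicz, Whitehead) $V'$ is a contractible compact smooth $5$-manifold with $\partial V'=N_1\cong S^4$. By Freedman's topological h-cobordism theorem in dimension $4$ (equivalently, the topological generalized Poincar\'e conjecture in dimension $5$), $V'\thickapprox D^5$. At this point I would invoke verbatim the argument used inside the proof of Lemma \ref{the-smooth-poincare-conjecture}: since there are no exotic $\mathbb{R}^n$ for $n\not=4$ and no exotic $5$-spheres, any smooth manifold homeomorphic to $D^5$ must in fact be diffeomorphic to $D^5$ --- otherwise its double would be an exotic $5$-sphere. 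Hence $V'\cong D^5$.

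Removing from $V'$ the open $5$-disk $\mathop{D}\limits^{\circ}{}^5$ that was glued in then yields
$$V\;\cong\;D^5\setminus\mathop{D}\limits^{\circ}{}^5\;\cong\;S^4\times I\;\cong\;N_1\times I,$$
so $V$ is the trivial h-cobordism and conditions (i)--(iii) of the h-cobordism conclusion hold. For the general simply connected case one argues as in Smale's handle-cancellation proof applied to a handle decomposition of $V$ relative to $N_1$: the only step that fails in this dimension is the smooth Whitney trick for middle-dimensional embedded $2$-spheres inside the $5$-manifold $V$, and the smooth Poincar\'e conjecture together with Lemma \ref{smooth-3-dimensional-h-cobordism-theorem} furnishes the required smoothly embedded Whitney disks, so the decomposition can be simplified to a trivial one, whence $V\cong N_1\times I$.

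I expect the main obstacle to be precisely the passage from the topological identification $V'\thickapprox D^5$ to the smooth one $V'\cong D^5$ (equivalently, the availability of the smooth Whitney trick in dimension $4$); but this is exactly the exotic-disk analysis already carried out in the proof of Lemma \ref{the-smooth-poincare-conjecture}, so the corollary requires no input beyond the results established above.
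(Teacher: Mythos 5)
The paper gives no explicit argument for this corollary; it is stated as an immediate consequence of the lemmas proved in the course of Theorem \ref{integral-h-cobordism-in-Ricci-flow-pdes}, in particular $\Theta_4=\Gamma_4=0$. Your Steps 1--6, treating the case where $N_1$ and $N_2$ are homotopy $4$-spheres, faithfully reconstruct what that framework actually yields: Lemma \ref{the-smooth-poincare-conjecture} gives $N_1\cong N_2\cong S^4$; capping one end produces a contractible compact smooth $5$-manifold $V'$ with $\partial V'\cong S^4$; gluing on a further $D^5$ gives a smooth homotopy $5$-sphere, hence (since $\Theta_5=0$) a standard $S^5$; and uniqueness up to ambient isotopy of smoothly embedded $5$-disks in $S^5$ then gives $V'\cong D^5$ and $V\cong S^4\times I$. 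In fact you need not detour through Freedman's topological category at all --- $\Theta_5=0$ together with disk uniqueness already suffices --- but in spirit this matches the paper's implicit reasoning, and this restricted statement is the content the corollary can legitimately carry.

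The last paragraph contains a genuine gap. You assert that, granted the smooth $4$-dimensional Poincar\'e conjecture and Lemma \ref{smooth-3-dimensional-h-cobordism-theorem}, the smooth Whitney trick for middle-dimensional spheres inside the $5$-dimensional cobordism $V$ becomes available, so that Smale's handle-cancellation argument runs for arbitrary simply connected $N_1$, $N_2$. This does not follow. The obstruction to finding smoothly embedded Whitney disks in a simply connected $4$-manifold is not governed by the Poincar\'e conjecture; it is measured by gauge-theoretic invariants (Donaldson, Seiberg--Witten) which vanish for $S^4$ but are non-trivial for many other simply connected $4$-manifolds. Donaldson in fact exhibited pairs of simply connected smooth $4$-manifolds that are $h$-cobordant --- even homeomorphic, by Freedman --- yet not diffeomorphic, e.g.\ the Dolgachev surface and $\mathbb{CP}^2\#9\overline{\mathbb{CP}}^2$. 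Consequently the smooth $h$-cobordism theorem \emph{fails} in dimension $4$ in general, independently of the status of the smooth Poincar\'e conjecture, and any argument of the kind you sketch in the last paragraph must break precisely at the Whitney-disk step. What the paper's machinery supports, and what your Steps 1--6 establish, is only the statement for $h$-cobordisms between homotopy $4$-spheres; the corollary as written in full generality, and your attempt to prove it in that generality, both run against Donaldson's example.
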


\end{document}